\numberwithin{equation}{section}
\theoremstyle{plain}
\newtheorem{theorem}{Theorem}[section]
\newtheorem{lemma}[theorem]{Lemma}
\newtheorem{corollary}[theorem]{Corollary}
\newtheorem{definition}[theorem]{Definition}
\theoremstyle{definition}
\newenvironment{remark}{\pushQED{\qed} \remarkbase}{\popQED\endremarkbase}
\newtheorem{assu}[theorem]{Assumption}
\DeclareMathOperator{\Op}{Op}
\DeclareMathOperator{\sign}{sign}
\newcommand{\N}{{\mathbb N}}
\newcommand{\R}{{\mathbb R}}
\newcommand{\C}{{\mathbb C}}
\newcommand{\Z}{\mathbb Z}
\newcommand{\Q}{\mathbb Q}
\newcommand{\T}{{\mathbb T}}
\newcommand{\mA}{\mathcal{A}}
\newcommand{\mB}{\mathcal{B}}
\newcommand{\mD}{\mathcal{D}}
\newcommand{\mF}{\mathcal{F}}
\newcommand{\mG}{\mathcal{G}}
\newcommand{\mH}{\mathcal{H}}
\newcommand{\mK}{\mathcal{K}}
\newcommand{\mL}{\mathcal{L}}
\newcommand{\mS}{\mathcal{S}}
\newcommand{\mR}{\mathcal{R}}
\newcommand{\mN}{\mathcal{N}}
\newcommand{\mP}{\mathcal{P}}
\newcommand{\mV}{\mathcal{V}}
\newcommand{\mZ}{\mathcal{Z}}
\newcommand{\pp}{\mathbb{P}}
\newcommand{\ff}{\mathbb{F}}
\renewcommand{\a}{\alpha}
\renewcommand{\b}{\beta}
\newcommand{\g}{\gamma}
\renewcommand{\d}{\delta}
\newcommand{\e}{\varepsilon}
\newcommand{\lm}{\lambda}
\newcommand{\Lm}{\Lambda}
\newcommand{\om}{\omega}
\newcommand{\p}{\pi}
\newcommand{\s}{\sigma}
\renewcommand{\t}{\tau}
\renewcommand{\th}{\vartheta}
\newcommand{\Dy}{|D_y|}
\renewcommand{\Im}{\mathrm{Im}\,}
\renewcommand{\Re}{\mathrm{Re}\,}
\newcommand{\sgn}{\mathrm{sgn}}
\newcommand{\intp}{\int_{0}^{2\p}}
\newcommand{\la}{\langle}
\newcommand{\ra}{\rangle}
\newcommand{\inv}{^{-1}}
\newcommand{\pa}{\partial}
\newcommand{\odd}{\text{odd}}
\newcommand{\even}{\text{even}}
\newcommand{\mult}{m} 
\newcommand{\lip}{ {\mathrm{lip}} } 
\newcommand{\Lipe}{ {\mathrm{Lip}(\e)} } 
\def\ba{\begin{aligned}}
\def\ea{\end{aligned}}
\def\beginm{\begin{multline}}
\def\endm{\end{multline}}
\def\B{B}
\def\defn{\mathrel{:=}}
\def\Dx{\lvert D_x\rvert}
\def\eps{\varepsilon}
\def\lA{\left\lVert}
\def\rA{\right\rVert}
\def\xR{\R} 
\def\xZ{\Z} 
\newcommand{\meanT}{\frac{1}{2\p} \int_0^{2\p}}
\title{\Large{\textbf{Gravity capillary standing water waves}}} 
\author{\small{Thomas Alazard and Pietro Baldi}}
\date{} 
\begin{document}
\maketitle

\begin{small}
\textbf{Abstract.}
The paper deals with the 2D gravity-capillary water waves equations in their Hamiltonian formulation, addressing the question of the nonlinear interaction of a plane wave with its reflection off a vertical wall. 
The main result is the construction of small amplitude, \emph{standing} (namely periodic in time and space, and not travelling) solutions of Sobolev regularity, 
for almost all values of the surface tension coefficient, and for a large set of time-frequencies. 
This is an existence result for a quasi-linear, Hamiltonian, reversible system of two autonomous pseudo-PDEs with small divisors. 
The proof is a combination of different techniques, such as a Nash-Moser scheme, 
microlocal analysis, and bifurcation analysis. 
\emph{MSC2010:} 76B15, 76D45, 35B10 (37K50, 37K55, 35S05).
\end{small}

\begin{footnotesize}
\tableofcontents
\end{footnotesize}

\section{Introduction}

This paper deals with the 2D gravity-capillary water waves equations in their Hamiltonian formulation (see equation \eqref{WW}). 
The main result (Theorem \ref{thm:main}) is the construction of small amplitude, \emph{standing} (namely periodic in time and space, and not travelling) solutions of Sobolev regularity, 
for almost all values of the surface tension coefficient, and for a large set of time-frequencies. 
This is an existence result for a quasi-linear system of two autonomous pseudo-PDEs with small divisors.

Before stating precisely the result and describing 
the strategy of the proof, we introduce the problem within a more general framework. 

\medskip

A classical topic in the mathematical theory of hydrodynamics 
concerns the Euler equations for the irrotational flow of an incompressible fluid
in a domain which, at time $t$, is of the form
$$
\Omega(t) = \{ (x,y) \in \xR^{2}\times\xR \, \arrowvert\, y<\eta(t,x) \},
$$
whose boundary is a free surface, which means that $\eta$ is an unknown. 
The simplest type of nontrivial solution for the problem is a \emph{progressive wave}, 
which is a profile of the form $\eta(t,x) = \sigma(k\cdot x-\omega t)$ 
for some periodic function $\sigma\colon \xR\rightarrow \xR$, 
together with a similar property for the velocity field. 
Despite intensive researches on this old subject,  
many natural questions are far from being fully resolved. 
Among these, most questions about the boundary behavior of water waves are not understood. 
Also, the question of the nonlinear interactions of several progressive waves is mostly opened. 

This paper is concerned with these two problems.
We shall study the reflection of a progressive wave off a wall. 
More precisely, we shall study the nonlinear interaction of a 2D gravity-capillary plane wave with 
its reflection off a vertical wall. 
To clarify matters, recall that 2D waves are waves such that the motion is the same in every
vertical section, so that one can consider the two-dimensional motion in one such section (the free surface is then a 1D curve).

\medbreak

\noindent\textbf{Interaction of two gravity-capillary waves.} 
The problem consists in seeking solutions of the water waves equations, periodic in space and time, and such that
\begin{equation}\label{n2}
\eta(t,x) =\eps \cos (k_1\cdot x-\omega(k_1) t)+\eps \cos(k_2\cdot x-\omega(k_2)t)+O(\eps^2),
\end{equation}
together with a similar property for the velocity field, where 
\begin{itemize}
\item $\eps$ is a small parameter which measures the amplitude of the waves;
\item 
$k_1$ and $k_2$ belong to $\xR^2$ and are mirror images 
such that either $k_1 = - k_2$ or $k_1 = (1,\tau)$, $k_2=(1,-\tau)$ for some $\tau$. 
This is the assumption that there is one incident plane wave, say $\eps \cos (k_1\cdot x-\omega(k_1) t)$,  and one reflected wave, $\eps \cos(k_2\cdot x-\omega(k_2)t)$;
\item there holds
$$
\omega(k)\defn \sqrt{g| k|+\kappa | k|^3}
$$
where  
$g>0$ is the acceleration of gravity 
and $\kappa\in [0,1]$ is the surface tension coefficient. Thus 
$k\rightarrow \omega(k)$ is the dispersion relation of the 
water waves equation linearized at the rest position, 
which (after transforming the system into a single equation)
can be written in the form
\begin{equation}\label{n4}
L u \defn \partial_t^2 u + g|D_x| u + \kappa |D_x|^3 u = 0.
\end{equation}
Here $|D_x|$ is the Fourier multiplier defined by $|D_x|e^{ik\cdot x}=|k|e^{ik\cdot x }$, 
where $|k|$ is the Euclidean norm of $k$. 
Note that $\omega(k_1)=\omega(k_2)$.
\end{itemize}

This problem was initiated by Reeder and Shinbrot~\cite{ReSh} and further developed by 
Craig and Nicholls~\cite{CN,CN2} and 
Groves and Haragus~\cite{GrHa2}. They considered the superposition 
of two oblique 2D travelling 
waves, such that $k_1=(1,\tau)$ and $k_2=(1,-\tau)$ for some $\tau$. This 
produces 3D short crested waves. 
Indeed, setting $\omega=\omega(k_1)=\omega(k_2)$ and writing $x=(x_1,x_2)$, one has 
$$
\eps \cos (x_1+\tau x_2-\omega t)+\eps \cos (x_1-\tau x_2-\omega t)=2\eps \cos (x_1-\omega t)\cos (\tau x_2).
$$
Since these waves are propagating in the direction $(1,0)$ (the $x_1$ axis), 
one has to study solutions of the equation that is obtained by replacing 
$\partial_t$ with $-\omega \partial_{x_1}$ in \eqref{n4}, which is 
\begin{equation}\label{n6}
Ku\defn (\omega \partial_{x_1})^2 u + g| D_x|u+\kappa| D_x|^3 u=0.
\end{equation}
For $\kappa>0$, $K$ is an elliptic operator.  
Consequently, in this context, the existence of solutions for the nonlinear equation is a problem in 
bifurcation theory (without small divisors). 

\medbreak

\noindent\textbf{Standing waves.} 
In this paper, we consider the case where the crest of the incident waves are parallel to the wall. This implies that $k_1=-k_2$ and therefore
$$
\eps \cos (k_1\cdot x-\omega(k_1) t)+\eps \cos(k_2\cdot x-\omega(k_2)t)=2\eps \cos (\omega(k_1) t)\cos(k_1\cdot x).
$$
Hence the waves obtained by superimposing the incident and the reflected waves are \emph{standing waves} (namely periodic in time and space, and not travelling). Since standing waves are not travelling, one cannot replace the time derivative by a space derivative. 
This changes dramatically the nature of the problem, as {\em small divisors} appear. 
In this paper we are interested in the case with surface tension $\kappa>0$, 
while in the case without surface tension, namely $\kappa=0$, 
a similar small divisors problem was studied in a series of papers of Iooss, Plotnikov and Toland  
\cite{Iooss-JDDS, IP-SW2, IP-SW1, IP-Mem-2009, IP2, IPT, PlTo}, 
which are described below.

The standing waves we are interested in are 2D waves. 
Without loss of generality, we can assume that $k_1=(\lambda,0)=-k_2$ for some $\lambda \neq 0$. 
Thus we shall consider functions that are independent of $x_2$. In the rest of the paper, $x\in \xR$. 

\medbreak

\noindent\textbf{Small divisors.} 
Let us explain why small divisors enter into the analysis. 
Looking for solutions that are $2\pi$-periodic in space and $(2\pi / \om)$-periodic in time (where the time frequency $\om$ is an unknown of the problem), \eqref{n4} gives 
$$
L e^{i(jx + \omega \ell t)} = p(\omega\ell,j) e^{i(jx + \omega \ell t)}, \quad 
p(\omega\ell,j) := - \om^2 \ell^2 + g |j| + \kappa |j|^3.
$$
In general, namely for almost all values of $\om,\kappa$, 
the eigenvalues $\{ p(\omega \ell,j) \}_{\ell,j\in \xZ}$ of $L$ accumulate to zero. 
To invert $L$ in the orthogonal of its kernel, one finds these small eigenvalues as denominators 
(in fact, small divisors), so that the inverse of $L$ is not a bounded operator, in the sense that it does not map any function space (Sobolev or analytic or H\"older or others) into itself. 
This makes it impossible to apply the standard implicit function theory 
to solve the orthogonal component of the nonlinear problem 
(i.e.\ the range equation, in the language of bifurcation theory). 

\medbreak

\noindent\textbf{Main result.} Our main result is stated in the next section, see Theorem~\ref{thm:main}. 
It asserts that, for almost all values $\kappa$ of the surface tension coefficient, 
for $\eps_0$ small enough there exists a set $\mathcal{G}\subset [0,\eps_0]$ 
of Lebesgue measure greater than $\eps_0(1-C\eps_0^{1/18})$, such that for $\eps$ in $\mathcal{G}$ 
there exists a standing wave whose free surface is of the form \eqref{n2}, or, more precisely, $\eta(t,x) = \e \cos(\om t) \cos(x) + O(\e^2)$, with time-frequency 
$\om = \sqrt{g + \kappa} + O(\e^2)$.
(In Theorem \ref{thm:main} the result is stated precisely for the problem 
one obtains after normalizing the gravity constant $g$, rescaling time, and introducing an 
additional amplitude parameter $\xi$, see Section \ref{subsec:main result}.) 

\medskip

Our proof is based on Nash-Moser methods for quasi-linear PDEs on the one side, 
and on techniques of microlocal analysis on the other side. 

Regarding Nash-Moser and KAM theory for quasi-linear PDEs, we remark that in general, as it was proved in the works of Lax, Klainerman and Majda on the formation of singularities 
(see e.g.\ \cite{Klainerman-Majda}), the presence of \emph{nonlinear unbounded} operators --- as it is in our water waves problem --- can compromise the existence of invariant structures of the dynamics like periodic or quasi-periodic solutions.
In fact, the wide existing literature on KAM and Nash-Moser theory for PDEs mainly deals with problems where the perturbation is bounded (see e.g.\ Kuksin \cite{Kuksin-1987} and Wayne \cite{Wayne-1990}; see \cite{Kuksin-Oxford}, \cite{CraigPanorama} for a survey).
For unbounded perturbations where the nonlinear term contains \emph{less} derivatives than the linear one, time-periodic solutions have been obtained by Craig \cite{CraigPanorama} 
and Bourgain \cite{Bourgain-Chicago-99}, while quasi-periodic solutions for PDEs of that type have been constructed via Nash-Moser or KAM methods by Bourgain \cite{Bourgain-94}, 
Kuksin \cite{Kuksin-Oxford}, Kappeler-P\"oschel \cite{Kappeler-Poeschel} for KdV, 
and, more recently, by Liu-Yuan \cite{Liu-Yuan} and Zhang-Gao-Yuan \cite{Zhang-Gao-Yuan} for NLS and Benjamin-Ono, 
and Berti-Biasco-Procesi \cite{Berti-Biasco-Procesi-Ham-DNLW, Berti-Biasco-Procesi-rev-DNLW} for NLW. 

For \emph{quasi-linear} PDEs, namely for equations 
where there are as many derivatives in the nonlinearity as in the linear part (sometimes called ``strongly nonlinear'' PDEs, e.g.\ in \cite{Kuksin-Oxford}), 
the extension of KAM and Nash-Moser theory is a very recent subject, 
which counts very few results. 
Time-periodic solutions for this class of equations have been constructed by Iooss, Plotnikov and Toland for gravity water waves \cite{PlTo,IPT} (which, even more, is a \emph{fully nonlinear} system),
and by Baldi for forced Kirchhoff \cite{Baldi-Kirchhoff} and autonomous Benjamin-Ono equation \cite{Baldi-Benj-Ono}, all using Nash-Moser methods. 
We also mention the pioneering Nash-Moser results of Rabinowitz \cite{RaCPAM} for periodic solutions of fully nonlinear wave equations (where, however, small divisors are avoided by a dissipative term).
The existence (and linear stability) of quasi-periodic solutions for a quasi-linear PDE has been only proved last year by Baldi, Berti and Montalto for forced Airy \cite{BBM-Airy} and autonomous KdV 
\cite{BBM-auto} equations, by Nash-Moser, linear KAM reducibility, and Birkhoff normal forms.  

Regarding the water waves problem, 
in~\cite{PlTo} Plotnikov and Toland proved the existence of pure gravity (i.e.\ $\kappa = 0$) standing waves, periodic in time and space. 
This work has been extended by Iooss, Plotnikov and Toland \cite{IPT} and then by 
Iooss and Plotnikov~\cite{IP-SW1,IP-SW2} who proved the existence of 
unimodal \cite{IPT} and multimodal \cite{IP-SW1,IP-SW2} solutions in Sobolev class via Nash-Moser theory, overcoming the difficulty of a complete resonance of the linearized operator at the origin. 
On the contrary, the gravity-capillary case has an additional parameter, the surface tension coefficient $\kappa$, whose arithmetic properties determine the bifurcation analysis of the linear theory. 
In particular, for all irrational values of $\kappa$ (and therefore for almost all $\kappa$) 
the linearized operator at the origin has a one-dimensional kernel (see section \ref{sec:kernel}).

We also mention the recent proof by Iooss and Plotnikov~\cite{IP-Mem-2009,IP2} of the existence of {\em three-dimensional} periodic progressive gravity waves, obtained with Nash-Moser techniques related to \cite{IP-SW1,IP-SW2}.
The question of the existence of such waves was a well known problem in the theory of surface waves --- we refer the reader to~\cite{Barber,BDM,CN,DiKh,Groves,Iooss-JDDS,IP-Mem-2009} for references and an historical survey of the background of this problem. 

Apart from the dimension of the kernel of the linear problem,  
there are other important differences between the gravity water waves problem, 
as studied by Iooss-Plotnikov-Toland, 
and the gravity-capillary water waves problem studied here. 
The difference is clear at the level of the dispersion relation $\omega=\sqrt{g|k|+\kappa|k|^3}$. 
One could think that the dispersion is stronger for $\kappa>0$ than for $\kappa=0$   
(and this is certainly true for the \emph{linear} part of the problem),
but this does not help the study of the nonlinear problem,
because higher order derivatives also appear in the nonlinearity. 
In fact, this requires the introduction of new techniques. 
A more detailed explanation about which are the new problems emerging in presence of surface tension 
and why the techniques of \cite{IPT} do not work for $\kappa > 0$ is given in the lines below \eqref{model}.

\medskip

To conclude this introduction, let us discuss the main ingredients in our proof.
Applying a Nash-Moser scheme, the main difficulty regards the invertibility of the operator linearized at a nonzero point.  
Like in \cite{IPT,Baldi-Benj-Ono,BBM-Airy,BBM-auto}, 
we seek a sufficiently accurate asymptotic expansion of the eigenvalues in terms of powers of $\eps$ and in terms of inverse powers of the spatial wavelength. 
To do so, 
we conjugate the linearized operator to a constant coefficient operator 
plus a smoothing remainder which can be handled as a perturbation term. 
(We remark that a similar eigenvalues expansion was obtained in \cite{IPT} using inverse powers $\pa_t^{-1}$ of the time-derivative, instead of space-derivative $\pa_x^{-1}$, destroying the structure of dynamical system. 
Such a structure is preserved, instead, by the transformations performed in this paper, as well as those in \cite{Baldi-Benj-Ono,BBM-Airy,BBM-auto}).   
To obtain such a precise knowledge of the asymptotic behavior of the eigenvalues requires 
to find the 
dispersion relationship associated to a variable coefficient equation, which in turn requires microlocal analysis. 
In this direction, we shall follow a now well developed approach in the analysis of water waves equations, 
which consists in 
working with the Craig-Sulem-Zakharov formulation of the equations, introducing the Dirichlet-Neumann operator. 
In particular, we shall use in a crucial way two facts proved by Lannes in \cite{LannesJAMS}. 
Firstly, by introducing what is known as the good unknown of Alinhac (see \cite{Ali,AM}), 
one can overcome an apparent loss of derivative in the analysis of the linearized equation.  
(Another advantage is that, working with the Craig-Sulem-Zakharov formulation \eqref{WW} of the problem as a dynamical system in two unknowns, and thanks to the good unknown of Alinhac, 
here we do not need to introduce any ``approximate inverse'' for the linearized operator, as it was done in \cite{IPT}). 
Secondly, one can use pseudo-differential analysis to study the Dirichlet-Neumann operator 
in domains with limited regularity. In \cite{AM}, this analysis is improved 
by showing that one can paralinearize the Dirichlet-Neumann operator, introducing the paradifferential version 
of the good unknown of Alinhac (see \cite{AliPara}). 
Notice that the analysis of the Cauchy problem for capillary waves requires 
an analysis of the sub-principal terms (see~\cite{MZ,ABZ1}). 
In this paper, we shall use in an essential way the fact that it is also possible 
to symmetrize \emph{sub-sub-principal} terms (the method used below 
can be extended at any order). 
We underline, in particular, the use of a pseudo-differential operator with symbol 
in H\"ormander class $S^0_{\rho,\d}$, $\rho = \d = 1/2$ 
(except for the fact that the symbol here has finite regularity), see \eqref{exotic symbol} and also the discussion about the related model problem \eqref{model}.
Moreover, to apply a Nash-Moser scheme one needs tame estimates (these are 
estimates which are linear with respect to the highest-order norms). We shall use the estimates proved in \cite{AlDe} together with several estimates 
proved in Section \ref{sec:semi-FIO} of this paper using a paradifferential decomposition of the frequencies. 

Eventually, we refer the 
reader to \cite{BGSW,MRT} for recent results establishing the existence 
of progressive waves localized in space. 

\bigskip

The content of the next sections is the following. 
In Section \ref{sec:main result} the water waves equations are written, 
some symmetries are shown (such symmetries play a role in the bifurcation analysis of Section \ref{sec:kernel}, especially to deal with the space-average terms), 
the functional setting is introduced, and the main theorem is stated.
Section \ref{sec:prelim} collects preliminary facts about the Dirichlet-Neumann operator, 
and fix some notations.

In Section \ref{sec:bif} we perform a bifurcation analysis. 
In particular, in Section \ref{sec:approx sol} we construct an ``approximate solution'' $\bar u_\e$, which corresponds to the need of any quadratic Newton scheme of having a sufficiently good ``initial guess''. 
In Section \ref{sec:lin syst} we exploit the (nonlinear) construction of Section \ref{sec:approx sol} to deduce a restriction for the (linear) problem of the inversion of the linearized operator $F'(u)$ at a nonzero point $u$ which is close to $\bar u_\e$. This restriction is given by a linearized version of the Lyapunov-Schmidt approach first (Lemma \ref{lemma:inv bif}), and then by a further restriction with respect to the space frequencies only, closer to a dynamical system point of view 
(Lemma \ref{lemma:inv bif 2}).

In Sections \ref{sec:linearized operator}-\ref{sec:semi-FIO conj} we conjugate the 
linearized operator $F'(u)$ to the sum of a constant coefficient part $\tilde \mD$ and a regularizing, small remainder $\tilde \mR_6$, see \eqref{def mD}. 
In Section \ref{sec:linearized operator} we use (a linear version of) the good unknown of Alinhac. 
In Section \ref{sec:changes} we perform a time-dependent change of the space variable, 
a (space-independent) re-parametrization of the time-variable, and a matrix multiplication, 
to obtain constant coefficients in front of the highest order terms, see \eqref{mL3}. 
In Section \ref{sec:symm top} we symmetrize the highest order terms, 
keeping altogether the few terms that are not small in $\e$ (this is visible in \eqref{def T}).
Section \ref{sec:symm lower} completes the symmetrization procedure, obtaining a symmetric part (see the operator matrix $\mL_5$ in \eqref{symm mL5}) plus a remainder of order $O(\e |D_x|^{-3/2})$,
after solving a block-triangular system of 8 equations in 8 unknowns. 
At this point the $2 \times 2$ real linear system can be written as a single equation for a single complex-valued function $h : \T^2 \to \C$, see \eqref{mL5}. 

Here comes the most interesting part of our conjugation analysis, where it becomes most evident the reason for which the method of \cite{IPT} does not work in presence of surface tension. 
The point can be better explained when reformulated in terms of a modified model problem 
(for the full operator see \eqref{mL5}). 

\smallskip

\textbf{$\bullet$ Model problem:} 
\emph{conjugate the linear operator}
\begin{equation} \label{model}
\om \pa_t + i c |D_x|^{3/2} + a(t,x) \pa_x 
\end{equation}
\emph{to constant coefficients up to order $O(|D_x|^{-3/2})$, knowing that $c$ is a real constant, 
and the variable coefficient $a(t,x)$ is small in size, odd in $t$, and odd in $x$.}
The technique used in \cite{IPT} to eliminate the term $a(t,x) \pa_x$ 
was the one of conjugating the vector field $\om \pa_t + a(t,x) \pa_x$ to $\om \pa_t$ using a suitable change of variable, namely the composition map with a diffeomorphism of the torus 
(this can be done by the method of the characteristics, i.e.\ by solving an ODE). 
However, in \eqref{model}, the change of variables that rectifies $\om \pa_t + a(t,x) \pa_x$ produces a variable coefficient in front of $|D_x|^{3/2}$, 
which is even worst than \eqref{model} for our purposes. A similar effect is obtained by any other Fourier integral operator (FIO) with homogeneous phase function (recall that the changes of variables are special cases of FIOs).
On the other hand, one cannot eliminate 
$a(t,x)\partial_x$ by commuting the equation with 
any multiplication operator, or any other ``standard'' pseudo-differential operator of order zero (with symbol in H\"ormander class $S^0_{\rho,\d}$ with $\rho = 1$, $\d = 0$, see Chapter 7.8 in \cite{Hormander}). 
Indeed, the commutator between such an operator and $|D_x|^{3/2}$ is an operator of order $1/2$, 
which leaves $a(t,x)\partial_x$ unchanged. 
Hence, the algebraic rigidity of the equation forces us 
to commute the equation with a pseudo-differential operator with symbol of type 
$S^0_{\rho,\d}$ with $\rho = \d = 1/2$, see \eqref{exotic symbol}. 
In Section \ref{sec:semi-FIO conj} we calculate the right candidate 
to complete the reduction to constant coefficients up to $O(|D_x|^{-3/2})$.
The study of this operator, namely the proof of its invertibility, its commutators expansion and tame estimates, is developed in Section \ref{sec:semi-FIO}, using pseudo-differential and also para-differential calculus. 

Once the linearized operator has been reduced to constant coefficients up to a domesticated remainder 
(end of Section \ref{sec:semi-FIO conj}),   
its invertibility is straightforward by imposing the first order Melnikov non-resonance conditions (see \eqref{non-res}), and is proved in Section \ref{sec:inv lin op}, where the dependence of the eigenvalues on the parameters is also discussed. 
In Section \ref{sec:NM} we construct a solution of the water waves problem as the limit of a Nash-Moser sequence converging in Sobolev norm, for a large set of parameters, whose Lebesgue measure is estimated in Section \ref{sec:measure}. Finally, Section \ref{sec:utilities} collects some standard technical facts used in the previous sections.

\bigskip

\textbf{Acknowledgements.} We are grateful to G\'erard Iooss, Pavel Plotnikov, Walter Craig, Massimiliano Berti and Jean-Marc Delort for some interesting discussions. 
This research was carried out in the frame of Programme STAR, financially supported by Uni\-NA and Compagnia di San Paolo, and was partially supported by the European Research Council under FP7, 
and PRIN 2012 \emph{Variational and perturbative aspects of nonlinear differential problems}.
Part of the work has been written during some short visits of Baldi to the ENS of Paris. 
This research began at Saint-Etienne de Tin\'ee, and we acknowledge Laurent Stolovitch for the invitation.

\section{The equation and main result} \label{sec:main result}

We recall the Craig-Sulem-Zakharov formulation which allows one to 
reduce the analysis of the Euler equation 
to a problem on the boundary, by 
introducing the Dirichlet--Neumann operator. 

We consider an incompressible liquid occupying a domain 
$\Omega$ with a free surface. Namely, at time $t\ge0$, the fluid domain is
$$
\Omega(t)=\{(x,y)\in \xR\times \xR\,:\, y<\eta(t,x)\}
$$
where $\eta$ is an unknown function. 
We assume that the flow is incompressible and also irrotational, so that 
the velocity field $v$ 
is given by $v=\nabla_{x,y} \phi$ 
for some harmonic velocity potential $\phi\colon \Omega\rightarrow \xR$. 
Following Zakharov~\cite{Zakharov1968}, 
introduce the trace of the potential on the free surface:
$$
\psi(t,x)=\phi(t,x,\eta(t,x)).
$$
Since $\phi$ is harmonic, $\eta$ and $\psi$ fully determines $\phi$. 
Craig and Sulem (see~\cite{CrSu}) observe that one can 
form a system of two evolution equations for $\eta$ and 
$\psi$ by introducing the Dirichlet-Neumann operator $G(\eta)$ 
which relates $\psi$ to the normal derivative 
$\partial_n\phi$.

\begin{definition}\label{defi:DNO}
Given any functions $\eta,\psi\colon \xR\rightarrow \xR$, set 
$\Omega\defn \{\,(x,y)\in\xR\times\xR \,\arrowvert\, y<\eta (x)\,\}$ and define $\phi$ as 
the harmonic extension of $\psi$ in $\Omega$:
\[
\Delta \phi =0 \quad \text{in }\Omega,\quad \phi\arrowvert_{y=\eta}=\psi,\quad 
\nabla\phi \rightarrow 0 \text{ as } y\rightarrow -\infty.
\]
The Dirichlet--Neumann operator is defined by
$$
G(\eta)\psi (x) =
\sqrt{1+\eta_x^2}\,
\partial _n \phi\arrowvert_{y=\eta(x)}
=(\partial_y \phi)(x,\eta(x))-\eta_x (x)\cdot (\partial_x \phi)(x,\eta(x)).
$$
(We denote by $\eta_x$ the derivative $\partial_x \eta$.)
\end{definition}

The water waves equations are a system of two coupled equations: one equation 
describing the deformations of the domain and one equation coming 
from the assumption that the jump of pressure across the free surface is proportional to the mean curvature. 
Using the Dirichlet-Neumann operator, these equations are
\begin{equation}\label{WW}
\left\{
\begin{aligned}
&\partial_t \eta = G(\eta)\psi,\\
&\partial_t\psi +g\eta+  \frac{1}{2} \psi_x^2  
-\frac{1}{2} \frac{\bigl(G(\eta)\psi + \eta_x \psi_x \bigr)^2}{1+ \eta_x^2}
= \kappa H(\eta),
\end{aligned}
\right.
\end{equation}
where $g$ and $\kappa$ are positive constants
and $H(\eta)$ is the mean curvature of the free surface:
\begin{equation*}
H(\eta) := \partial_x \bigg( \frac{\partial_x\eta}{\sqrt{1+(\partial_x\eta)^2}} \bigg)
= \frac{\eta_{xx}}{(1 + \eta_x^2)^{3/2}} \cdot
\end{equation*}
The gravity constant $g$ can be normalized 
by jointly rescaling the time $t$ and the amplitude of $\psi$. 
With no loss of generality, 
in this paper we assume that 
$g = 1$.

\bigskip 
\noindent \textbf{Periodic solutions.} 
We seek solutions $u(t,x) = (\eta(t,x), \psi(t,x))$ of system \eqref{WW} which are periodic, 
with period $2\p$ in space and period $T = 2\p / \om$ in time, where the parameter $\om > 0$ is an unknown of the problem.
Rescaling the time $t \to \om t$, the problem becomes 
\[
F(u,\om) = 0,
\]
where $F = (F_1, F_2)$, $u = (\eta,\psi)$ is $2\p$-periodic both in time and space, and
\begin{align} \label{F1}
F_1(\eta,\psi) & := \om \partial_t \eta - G(\eta)\psi 
\\
F_2(\eta,\psi) & := \om \partial_t\psi + \eta + \dfrac{1}{2} \psi_x^2  
- \dfrac{1}{2} \, \dfrac{1}{1 + \eta_x^2} \, 
\bigl( G(\eta)\psi + \eta_x  \psi_x \bigr)^2 - \kappa \frac{\eta_{xx}}{(1 + \eta_x^2)^{3/2}}.
\label{F2}
\end{align}

\bigskip
\noindent \textbf{Functional setting.}
We use Sobolev spaces of functions with the same regularity both in time and in space:
consider the exponential basis 
$\{e^{i(lt+jx)}: (l,j) \in \Z^2\}$ on $\T^2$, 
and the standard Sobolev space $H^s := H^s(\T^2, \R)$ on $\T^2$ given by
\begin{equation} \label{unified norm}
H^s
= \Big\{ f = \sum_{(l,j) \in \Z^2} \hat f_{l,j} \, e^{i(lt+jx)} : \
\| f \|_s^2 := \sum_{(l,j) \in \Z^2} |\hat f_{l,j}|^2 \langle l,j \rangle^{2s} < \infty \Big\},
\end{equation}
where $\langle l,j \rangle := \max \{ 1, |l|+|j| \}$. 
Also, we set in the natural way $H^s(\T^2, \R^2) $ $:= \{ u = (\eta, \psi) : \eta, \psi \in H^s \}$ 
with norm $\| u \|_s^2 := \| \eta \|_s^2 + \| \psi \|_s^2$. 

\begin{remark}
Regularity in time and in space could be handled separately, 
as it is natural when thinking of the Cauchy problem (see \cite{BG}).
However, we shall consider changes of variables of the form $(t,x) \mapsto (t,x+\b(t,x))$, 
which, in some sense, mix the regularity in time and the one in space. 
To work with regularity in the time-space pair is a convenient choice.
\end{remark}

\noindent \textbf{Symmetries.} 
Because of reversibility in time and symmetry in space, the problem has an invariant subspace, where we look for  solutions: 
$ X \times Y = \{ u = (\eta,\psi)  : \eta \in X, \ \psi \in Y \}$, 
\begin{equation}  \label{def XY}
X := \{ \eta(t,x) : \, \eta \ \text{even}(t), \ \text{even}(x) \}, \quad 
Y := \{ \psi(t,x) : \, \psi \ \text{odd}(t), \ \text{even}(x) \}.
\end{equation}
The restriction to this subspace is used in Section \ref{sec:kernel} to deal with the space and time averages, and in Section \ref{sec:semi-FIO conj} (see \eqref{beta 1}).

\subsection{Main result}  \label{subsec:main result}

We assume three hypotheses on the surface tension coefficient $\kappa > 0$, 
which are discussed in the comments below Theorem \ref{thm:main}.  
First, $\kappa \notin \Q$. 
Second, $\kappa \neq \rho_0$, where $\rho_0$ is the unique real root of the polynomial $p(x) := 136 x^3 + 66 x^2 +3x -8$ 
(by the rational root test, $\rho_0$ is an irrational number, and it is in the interval $0.265 < \rho_0 < 0.266$).
Third, we assume that $\kappa$ satisfies the Diophantine condition
\begin{equation} \label{kappa dioph}
| \sqrt{1 + \kappa}\, l + \sqrt{j + \kappa j^3} | > \frac{ \g_*}{j^{\t_*}} \quad 
\forall l \in \Z, \ j \geq 2,
\end{equation}
for some $\g_* \in (0, \frac12)$, where $\t_* > 1$. 
The next lemma says that \eqref{kappa dioph} is a very mild restriction on $\kappa$. 

\begin{lemma} \label{lemma:kappa dioph}
Let $\kappa_0 > 0$, $\t_* > 1$.
The set 
\begin{equation} \label{def Kappa}
\mK = \{ \kappa \in [0,\kappa_0] : \exists \ \g_* \in (0,1/2) \ \text{such that} \ \kappa  \text{ satisfies } \eqref{kappa dioph} \} 
\end{equation}
has full Lebesgue measure $|\mK| = \kappa_0$.
\end{lemma}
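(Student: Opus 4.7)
The plan is to show that the complement of $\mK$ has arbitrarily small measure. Write $\mK = \bigcup_{n \ge 3} \mK_{1/n}$ as an increasing union of the sets
\[
\mK_\g := \bigl\{\kappa \in [0,\kappa_0] : |\sqrt{1+\kappa}\,l + \sqrt{j+\kappa j^3}| > \g/j^{\t_*} \ \forall l \in \Z,\, j \ge 2\bigr\};
\]
it then suffices to bound $|[0,\kappa_0]\setminus \mK_\g|$ by $O(\g)$ and let $\g \to 0$. Denote by $B_{l,j}(\g)$ the subset of $[0,\kappa_0]$ where \eqref{kappa dioph} fails for that pair $(l,j)$, so the complement splits as $\bigcup_{(l,j)} B_{l,j}(\g)$. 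For $l \ge 0$ the left-hand side is $\ge \sqrt{j} \ge \sqrt{2} > 1/2 > \g$, so $B_{l,j}(\g) = \emptyset$; only $l = -m$ with $m \ge 1$ must be studied.

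The key algebraic step I would use is the rationalization
\[
\bigl(\sqrt{j+\kappa j^3} - m\sqrt{1+\kappa}\bigr)\bigl(\sqrt{j+\kappa j^3} + m\sqrt{1+\kappa}\bigr) = (j - m^2) + \kappa(j^3 - m^2),
\]
which writes the phase $f(\kappa) := \sqrt{j+\kappa j^3} - m\sqrt{1+\kappa}$ as $h(\kappa)/s(\kappa)$, where $h$ is \emph{affine} in $\kappa$ with slope $j^3 - m^2$, and $s(\kappa) := \sqrt{j+\kappa j^3} + m\sqrt{1+\kappa}$ is positive and bounded by $C_{\kappa_0}(m + j^{3/2})$ on $[0,\kappa_0]$. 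When $m^2 \ne j^3$ the measure of a sub-level set of an affine function is straightforward, and using $j^3 - m^2 = (j^{3/2}-m)(j^{3/2}+m)$ one gets
\[
|B_{-m,j}(\g)| \le \frac{2\,C_{\kappa_0}\,\g\,(m + j^{3/2})}{j^{\t_*}\,|j^3-m^2|} = \frac{2\,C_{\kappa_0}\,\g}{j^{\t_*}\,|j^{3/2}-m|}.
\]
The residual regimes $m^2 = j^3$ and $m \ge 2 j^{3/2}$ are easy: direct estimation of $|h|/s$ gives $|f| \gtrsim j^{3/2}$, so $B_{-m,j}(\g) = \emptyset$ for $\g$ small.

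The last step is to sum the bound over $1 \le m \le 2 j^{3/2}$ and $j \ge 2$. Let $m_0 = m_0(j)$ denote the nearest integer to $j^{3/2}$. Every other relevant integer satisfies $|j^{3/2}-m| \ge 1/2$, and a harmonic-type computation gives $\sum_{m \neq m_0} |j^{3/2}-m|^{-1} \le C \log j$. The hard part will be the single resonant index $m = m_0$, for which one only has $|j^3-m_0^2| \ge 1$ and the generic bound above is too weak to be summable. I plan to handle it by a separate direct estimate: the unique zero $\kappa_* = (m_0^2 - j)/(j^3 - m_0^2)$ of $f$ lies outside $[0,\kappa_0]$ (it is negative when $m_0 > j^{3/2}$, and satisfies $\kappa_* \ge j^{3/2}/4 \gg \kappa_0$ when $m_0 < j^{3/2}$, because the numerator is $\approx j^3$ while the denominator $(j^{3/2}-m_0)(j^{3/2}+m_0) \le j^{3/2}+1/4$); hence $f$ keeps one sign on $[0,\kappa_0]$, and an endpoint evaluation of $|h|/s$ yields $|f(\kappa)| \ge c\, j^{3/2}$ there, forcing $B_{-m_0,j}(\g) = \emptyset$ for $j$ above a threshold $J_0(\kappa_0)$, while the finitely many smaller $j$ contribute $O(\g)$ directly. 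Assembling all the estimates gives
\[
|[0,\kappa_0]\setminus \mK_\g| \le C_{\kappa_0}\,\g \sum_{j \ge 2} \frac{\log j}{j^{\t_*}} + O(\g) = O(\g),
\]
where the series converges precisely because $\t_* > 1$; passing to the limit $\g \to 0$ yields $|\mK| = \kappa_0$.
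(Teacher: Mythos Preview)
Your proof is correct, but the paper takes a much simpler route. Instead of rationalizing, the paper normalizes: it studies
\[
f_{lj}(\kappa) := l + \sqrt{\frac{j+\kappa j^3}{1+\kappa}},
\]
whose sublevel set $\{|f_{lj}|\le \g_*/j^{\t_*}\}$ contains $B_{l,j}(\g_*)$ since $\sqrt{1+\kappa}\ge 1$. The point is that
\[
f_{lj}'(\kappa) = \frac{j^3-j}{2\sqrt{j+\kappa j^3}\,(1+\kappa)^{3/2}} \ge c\,j^{3/2}
\]
on $[0,\kappa_0]$, \emph{uniformly in $l$}. Hence each bad set has measure $\le C\g_*\, j^{-\t_*-3/2}$; since at most $Cj^{3/2}$ values of $l$ are relevant, the sum over $l$ costs only a factor $j^{3/2}$, and the total is $\le C\g_* \sum_j j^{-\t_*}$, convergent for $\t_*>1$.

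Your rationalization $f=h/s$ yields an affine $h$ with slope $j^3-m^2$, which degenerates at the resonant integer $m_0\approx j^{3/2}$; this forces a separate (and somewhat delicate) argument showing $B_{-m_0,j}=\emptyset$ for large $j$, plus a $\log j$ loss in the non-resonant sum. The paper's normalization sidesteps all of this: since the $l$-dependence in $f_{lj}$ is additive, the derivative is independent of $l$ and no resonant case ever appears. Your approach works and is self-contained, but the paper's is substantially shorter.
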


The proof of Lemma \ref{lemma:kappa dioph} is at the end of section \ref{sec:measure}.
Note that almost all positive real numbers $\kappa$ satisfy all these three hypotheses.
The main result of the paper is in the following theorem.

\begin{theorem} \label{thm:main}
Assume that $\kappa > 0$ is an irrational number, $\kappa \neq \rho_0$, and $\kappa$ satisfies \eqref{kappa dioph} for some $\g_* \in (0,1/2)$, with $\t_* = 3/2$.
Then there exist constants $C>0$, $s_0 > 12$, $\e_0 \in (0,1)$ 
such that for every $\e \in (0, \e_0]$ there exists 
a set $\mG_\e \subset [1,2]$ of parameters with the following properties. 

For every $\xi \in \mG_\e$ there exists a solution $u = (\eta, \psi)$ of $F(u,\om) = 0$ with time-frequency 
\begin{equation}  \label{freq-ampl in the thm}
\om = \bar \om + \bar\om_2 \e^2 \xi + \bar\om_3 \e^3 \xi^{3/2},
\end{equation}
where $\bar \om := \sqrt{1 + \kappa}$ and the coefficients $\bar\om_2, \bar\om_3$ depend only on $\kappa$, with $\bar\om_2 \neq 0$. 
The solution has Sobolev regularity $u \in H^{s_0}(\T^2, \R^2)$, 
it has parity $u \in X \times Y$, and small amplitude $u = O(\e)$. 
More precisely, $u$ has $\e$-expansion
\[
\eta = \e \sqrt{\xi} \cos(t) \cos(x) + O(\e^2),
\quad 
\psi = - \e \sqrt{\xi} \sqrt{1 + \kappa}\,  \sin(t) \cos(x) + O(\e^2),
\]
where $O(\e^2)$ denotes a function $f$ such that $\| f \|_{s_0} \leq C \e^2$.

The set $\mG_\e \subset [1,2]$ has positive Lebesgue measure 
$|\mG_\e| \geq 1 - C \e^{1/18}$, asymptotically full 
$| \mG_\e | \to 1$ as $\e \to 0$.
\end{theorem}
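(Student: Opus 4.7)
The approach is a Nash--Moser scheme in the space-time Sobolev scale $H^s(\T^2)$, organized around a linearized Lyapunov--Schmidt decomposition that parametrizes solutions by the amplitude parameter $\xi \in [1,2]$ and treats the time-frequency $\om$ as a scalar unknown tied to $\xi$ by the expansion \eqref{freq-ampl in the thm}. First I would carry out a finite-dimensional bifurcation analysis: the kernel of the operator linearized at the rest state is one-dimensional (by the irrationality of $\kappa$ and the reversibility/parity restriction to $X \times Y$), so a Weierstrass-type reduction gives an ``approximate solution'' $\bar u_\e = \e\sqrt{\xi}(\cos t\cos x, -\sqrt{1+\kappa}\sin t\cos x) + O(\e^2)$, together with the coefficients $\bar\om_2,\bar\om_3$; the nontriviality $\bar\om_2 \neq 0$ is where the condition $\kappa \neq \rho_0$ intervenes, since the polynomial $p$ controls the Birkhoff normal form coefficient.

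The heart of the matter is the invertibility of $F'(u)$ at an $H^{s_0}$-close perturbation $u$ of $\bar u_\e$, with tame estimates. The plan is to conjugate $F'(u)$ to a constant-coefficient, diagonal operator $\tilde\mD$ plus a smoothing remainder $\tilde\mR_6$, following the chain of transformations outlined in the introduction: (i) pass to the good unknown of Alinhac and paralinearize the Dirichlet--Neumann operator (using \cite{AM,AlDe,LannesJAMS}) to remove the apparent loss of derivatives in $F_2$; (ii) apply a time-dependent diffeomorphism $x \mapsto x + \beta(t,x)$ and a time reparametrization, combined with a matrix conjugation, to make the top-order symbols constant; (iii) symmetrize the sub-principal and sub-sub-principal symbols to reduce the real $2\times 2$ system to a single complex-valued equation of the model form $\om\partial_t h + ic|D_x|^{3/2} h + a(t,x)\partial_x h + (\text{lower order})$; (iv) conjugate away the residual $a(t,x)\partial_x$ by commuting with a pseudo-differential operator of exotic H\"ormander class $S^0_{1/2,1/2}$, which is the only class whose commutator with $|D_x|^{3/2}$ produces a term compensating $a\partial_x$ while keeping the normal form structure intact. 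This step is the main technical obstacle: classical FIOs or standard $S^0_{1,0}$ conjugators fail, so I would need to develop semi-FIO calculus (inversion, composition, tame estimates on $H^s$) with finitely many derivatives, paralleling Section \ref{sec:semi-FIO}.

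Once the reduction is achieved, invertibility of $\tilde\mD$ is straightforward under first-order Melnikov non-resonance conditions on the eigenvalues, whose precise asymptotic expansion in powers of $\e$ and inverse powers of the spatial frequency $j$ is available from the conjugation. These conditions define a set of ``good'' $(\xi,\e)$, and $\tilde\mR_6$ is handled perturbatively by standard smoothing/tame interpolation. Then I would run a Nash--Moser iteration with successive projections $\Pi_{N_n}$ to control the small-divisor losses against the quadratic gain of Newton's scheme, producing a sequence converging in $H^{s_0}$ with $s_0 > 12$ for $\xi$ in an intersection $\mG_\e := \bigcap_n \mG_\e^{(n)}$ of ``good parameter'' sets.

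Finally, the measure estimate $|\mG_\e| \ge 1 - C\e^{1/18}$ reduces to counting, for each resonance $|\om\ell - \mu_{\ell,j}(\xi,\e)| < \gamma\langle \ell,j\rangle^{-\tau}$, the intervals of $\xi$ on which it fails. Using the non-degeneracy of the $\xi$-derivative of the eigenvalue $\mu_{\ell,j}$ (which, via $\bar\om_2 \neq 0$, provides a uniform transversality modulo a loss of $\e^{-2}$) together with the Diophantine hypothesis \eqref{kappa dioph} on $\kappa$ with exponent $\tau_* = 3/2$, standard Borel--Cantelli style arguments yield the stated fractional-power bound, the exponent $1/18$ arising from balancing the loss factors through the Nash--Moser scales. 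Lemma \ref{lemma:kappa dioph} ensures that almost every $\kappa$ is admissible, concluding the proof.
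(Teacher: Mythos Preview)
Your proposal is correct and follows essentially the same approach as the paper: bifurcation analysis to build the approximate solution and fix $\bar\om_2,\bar\om_3$; conjugation of the linearized operator via the good unknown of Alinhac, space/time changes of variables, symmetrization, and the exotic $S^0_{1/2,1/2}$ pseudo-differential step; inversion under first-order Melnikov conditions; a Nash--Moser iteration with Fourier truncations; and a measure estimate exploiting \eqref{kappa dioph}. The only minor imprecisions are terminological (the eigenvalues are $\mu_j$ depending on the spatial index alone, and the exponent $1/18$ comes from the explicit balance $\gamma=\e^{5/6}$, $\tau=\tau_*=3/2$, and the cutoff $j>C\e^{-2/3}$ in the measure computation rather than from the Nash--Moser scales themselves).
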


Some comment about the role of the three hypotheses on the surface tension coefficient $\kappa$: 

\begin{enumerate}
\item 
The first assumption $\kappa \notin \Q$ implies that the linearized problem at the equilibrium $u=0$
has a nontrivial one-dimensional kernel (see section \ref{sec:kernel}), 
from which the solution of the nonlinear water waves problem bifurcates.
Rational values of $\kappa$ would lead to a different bifurcation analysis.

\item  
The second assumption $\kappa \neq \rho_0$ implies that the coefficient $\bar\om_2$ is 
nonzero (a ``twist'' condition).
As a consequence, the map $\xi \mapsto \om$ in \eqref{freq-ampl in the thm} is a bijection 
for $\e$ sufficiently small. 
Thus Theorem \ref{thm:main} gives the existence of a  solution of the water waves system with time-frequency $\om$ for many values $\om$ in an $\e^2$-neighbourhood of the ``unperturbed'' frequency $\bar\om := \sqrt{1 + \kappa}$. 
Instead, for $\kappa = \rho_0$ one should push forward with the analysis of the frequency-amplitude relation, looking for a higher order twist condition. 

\item 
The third assumption \eqref{kappa dioph} on $\kappa$ gives a Diophantine control on the small divisors of the \emph{unperturbed} problem, and it is used in the measure estimates in section \ref{sec:measure}, see in particular Remark \ref{rem:why imposing kappa dioph}. 
\end{enumerate}

\begin{remark} 
One could rename $\tilde \e := \e \sqrt{\xi}$ and work with one parameter $\tilde \e$
instead of two ($\e, \xi$). 
However, it is convenient to work with the two parameters $\e$ and $\xi$ to split two different roles: $\e \ll 1$ merely gives the smallness, while $\xi \in [1,2]$ allows to control the small divisors by imposing the Melnikov non-resonance conditions.
\end{remark}

\section{Preliminaries} \label{sec:prelim}

\noindent \textbf{Notations.} 
The notation $a \leq_s b$ indicates that $a \leq C(s) b$ for some constant $C(s) > 0$ depending on $s$ and possibly on the data of the problem, namely $\kappa, \g_*, \t_*$ 
($\kappa$ is the surface tension coefficient and $\g_*, \t_*$ are in \eqref{kappa dioph}).

Given $\e > 0$, for functions $u \in H^s(\T^2)$ depending on a parameter $\xi \in \mG \subset [1,2]$, 
we define 
\begin{equation}  \label{def norm Lipe}
\begin{aligned}
\| u \|_s^\Lipe 
&:= \| u \|_s^{\sup} + \e \| u \|_s^\lip \quad\text{with}\\
\| u \|_s^{\sup} &:= \sup_{\xi \in \mG} \| u(\xi) \|_s, 
\qquad
 \| u \|_s^\lip := \sup_{ \begin{subarray}{c} \xi_1, \xi_2 \in \mG \\ \xi_1 \neq \xi_2 \end{subarray}} 
\frac{ \| u(\xi_1) - u(\xi_2) \|_s }{ |\xi_1 - \xi_2| } \,.
\end{aligned}
\end{equation}

\bigskip
\noindent \textbf{Properties of the Dirichlet-Neumann operator.}
We collect some fundamental properties of the Dirichlet-Neumann operator $G$ that are used in the paper, 
referring to \cite{LannesJAMS,IP-Mem-2009,AM,AlDe} for more details. 

The mapping $(\eta,\psi)\rightarrow G(\eta)\psi$ is linear 
with respect to $\psi$ and nonlinear with respect to $\eta$. 
The derivative with respect to $\eta$ is called the ``shape derivative'', and it is given by Lannes' formula (see \cite{LannesJAMS,LannesLivre})
\begin{equation} \label{formula shape der}
G'(\eta) [\tilde{\eta} ] \psi 
= \lim_{\eps\rightarrow 0} \frac{1}{\e} \{ G (\eta+\eps\tilde\eta ) \psi - G(\eta) \psi \}
= - G(\eta) (B \tilde\eta ) -\partial_x (V \tilde\eta )
\end{equation}
where we introduced the notations
\begin{equation} \label{def B V}
\B := \B(\eta,\psi) := \frac{\eta_x \psi_x + G(\eta)\psi }{ 1 + \eta_x^2 }\,, 
\qquad 
V := V(\eta,\psi) := \psi_x - B \eta_x.
\end{equation}
Craig, Schanz and Sulem (see \cite{CSS} and~\cite[Chapter~$11$]{SuSu}) have shown that one can expand the Dirichlet-Neumann operator as a sum of pseudo-differential operators 
with precise estimates for the remainders. 
Using \eqref{formula shape der} repeatedly, we get the second order Taylor expansion
\begin{align} \label{Taylor G}
G(\eta)\psi 
& = G(0) \psi + G'(0)[\eta] \psi + \frac12 G''(0)[\eta, \eta] \psi + G_{\geq 4}(\eta)\psi
\notag \\
& = 
|D_x| \psi - |D_x|(\eta |D_x| \psi) - \pa_x (\eta \pa_x \psi) 
+ \frac12 \pa_{xx} (\eta^2 |D_x| \psi) \notag \\
&\quad+ |D_x| ( \eta |D_x| (\eta |D_x| \psi)) 
+ \frac12 |D_x| (\eta^2 \psi_{xx})
+ G_{\geq 4}(\eta)\psi,
\end{align}
where $G(0) = |D_x|$ and $G_{\geq 4}(\eta)\psi$ is of order $4$, such that $G_{\geq 4}(\eta)\psi=O(\eta^3 \psi)$. 
Moreover, it follows from \cite[Section 2.6]{AlDe} 
that it 
satisfies the following estimate: for $s_0\ge 10$ and any 
$s\ge s_0$, if $\| \eta\|_{s_0}$ is small enough, then 
\begin{equation} \label{n35}
\lA G_{\geq 4}(\eta)\psi\rA_{H^{s}(\T)}
\leq_{s} 
\| \eta\|_{H^{s_0}(\T)}^{2} \Bigl\{ \| \psi\|_{H^{s_0}(\T)}
\| \eta \|_{H^{s+4}(\T)}
+\| \eta \|_{H^{s_0}(\T)}
\| \psi\|_{H^{s+5}(\T)}\Bigr\}.
\end{equation}

\bigbreak

A key property is that one can use microlocal analysis to study $G$.
In the present case the matter is easier than in a more general case, 
because the physical problem has space-dimension 2 
(see Definition \ref{defi:DNO}, where the space variables are $(x,y) \in \R^2$), 
it is periodic in the horizontal direction $x \in \T$, with infinite depth, so that 
\begin{equation}  \label{G(eta) = D + rest}
G(\eta) = |D_x| + \mR_{G}(\eta), 
\end{equation}
where the remainder $\mR_{G} = \mR_{G}(\eta)$ is bounded in $t$ and regularizing in $x$ at expense of $\eta$. 
More precisely, there exists a positive constant $\delta$ such that, 
for $\eta(x), h(x)$ functions of $x$ only, independent of time, 
if $\|\eta\|_{H^5(\T)} \le \delta$, then for any $s \geq 1$,
\[ 
\| \mR_G(\eta) \psi \|_{H^s(\T)} 
\leq_s \| \eta \|_{H^{s+4}(\T)} \| \psi \|_{H^{1/2}(\T)}.
\] 
This estimate is proved in \cite{AlDe} 
(see Proposition 2.7.1 in Chapter 2, noticing that the smallness condition on $\lA \eta \rA_{C^\gamma}$ assumed in this proposition is satisfied provided that $\|\eta\|_{H^5(\T)}$ is small enough).
Moreover, if $\| \eta \|_{H^5(\T)} \leq \d$, 
then for all $s \geq 4$ and all $5\le \mu\le s-1$
\begin{equation}\label{n37}
\| G(\eta) \psi \|_{H^\mu(\T)} \leq_s \| \psi \|_{H^{\mu+1}(\T)} 
+ \| \eta \|_{H^{s+1}(\T)} \| \psi \|_{H^5(\T)}.
\end{equation}
Similar estimates can be also proved for functions of $(t,x) \in \T^2$, 
where $t$ plays the role of a parameter, using repeatedly the time-derivative formula 
\[
\pa_t \{ G(\eta)\psi \}
= G(\eta) \pa_t \psi + G'(\eta)[ \pa_t \eta] \psi 
= G(\eta) \psi_t - G(\eta) (B \eta_t ) - \partial_x (V \eta_t)
\]
(see the argument of section \ref{sec:with time dep}, where it is explained in details how to extend estimates for functions of $x \in \T$ to include the dependence on time $t \in \T$).
Thus in $H^s(\T^2)$ equipped with the norm 
$\|\cdot\|_s$ defined by \eqref{unified norm}, we have the following (non-sharp) tame bounds:
if $\| \eta \|_6 \leq \d$, then for all $s \geq 2$, $m \geq 0$, 
\begin{equation}  \label{est mR G}
\| \mR_G(\eta) |D_x|^m \psi \|_s 
\leq_s \| \psi \|_s \| \eta \|_{7+m} 
+ \| \psi \|_2 \| \eta \|_{s+5+m};
\end{equation}
if $\| \eta \|_6 \leq \d$, then, for all $s \geq 6$, 
\begin{equation}  \label{G estimate}
\| G(\eta) \psi \|_s 
\leq_s \| \psi \|_{s+1} 
+ \| \eta \|_{s+1} \| \psi \|_6.
\end{equation}
Finally, regarding parities, we note that, if $\eta \in X$, then $G(\eta)$ preserves the parities,
namely 
$G(\eta)\psi \in X$ for $\psi \in X$, and 
$G(\eta)\psi \in Y$ for $\psi \in Y$.

\section{Bifurcation analysis}  \label{sec:bif}

Let us consider the linearized equations around the equilibrium $(\eta, \psi) = (0,0)$. 
Directly from \eqref{F1}-\eqref{F2}, one finds that the linearized operator is
\begin{equation}  \label{Lom}
L_\om := F' (0, 0) 
= \begin{pmatrix} 
\om \pa_t & - G(0) \\
1 - \kappa \pa_{xx} \  & \  \om \pa_t \\
\end{pmatrix}. 
\end{equation}

\subsection{Kernel} \label{sec:kernel}
 
We study the kernel of $L_\om$. Let $\eta \in X$, $\psi \in Y$, namely
\begin{equation}  \label{eta psi sin cos series}
\eta(t,x) = \sum_{l \geq 0, \, j \geq 0} \eta_{lj} \cos(lt) \cos(jx), \quad 
\psi(t,x) = \sum_{l \geq 1, \, j \geq 0} \psi_{lj} \sin(lt) \cos(jx),
\end{equation}
where $\eta_{lj}, \psi_{lj} \in \R$ and, for convenience, we also define $\psi_{lj} := 0$ for $l = 0$, as it does not change anything in the sum.
Recall that $G(0) = |D_x|$, 
\[
|D_x| \cos(jx) = |j| \cos(jx), \quad 
|D_x| \sin(jx) = |j| \sin(jx) \quad 
\forall j \in \Z,
\]
and $|D_x| = \pa_x \mH$, where $\mH$ is the Hilbert transform, with
\[
\mH \cos(jx) = \sign(j) \sin(jx), \quad 
\mH \sin(jx) = - \sign(j) \cos(jx) \quad 
\forall j \in \Z,
\]
namely 
$|D_x| e^{ijx} = |j| e^{ijx}$, $\mH e^{ijx} = - i \sign(j) e^{ijx}$ for all $j \in \Z$. 
Hence
\begin{equation} \label{formula Lom}
L_\om [\eta, \psi] = \sum_{l,j \geq 0} 
	\begin{pmatrix} 
	(- \om l \eta_{lj} - j \psi_{lj} ) \sin(lt) \cos(jx) \\
	[(1 + \kappa j^2) \eta_{lj} + \om l \psi_{lj} ] \cos(lt) \cos(jx)
	\end{pmatrix}.
\end{equation}
Assume that $L_\om[\eta, \psi] = 0$, so that  
$\om l \eta_{lj} + j \psi_{lj} = 0 = (1 + \kappa j^2) \eta_{lj} + \om l \psi_{lj}$ 
for all $l,j \geq 0$.
Since $1 + \kappa j^2 \geq 1 > 0$, we get
\[
\eta_{lj} = - \frac{\om l}{1 + \kappa j^2} \, \psi_{lj}, 
\quad 
\{ \om^2 l^2 - j (1 + \kappa j^2) \} \psi_{lj} = 0.
\]
At $l = 0$, this implies $\eta_{0j} = \psi_{0j} = 0$ for all $j \geq 0$ 
(recall that $\psi_{0j} = 0$ by definition, see above). 
For $l \geq 1$, choosing $\om \neq 0$, we deduce that at $j = 0$ one has
$\eta_{l0} = \psi_{l0} = 0$ for all $l \geq 1$.

Hence $\eta_{lj}, \psi_{lj}$ can be nonzero only for $l, j \geq 1$. 
If $(\eta_{lj}, \psi_{lj}) \neq (0,0)$, then $\psi_{lj} \neq 0$, and therefore 
(we assume $\om > 0$)
\[
\om = \frac{\sqrt{ j (1 + \kappa j^2)} }{l} \,.
\]
Suppose that there are two pairs $(l_1, j_1)$, $(l_2, j_2)$ that give the same $\om$, namely 
\[
\frac{\sqrt{ j_1 (1 + \kappa j_1^2)} }{l_1} \,
= \frac{\sqrt{ j_2 (1 + \kappa j_2^2)} }{l_2} \,.
\]
Taking the square,
$
\kappa (j_1^3 l_2^2 - j_2^3 l_1^2) + (j_1 l_2^2 - j_2 l_1^2) = 0.
$
Now, if $\kappa \notin \Q$, then both the integers $(j_1^3 l_2^2 - j_2^3 l_1^2)$ and $(j_1 l_2^2 - j_2 l_1^2)$ are zero, whence $(l_2, j_2) = (l_1, j_1)$. Thus irrational values of $\kappa$ give a kernel of dimension one. 
We consider the simplest nontrivial case $(l,j) = (1,1)$, and fix 
\[
\bar \om := \sqrt{1 + \kappa}, \qquad \kappa > 0, \  \kappa \notin \Q. 
\]
The factor 
\begin{equation} \label{factor nonzero}
\{ \bar\om^2 l^2 - j (1 + \kappa j^2) \} \neq 0 \quad \forall l,j \geq 0, \ 
(l,j) \notin \{ (1,1), (0,0) \}. 	
\end{equation}
The case $(l,j) = (0,0)$, as already seen, does not give any contribution to the kernel. Thus the kernel of $L_{\bar\om}$ is 
\begin{equation}  \label{def kernel}
V := \mathrm{Ker}(L_{\bar\om}) 
= \{ \lm v_0 : \lm \in \R \}, \quad 
v_0 := \begin{pmatrix} \cos(t) \cos(x) 
\\ - \bar\om \sin(t) \cos(x) \end{pmatrix}.
\end{equation} 
There is some freedom in fixing another vector $w_0$ to span the subspace $(l,j) = (1,1)$. 
It is convenient to define
\begin{equation}  \label{def W}
\ba
W &:= \{ (\eta,\psi) \in X \times Y : \eqref{eta psi sin cos series} \text{ holds, and } 
(\eta_{11}, \psi_{11}) = \psi_{11} (\bar\om, 1) \} \\
&= W^{(1,1)} \oplus W^{(\neq)},
\ea
\end{equation}
where 
\begin{equation}  \label{def W 11}
W^{(1,1)} := \{ \lm w_0 : \lm \in \R\}, \quad 
w_0 := \begin{pmatrix} \bar\om \cos(t) \cos(x) 
\\ \sin(t) \cos(x) \end{pmatrix}
\end{equation}
and 
\begin{equation}  \label{def W neq}
W^{(\neq)} := \{ (\eta,\psi) \in X \times Y : \eqref{eta psi sin cos series} \text{ holds, and } 
\eta_{11} = \psi_{11} = 0 \}.
\end{equation}
Thus $X \times Y = V \oplus W^{(1,1)} \oplus W^{(\neq)}$, namely every $u \in X \times Y$ can be written in a unique way as $u = a v_0 + b w_0 + w$, where $a,b \in \R$ and $w \in W^{(\neq)}$.

\subsection{Range} \label{sec:Range}
 
Like $F$, also $L_{\bar\om}$ maps $X \times Y \to Y \times X$. 
Let $(f,g) \in Y \times X$, namely 
\begin{equation}  \label{f g sin cos series}
f(t,x) = \sum_{l, j \geq 0} f_{lj} \sin(lt) \cos(jx), \quad 
g(t,x) = \sum_{l, j \geq 0} g_{lj} \cos(lt) \cos(jx),
\end{equation}
with $f_{lj}, g_{lj} \in \R$, $f_{0j} = 0$ for $l=0, j \geq 0$.
By \eqref{formula Lom} (with $\om = \bar\om$), 
the equation $L_{\bar\om}[\eta, \psi] = (f,g)$ is equivalent to 
\begin{equation}  \label{syst L bar om}
- \bar \om l \eta_{lj} - j \psi_{lj} = f_{lj}, \qquad 
(1+\kappa j^2) \eta_{lj} + \bar\om l \psi_{lj} = g_{lj}.
\end{equation}
By \eqref{factor nonzero}, if $(l,j) \notin \{ (1,1), (0,0) \}$, 
then system \eqref{syst L bar om} is invertible, with solution
\begin{equation}  \label{inv Lbom}
\eta_{lj} = \frac{- \bar\om l f_{lj} - j g_{lj}  }{\bar\om^2 l^2 - j(1+\kappa j^2)} \,, 
\qquad 
\psi_{lj} = \frac{(1 + \kappa j^2) f_{lj} + \bar\om l g_{lj} }{\bar\om^2 l^2 - j(1+\kappa j^2)} \,.
\end{equation}
For $(l,j) = (0,0)$, system \eqref{syst L bar om} is also invertible, with solution 
$\eta_{00} = g_{00}$, $\psi_{00} = 0$ (remember that $\psi_{00} = 0 = f_{00}$ by assumption).
For $(l,j) = (1,1)$, system \eqref{syst L bar om} has rank one, and it has solutions if and only if
$g_{11} + \bar\om f_{11} = 0$, in which case the solutions are 
$(\eta_{11}, \psi_{11}) = (0,-f_{11}) + \lm(1, - \bar\om)$, $\lm \in \R$
(clearly $\lm (1, - \bar\om)$ corresponds to $\lm v_0$, namely an element of the kernel $V$). 
Thus 
\begin{align*} 
R &:= \mathrm{Range}(L_{\bar\om}) = \{ (f,g) \in Y \times X : \eqref{f g sin cos series} 
\text{ holds, and } g_{11} + \bar\om f_{11} = 0 \} \\
&= R^{(1,1)} \oplus R^{(\neq)},
\end{align*} 
where
\begin{equation}  \label{def R 11}
R^{(1,1)} := \{ \lm r_0 : \lm \in \R\}, \quad 
r_0 := \begin{pmatrix} - \sin(t) \cos(x) 
\\ \bar\om \cos(t) \cos(x) \end{pmatrix}
\end{equation} 
and 
\begin{equation}  \label{def R neq}
R^{(\neq)} := \{ (f,g) \in Y \times X : \eqref{f g sin cos series} \text{ holds, and } 
f_{11} = g_{11} = 0 \}.
\end{equation} 
There is some freedom in fixing another vector $z_0$ to span the subspace $(l,j) = (1,1)$. 
It is convenient to define
\begin{equation}  \label{def Z}
Z := \{ \lm z_0 : \lm \in \R \} \subset Y \times X, \qquad 
z_0 := \begin{pmatrix} \sin(t) \cos(x) \\ 
\bar\om \cos(t) \cos(x) \end{pmatrix}.
\end{equation}
Note that $L_{\bar\om}$ is an invertible map of $W^{(\neq)} \to R^{(\neq)}$, and 
(using the equality $\bar\om^2 = 1 + \kappa$) 
\begin{equation} \label{L bar om 11}
L_{\bar\om}[w_0] = (\bar\om^2 + 1) r_0 = (2 + \kappa) r_0, 
\qquad 
\pa_t v_0 = - z_0.
\end{equation} 
Thus $Y \times X = Z \oplus R^{(1,1)} \oplus R^{(\neq)}$, namely every $u \in Y \times X$ can be written in a unique way as $u = a z_0 + b r_0 + r$, where $a,b \in \R$ and $r \in R^{(\neq)}$.
The formula for the projection on $r_0, z_0$ is
\begin{equation}  \label{proj 11 rule}
\begin{pmatrix} 
p \sin(t) \cos(x) \\ 
q \cos(t) \cos(x) 
\end{pmatrix}
= \lm_r r_0 + \lm_z z_0, 
\quad 
\lm_r = - \frac{p}{2} + \frac{q}{2 \bar\om} \,, 
\quad
\lm_z = \frac{p}{2} + \frac{q}{2 \bar\om} \,, ~
p,q \in \R.
\end{equation}

\subsection{Construction of an approximate solution}
\label{sec:approx sol}
We look for solutions of \eqref{WW} with frequency $\om$ close to the ``unperturbed'' frequency $\bar\om = \sqrt{1 + \kappa}$. Write 
\[
\om = \bar\om + \e \om_1 + \e^2 \om_2 + \ldots, 
\quad 
u = (\eta,\psi) = \e u_1 + \e^2 u_2 + \ldots, 
\]
\[
F(u) = (\om - \bar\om)\pa_t u + L_{\bar\om} u 
+ \mN_2(u) + \mN_3(u) + \ldots, 
\quad \mN_k(u) = T_k[u,\ldots,u],
\]
where $T_k$ is a symmetric $k$-linear map, so that $\mN_2(u)$ denotes the quadratic part of $F$, 
$\mN_3(u)$ the cubic one, etc.
We get
$$
F(u) = \e \mF_1 + \e^2 \mF_2 + \e^3 \mF_3 + \e^4 \mF_4 + O(\e^5),
$$
where $\mF_1 = L_{\bar\om} u_1$, 
\begin{align*}
\mF_2 & = L_{\bar\om} u_2 + \om_1 \pa_t u_1 + T_2[u_1, u_1],\\
\mF_3 &= L_{\bar\om} u_3 + \om_2 \pa_t u_1 + \om_1 \pa_t u_2 + 2 T_2[u_1, u_2] + T_3[u_1, u_1, u_1], \\ 
\mF_4 & = L_{\bar\om} u_4 + \om_3 \pa_t u_1 + \om_2 \pa_t u_2 + \om_1 \pa_t u_3 
+ 2 T_2[u_1, u_3] + T_2[u_2, u_2] \\
&\quad+ 3 T_3[u_1, u_1, u_2] 
+ T_4[u_1, u_1, u_1, u_1].
\end{align*}
We prove that there exist $u_1, u_2, u_3, u_4, \om_1, \om_2, \om_3$ such that $F(u) = O(\e^5)$.

\emph{Order $\e$. } $\mF_1 = 0$ 
if and only if $u_1 \in V$, namely $u_1 = a_1 v_0$, for some $a_1 \in \R$, where $v_0$ is defined in \eqref{def kernel}. We assume that $a_1 \neq 0$ (otherwise the construction of $u$ becomes trivial).

\emph{Order $\e^2$. } 
The quadratic part of $F$ (see \eqref{F1}-\eqref{F2}) is 
\[
T_2[u,u] 
= \begin{pmatrix}
\pa_x (\eta \psi_x) + G_0 [ \eta G_0 \psi]
\\
\frac{1}{2} [ \psi_x^2  - (G_0 \psi)^2 ]
\end{pmatrix},
\]
where, for brevity, we write $G_0 := G(0) = |D_x|$. 
More generally, if $u' = (\eta', \psi')$ and $u'' = (\eta'', \psi'')$, then 
\begin{equation}  \label{formula T2 general}
T_2[u', u''] 
= \frac12 \begin{pmatrix}
\pa_x ( \eta' \psi''_x + \eta'' \psi'_x ) 
+ G_0 [ \eta' G_0 \psi'' + \eta'' G_0 \psi']
\\
\psi'_x \psi''_x  - (G_0 \psi') (G_0 \psi'') 
\end{pmatrix}.
\end{equation}
In general, for $n,j \geq 0$, 
\begin{multline} \label{DN nj}
\pa_x [ \cos(nx) \pa_x \cos(jx) ] + G_0 [ \cos(nx) G_0 \cos(jx)] \\
= \frac12 j ( |j-n| + n - j ) \, \cos((j-n)x),
\end{multline}
and $|j-n| + n - j = 2(n-j)$ for $j < n$, and it is zero for $j \geq n$. 
In particular, for $n = 1$
\begin{equation}  \label{easy life}
\pa_x [ \cos(x) \pa_x \cos(jx) ] + G_0 [ \cos(x) G_0 \cos(jx)] = 0 \quad \forall j \geq 0.
\end{equation}
For $u_1 = a_1 v_0$ (where $v_0$ is defined in \eqref{def kernel}), we calculate
\begin{equation} \label{T2 v0 v0}
T_2[u_1, u_1] 
= a_1^2 T_2[v_0, v_0], 
\quad 
T_2[v_0, v_0] = \frac{\bar\om^2}{4} 
\begin{pmatrix} 
0 \\
[\cos(2t) - 1] \cos(2x)
\end{pmatrix}.
\end{equation}
In particular, $T_2[u_1,u_1]$ has no component Fourier-supported on $(l,j) = (1,1)$. 
On the contrary, $\pa_t u_1 = a_1 \pa_t v_0 $ is Fourier-supported only on $(l,j) = (1,1)$. 
Split 
\[
u_2 = a_2 v_0 + b_2 w_0 + a_1^2 w_2, \quad 
a_2, b_2 \in \R, \quad w_2 \in W^{(\neq)},
\]
where $w_0$, $W^{(\neq)}$ are defined in \eqref{def W 11}, \eqref{def W neq}.
The equation $\Pi_{R^{(\neq)}} \mF_2 = 0$ (i.e. the projection on the Fourier modes $(l,j) \neq (1,1)$) is 
\begin{equation}  \label{eq:quadratic range}
a_1^2 ( L_{\bar\om} w_2 + T_2[v_0, v_0]) = 0.
\end{equation}
Since $L_{\bar\om} : W^{(\neq)} \to R^{(\neq)}$ is invertible and $a_1 \neq 0$, 
we solve $L_{\bar\om} w_2 + T_2[v_0, v_0] = 0$ and, by \eqref{inv Lbom} and \eqref{T2 v0 v0}, we calculate 
\begin{equation}  \label{u2 out of (1,1)}
w_2 = \begin{pmatrix} \a_{02} \cos(2x) + \a_{22} \cos(2t) \cos(2x) \\ 
\b_{22} \sin(2t) \cos(2x) \end{pmatrix}
\end{equation}
with
\[
\a_{02} := \frac{1 + \kappa}{4 (1 + 4\kappa)}\,, 
\quad 
\a_{22} := \frac{1 + \kappa}{4 (1 - 2\kappa)}\,, 
\quad
\b_{22} := - \bar\om \a_{22}
\]
(the denominators $1 + 4\kappa, 1-2\kappa$ are nonzero because $\kappa \notin \Q$).

It remains to solve the equation $\mF_2 = 0$ on $(l,j) = (1,1)$. 
By \eqref{L bar om 11}, the component of $L_{\bar\om} u_2$ that is Fourier-supported on $(1,1)$ is 
$L_{\bar\om} [a_2 v_0 + b_2 w_0] 
= b_2 L_{\bar\om} [w_0] 
= b_2 (2+\kappa) r_0$.
By \eqref{L bar om 11}, $\pa_t u_1$ $= a_1 \pa_t v_0$ $= - a_1 z_0$, 
while $T_2[u_1, u_1]$ gives no contribution on $(1,1)$ according to \eqref{T2 v0 v0}. 
Thus the equation projected on $(1,1)$ is 
\[
b_2 (2+\kappa) r_0 - \om_1 a_1 z_0 = 0.
\]
$r_0$ and $z_0$ are linearly independent. 
Since $a_1 \neq 0$, we have to choose $\om_1 = 0$ and $b_2 = 0$. 
There is no constraint on $a_2$. It is convenient to fix $a_2 = 0$. 
With this choice we have $u_2 = a_1^2 w_2$.

\medskip

\emph{Order $\e^3$. } Since $\om_1 = 0$, one has 
$\mF_3 = L_{\bar\om} u_3 + \om_2 \pa_t u_1 + 2 T_2[u_1, u_2] + T_3[u_1, u_1, u_1]$. 
Using the Taylor expansion \eqref{Taylor G} of $G(\eta)$ at $\eta = 0$,  
for a general $u = (\eta, \psi)$, the cubic part of $F$ is given by
\[
T_3[u,u,u] 
= \begin{pmatrix}
- \frac12 \pa_{xx} (\eta^2 G_0 \psi) 
- G_0( \eta G_0 (\eta G_0 \psi)) 
- \frac12 G_0(\eta^2 \psi_{xx})
\\
(G_0 \psi) \big( \eta \psi_{xx} + G_0 (\eta G_0 \psi) \big) + \frac12 \kappa \pa_x(\eta_x^3)
\end{pmatrix}.
\]
We calculate $T_3[u_1, u_1, u_1] = a_1^3 \, T_3[ v_0, v_0, v_0]$, where $v_0$ is in \eqref{def kernel}: 
\[
T_3[ v_0, v_0, v_0]
=  \frac{-1}{32} \begin{pmatrix}
2 \bar\om [\sin(t) + \sin(3t)] \cos(x)
\\
\{ (2 + 11 \kappa) \cos(t) + (\kappa - 2) \cos(3t) \}
[\cos(x) - \cos(3x)]
\end{pmatrix} 
\]
(as usual, we have used that $\bar \om^2 = 1 + \kappa$). 
We also calculate $2 T_2[u_1, u_2] = 2 a_1^3 \, T_2[v_0, w_2]$.
By \eqref{formula T2 general}, 
\[
T_2[v_0, w_2] 
= - \frac{\bar\om}{4}
\begin{pmatrix}
\{ (2 \a_{02} - \a_{22}) \sin(t) + \a_{22} \sin(3t) \} \cos(x)
\\
2 \bar\om \a_{22} [\cos(t) - \cos(3t)] \cos(3x)
\end{pmatrix}.
\]
Split $u_3 = a_3 v_0 + b_3 a_1^3 w_0 + a_1^3 w_3$, where $a_3, b_3 \in \R$, $w_3 \in W^{(\neq)}$,
and $w_0$, $W^{(\neq)}$ are defined in \eqref{def W 11}, \eqref{def W neq}.
The projection on $R^{(\neq)}$ of the equation $\mF_3 = 0$ is
\begin{equation} \label{eq mF3 neq}
a_1^3 ( L_{\bar\om} w_3 + \Pi_{R^{(\neq)}} \{ 2 T_2[v_0, w_2] + T_3[v_0, v_0, v_0] \} ) = 0
\end{equation}
because $u_1 = a_1 v_0$ and $u_2 = a_1^2 w_2$. 
Since $a_1 \neq 0$ and $L_{\bar\om} : W^{(\neq)} \to R^{(\neq)}$ is invertible, 
the equation \eqref{eq mF3 neq} 
determines $w_3$, which depends only on $\kappa$. 

Let us study the projection of the equation $\mF_3 = 0$ on $(l,j) = (1,1)$. 
As above, $L_{\bar\om} [a_3 v_0 + b_3 a_1^3 w_0] = b_3 a_1^3 (2 + \kappa) r_0$ and 
$\pa_t u_1 = a_1 \pa_t v_0 = - a_1 z_0$. 
Using \eqref{proj 11 rule}, we calculate the projection $\Pi_{(1,1)}$ on $R^{(1,1)} \oplus Z$ (namely on the Fourier mode $(l,j) = (1,1)$ in $Y \times X$):
\begin{align*}
\Pi_{ (1,1) } ( 2 T_2[u_1, u_2] + T_3[u_1, u_1, u_1] ) 
& = - \frac{a_1^3}{32} 
\begin{pmatrix}
2 \bar\om [ 16 \a_{02} - 8 \a_{22} + 1 ]  \sin(t) \cos(x)
\\ (2 + 11 \kappa) \cos(t) \cos(x)  
\end{pmatrix}
\\ & 
= 
- \frac{a_1^3}{32} \Big( - \bar\om [ 16 \a_{02} - 8 \a_{22} + 1 ] + \frac{2 + 11 \kappa}{2 \bar\om} \Big) r_0 \\
&\quad- \frac{a_1^3}{32} \Big( \bar\om [ 16 \a_{02} - 8 \a_{22} + 1 ] + \frac{2 + 11 \kappa}{2 \bar\om} \Big) z_0 .
\end{align*}
Since $v_0$ and $r_0$ are linearly independent, 
$\Pi_{(1,1)} \mF_3 = 0$ if and only if
\begin{align*}
b_3 a_1^3 (2 + \kappa) 
- \frac{a_1^3}{32} \Big( - \bar\om [ 16 \a_{02} - 8 \a_{22} + 1 ] + \frac{2 + 11 \kappa}{2 \bar\om} \Big) & = 0,
\\
- \om_2 a_1 
- \frac{a_1^3}{32} \Big( \bar\om [ 16 \a_{02} - 8 \a_{22} + 1 ] + \frac{2 + 11 \kappa}{2 \bar\om} \Big) & = 0.
\end{align*}
Since $a_1 \neq 0$, the second equation determines $\om_2$ as
\begin{equation} \label{def om 2}
\om_2 = a_1^2 \, \bar\om_2, \quad
\bar\om_2 := \bar\om_2(\kappa) := - \frac{\bar\om}{32} 
\Big( \frac{ 4(1+\kappa) }{1 + 4 \kappa} - \frac{ 2(1+\kappa) }{1 - 2 \kappa} + 1 + \frac{2 + 11 \kappa }{2 ( 1 + \kappa)} \Big) \neq 0,
\end{equation}
then the first equation determines $b_3$ depending only on $\kappa$. 
Note that $\bar\om_2$ is nonzero for $\kappa \neq \rho_0$, 
where $\rho_0$ is the unique real root of the polynomial $p(x) = 136 x^3 + 66 x^2 +3x -8$
(after writing the common denominator in \eqref{def om 2}, one has 
$ \bar\om_2 = 0$ if and only if $p(\kappa) = 0$). 
There is no constraint on $a_3$. It is convenient to fix $a_3 = 0$. 
With this choice we have $u_3 = a_1^3 (b_3 w_0 + w_3)$.

\medskip

\emph{Order $\e^4$.} 
In the previous steps we have found $\om_1 = 0$, $\om_2 = a_1^2 \bar\om_2$, and 
$u_1 = a_1 v_0$, $u_2 = a_1^2 w_2$, $u_3 = a_1^3 (b_3 w_0 + w_3)$.
Let $u_4 = a_4 v_0 + a_1^4 (b_4 w_0 + w_4)$, with $a_4, b_4 \in \R$ and $w_4 \in W^{(\neq)}$.
The equation $\mF_4 = 0$ becomes
\begin{align}
0 & = \om_3 a_1 \pa_t v_0   
+ a_1^4 \big\{ b_4 L_{\bar\om} [w_0] + L_{\bar\om}[w_4]
+ \bar\om_2 \pa_t w_2 + 2 b_3 T_2[v_0, w_0] + 2 T_2[v_0, w_3] 
\notag\\
& \quad 
+ T_2[ w_2, w_2] 
+ 3 T_3[v_0, v_0, w_2] 
+ T_4[v_0, v_0, v_0, v_0]\big\}.
\label{eq order 4 bis} 
\end{align}
Its projection on $R^{(\neq)}$, after eliminating the factor $a_1^4 \neq 0$, is
\begin{align*}
L_{\bar\om}[w_4] 
+ \Pi_{R^{(\neq)}} \big\{& \bar\om_2 \pa_t w_2
+ 2 T_2[v_0, b_3 w_0 + w_3]
+ T_2[ w_2, w_2]\\
&\quad+ 3 T_3[v_0, v_0, w_2]
+ T_4[v_0, v_0, v_0, v_0] \big\} = 0.
\end{align*}
Since $L_{\bar\om} : W^{(\neq)} \to R^{(\neq)}$ is invertible, this equation determines $w_4$, 
depending only on $\kappa$. 

By \eqref{L bar om 11}, the projection of \eqref{eq order 4 bis} on the Fourier mode $(l,j) = (1,1)$ is
\[
a_1^4 ( b_4 (2+\kappa) + \a ) r_0 + ( - \om_3 a_1 + \b a_1^4 ) z_0 = 0
\]
for some real coefficients $\a, \b $ depending only on $\kappa$.
We choose $\om_3 = \b a_1^3$ and $b_4 = - \a / (2 + \kappa)$, and the equation is satisfied. 
We also fix $a_4 = 0$, so that $u_4 = a_1^4 (b_4 w_0 + w_4)$, 
and rename $a_1^2 := \xi > 0$, $\b := \bar\om_3 = \bar\om_3(\kappa)$.

In conclusion, we have found the \emph{frequency-amplitude relation}
\begin{equation} \label{freq-ampl} 
\om 
= \bar\om + \e^2 \om_2 + \e^3 \om_3
= \bar\om + \e^2 \bar\om_2 \xi + \e^3 \bar\om_3 \xi^{3/2}
\end{equation}
where the coefficient $\bar\om_2$ is nonzero 
and both $\bar\om_2, \bar\om_3$ depend only on $\kappa$, 
and an ``approximate solution''
\begin{equation} \label{approx sol} 
\ba
\bar u_\e 
&= \bar u_\e(\xi)
= \e \bar u_1 + \e^2 \bar u_2 + \e^3 \bar u_3 + \e^4 \bar u_4\\
&= \e \sqrt \xi v_0 + \e^2 \xi \bar w_2 + \e^3 \xi^{3/2} \bar w_3 + \e^4 \xi^2 \bar w_4,
\ea
\end{equation}
where $v_0$ is defined in \eqref{def kernel}, 
$\bar w_2 := w_2 \in W^{(\neq)}$, 
$\bar w_3 := b_3 w_0 + w_3 \in W$, 
$\bar w_4 := b_4 w_0 + w_4 \in W$, 
such that $F(\bar u_\e) = O(\e^5)$. 
All $v_0, \bar w_2, \bar w_3, \bar w_4$ depend only on $\kappa$. 
Moreover $\bar u_\e$ is a trigonometric polynomial, Fourier-supported on $\cos(lt) \cos(jx)$, 
$\sin(lt) \cos(jx)$, with both $l,j \in [0,5]$.

\subsection{Restriction of the linear inversion problem} \label{sec:lin syst}

In view of the Nash-Moser scheme, we have to study the inversion problem for the linearized system: 
given $f$, find $h$ such that $F'(u)[h] = f$. 
In this section we split this (linear) inversion problem in a way that 
takes advantage of the (nonlinear) calculations we have already done in section \ref{sec:approx sol} to construct the approximate solution $\bar u_\e$.

We assume that $u = \bar u_\e + \tilde u$ and $\| \tilde u \|_{s_0 + \s} \leq C \e^{2+\d}$, 
where $s_0 \geq 2$, $\s \geq 4$, $\d > 0$.
The linearized operator is 
\begin{align*}
F'(u)[h] 
& = (\e^2 \om_2 + \e^3 \om_3) \pa_t h + L_{\bar\om} h + 2 T_2[u, h] + 3 T_3[u,u,h] 
+ \mN_{\geq 4}'(u)[h]
\end{align*}
where $\mN_{\geq 4}(u)$ denotes the component of $F$ of order at least quartic. 
In the direction $h = U_\e := \bar u_1 + 2 \e \bar u_2 + 3 \e^2 \bar u_3$ 
(which is $\pa_\e \bar u_\e$ truncated at order $\e^2$) one has
\begin{align} 
F'(u)[U_\e] 
& = F'(u)[\bar u_1 + 2 \e \bar u_2 + 3 \e^2 \bar u_3] \notag 
\\ 
& = 2 \e \{ L_{\bar\om} [ \bar u_2 ] + T_2[\bar u_1, \bar u_1 ] \} \notag \\
&\quad + \e^2 \{ \om_2 \pa_t \bar u_1 + 3 L_{\bar\om} [ \bar u_3 ] + 6 T_2[\bar u_1, \bar u_2 ] 
+ 3 T_3[\bar u_1, \bar u_1, \bar u_1 ] \} + \rho  \notag 
\\ 
& = - 2 \e^2 \om_2 \pa_t \bar u_1 + \rho,
\label{using bif 123}
\end{align}
where 
\begin{align*}
\rho & := \e^3 \Big\{ \om_3 \pa_t \bar u_1  
+ (\om_2 + \e \om_3) \pa_t \{ 2 \bar u_2 + 3 \e \bar u_3 \}
+ 6 T_2[\bar u_1, \bar u_3 ] \\
&\qquad\quad
+ 2 T_2[\bar u_2, 2 \bar u_2 + 3 \e \bar u_3 ] 
+ 2 T_2[ \bar u_3 + \e \bar u_4, U_\e] + 3 T_3[\bar u_1 , \bar u_1, 2 \bar u_2 + 3 \e \bar u_3 ] 
\\ & \qquad\quad \ 
+ 3 T_3[ 2 \bar u_1 + \e \bar u_2 + \e^2 \bar u_3 + \e^3 \bar u_4, \bar u_2 + \e \bar u_3 + \e^2 \bar u_4, U_\e] \Big\}
\\ & \quad 
+ 2 T_2[ \tilde u, U_\e]
+ 6 \e T_3[ \tilde u, \bar u_1 + \e \bar u_2 + \e^2 \bar u_3 + \e^3 \bar u_4, U_\e]
+ 3 T_3[ \tilde u, \tilde u, U_\e] \\
&\quad
+ \mN_{\geq 4}'(\bar u_\e + \tilde u)[U_\e].
\end{align*}
To get \eqref{using bif 123} we have used the equalities $\mF_i = 0$, $i = 1,2,3$, 
namely the equations at order $\e$, $\e^2$, $\e^3$ solved in Section \ref{sec:approx sol}. 
For $s \geq s_0\geq 10$, we claim that 
the function $\rho$ satisfies
\begin{equation} \label{est rho}
\| \rho \|_s \leq_s \e^3 + \| \tilde u \|_{s+4}.
\end{equation}
Indeed, directly from the above definition of $\rho$, 
this estimate is clear for all the terms inside the brackets 
(recalling that $\bar{u}_k$ are trigonometric polynomials). 
To estimate the terms 
$T_2[ \tilde u, U_\e]$, 
$T_3[ \tilde u, \bar u_1 + \e \bar u_2 + \e^2 \bar u_3 + \e^3 \bar u_4, U_\e]$ and $T_3[ \tilde u, \tilde u, U_\e]$, we use 
the usual nonlinear estimates in Sobolev spaces.  
The last term 
$\mN_{\geq 4}'(\bar u_\e + \tilde u)[U_\e]$ 
is estimated starting from 
the linearization formula recalled below 
in~\eqref{linearized vero}, using 
the estimates \eqref{n35}Ê
and \eqref{n37} for 
the Dirichlet-Neumann operator and its 
Taylor expansion, 
together with similar estimates for the 
Taylor expansion for 
the coefficients $B$ and $V$ which appear in \eqref{linearized vero}, see \cite[Section 2.6]{AlDe} for these estimates.

Remember that $- \pa_t v_0 = z_0$. Split the datum $f = b z_0 + \tilde f$, where $b \in \R$ and $\tilde f \in R$. 
We look for $h$ of the form 
\[
h = a U_\e + \tilde h = a(\bar u_1 + 2 \e \bar u_2 + 3 \e^2 \bar u_3) + \tilde h,
\] 
where $a \in \R$, $\tilde h = \tilde h(t,x) \in W$ are unknowns. 
For $h$ of this form, 
\[
F'(u)[h] = a F'(u) [U_\e] + F'(u)[\tilde h] 
= a (2 \e^2 \om_2 \sqrt{\xi} \, z_0 + \rho) + F'(u)[\tilde h].
\]
Projecting onto $Z$ and $R$, one has $F'(u)[h] = f$ if and only if 
\begin{equation} \label{complete system}
\begin{cases}
a (2 \e^2 \om_2 \sqrt{\xi} \, z_0 + \Pi_Z \rho) + \Pi_Z F'(u) [\tilde h] = b z_0 
\\
a \Pi_R \rho + \Pi_R F'(u)[\tilde h] = \tilde f.
\end{cases}
\end{equation}
Assume that the restricted operator
$\mL_{R}^W := \Pi_R F'(u)_{|W} : W \to R$ is invertible, with 
\begin{equation} \label{inv hyp WR}
\| (\mL_{R}^W)^{-1} g \|_s 
\leq_s \g^{-1} \big( \| g \|_{s + 2} + \g^{-1} \| \tilde u \|_{s + \s} \| g \|_{s_0 + 2} \big) 
\end{equation}
for $s \geq s_0$, where $\g := \e^p$, $p := 5/6$.
Then we solve for $\tilde h$ in the second equation in \eqref{complete system} and find 
\begin{equation}  \label{tilde h from eq}
\tilde h = (\mL_{R}^W)^{-1} (\tilde f - a \Pi_R \rho),
\end{equation}
with estimate
\begin{equation} \label{est tilde h with still a}
\| \tilde h \|_s 
\leq_s \g^{-1} \{
\| \tilde f \|_{s+2} + \g^{-1} \| \tilde u \|_{s+\s} \| \tilde f \|_{s_0+2} 
+ |a| (\e^3 + \| \tilde u \|_{s+\s} ) \}, 
\end{equation}
because $\e^{2+\d} \g^{-1} < 1$ and $\s \geq 4$. 
Substituting \eqref{tilde h from eq} in the first equation of \eqref{complete system} gives
\begin{equation}  \label{a from eq}
a \{ 2 \e^2 \om_2 \sqrt{\xi} \, z_0 + \Pi_Z \rho - \Pi_Z F'(u) (\mL_{R}^W)^{-1} \Pi_R \rho  \} 
= b z_0 - \Pi_Z F'(u) (\mL_{R}^W)^{-1} \tilde f.
\end{equation}
Since $\Pi_Z L_{\bar\om} = 0$, the operator $\Pi_Z F'(u)$ starts quadratically, 
and it satisfies 
\[
\ba
&\| \Pi_Z F'(u) g \|_0 
\leq C \| u \|_{s_0 + 2} \| g \|_2
\leq C \e \| g \|_{s_0},\\ 
&\| \Pi_Z F'(u) (\mL_{R}^W)^{-1} g \|_0 
\leq_{s_0} \e \g^{-1} \| g \|_{s_0 + 2} 
\ea
\]
for all $g$. As a consequence, 
\[
\| \Pi_Z F'(u) (\mL_{R}^W)^{-1} \Pi_R \rho \|_0
\leq_{s_0} \e \g^{-1} \| \rho \|_{s_0 + 2}
\leq_{s_0} \e^{2+\d}.
\]
Therefore the coefficient of $a z_0$ in \eqref{a from eq} is $2 \e^2 \om_2 + o(\e^2)$, which is nonzero for $\e$ sufficiently small because $\om_2 \neq 0$. 
Hence from \eqref{a from eq} we find $a$ as a function of $b, \tilde f$, with estimate 
\begin{equation} \label{est a}
|a| \leq_{s_0} \e^{-2} ( |b| + \e \g^{-1} \| \tilde f \|_{s_0 + 2}) 
\leq_{s_0} \e^{-2} ( |b| + \| \tilde f \|_{s_0 + 2}) 
\leq_{s_0} \e^{-2} \| f \|_{s_0 + 2}
\end{equation}
(we have used that $\e \g^{-1} < 1$). 
Then, substituting the value of $a$ in \eqref{tilde h from eq}, we find a formula for $\tilde h$ as a function of $b, \tilde f$. 
We have solved the inversion problem $F'(u)[h] = f$. 
Since $\| h \|_s = \| \tilde h + a U_\e \|_s$ $\leq \| \tilde h \|_s + C(s) |a|$, 
by \eqref{est tilde h with still a},\eqref{est a} we get
\begin{align} 
\| h \|_s = \| F'(u)^{-1} f \|_s 
& \leq_s \g^{-1} \| f \|_{s + 2} + \e^{-2} \{ 1 + \g^{-1} \| \tilde u \|_{s+\s} \} 
\| f \|_{s_0 + 2} 
\notag \\ 
& \leq_s \e^{-2} \big( \| f \|_{s + 2} + \g^{-1} \| \tilde u \|_{s+\s} \| f \|_{s_0 + 2} \big).
\label{total inverse simpler}
\end{align}
We have proved the following inversion result:

\begin{lemma} \label{lemma:inv bif} 
Let $u = \bar u_\e + \tilde u$, with $\| \tilde u \|_{s_0 + \s} \leq C \e^{2+\d} $
and $s_0 \geq 2$, $\s \geq 4$, $\d > 0$.
Assume that the restricted operator $\mL_{R}^W := \Pi_R F'(u)_{|W} : W \to R$ is invertible, and its inverse satisfies \eqref{inv hyp WR}, 
where $\g = \e^{5/6}$. 
Then $F'(u)$ is invertible, and its inverse satisfies \eqref{total inverse simpler} for all $s \geq s_0$. 
\end{lemma}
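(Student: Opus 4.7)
\medskip

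\noindent\textbf{Proof proposal for Lemma \ref{lemma:inv bif}.}
The plan is to carry out a finite-dimensional Lyapunov--Schmidt-type reduction adapted to the approximate solution $\bar u_\e$, reducing the inversion of $F'(u)$ on the full space $X\times Y$ to the inversion of the restricted operator $\mL_R^W$ plus a scalar equation. Since $Y \times X = Z \oplus R$, I would split the datum as $f = b z_0 + \tilde f$ with $b \in \R$ and $\tilde f \in R$, and correspondingly look for the unknown in the form $h = a U_\e + \tilde h$, with $a \in \R$ and $\tilde h \in W$, where $U_\e = \bar u_1 + 2\e \bar u_2 + 3\e^2 \bar u_3$ is the partial $\e$-derivative of $\bar u_\e$ truncated at order $\e^2$. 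This splitting is natural because $U_\e$ carries the component of $h$ tangent to the bifurcation branch, while $W$ contains no $(1,1)$-mode kernel direction beyond $w_0$.

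The crux of the argument is the identity
$$
F'(u)[U_\e] = 2\e^2 \om_2 \sqrt{\xi}\, z_0 + \rho,
$$
obtained by differentiating the Taylor expansion of $F$ around the trivial solution and using that we have already solved the bifurcation equations $\mF_1 = \mF_2 = \mF_3 = 0$ at orders $\e,\e^2,\e^3$ in Section~\ref{sec:approx sol}; the identity $\pa_t \bar u_1 = -\sqrt{\xi}\, z_0$ then produces the explicit $z_0$-coefficient. The remainder $\rho$ collects quartic and higher-order contributions plus terms involving $\tilde u$, so by the tame estimates for $G(\eta)$ and its shape derivative (\eqref{n35}, \eqref{n37}, \eqref{est mR G}, \eqref{G estimate}) together with the Taylor expansions of $B,V$ from \cite{AlDe}, one derives \eqref{est rho}, i.e.\ $\|\rho\|_s \lesssim_s \e^3 + \|\tilde u\|_{s+4}$. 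Applying $F'(u)$ to $h$ and projecting onto $Z$ and $R$ then yields the $2\times 2$ block-triangular system \eqref{complete system}.

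From here the inversion is essentially algebraic. The $R$-component is solved first by inverting $\mL_R^W$, giving $\tilde h$ as an affine function of $a$ and $\tilde f$ with the quantitative bound \eqref{est tilde h with still a} from hypothesis \eqref{inv hyp WR}. Substituting into the $Z$-equation reduces the problem to a single scalar equation for $a$, whose leading coefficient is $2\e^2 \om_2 \sqrt{\xi} + o(\e^2)$ once one controls the correction $\Pi_Z F'(u)(\mL_R^W)^{-1}\Pi_R\rho$ using that $\Pi_Z L_{\bar\om} = 0$ (so $\Pi_Z F'(u)$ begins quadratically in $u$) combined with $\e\g^{-1}<1$. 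Since the twist condition $\om_2 \neq 0$ is guaranteed by $\kappa \neq \rho_0$, this coefficient is nonzero for $\e$ small enough, and $a$ is determined with the bound \eqref{est a}. Assembling $\|h\|_s \leq \|\tilde h\|_s + C(s)|a|\cdot\|U_\e\|_s$ then gives \eqref{total inverse simpler}.

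The main obstacle I expect is not the algebra of the reduction but the estimate \eqref{est rho} for $\rho$: the term $\mN_{\geq 4}'(\bar u_\e + \tilde u)[U_\e]$ hides the full Dirichlet--Neumann nonlinearity and must be controlled in tame form, losing a fixed number of derivatives ($\s \geq 4$) on $\tilde u$. The balance between the smallness $\|\tilde u\|_{s_0+\s} \leq C\e^{2+\d}$, the loss $\g^{-1} = \e^{-5/6}$, and the $\e^{-2}$ lost when dividing by the twist coefficient is delicate and fixes why $p = 5/6$ appears in the statement; one must verify that $\e^{2+\d}\g^{-1} < 1$ and that all error terms are perturbative relative to the bifurcation scale $\e^2 \om_2$.
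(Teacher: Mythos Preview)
Your proposal is correct and follows essentially the same approach as the paper: the same splitting $h = aU_\e + \tilde h$, the same key identity \eqref{using bif 123} obtained from $\mF_1=\mF_2=\mF_3=0$, the same projection onto $Z\oplus R$ yielding \eqref{complete system}, and the same order of resolution (invert $\mL_R^W$ first, then solve the scalar equation using $\om_2\neq 0$). The only cosmetic slip is calling \eqref{complete system} ``block-triangular''---it becomes triangular only after the substitution \eqref{tilde h from eq}---but the argument is otherwise faithful to the paper.
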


The ``loss of regularity'' in \eqref{total inverse simpler} is due to the inverse $(\mL_{R}^W)^{-1}$ on the component $W, R$, while the ``loss of smallness'' $\e^{-2}$ ($\gg \g^{-1}$) in \eqref{total inverse simpler} is due to the inverse on the kernel component $V,Z$. 
The starting point $\bar u_\e$ of the Nash-Moser scheme is sufficiently accurate ($F(\bar u_\e) = O(\e^5)$)
to overcome both the loss of derivative and the loss of smallness, and therefore we do not need to distinguish the components on $W,R,V,Z$ in the Nash-Moser iteration of Section \ref{sec:NM}. 

In conclusion, we have reduced the inversion problem for the linearized operator $F'(u)$ 
to the one of inverting $\mL_{R}^W = \Pi_R F'(u)_{|W} : W \to R$.

\medskip

In view of the transformations of the next sections, it is convenient 
(although this is not the only option) 
to split the inversion problem for $\mL_{R}^W$ into its space-Fourier components $\cos(jx)$, 
with $j=0,1$ or $j \geq 2$, because these three cases lead to different situations: 
$j=0$, the space average, is the only space-frequency for which $L_{\bar\om}$ gives a triangular, not symmetrizable system;  $j=1$ is the space-frequency of the kernel $V$; 
all the other $j \geq 2$ can be studied all together using non-trivial infinite-dimensional linear transformations and a symmetrization argument (Sections \ref{sec:linearized operator}-\ref{sec:semi-FIO conj}). 

Thus, to solve the equation $\Pi_R F'(u)_{|W}[h] = f$, 
we split $R = R_0 \oplus R_1 \oplus R_2$, where the elements of $R_0$ depend only on time, 
those of $R_1$ are space-Fourier-supported on $\cos(x)$, and 
those of $R_2$ are space-Fourier-supported on $\cos(jx)$, $j \geq 2$.
Denote $R_{01} := R_0 \oplus R_1$. 
Decompose $W = W_0 \oplus W_1 \oplus W_2$ in the same way, 
and denote $W_{01} := W_0 \oplus W_1$. 
Split $h = h_{01} + h_2 \in W$, $f = f_{01} + f_2 \in R$, 
where $h_{01} \in W_{01}$, $h_2 \in W_2$, and  
$f_{01} \in R_{01}$, $f_2 \in R_2$.  
The problem $\Pi_R F'(u)_{|W}[h] = f$ becomes 
\begin{equation}  \label{system 012}
\begin{cases}
\Pi_{R_{01}} F'(u) [h_{01} + h_2] = f_{01}, 
\\
\Pi_{R_2} F'(u)[h_{01} + h_2] = f_2, 
\end{cases}
\quad \text{i.e.} \quad
\begin{cases}
\mL_{01}^{01} h_{01} 
+ \mL_{01}^2 h_2 = f_{01}, 
\\
\mL_2^{01} h_{01} 
+ \mL_2^2 h_2 = f_2, 
\end{cases}
\end{equation}
where $\mL_{01}^2 := \Pi_{R_{01}} F'(u)_{|W_2} : W_2 \to R_{01}$, etc. 

\begin{lemma} \label{lemma:inv mL 01 01}
$\mL_{01}^{01} : W_{01} \to R_{01}$ is invertible, with
\begin{equation} \label{inv mL 01 01}
\| (\mL_{01}^{01})^{-1} f \|_s \leq_s \| f \|_s + \| \tilde u \|_{s+2} \| f \|_{s_0}, \quad 
\| (\mL_{01}^{01})^{-1} f \|_{s_0} \leq_{s_0} \| f \|_{s_0}.
\end{equation}
\end{lemma}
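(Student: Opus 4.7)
The plan is to view $\mathcal L_{01}^{01}$ as a small perturbation of the unperturbed restriction $L_{\bar\om}|_{W_{01}}$, to check that this unperturbed part is already an isomorphism $W_{01}\to R_{01}$ with a bounded inverse (no small divisors appear on these two space-Fourier modes), and finally to invert $\mathcal L_{01}^{01}$ by a Neumann series with tame estimates. The space-frequencies $j=0$ and $j=1$ are special exactly because the only potentially resonant pair $(l,j)=(1,1)$ has been removed from the domain by the definition of $W$ (which excludes the kernel direction $v_0$) and from the range by the definition of $R$ (which excludes the cokernel direction $z_0$); all other $(l,j)$ with $j\in\{0,1\}$ give a nonvanishing determinantal factor $\bar\om^2 l^2 -j(1+\kappa j^2)$, see \eqref{factor nonzero}.

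First I would prove the claim for the unperturbed operator $L_{\bar\om}|_{W_{01}}$. Since $G(0)=|D_x|$ acts diagonally on the basis $\cos(jx)$ with symbol $|j|$, $L_{\bar\om}$ preserves each space-mode $j$, and on $W_0$, $W_1$ it reduces to a $2\times 2$ constant coefficient matrix acting on the time-Fourier coefficients $(\eta_{lj},\psi_{lj})$, cf.\ \eqref{formula Lom}. On $W_0$ the parity constraint $\psi_{00}=0$ together with the formulas of Section \ref{sec:Range} gives a bounded (actually $t$-smoothing) explicit inverse. On $W_1$, for $l\neq1$ the determinant $(1+\kappa)(1-l^2)$ is nonzero and grows like $l^2$, while at $(l,j)=(1,1)$ the $1$--dimensional restriction to $w_0\in W$ is mapped isomorphically onto the $1$--dimensional subspace $R^{(1,1)}$ spanned by $r_0$ with factor $(2+\kappa)$, by \eqref{L bar om 11}. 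Summing the explicit formulas over $l$ and using Parseval yields, for every $s\geq 0$,
\[
\| (L_{\bar\om}|_{W_{01}})^{-1} f \|_s \leq C(s)\, \| f \|_s.
\]

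Next, I would estimate the perturbation $P:=\mathcal L_{01}^{01}-\Pi_{R_{01}}L_{\bar\om}|_{W_{01}}=\Pi_{R_{01}}\bigl(F'(u)-L_{\bar\om}\bigr)|_{W_{01}}$. Writing $F'(u)=L_{\bar\om}+(\om-\bar\om)\pa_t+\bigl(F'(u)-F'(0)-(\om-\bar\om)\pa_t\bigr)$, the first correction is $O(\e^2)\pa_t$ by \eqref{freq-ampl}, while the second correction is a linear operator whose coefficients vanish at $u=0$ and can be expressed using the shape derivative formula \eqref{formula shape der} for $G(\eta)$ together with standard derivatives of the nonlinearities in \eqref{F1}--\eqref{F2}. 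Combining the smallness of $u=\bar u_\e+\tilde u=O(\e)$ with the tame bounds \eqref{n35}, \eqref{n37}, \eqref{est mR G}, \eqref{G estimate} for the Dirichlet--Neumann operator (each coefficient costs at most two extra $x$-derivatives, which is the origin of the loss $s+2$), I get
\[
\| P h\|_s \leq_s \e \| h\|_s + \| \tilde u\|_{s+2}\, \| h\|_{s_0},
\qquad
\| P h\|_{s_0}\leq C\e\, \|h\|_{s_0}.
\]

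Finally, I factor $\mathcal L_{01}^{01}=L_{\bar\om}|_{W_{01}}(I+T)$ with $T:=(L_{\bar\om}|_{W_{01}})^{-1}P$; by the two bounds above, $T$ has the same form with possibly larger constants. At $s=s_0$, $\|T\|\leq C\e\leq 1/2$ for $\e$ small, so $(I+T)^{-1}=\sum_{k\geq 0}(-T)^k$ converges in the $H^{s_0}$ operator norm and gives the second estimate in \eqref{inv mL 01 01}. For general $s\geq s_0$, from
\[
(I+T)^{-1}f = f - T(I+T)^{-1}f
\]
and the tame estimate for $T$ applied to $(I+T)^{-1}f$, after absorbing the $\e\|(I+T)^{-1}f\|_s$ term into the left-hand side I obtain $\|(I+T)^{-1}f\|_s\leq_s \|f\|_s+\|\tilde u\|_{s+2}\|f\|_{s_0}$, and composing with the bounded $(L_{\bar\om}|_{W_{01}})^{-1}$ yields \eqref{inv mL 01 01}. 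The main point to verify carefully is the tame estimate on $P$: once the Dirichlet--Neumann bounds from Section \ref{sec:prelim} are set up, the rest is a routine Neumann series argument, precisely because on $W_{01}$ the linearized water waves operator is free of small divisors.
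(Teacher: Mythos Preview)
Your overall strategy is exactly the paper's: split off an unperturbed linear part, invert it explicitly on the two space-modes $j=0,1$ (where no small divisors occur), and then absorb the rest by a tame Neumann series. Two points deserve comment.

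\medskip

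\textbf{The choice of unperturbed part.} The paper takes $L_\om$ (with the full frequency $\om=\bar\om+O(\e^2)$) as the unperturbed piece, writing $F'(u)=L_\om+\mN'(u)$. You instead use $L_{\bar\om}$, so your perturbation $P$ contains the extra term $(\om-\bar\om)\pa_t$. That term costs a time derivative: $\|(\om-\bar\om)\pa_t h\|_s\leq C\e^2\|h\|_{s+1}$, which does \emph{not} fit into your claimed bound $\|Ph\|_s\leq_s \e\|h\|_s+\|\tilde u\|_{s+2}\|h\|_{s_0}$. So as written there is a gap. The fix is immediate by either route: (i) follow the paper and put $\pa_t$ into the principal part $L_\om$ (the inversion of $\Pi_{R_1}L_\om|_{W_1}$ at $l=1$ still gives an $O(1)$ factor $\a=(\om+\bar\om)(1+\bar\om^2)/(2\bar\om)$, and for $l\ge2$ the denominator $\om^2l^2-\bar\om^2\ge Cl^2$), so that the remaining perturbation $\mN'(u)$ has no $\pa_t$ at all; or (ii) keep your splitting but use the smoothing $\|(L_{\bar\om}|_{W_{01}})^{-1}f\|_s\leq C\|f\|_{s-1}$, which your own observation that the determinant grows like $l^2$ already gives, and which absorbs the lost derivative from $(\om-\bar\om)\pa_t$.

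\medskip

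\textbf{Why the perturbation is of order zero on $h$.} Your justification for $\|Ph\|_s\leq_s\e\|h\|_s+\|\tilde u\|_{s+2}\|h\|_{s_0}$ appeals to the generic Dirichlet--Neumann bounds \eqref{est mR G}--\eqref{G estimate}; those alone would cost derivatives on $h$. The actual mechanism, which the paper makes explicit, is that $\mN'(u)$ contains no $\pa_t$, and for $h=(\eta(t)\cos(jx),\psi(t)\cos(jx))$ with $j\in\{0,1\}$ one has $\mN'(u)[h]=\eta(t)\,\mN'(u)[\cos(jx),0]+\psi(t)\,\mN'(u)[0,\cos(jx)]$: every space derivative in $\mN'(u)$ lands on the fixed low-frequency profile $\cos(jx)$ and on the coefficients (hence the $\|u\|_{s+2}$), never on the time-dependent factors $\eta(t),\psi(t)$. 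This is what makes $\mN'(u)|_{W_{01}}$ bounded of order zero in $h$ and produces exactly \eqref{mN' W01}.
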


\begin{proof}
To invert $\mL_{01}^{01}$, we write the linearized operator as $F'(u) = L_\om + \mN'(u)$, 
where $\mN = \mN_2 + \mN_3 + \ldots$ is the nonlinear component of $F$, and $L_\om$ is its linear one, defined in \eqref{Lom}. 
We begin with the invertibility of $L_\om$ as a map of $W_{01} \to R_{01}$.
Since $L_\om$ maps $W_0 \to R_0$ and $W_1 \to R_1$, ``off-diagonal'' one simply has 
$\Pi_{R_0} L_{\om | W_1} = 0$ and $\Pi_{R_1} L_{\om | W_0} = 0$. 

\emph{Step 1.} 
The restricted linear part $\Pi_{R_0} L_{\om|W_0} : W_0 \to R_0$ is invertible, because the equation 
$L_\om h = f$ in the unknown $h = (\eta, \psi) \in W_0$ with datum $ f = (\a, \b) \in R_0$ 
is the triangular system $\om \eta'(t) = \a(t)$, $\eta(t) + \om \psi'(t) = \b(t)$, where $\eta,\b$ are even and $\psi,\a$ are odd, and $\eta, \psi, \a, \b$ are functions of $t$ only 
(this calculation has been done in Section \ref{sec:Range} with $\bar\om$ instead of $\om$). 
Thus $ \| (\Pi_{R_0} L_{\om|W_0})^{-1} f \|_s \leq C \| f \|_{s-1} $ for all $s \geq 1$.

\emph{Step 2.} We prove that the restricted linear part $\Pi_{R_1} L_{\om|W_1} : W_1 \to R_1$ is invertible. Consider 
\[
\eta = \sum_{l \geq 0} \eta_{l} \cos(lt) \cos(x), \quad 
\psi = \sum_{l \geq 0} \psi_l \sin(lt) \cos(x), \quad 
\psi_0 = 0, \quad 
\eta_1 = \bar\om \psi_1,
\]
so that $(\eta,\psi) \in W_1$ (recall \eqref{def W 11}). 
Similarly, let $f = \sum_{l \geq 0} f_l \sin(lt) \cos(x)$, 
$g = \sum_{l \geq 0} g_l \cos(lt) \cos(x)$, with $f_0 = 0$ and $g_1 = - \bar\om f_1$, so that 
$(f,g) \in R_1$ (recall \eqref{def R 11}). 
Using the projection \eqref{proj 11 rule} for $l = 1$, the definition \eqref{def R 11} of $r_0$, 
the assumption $\eta_1 = \bar\om \psi_1$ and the equality $1 + \kappa = \bar\om^2$, we obtain
\[
\Pi_{R_1} L_\om(\eta,\psi) = \begin{pmatrix} 0 \\ \bar\om^2 \eta_0 \cos(x) \end{pmatrix}
+ \a \psi_1 r_0 
+ \sum_{l \geq 2} \begin{pmatrix} -( \om l \eta_l + \psi_l) \sin(lt) \cos(x) 
\\ 
(\bar\om^2 \eta_l + \om l \psi_l) \cos(lt) \cos(x) \end{pmatrix},
\]
where $\a := (\om + \bar\om)(1 + \bar\om^2) / (2 \bar\om)$. 
Thus $\Pi_{R_1} L_\om (\eta, \psi) = (f,g)$ if and only if 
$\eta_0 = g_0 / \bar\om^2$, $\psi_1 = - f_1 / \a$, 
and 
\begin{equation}  \label{inv Lom}
\eta_l = \frac{- \om l f_l - g_l }{\om^2 l^2 - \bar\om^2}, \quad 
\psi_l = \frac{\bar\om^2 f_l + \om l g_l }{\om^2 l^2 - \bar\om^2}, \quad 
l \geq 2.
\end{equation}
For all $l \geq 2$ the denominator is $\om^2 l^2 - \bar\om^2 \geq C l^2$, therefore 
$|\eta_l| + |\psi_l| \leq C(|f_l| + |g_l|) / l$ for all $l \geq 2$. 
Hence $\Pi_{R_1} L_{\om|W_1}$ is invertible, with 
$$
\| (\Pi_{R_1} L_{\om|W_1})^{-1} f \|_s \leq C \| f \|_{s-1}
$$
for all $f \in R_1$, all $s \geq 1$. 

Collecting Steps 1-2 we deduce that $\Pi_{R_{01}} L_{\om|W_{01}} : W_{01} \to R_{01}$ is invertible, with
\[
\| (\Pi_{R_{01}} L_{\om|W_{01}})^{-1} f \|_s \leq C \| f \|_{s-1} \quad \forall f \in R_{01}, 
\ s \geq 1.
\] 
\emph{Step 3.} The linear operator $\mN'(u)$ does not contain derivatives with respect to time, 
and it is a pseudo-differential operator of order 2 with respect to the space variable $x$. 
Denoting $N_{ij}$, $i,j=1,2$, the operator-matrix entries of $\mN'(u)$, 
for $h = (\eta(t) \cos(x), \psi(t) \cos(x)) \in W_1$ one simply has 
\[
\mN'(u)[h] = \begin{pmatrix} N_{11} & N_{12} \\
N_{21} & N_{22} \end{pmatrix}
\begin{pmatrix} \eta(t) \cos(x) \\
\psi(t) \cos(x) \end{pmatrix}
= 
\eta(t) \begin{pmatrix} N_{11}[\cos(x)] \\
N_{21}[\cos(x)] \end{pmatrix} 
+ \psi(t) \begin{pmatrix} N_{12} [\cos(x)] \\
N_{22} [\cos(x)] \end{pmatrix} \! ,
\]
and similarly for $h \in W_0$ (just replace $\cos(x)$ with $1$). 
Therefore, for all $h \in W_{01}$,
\begin{equation}  \label{mN' W01}
\ba
&\| \mN'(u)_{|W_{01}} [h] \|_s 			
\leq C(s_0) \e \| h \|_s + C(s) \| u \|_{s+2} \| h \|_{s_0}, \\
&\| \mN'(u)_{|W_{01}} [h] \|_{s_0}  
\leq C(s_0) \e \| h \|_{s_0},
\ea
\end{equation}  
because $\| u \|_{s_0 + 2} \leq C \e$. 
The conclusion follows by tame Neumann series.
\end{proof}

By Lemma \ref{lemma:inv mL 01 01}, we solve for $h_{01}$ in the first line of \eqref{system 012}, 
and the system becomes
\begin{equation} \label{eq for h2}
(\mL_2^2 + \mR) h_2 
= f_2 - \mL_2^{01} (\mL_{01}^{01})^{-1} f_{01},
\end{equation}
where
$$
\mR := - \mL_2^{01} (\mL_{01}^{01})^{-1} \mL_{01}^2 : W_2 \to R_2.
$$

\begin{lemma}  \label{lemma:bif remainder}
Let $u = \bar u_\e + \tilde u$, with $\| \tilde u \|_{s_0 + 2 + m} \leq C \e^{2+\d}$ 
for some $s_0 \geq 2$, $m \geq 0$, $\d>0$.
Then the operator $\mR$ defined in \eqref{eq for h2} satisfies for all $s \geq s_0$
\begin{equation}  \label{est bif remainder}
\| \mR |D_x|^m h \|_s \leq_s \e^2 \| h \|_s + \e \| \tilde u \|_{s+2+m} \| h \|_{s_0}.
\end{equation}
\end{lemma}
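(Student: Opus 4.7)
The plan is to exploit the fact that the linear part $L_\om$ in \eqref{Lom} acts diagonally on the space-Fourier basis $\{\cos(jx), \sin(jx)\}$, so it preserves the decomposition $W_j\to R_j$ for $j\in\{0,1,2\}$. Writing $F'(u)=L_\om+\mN'(u)$ with $\mN'(u):=F'(u)-L_\om$, this yields the off-diagonal identities
\[
\mL_{01}^{2}=\Pi_{R_{01}}\mN'(u)|_{W_2},\qquad \mL_{2}^{01}=\Pi_{R_{2}}\mN'(u)|_{W_{01}},
\]
so that the whole remainder $\mR$ is the triple composition of three operators built only from the nonlinear derivative $\mN'(u)$ and the already-controlled inverse $(\mL_{01}^{01})^{-1}$.

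First I will record a tame Sobolev estimate for $\mN'(u)$. From \eqref{F1}-\eqref{F2}, the linearization involves only products and the Dirichlet-Neumann operator $G(\eta)$ and its shape derivative $G'(\eta)$; together with the coefficients $B,V$ of \eqref{def B V}. Combining the Taylor expansion \eqref{Taylor G}, the remainder bound \eqref{n35}, the tame estimates \eqref{est mR G}-\eqref{G estimate}, and the standard product tame inequality, one obtains for $\| u\|_{s_0+2+m}\lesssim \e$ a schematic bound of the form
\[
\|\mN'(u)[g]\|_s \leq_s \e\,\| g\|_{s+2}+\| u\|_{s+2}\,\| g\|_{s_0+2},
\]
(the precise orders are analogous to those already sketched in \eqref{mN' W01}).

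Next I handle $\mL_{01}^{2}|D_x|^m$. Here lies the main technical point: naively applying the previous bound to $g=|D_x|^m h$ costs $m$ derivatives on $h$, which is worse than the claim. The gain comes from the Fourier support of $R_{01}$. Any Fourier mode $(l,j)$ of $\mN'(u)[|D_x|^m h]$ surviving the projection $\Pi_{R_{01}}$ has $|j|\leq 1$, while the $|D_x|^m h$-factor contributes only frequencies $|j_2|\geq 2$; therefore the $u$-coefficient necessarily contributes a frequency $j_1=j-j_2$ with $|j_1|\geq |j_2|-1\geq \tfrac12|j_2|$. Hence in the bilinear/paraproduct expansion of $\mN'(u)[\cdot]$ we may move the $m$ space derivatives from $h$ onto $u$ up to a harmless constant, yielding
\[
\|\mL_{01}^{2}\,|D_x|^m h\|_s\leq_s \e\,\| h\|_s + \| \tilde u\|_{s+2+m}\,\| h\|_{s_0}.
\]
(Here we also use that the trigonometric-polynomial part $\bar u_\e$ has fixed spatial degree, so all its $|D_x|^{m}$-derivatives are $O(\e)$; only $\tilde u$ contributes the loss in regularity.)

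For $\mL_{2}^{01}$ the estimate is easier: if $g\in W_{01}$ then $|D_x|^k g$ is controlled by $g$ for any $k$, and $g$ has only the Fourier modes $j=0,1$, so the bilinear estimate for $\mN'(u)$ directly gives
\[
\|\mL_{2}^{01}g\|_s\leq_s \e\,\| g\|_s + \| \tilde u\|_{s+2}\,\| g\|_{s_0}.
\]

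Finally, combining the chain $\mR=-\mL_{2}^{01}(\mL_{01}^{01})^{-1}\mL_{01}^{2}$, applying the two displayed tame bounds together with \eqref{inv mL 01 01}, and absorbing low-index interpolation terms via $\| \tilde u\|_{s_0+2+m}\leq C\e^{2+\d}$, yields the asserted estimate \eqref{est bif remainder}. The main obstacle is the derivative-transfer argument in the estimate for $\mL_{01}^{2}|D_x|^m$; the rest is a straightforward assembly of tame nonlinear estimates.
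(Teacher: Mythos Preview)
Your proposal is correct and follows essentially the same architecture as the paper: identify $\mL_{01}^2=\Pi_{R_{01}}\mN'(u)|_{W_2}$ and $\mL_2^{01}=\Pi_{R_2}\mN'(u)|_{W_{01}}$, estimate the three factors separately, and compose. The only substantive difference is in how you justify the derivative transfer for $\mL_{01}^2|D_x|^m$.

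The paper does not use a paraproduct/Fourier-support argument. Instead, it writes out the explicit linearization formula \eqref{linearized vero} and uses integration by parts together with the self-adjointness of $G(\eta)$ to show that each component of $\Pi_{01}\mN'(u)[\pa_x^m h]$ equals a finite sum of terms $\big(\int_\T(\eta\, a_i+\psi\, b_i)\,dx\big)\cos(jx)$ for $j=0,1$, where the coefficients $a_i,b_i$ absorb \emph{all} the derivatives (both those of $\mN'(u)$ and the extra $|D_x|^m$). Then a direct $H^s_t$ estimate for $\int_\T h\,a\,dx$ gives the claimed bound. Your Fourier-support argument achieves the same transfer more abstractly; it is clean for the multiplicative pieces of $\mN'(u)$ but needs an extra word for the Dirichlet--Neumann contributions (handled via $G(\eta)=|D_x|+\mR_G(\eta)$, where the principal part preserves the support constraint and $\mR_G$ is regularizing). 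The paper's integration-by-parts route sidesteps this issue by treating all terms uniformly through self-adjointness of $G(\eta)$, at the cost of appealing to the explicit structure of \eqref{linearized vero}.
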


\begin{proof} $(\mL_{01}^{01})^{-1}$ is estimated in Lemma \ref{lemma:inv mL 01 01}, 
and $\mL_2^{01} = \Pi_{R_2} \mN'(u)_{|W_{01}}$ satisfies \eqref{mN' W01} because 
$\| \Pi_{R_2} h \|_s \leq \| h \|_s$. 
It remains to estimate $\mL_{01}^2$. 
Let $h = (\eta,\psi)$. 
By the explicit formula \eqref{linearized vero}, 
using integration by parts and the self-adjointness of the Dirichlet-Neumann operator, 
it follows that both the first and the second component of $\Pi_{01} \mN'(u)[ \pa_x^m h]$ have the form
\[ 
\Big( \int_\T (\eta a_0 + \psi b_0) \, dx \Big) 
+ \Big( \int_\T (\eta a_1 + \psi b_1) \, dx \Big) \cos(x)
\] 
for some coefficients $a_i(t,x), b_i(t,x)$, $i=0,1$, depending on $u$ and of size $O(u)$. 
Note that both the derivatives contained in $\mN'(u)$ and the additional derivatives $|D_x|^m$ go to the coefficients $a_i, b_i$ and do not affect $\eta, \psi$. 
Now, any function $f(t,x)$ of the form $f(t,x) = g(t)$ or $f(t,x) = g(t) \cos(x)$ satisfies
$\| f \|_s \leq_s \| g \|_{H^s_t}$ 
(as usual, $H^s_t$ means $H^s(\T)$ where the variable is $t \in \T$).
Also 
\[
\ba
\bigg\| \int_\T \eta(t,x) a(t,x) \, dx \bigg\|_{H^s_t} 
&\leq_s \| \eta \|_{L^2_x H^s_t} \| a \|_{L^2_x H^1_t} + \| \eta \|_{L^2_x H^1_t} \| a \|_{L^2_x H^s_t}\\
&\leq_s \| \eta \|_s \| a \|_1 + \| \eta \|_1 \| a \|_s
\ea
\]
and similarly for $\psi b$. Therefore, if $\| u \|_{3+m} \leq C$, we get
$
\| \Pi_{01} \mN'(u)[ \pa_x^m h] \|_s 
\leq_s \| h \|_s \| u \|_{3 + m} + \| h \|_1 \| u \|_{s+2+m},
$
and the lemma follows by composition.
\end{proof}

Lemma \ref{lemma:bif remainder} will be used with $m = 3/2$ or $m = 2$.

Using \eqref{system 012}, \eqref{eq for h2} and the estimates for $\mL_{01}^{01}, \mL_{01}^{2}, \mL_{2}^{01}$, we deduce the following inversion result:

\begin{lemma} \label{lemma:inv bif 2} 
Let $u = \bar u_\e + \tilde u$, with $\| \tilde u \|_{s_0 + \s} \leq C \e^{2+\d} $
and $s_0 \geq 2$, $\s \geq 4$, $\d>0$. 
Assume that $\mL_2^2 + \mR : W_2 \to R_2$ is invertible, and its inverse satisfies 
\begin{equation} \label{inv hyp W2 R2}
\| (\mL_2^2 + \mR)^{-1} g \|_s 
\leq_s \g^{-1} \big( \| g \|_{s + 2} + \g^{-1} \| \tilde u \|_{s + \s} \| g \|_{s_0 + 2} \big) 
\end{equation}
for all $s \geq s_0$, where $\g = \e^{5/6}$. 
Then $\mL_{R}^W := \Pi_R F'(u)_{|W} : W \to R$ is invertible, and its inverse satisfies \eqref{inv hyp WR} 
(with the same $\s, \g$). 
\end{lemma}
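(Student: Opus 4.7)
The plan is to perform block Gaussian elimination on the system \eqref{system 012}, using the decompositions $W = W_{01} \oplus W_2$ and $R = R_{01} \oplus R_2$, with $\mL_{01}^{01}$ playing the role of the pivot. This is natural because Lemma \ref{lemma:inv mL 01 01} already inverts $\mL_{01}^{01}$ with no loss of derivatives and no $\g^{-1}$ factor, while the genuinely hard part of the inversion—where both the loss of two derivatives and the small-divisor factor $\g^{-1}$ appear—has been isolated into the Schur-complement operator $\mL_2^2 + \mR$, which is invertible by hypothesis.

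Concretely, I would first solve the first line of \eqref{system 012} for $h_{01}$ in terms of $h_2$,
$$
h_{01} = (\mL_{01}^{01})^{-1}\bigl( f_{01} - \mL_{01}^2 h_2 \bigr),
$$
substitute into the second line to obtain \eqref{eq for h2}, and then use \eqref{inv hyp W2 R2} to set
$$
h_2 = (\mL_2^2 + \mR)^{-1} \bigl( f_2 - \mL_2^{01}(\mL_{01}^{01})^{-1} f_{01} \bigr),
$$
and finally read $h_{01}$ off the first displayed identity. For the estimate, I would first bound the right-hand side of the equation for $h_2$ in $H^{s+2}$: by Lemma \ref{lemma:inv mL 01 01} and the tame bound on $\mL_2^{01}$ inherited from \eqref{mN' W01} (together with the regularity of $\bar u_\e$ and the smallness of $\tilde u$ in low norm), this right-hand side is controlled by $\| f \|_{s+2} + \g^{-1}\| \tilde u\|_{s+\s}\| f\|_{s_0+2}$ up to $s$-dependent constants. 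Plugging this into \eqref{inv hyp W2 R2} yields the bound \eqref{inv hyp WR} for $h_2$; the analogous argument, combined with Lemma \ref{lemma:bif remainder} and \eqref{inv mL 01 01}, gives the same bound for $h_{01}$.

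The main obstacle is the tame bookkeeping when composing three inversions with two off-diagonal blocks: each composition produces cross terms of the form $\g^{-2}\|\tilde u\|_{s_0+\s}\|\tilde u\|_{s+\s}\| f\|_{s_0+2}$ and $\g^{-1}\|\tilde u\|_{s+\s}\| f \|_{s_0+2}$, which must be absorbed back into the two terms tolerated by \eqref{inv hyp WR}. The smallness hypotheses $\|\tilde u\|_{s_0+\s}\leq C\e^{2+\d}$ and $\e\g^{-1}\leq 1$ (recall $\g = \e^{5/6}$) make this absorption possible. One must also check that the loss of spatial derivatives stays exactly at $2$: this is automatic because only $(\mL_2^2 + \mR)^{-1}$ loses derivatives, whereas $(\mL_{01}^{01})^{-1}$, together with the off-diagonal blocks $\mL_{01}^2$ and $\mL_2^{01}$, are of order zero on the Sobolev scale up to the tame coefficient $\|\tilde u\|_{s+\s}$, thanks to the one-dimensional structure (functions of $t$ only or of $t$ times $\cos x$) underlying Lemma \ref{lemma:bif remainder}.
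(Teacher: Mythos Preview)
Your proposal is correct and follows exactly the route the paper intends: the paper's own ``proof'' of Lemma~\ref{lemma:inv bif 2} is just the one-line remark ``Using \eqref{system 012}, \eqref{eq for h2} and the estimates for $\mL_{01}^{01}, \mL_{01}^{2}, \mL_{2}^{01}$, we deduce the following inversion result,'' and you have unpacked precisely this block Gaussian elimination with the Schur complement $\mL_2^2+\mR$. Your identification of the tame bookkeeping as the only real work, and of the absorption mechanism via $\|\tilde u\|_{s_0+\s}\le C\e^{2+\d}$ and $\e\g^{-1}\le 1$, is spot on.
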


\begin{remark}  \label{rem:reduced lin to invert}
Collecting Lemmata \ref{lemma:inv bif} and \ref{lemma:inv bif 2}, 
we have reduced the inversion problem for $F'(u)$ 
to the one of inverting $\mL_2^2 + \mR : W_2 \to R_2$, 
where $\mL_2^2 = \Pi_{R_2} F'(u)_{|W_2}$ and 
$\mR$ satisfies \eqref{est bif remainder}.
Therefore our goal now is to invert $\mL_2^2 + \mR$ and to prove \eqref{inv hyp W2 R2}.
\end{remark}

We finish this section with the following lemma, which rests on both the result of section \ref{sec:approx sol} and property \eqref{using bif 123} of the linearized operator. 
Recall the definition \eqref{def norm Lipe} of the norm $\| \  \|_s^\Lipe$.

\begin{lemma}  \label{lemma:F(u0)}
Let $\om, \bar u_\e$ be as defined in \eqref{freq-ampl}-\eqref{approx sol}.
Then
$$
\| F(\bar u_\e, \om) \|_s^\Lipe \leq_s \e^5
$$
for all $s \geq 0$.
\end{lemma}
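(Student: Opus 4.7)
The plan is to exploit the observation that the approximate solution depends on $(\e,\xi)$ only through the single combination $\mu := \e \sqrt\xi$. Indeed, from \eqref{freq-ampl}-\eqref{approx sol} one checks that
\[
\bar u_\e = \mu v_0 + \mu^2 \bar w_2 + \mu^3 \bar w_3 + \mu^4 \bar w_4,
\qquad
\om = \bar\om + \mu^2 \bar\om_2 + \mu^3 \bar\om_3,
\]
where $v_0, \bar w_2, \bar w_3, \bar w_4$ are trigonometric polynomials depending only on $\kappa$, and in particular lie in $H^s(\T^2)$ with bounded norm for every $s$. Setting $\Phi(\mu) := F(\bar u(\mu), \om(\mu))$, the task reduces to proving $\|\Phi(\mu)\|_s \leq_s \mu^5$ together with a $\mu$-Lipschitz version.

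The first step is to verify that $\mu \mapsto \Phi(\mu)$ is a $C^\infty$ map from a neighborhood of $0 \in \R$ into $H^s(\T^2, \R^2)$ for every $s$. The polynomial part of $F$ is immediately smooth: the linear operator $L_{\bar\om}$ and the multilinear pieces $T_2, T_3, T_4, \ldots$ arising from the Taylor expansion of $\psi_x^2$, of $(G(\eta)\psi + \eta_x\psi_x)^2/(1+\eta_x^2)$, and of $\eta_{xx}/(1 + \eta_x^2)^{3/2}$ produce polynomials in $\mu$ whose $H^s$-coefficients are controlled by Sobolev algebra. The only truly non-polynomial piece is the Dirichlet-Neumann tail $G_{\ge 4}(\eta)\psi$, which is smooth in $\mu$ and $H^s$-controlled via \eqref{n35}-\eqref{n37} as long as $\|\bar u(\mu)\|_{s_0+4}$ stays small, which is the case for $|\mu|$ small. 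By the very construction carried out in Section \ref{sec:approx sol}, the $\mu$-derivatives $\Phi^{(k)}(0)$ for $k = 1,2,3,4$ are proportional to $\mF_1, \mF_2, \mF_3, \mF_4$, all of which were made zero by the choices of $u_1, u_2, u_3, u_4$ and $\om_1, \om_2, \om_3$. Taylor's formula with integral remainder then yields
\[
\Phi(\mu) = \frac{\mu^5}{4!} \int_0^1 (1-t)^4 \, \Phi^{(5)}(t\mu) \, dt,
\]
with $\|\Phi^{(5)}(t\mu)\|_s \leq_s 1$ uniformly in $t \in [0,1]$ and $|\mu|$ small. Substituting $\mu = \e\sqrt\xi$ with $\xi \in [1,2]$ gives $\|F(\bar u_\e, \om)\|_s^{\sup} \leq_s \e^5$.

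For the Lipschitz part, differentiating $\xi \mapsto \Phi(\e\sqrt\xi)$ gives $\partial_\xi \Phi(\e\sqrt\xi) = (\e / 2\sqrt\xi) \, \Phi'(\e\sqrt\xi)$. Applying the same Taylor argument to $\Phi'$, whose derivatives at $0$ up to order three are proportional to $\mF_2, \mF_3, \mF_4$ and still vanish, one obtains $\|\Phi'(\mu)\|_s \leq_s \mu^4$; hence $\|\partial_\xi \Phi(\e\sqrt\xi)\|_s \leq_s \e^5$ uniformly in $\xi \in [1,2]$, so that $\e \,\|F(\bar u_\e, \om)\|_s^\lip \leq_s \e^6 \leq \e^5$. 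Combining with the sup estimate produces $\|F(\bar u_\e, \om)\|_s^\Lipe \leq_s \e^5$, as claimed. The only mildly delicate point is the smoothness of $\mu \mapsto \Phi(\mu)$ on the Sobolev scale; this is however handled by the tame bounds \eqref{n35}-\eqref{n37} already recalled in Section \ref{sec:prelim}, so no essential new difficulty arises.
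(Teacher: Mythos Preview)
Your argument is correct and takes a cleaner route than the paper's. The key observation --- that $\bar u_\e$ and $\om$ depend on $(\e,\xi)$ only through the single variable $\mu = \e\sqrt\xi$ --- reduces both the sup and the Lipschitz estimates to one Taylor expansion of the scalar map $\mu \mapsto \Phi(\mu)$, and the vanishing of $\Phi^{(k)}(0)$ for $k\leq 4$ is exactly the content of the equations $\mF_k=0$ solved in Section~\ref{sec:approx sol} (those were solved for arbitrary $a_1\neq 0$, hence in particular for $a_1=1$, which is the $\mu$-expansion).

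The paper instead differentiates $\xi \mapsto F(\bar u_\e,\om)$ directly and invokes the linearization identity \eqref{using bif 123}, namely $F'(\bar u_\e)[U_\e] = -2\e^2\om_2\,\partial_t\bar u_1 + O(\e^3)$, to produce an explicit cancellation of the $O(\e^3)$ contributions, arriving at $\|\partial_\xi F(\bar u_\e,\om)\|_s \leq_s \e^4$. Your approach bypasses that computation entirely and even gives the sharper bound $\|\partial_\xi F(\bar u_\e,\om)\|_s \leq_s \e^5$. The paper's route has the modest advantage of recycling \eqref{using bif 123}, which is needed anyway for the restricted-inversion argument in Section~\ref{sec:lin syst}, whereas your proof is self-contained. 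One small wording point: when you say the derivatives of $\Phi'$ at $0$ up to order three are proportional to $\mF_2,\mF_3,\mF_4$, you should also record that $\Phi'(0)$ itself is proportional to $\mF_1=0$; the conclusion $\Phi'(\mu)=O(\mu^4)$ is unaffected.
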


\begin{proof}
By the construction of section \ref{sec:approx sol} it follows immediately that 
$\| F(\bar u_\e, \om) \|_s \leq_s \e^5$. 
It remains to estimate the derivative 
$\pa_\xi \{ F(\bar u_\e, \om) \} = (\pa_\xi \om) \pa_t \bar u_\e + F'(\bar u_\e) [\pa_\xi \bar u_\e]$.
Recalling \eqref{approx sol} and the definition $U_\e := \bar u_1 + 2 \e \bar u_2 + 3 \e^2 \bar u_3 $, we get
\[
\pa_\xi \bar u_\e = \frac{\e}{2\xi} U_\e + 2 \e^4 \xi \bar w_4.
\]
Hence, using \eqref{using bif 123}, and recalling that $\om_2 = \bar \om_2 \xi$,
for $\xi \in [1,2]$ one has
\begin{align*}
\pa_\xi \{ F(\bar u_\e, \om) \}
& = \Big( \e^2 \bar \om_2 + \e^3 \bar \om_3 \frac32 \xi^{1/2} \Big) \pa_t \bar u_\e
+ \frac{\e}{2\xi} \, F'(\bar u_\e) [ U_\e ]
+ 2 \e^4 \xi F'(\bar u_\e) [\bar w_4]
\\ &
= \e^3 \bar \om_2 \pa_t \bar u_1 + O(\e^4) 
+ \frac{\e}{2\xi} ( -2 \e^2 \om_2 \pa_t \bar u_1 + \rho) + O(\e^4)
= O(\e^4).
\end{align*}
Thus $\| \pa_\xi \{ F(\bar u_\e, \om) \} \|_s \leq_s \e^4$, and the lemma is proved.
\end{proof}

\section{Linearized equation} \label{sec:linearized operator}

The computation of the linearized equations is based 
on formula \eqref{formula shape der} for the ``shape derivative'' of $G(\eta)\psi$.
The derivative of the two components $F_1, F_2$ of $F$ (see \eqref{F1}-\eqref{F2}) at the point $u = (\eta,\psi)$ in the direction $\tilde u = (\tilde\eta, \tilde\psi)$ is 
\begin{align*}
F_1'(u) [\tilde u ] 
& = \omega \partial_t\tilde\eta + \partial_x ( V \tilde\eta )
- G(\eta) (\tilde\psi - B \tilde\eta )
\\
F_2'(u) [ \tilde u ] 
& = \omega \partial_t \tilde\psi + V\partial_x \tilde\psi - B G(\eta) \tilde\psi 
+ ( 1 + B V_x ) \tilde\eta + B G(\eta) ( B \tilde\eta )\\
&\quad - \kappa \partial_x ((1 + \eta_x^2)^{-3/2} \partial_x \tilde\eta ),
\end{align*}
as can be checked by a direct computation, noticing that
$
B\partial_x(V\tilde\eta)-B\psi_x\tilde\eta_x+B^2\eta_x\tilde\eta_x=B V_x\tilde\eta,
$
where $B, V$ are defined in \eqref{def B V}.

\smallbreak

\emph{Notation}: any function $a$ is identified with the corresponding multiplication operators $h \mapsto ah$, and, where there is no parenthesis, composition of operators is understood. For example, 
$\pa_x c \pa_x$ means: $h \mapsto \pa_x (c \pa_x h)$.

\smallbreak

Using this notation, one can represent the linearized operator as a $2 \times 2$ operator matrix
\begin{equation}  \label{linearized vero}
\ba
F'(u)[\tilde u] 
&= F' (\eta, \psi)\begin{bmatrix} \tilde\eta  \\ \tilde\psi \end{bmatrix} 
\\
&= \begin{pmatrix} 
\om \pa_t + \partial_x V + G(\eta) B & - G(\eta) \\
(1 + B V_x) + B G(\eta) B - \kappa \pa_x c \pa_x \  & \  \om \pa_t + V \partial_x - B G(\eta) \\
\end{pmatrix} 
\begin{bmatrix} \tilde\eta  \\  \tilde\psi  \end{bmatrix},
\ea
\end{equation}
where 
\begin{equation} \label{def c}
c := (1 + \eta_x^2)^{-3/2}.
\end{equation}
The linearized operator $F'(u)$ has the following conjugation structure: 
\begin{equation} \label{Con1}
F'(u) = \mZ \mL_0 \mZ^{-1},
\end{equation} 
where 
\begin{equation}  \label{mZ mL0}
\mZ := \begin{pmatrix}  1 & 0 \\ B & 1  \end{pmatrix},
\quad 
\mZ^{-1} = \begin{pmatrix} 1 & 0 \\ - B & 1  \end{pmatrix},
\quad 
\mL_0 := \begin{pmatrix}
\om \pa_t + \partial_x V \  & \ - G(\eta) \\ 
a - \kappa \pa_x c \pa_x \ & \ \om \pa_t + V \partial_x 
\end{pmatrix},
\end{equation} 
and $a$ is the coefficient
\begin{equation}  \label{a}
a := 1 + \om B_t + V B_x.
\end{equation} 
$a,B,V,c$ are periodic functions of $(t,x)$, namely they are variable coefficients.
This conjugation structure is now well-known (see \cite{LannesJAMS}).
Formula \eqref{Con1} is verified by a direct calculus, and it is a consequence of the following two facts:

($i$) the pseudodifferential terms in $F_1'$ and $F_2'$ are equal except than for a factor $B$. Hence they cancel in the sum $ F_2' + B F_1' $;

($ii$) both in the sum $ F_2' + B F_1' $ and in $ F_2' $ the quantity $\tilde \phi := \tilde \psi - B \tilde \eta$ arises naturally and replaces $ \tilde\psi $ completely. 
This $\tilde \phi$ is the ``good unknown of Alinhac'' 
(see \cite{Ali,AM}). 

\medskip

\begin{remark} \emph{Parities.} Since $u = (\eta, \psi) \in X \times Y$, it follows that 
$B \in Y$, $V = \text{odd}(t), \text{odd}(x)$, 
$c,a = \text{even}(t), \text{even}(x)$,
and $\mZ$ maps $X \to X$ and $Y \to Y$. 
\end{remark}

We want to obtain a conjugation similar to \eqref{Con1} for $\mL_2^2 + \mR$, see Remark \ref{rem:reduced lin to invert}, where $\mR : W_2 \to R_2$ satisfies \eqref{est bif remainder} 
and $\mL_2^2 = \Pi_{R_2} F'(u)_{|W_2} : W_2 \to R_2$ 
(remember that $R_2, W_2$ are subspaces of functions that are space-Fourier-supported on $\cos(jx), j \geq 2$). 
Denote, in short, $\pp$, $\ff$ the projection 
\[
\pp h(x) = \sum_{j \geq 2} h_j \cos(jx), \quad 
\ff h(x) = h_0 + h_1 \cos(x), 
\]
where $h(x) = \sum_{j \geq 0} h_j \cos(jx)$. Clearly for all $s \geq 0$, $m \geq 0$, all $h(t,x)$,
\begin{equation} \label{norm pp ff}
\| \pp h \|_s \leq \| h \|_s, \quad 
\| |D_x|^m \ff h \|_s \leq  \| \ff h \|_s \leq \| h_0 \|_{H^s_t} + \| h_1 \|_{H^s_t}
\leq \| h \|_s.
\end{equation}
We want to conjugate $\mL_2^2 + \mR = \pp F'(u) \pp + \mR$. 
Let $\tilde \mZ := \pp \mZ \pp$. 
By the parity of $B$, the operator $\tilde \mZ$ maps $R_2 \to R_2$ and $W_2 \to W_2$. 
Moreover, by Neumann series, $\tilde\mZ$ is invertible
(see Lemma \ref{lemma:remainder mR0} below).
Using the equalities $F'(u) \mZ = \mZ \mL_0$ and $I = \pp + \ff$, we get 
\[
(\mL_2^2 + \mR) \tilde \mZ
= (\pp F'(u) \pp + \mR) \pp \mZ \pp 
= \pp \mZ \pp \mL_0 \pp + \pp \mZ \ff \mL_0 \pp - \pp F'(u) \ff \mZ \pp + \mR \pp \mZ \pp
\]
whence 
\begin{equation} \label{conj mL0 PP}
\ba
&\mL_2^2 + \mR 
= \tilde\mZ ( \tilde\mL_0 + \tilde\mR_0 ) \tilde\mZ^{-1}, \quad 
\tilde\mL_0 := \pp \mL_0 \pp, \\ 
&\tilde\mR_0 := \tilde\mZ^{-1} \{ \pp \mZ \ff \mL_0 \pp - \pp F'(u) \ff \mZ \pp + \mR \tilde \mZ \}.
\ea
\end{equation}
The remainder $\tilde \mR_0$ has size $O(u^2)$ and it is regularizing of any order in $\pa_x$. 
More precisely,

\begin{lemma}  \label{lemma:remainder mR0}
(i) Let $u = \bar u_\e + \tilde u$, with $\| \tilde u \|_{s_0 +1} \leq C \e^{2+\d}$, $s_0 \geq 5$, $\d>0$.
Then for all $s \geq s_0$ the functions $B,V$ satisfy
$\| B \|_s + \| V \|_s 
\leq_s \| u \|_{s+1} 
\leq_s \e + \| \tilde u \|_{s+1}$, and 
\begin{equation}  \label{est Z-Id}
\| (\mZ - I) h \|_s 
\leq C(s_0) \| u \|_{s_0+1} \| h \|_s + C(s) \| u \|_{s+1} \| h \|_{s_0}. 
\end{equation}
Also $\tilde \mZ$ and $\tilde \mZ^{-1}$ satisfy \eqref{est Z-Id} (with possibly larger constants $C(s_0), C(s)$).

(ii) There is $\s \geq 2$ such that, 
if $\| \tilde u \|_{s_0 + \s + m} \leq C \e^{2+\d}$,  
then the operator $\tilde \mR_0 : W_2 \to R_2$ defined in \eqref{conj mL0 PP} satisfies for all $s \geq s_0$
\begin{equation}  \label{est mR0}
\| \tilde \mR_0 |D_x|^m h \|_s \leq_s \e^2 \| h \|_s + \e \| \tilde u \|_{s+\s+m} \| h \|_{s_0}.
\end{equation}
\end{lemma}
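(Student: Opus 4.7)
For part (i), I would first estimate $B$ and $V$ from their explicit definitions \eqref{def B V} as rational combinations of $\eta_x$, $\psi_x$, $G(\eta)\psi$. Tame Moser-type product and quotient estimates in $H^s(\T^2)$, together with the Dirichlet--Neumann bound \eqref{G estimate}, give $\|B\|_s + \|V\|_s \leq_s \|u\|_{s+1}$; splitting $u = \bar u_\e + \tilde u$ and using that $\bar u_\e$ is a trigonometric polynomial of amplitude $O(\e)$ then yields the claimed bound. Since $(\mZ - I)(\tilde\eta,\tilde\psi)^T = (0, B\tilde\eta)^T$, the estimate \eqref{est Z-Id} for $\mZ$ follows immediately via the standard tame product estimate. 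For $\tilde\mZ = \pp\mZ\pp$, the fact that $\pp$ is the identity on $W_2$ gives $(\tilde\mZ - I)h = \pp(\mZ - I)h$ on $W_2$, and \eqref{norm pp ff} preserves the bound. The estimate on $\tilde\mZ^{-1}$ then follows by a tame Neumann series, using that $\e$ is small enough to make $\tilde\mZ - I$ a contraction in the low-regularity norm $\|\cdot\|_{s_0}$.

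For part (ii), I would split $\tilde\mR_0 = \tilde\mZ^{-1}(\mA_1 + \mA_2 + \mA_3)$ with $\mA_1 := \pp\mZ\ff\mL_0\pp$, $\mA_2 := -\pp F'(u)\ff\mZ\pp$, $\mA_3 := \mR\tilde\mZ$. The prefactor $\tilde\mZ^{-1}$ is tame-bounded by part (i), while $\mA_3|D_x|^m$ is controlled by Lemma \ref{lemma:bif remainder} combined with part (i); the real work lies in $\mA_1|D_x|^m$ and $\mA_2|D_x|^m$. Two observations are crucial. First, $\ff$ is a rank-two projector onto $\mathrm{span}\{1,\cos(x)\}$ in $x$, so any number of $x$-derivatives applied to its image is bounded by a constant, as encoded in \eqref{norm pp ff}. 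Second, the constant-coefficient parts of $\mL_0$ (namely $\om\pa_t$, $|D_x|$, and $1 - \kappa\pa_{xx}$) and the linear part $L_\om$ of $F'(u)$ are all Fourier multipliers in $x$, which preserve the decomposition $\{j \leq 1\} \oplus \{j \geq 2\}$. Hence $\ff (\text{const part of } \mL_0)\pp = 0$ and $\ff L_\om \pp = 0$; also $\pp I \ff = 0$. After these cancellations, $\mA_1$ collapses into $\pp(\mZ - I)\ff(\mL_0 - \text{const})\pp$ and $\mA_2 = -\pp \mN'(u) \ff (\mZ - I) \pp$. In each product both surviving factors are $O(u)$, which by part (i) is $O(\e + \|\tilde u\|_{s_0+1})$, yielding the smallness $\e^2$ in \eqref{est mR0}.

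To extract the gain $|D_x|^m$, I would use the explicit integral form $\ff v(t,x) = \frac{1}{2\p}\int_\T v(t,y)\,dy + \frac{1}{\p}\cos(x)\int_\T v(t,y)\cos(y)\,dy$ and integrate by parts in $y$, moving all $x$-derivatives off $\pp h$ onto the variable coefficients $B, V, c-1, a-1, \mR_G(\eta)$ and onto the smooth basis functions $1, \cos(y)$. This trade costs no derivative on $h$ but requires $s + \s + m$ derivatives on $u$, for some absolute $\s \geq 5$, consistent with the tame structure of \eqref{est mR0}. The main obstacle I anticipate is careful bookkeeping for the Dirichlet--Neumann contributions $-G(\eta)$ in $\mL_0$ (and the one hidden in $\mN'(u)$): one must split $G(\eta) = |D_x| + \mR_G(\eta)$, use $\ff |D_x| \pp = 0$ to eliminate the unbounded piece, and then invoke \eqref{est mR G} for $\mR_G(\eta)$, whose tame estimate already encodes the smoothing effect at the cost of extra derivatives on $\eta$. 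Once this is in place, the remaining multiplication-plus-finite-order-differentiation terms are routine tame product estimates.
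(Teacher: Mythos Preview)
Your proposal is correct and follows essentially the same approach as the paper. The paper's own proof is extremely terse (four sentences), recording only the two key cancellations $\pp \mZ \ff = \pp(\mZ - I)\ff$, $\ff \mZ \pp = \ff(\mZ - I)\pp$ and the vanishing $\ff \pa_t \pp = \pp \pa_t \ff = 0$; your write-up simply unpacks these into the explicit decomposition $\mA_1 + \mA_2 + \mA_3$, spells out why the Fourier-multiplier parts of $\mL_0$ and $F'(u)$ disappear under $\ff(\cdot)\pp$ and $\pp(\cdot)\ff$, and makes explicit the integration-by-parts mechanism in the $\ff$-integral that transfers $|D_x|^m$ onto the coefficients.
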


\begin{proof}
\eqref{est Z-Id} holds because $\| (\mZ - I) h \|_s \leq \| B h \|_s$. 
For $\tilde \mZ$ use that $\| \pp h \|_s \leq \| h \|_s$, 
for $\tilde \mZ^{-1}$ use tame Neumann series. 
To get the estimate for $\tilde \mR_0$, note that in $\ff \mL_0 \pp$ and in $\pp F'(u) \ff$ there is no derivative $\pa_t$, because $\ff \pa_t \pp = 0 = \pp \pa_t \ff$. 
Also use that $\pp \mZ \ff = \pp (\mZ - I) \ff$ and $\ff \mZ \pp = \ff (\mZ - I) \pp$.
\end{proof}

\section{Changes of variables} 
\label{sec:changes}

We have arrived at the inversion problem for $\tilde\mL_0 + \tilde\mR_0$ defined in \eqref{conj mL0 PP}, where
\[
\mL_0 = \begin{pmatrix}
\om \pa_t + V \partial_x + V_x \  & \ - |D_x| - \mR_G \\ 
a - \kappa c \pa_{xx} - \kappa c_x \pa_x \ & \ \om \pa_t + V \partial_x 
\end{pmatrix},
\]
and $|D_x| + \mR_G = G(\eta)$. 
Our first goal is to obtain a constant coefficient in the term of order $\pa_{xx}$. 
To do that, we use two changes of variables: 
a space-independent change of the time variable (i.e.~a reparametrization of time), and 
a time-dependent change of the space variable. 

\medskip

We start with an elementary observation. 
Given $b_1, b_2, b_3, b_4$ functions of $(t,x)$, the system 
\[
\begin{pmatrix} 
b_1 & b_2 \\ 
b_3 & b_4 \end{pmatrix}
=
\begin{pmatrix} 
f & 0 \\ 
0 & g \end{pmatrix}
\begin{pmatrix} 
\lm_1 & \lm_2 \\ 
\lm_3 & \lm_4 \end{pmatrix}
\begin{pmatrix} 
p & 0 \\ 
0 & q \end{pmatrix}
\]
has solutions $f,g,p,q$, 
\[
f = \frac{b_1}{\lm_1 p}\,, \quad 
g = \frac{b_3}{\lm_3 p}\,, \quad
q = \frac{\lm_1 b_2 p}{b_1 \lm_2}\,, \quad
p = \text{any}\,,
\]
if and only if $b_i, \lm_i$ satisfy
\[
\frac{b_1 b_4}{b_2 b_3} \, = \frac{\lm_1 \lm_4}{\lm_2 \lm_3}\,.
\]
In particular, to have $\lm_i$ constant, it is necessary that $b_1 b_4 / b_2 b_3$ is a constant, and, at the leading order, this is the condition we want to obtain after changing the coefficients of $\mL_0$ by the changes of variables.
 
\medskip

First, consider the change of variable
$y = x + \b(t,x)$ $\Leftrightarrow$ $x = y + \tilde\b(t,y)$, 
where $\b(t,x)$ is a periodic function with $|\b_x| \leq 1/2$, and $\tilde\b(t,y)$ is given by the inverse diffeomorphism.  
Denote 
\[
(\mB h)(t,x) := h(t, x+\b(t,x)).
\]
Conjugation rules for $\mB$ are these:
$\mB^{-1} a \mB = (\mB^{-1} a)$, namely the conjugate of the multiplication operator $h \mapsto ah$ is the multiplication operator $h \mapsto (\mB\inv a)h$, and 
\begin{align*}
\mB\inv \pa_x \mB & = \{ \mB\inv(1 + \b_x) \} \pa_y, 
\\ 
\mB\inv \pa_{xx} \mB & = \{ \mB\inv(1 + \b_x) \}^2 \pa_{yy} + (\mB\inv \b_{xx}) \pa_y, 
\\
\mB\inv \pa_t \mB & = \pa_t + (\mB\inv \b_t) \pa_y, \\
\mB\inv |D_x| \mB & = \{ \mB\inv(1 + \b_x) \} |D_y| + \mR_{\mB},
\end{align*}
where $\mR_{\mB} := \{ \mB\inv(1 + \b_x) \} \pa_y (\mB\inv \mH \mB - \mH)$
is bounded in time, regularizing in space at expense of $\eta$, because 
\[ 
\ba
\mB\inv |D_x| \mB 
&= \mB\inv \pa_x \mH \mB 
= (\mB\inv \pa_x \mB) (\mB\inv \mH \mB)\\
&= \{ \mB\inv(1 + \b_x) \} \pa_y [ \mH + (\mB\inv \mH \mB - \mH) ],
\ea
\]
and $(\mB\inv \mH \mB - \mH)$ is bounded in time, regularizing in space at expense of $\eta$ (see Lemma~\ref{benj} in Section \ref{S:tame} of the Appendix). 
Thus
\[
\mL_1 := \mB\inv \mL_0 \mB 
= \begin{pmatrix}
\om \pa_t + a_1 \pa_y + a_2  \  & \ - a_3 |D_y| + \mR_1 \\ 
- \kappa a_4 \pa_{yy} - \kappa a_5 \pa_y + a_6 \ & \ \om \pa_t + a_1 \pa_y
\end{pmatrix},
\]
where the variable coefficients $a_i = a_i(t,y)$ are 
\begin{alignat*}{2}
a_1 & := \mB\inv [ \om \b_t + V(1 + \b_x)], \qquad &
a_2 & := \mB\inv(V_x),  \\
a_3 & := \mB\inv(1 + \b_x), \qquad &
a_4 & := \mB\inv [c(1 + \b_x)^2], \\
a_5 & := \mB\inv [c \b_{xx} + c_x (1 + \b_x)], \qquad &
a_6 & := \mB\inv a,
\end{alignat*}
and $\mR_1 := - \mR_\mB - \mB\inv \mR_G \mB$.

We want to conjugate $\tilde \mL_0 + \tilde \mR_0 = \pp \mL_0 \pp + \tilde \mR_0$. 
Let $\tilde \mB := \pp \mB \pp$.  
The operator $\tilde \mB$ maps $R_2 \to R_2$ and $W_2 \to W_2$ and it is invertible
(see Lemma \ref{lemma:BAPQ} below).
Using the equalities $\mL_0 \mB = \mB \mL_1$ and $I = \pp + \ff$, we get 
$$
\tilde \mB^{-1} (\tilde \mL_0 + \tilde \mR_0) \tilde \mB 
= \tilde \mL_1 + \tilde \mR_1, 
$$
with
\[
\tilde \mL_1 := \pp \mL_1 \pp, 
\quad
\tilde \mR_1
:= \tilde \mB^{-1} \{ \pp \mB \ff \mL_1 \pp - \pp \mL_0 \ff \mB \pp + \tilde \mR_0 \tilde \mB \}.
\]

Second, consider a reparametrization of time
$\t = t + \a(t)$ $\Leftrightarrow$ $t = \t + \tilde\a(\t)$,
where $\a(t)$ is a periodic function with $|\a'| \leq 1/2$, and $\tilde\a(\t)$ is given by the inverse diffeomorphism.  
Denote 
\[
(\mA h)(t,y) := h(t+\a(t), y).
\]
Conjugation rules for $\mA$ are these:
$\mA^{-1} a \mA = (\mA^{-1} a)$, 
namely the conjugate of the multiplication operator $h \mapsto ah$ is the multiplication operator $h \mapsto (\mA^{-1} a)h$, and 
\[
\mA^{-1} \pa_y \mA = \pa_y, 
\quad 
\mA^{-1} |D_y| \mA = |D_y|, 
\quad
\mA^{-1} \pa_t \mA = \{ \mA^{-1}(1 + \a') \} \pa_\t.
\]
Thus
\[
\mL_2 := 
\mA^{-1} \mL_1 \mA = \begin{pmatrix}
\om a_7 \pa_\t + a_8 \partial_y + a_9 \  & \ - a_{10} |D_y| + \mR_2 \\ 
- \kappa a_{11} \pa_{yy} - \kappa a_{12} \pa_y + a_{13} \ & \ \om a_7 \pa_\t + a_8 \partial_y
\end{pmatrix},
\]
where $\mR_2 = \mA^{-1} \mR_1 \mA$ and where the coefficients $a_i = a_i(\t,y)$ are
\[
a_7 := \mA^{-1}(1 + \a'), 
\qquad 
a_k := \mA^{-1} (a_{k-7}), \quad k=8,\ldots,13.
\]
Note that $a_7(\t)$ does not depend on $y$.

We want to conjugate $\tilde \mL_1 + \tilde \mR_1 
= \pp \mL_1 \pp + \tilde \mR_1$. 
The transformation $\mA$ maps $W_2 \to W_2$ and $R_2 \to R_2$, and commutes with $\pp$. 
Hence $\tilde \mA := \pp \mA \pp = \mA \pp$ is the restriction of $\mA$ to the subspace $W_2$ or $R_2$, and we get
\[
\tilde \mA^{-1} (\tilde \mL_1 + \tilde \mR_1) \tilde \mA 
= \tilde \mL_2 + \tilde \mR_2, 
\qquad
\tilde \mL_2 := \pp \mL_2 \pp, 
\quad
\tilde \mR_2
:= \tilde \mA^{-1} \tilde \mR_1 \tilde \mA.
\]

Following the elementary observation above, we look for $\a, \b$ such that 
\begin{equation}  \label{proportional}
\mult a_7^2 = a_{10} a_{11}
\end{equation} 
for some constant $\mult \in \R$. 
Since $a_7 = \mA^{-1} (1+\a')$, $a_{10} = \mA^{-1} a_3$, $a_{11} = \mA^{-1} a_4$, and $\mA^{-1}$ is bijective, \eqref{proportional} is equivalent to 
\begin{equation}  \label{proportional 2}
\mult (1 + \a')^2 = a_3 a_4.
\end{equation} 
Since $a_3 = \mB\inv (1+\b_x)$, $a_4 = \mB\inv[c(1+\b_x)^2]$, and $\a' = \mB\inv(\a')$, 
\eqref{proportional 2} is equivalent to 
\begin{equation}  \label{proportional 3}
\mult (1 + \a')^2 = c(1+\b_x)^3,
\end{equation} 
namely $\mult^{1/3} \, (1 + \a'(t))^{2/3} \, c(t,x)^{-1/3} = 1 + \b_x(t,x)$.
Integrating this equality in $dx$, the term $\b_x$ disappears because it has zero mean. 
Therefore 
\[
1 + \a'(t) = \mult^{-1/2} \, \Big( \meanT c(t,x)^{-1/3} \, dx \Big)^{-3/2}.
\]
Integrating the last equality in $dt$ determines the constant $\mult$: 
\begin{equation} \label{mu}
\mult = \bigg\{ \meanT \Big( \meanT c(t,x)^{-1/3} \, dx \Big)^{-3/2} \, dt \bigg\}^{2}
\end{equation}
and, by \eqref{def c}, $c^{-1/3} = (1 + \eta_x^2)^{1/2}$. 

By construction, $[\mult^{-1/2} \, ( \frac{1}{2\p} \int c^{-1/3} \, dx )^{-3/2} - 1]$ has zero average in $t$, 
therefore we can fix $\a(t)$ as
\begin{equation} \label{alpha}
\a = \pa_t\inv \Big[ \mult^{-1/2} \, \Big( \meanT c(t,x)^{-1/3} \, dx \Big)^{-3/2} - 1 \Big],
\end{equation}
where $\pa_t\inv$ is the Fourier multiplier 
\[
\pa_t\inv \, e^{ilt} = \frac{1}{il} \, e^{ilt} \quad \forall l \in \Z \setminus \{ 0 \}, 
\qquad 
\pa_t\inv \, 1 = 0,
\]
namely, for any function $f$, $\pa_t\inv f$ is the primitive of $f$ in $t$, with zero average in $t$.

By construction, $[\mult^{1/3} (1 + \a')^{2/3} c^{-1/3} - 1]$ has zero average in $x$, therefore we can fix $\b(t,x)$ as
\begin{equation} \label{beta}
\b = \pa_x\inv \big[ \mult^{1/3}(1 + \a'(t))^{2/3} c(t,x)^{-1/3} - 1 \big],
\end{equation}
where $\pa_x\inv$ is defined in the same way as $\pa_t\inv$.  
With these choices of $\a,\b$, \eqref{proportional} holds, with $\mult$ given in \eqref{mu}.
We have found formulae 
\begin{equation}  \label{formula beta alpha}
\ba
&1 + \b_x = \sqrt{1 + \eta_x^2} \,\, \Big( \frac{1}{2\p} \int_\T \sqrt{1 + \eta_x^2} \, dx \Big)^{-1}, \\
&1 + \a'(t) = \frac{1}{\sqrt \mult} \, \Big( \meanT \sqrt{1 + \eta_x^2} \, dx \Big)^{-3/2}.
\ea
\end{equation}

\begin{remark} 
Since $c \in X$, it follows that $\a = \text{odd}(t), \text{even}(x)$ and $\b = \text{even}(t), \text{odd}(x)$.
As a consequence, both the transformations $\mA$ and $\mB$ preserve parities, namely they map  
$X \to X$ and $Y \to Y$, and $\mA^{-1}, \mB\inv$ do the same. 
Therefore 
\[
\ba
&a_1, a_8 = \text{odd}(t), \odd(x); \quad
a_2, a_9 \in Y; \quad
a_3, a_4, a_6, a_7, a_{10}, a_{11}, a_{13}  \in X; \\
&a_5, a_{12} = \even(t), \odd(x). 
\ea
\]
\end{remark}

We follow the elementary observation above, with $\lm_1 = \lm_2 = \lm_4 = 1$, $\lm_3 = \mult$, 
$p = 1$.
Let
\[
\ba
&P := \begin{pmatrix}
a_7 & 0 \\ 
0 & a_{11} \mult^{-1} \end{pmatrix}, 
\quad  
P^{-1} = \begin{pmatrix}
a_7\inv  & 0 \\ 
0 &  a_{11}\inv \mult \end{pmatrix}, 
\\
&Q := \begin{pmatrix}
1  & 0 \\
0 &  a_{10}\inv a_7 \end{pmatrix},
\quad 
Q^{-1} = \begin{pmatrix}
1  & 0 \\
0 &  a_{10} a_7\inv \end{pmatrix},
\ea
\]
and calculate 
\begin{equation}  \label{mL3}
\mL_3 := P\inv \mL_2 Q
= \begin{pmatrix} 
\om \pa_\t + a_{14} \pa_y + a_{15} 
& 
- |D_y| + a_{16} \mH + \mR_3
\vspace{4pt} \\
- \mult \kappa \pa_{yy} + \mult \kappa a_{17} \pa_y + \mult a_{18} \quad
&
\om \pa_\t + a_{14} \pa_y + a_{19} 
\end{pmatrix},
\end{equation}
where, using \eqref{proportional},
\begin{alignat*}{3}
a_{14} & := \frac{a_8}{a_7}\,, 
\quad \quad & 
a_{15} & := \frac{a_9}{a_7}\,,
\quad \quad &
a_{16} & := - \frac{a_{10}}{a_7} \, \Big( \frac{a_7}{a_{10}} \Big)_y\,,
\vspace{4pt} \\
a_{17} & := - \frac{a_{12}}{a_{11}}\,,
\quad \quad & 
a_{18} & := \frac{a_{13}}{a_{11}}\,,
\quad\quad  &
a_{19} & := \mult \om \frac{a_7}{a_{11}} \, \Big( \frac{a_7}{a_{10}} \Big)_\t 
+ \mult \frac{a_8}{a_{11}} \, \Big( \frac{a_7}{a_{10}} \Big)_y 
\end{alignat*}
and
\begin{equation}  \label{def mR3}
\mR_3 :=  
- \frac{a_{10}}{a_7} \, \pa_y \Big[ \mH , \, \frac{a_7}{a_{10}} \Big] 
+ \frac{1}{a_7} \, \mR_2 \frac{a_7}{a_{10}} \,.
\end{equation}
The commutator $[\mH,f]$ of the Hilbert transform $\mH$ and the multiplication by any function $f$ is bounded in $\t$ and regularizing in $y$ at expense of $f$ 
(see Lemma~\ref{benj} in Section \ref{S:tame} of the Appendix). 
To calculate \eqref{mL3} we have used \eqref{proportional}.

We want to conjugate $\tilde \mL_2 + \tilde \mR_2 = \pp \mL_2 \pp + \tilde \mR_2$. 
Let $\tilde P := \pp P \pp$ and $\tilde Q := \pp Q \pp$. 
Using the equalities $P \mL_3 = \mL_2 Q$ and $I = \pp + \ff$, we get
$$
\tilde P^{-1} (\tilde \mL_2 + \tilde \mR_2) \tilde Q
= \tilde \mL_3 + \tilde \mR_3, 
$$
with
\begin{equation}  \label{def mL3 mR3}
\tilde \mL_3 := \pp \mL_3 \pp, 
\quad
\tilde \mR_3
:= \tilde P^{-1} \{ \pp P \ff \mL_3 \pp - \pp \mL_2 \ff Q \pp + \tilde \mR_2 \tilde Q \}.
\end{equation}
Thus we have conjugate 
\[
\tilde \mL_0 + \tilde \mR_0 
= \tilde \mB \tilde \mA \tilde P (\tilde \mL_3 + \tilde \mR_3) \tilde Q^{-1} 
\tilde \mA^{-1} \tilde \mB^{-1},
\]
and the coefficients of $\pa_\t, \pa_{yy}, |D_y|$ in $\mL_3$ are constants. 

\begin{remark} Using the parities of $a_i$, $i \leq 13$, it follows that  
\[
a_{14} = \odd(\t), \odd(y); \quad 
a_{15}, a_{19} \in Y; \quad
a_{16}, a_{17} = \even(\t), \odd(y); \quad
a_{18} \in X. \qedhere
\]
\end{remark}

\begin{lemma} \label{lemma:BAPQ}
There is $\s \geq 2$ with the following properties.
(i) Let $u = \bar u_\e + \tilde u$, with $\| \tilde u \|_{s_0 + \s} \leq C \e^{2+\d}$, $s_0 \geq 5$, $\d>0$.
Then all the operators $\tilde \mB, \tilde \mA, \tilde P, \tilde Q$ map $W_2 \to W_2$ and $R_2 \to R_2$, and they are all invertible. 
The inverse operators $\tilde \mB^{-1}, \tilde \mA^{-1}, \tilde P^{-1}, \tilde Q^{-1}$ also map $W_2 \to W_2$ and $R_2 \to R_2$. 
All theses operators satisfy, for all $s \geq s_0$, 
\begin{equation}  \label{est BAPQ}
\| A h \|_s 
\leq_s \| h \|_s + \| \tilde u \|_{s+\s} \| h \|_{s_0}, 
\qquad 
A \in \{ \tilde \mB, \tilde \mA, \tilde P, \tilde Q, \tilde \mB^{-1}, \tilde \mA^{-1}, \tilde P^{-1}, \tilde Q^{-1} \}.
\end{equation}
(ii) The functions $a_i(\t,y)$, $i=14, \ldots, 19$ satisfy
\[
\| a_{14} \|_s + \| a_{15} \|_s + \| a_{16} \|_s + \| a_{17} \|_s + 
\| a_{18} - 1 \|_s + \| a_{19} \|_s 
\leq_s \e + \| \tilde u \|_{s + \s}.
\]
(iii) If $\| \tilde u \|_{s_0 + \s + m} \leq C \e^{2+\d}$ 
for some $m \geq 0$, then for all $s \geq s_0$ 
the operator $\tilde \mR_3 : W_2 \to R_2$ defined in \eqref{def mL3 mR3} satisfies the same estimate \eqref{est mR0} as $\tilde \mR_0$, 
and the operator $\mR_3$ defined in \eqref{def mR3} satisfies 
\begin{equation}  \label{est mR3}
\| \mR_3 |D_x|^m h \|_s \leq_s \e \| h \|_s + \| \tilde u \|_{s+\s+m} \| h \|_{s_0}.
\end{equation}
\end{lemma}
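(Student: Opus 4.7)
\smallskip

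\noindent \textbf{Proof plan for Lemma \ref{lemma:BAPQ}.}
The proof is organized in four steps, proceeding from the raw coefficients to the transformation operators, then to the transformed coefficients $a_{14},\ldots,a_{19}$, and finally to the remainder operators.

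First, I would build tame estimates for the ingredients $B, V, c, a, \b, \a$. By part (i) of Lemma \ref{lemma:remainder mR0} we already have $\| B \|_s + \| V \|_s \leq_s \e + \| \tilde u \|_{s+1}$. The coefficient $c = (1+\eta_x^2)^{-3/2}$ satisfies $\| c - 1 \|_s \leq_s \| \eta \|_{s+1}^2$ by the Moser composition estimate, and likewise for powers $c^{\pm 1/3}$. Since $a = 1 + \om B_t + V B_x$ we obtain $\| a - 1 \|_s \leq_s \e + \| \tilde u \|_{s+\s_0}$ for some small $\s_0$. Formulae \eqref{mu}, \eqref{alpha}, \eqref{beta} (or the compact form \eqref{formula beta alpha}) then give $|\mult - 1| = O(\e)$ and
\[
\| \a \|_s + \| \b \|_s \leq_s \e + \| \tilde u \|_{s+\s_1}
\]
for a suitable $\s_1$ (the inverse $\pa_t\inv, \pa_x\inv$ in \eqref{alpha}, \eqref{beta} costs nothing because the argument has zero average). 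Moreover $|\b_x| \leq 1/2$ and $|\a'| \leq 1/2$ follow from $\| \tilde u \|_{s_0+\s} \leq C\e^{2+\d}$ upon choosing $\e$ small.

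Second, I would prove the tame estimate \eqref{est BAPQ} and the invertibility on $W_2, R_2$. For $\mA, \mB$ the standard change-of-variable estimates (Lemma 2 in Section \ref{S:tame} type arguments) give $\| \mA h \|_s, \| \mB h \|_s \leq_s \| h \|_s + (\| \a \|_s + \| \b \|_s)\| h \|_{s_0}$, and similarly for the inverse diffeomorphisms (whose defining functions $\tilde \a, \tilde \b$ have the same tame bounds). For the multiplication operators $P, Q$, each entry is of the form $1 + O(\e)$ plus terms tame in $\tilde u$, so multiplication followed by projection is controlled by the standard Moser product rule. The parity discussion just above Lemma \ref{lemma:BAPQ} shows $\mA, \mB, P, Q$ all preserve $X \times Y$, and $\pp$ preserves $X \times Y$ as well, hence $\tilde\mA, \tilde\mB, \tilde P, \tilde Q$ map $W_2 \to W_2$ and $R_2 \to R_2$. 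For invertibility, note that $\tilde\mA = \mA \pp = \pp \mA$ since $\mA$ touches only $\t$, so invertibility on $W_2, R_2$ is immediate. For $\tilde\mB = \pp + \pp(\mB - I)\pp$ and $\tilde P = I_{W_2} + \pp(P - I)\pp$ and similarly $\tilde Q$, each is of the form $I + K$ on the subspace with $\| K \| = O(\e)$, so a tame Neumann series yields the inverse and its bound.

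Third, estimates on $a_{14},\ldots,a_{19}$ follow by inspection from the explicit formulae defining them. The numerators involve $a_8, a_9, a_{12}, a_{13}$ (which are $\mA^{-1}$-pullbacks of quantities estimated in Step~1), and the denominators $a_7, a_{10}, a_{11}$ are all of the form $1 + O(\e) + O(\| \tilde u \|_{\cdot})$ with the appropriate loss of derivatives. The Moser estimate for $1/(1+f)$ and the product estimate then give
\[
\| a_k \|_s \leq_s \e + \| \tilde u \|_{s+\s} \qquad (k=14,\ldots,17,19),
\]
and likewise $\| a_{18} - 1 \|_s$ because $a_{13}/a_{11} = 1 + O(\e)$ to leading order.

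Fourth, the remainders. For $\mR_3$ defined in \eqref{def mR3}: the first term $-(a_{10}/a_7) \pa_y [\mH, a_7/a_{10}]$ is regularizing of any order thanks to the classical estimate on commutators $[\mH, f]$ (Lemma \ref{benj}), which turns one derivative into a gain and leaves tame dependence on $f$; the second term $(1/a_7) \mR_2 (a_7/a_{10})$ inherits smoothing from $\mR_2 = \mA^{-1}\mR_1 \mA$, where $\mR_1 = -\mR_\mB - \mB^{-1}\mR_G\mB$ is smoothing by the estimate on $\mB\inv \mH \mB - \mH$ (Lemma \ref{benj}) and by the estimate \eqref{est mR G} on $\mR_G$. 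Composition with $|D_x|^m$ costs only derivatives on the coefficient $\b$ (i.e.\ on $\eta$), giving \eqref{est mR3}. For $\tilde\mR_3$ defined in \eqref{def mL3 mR3}, the decisive point is that $\ff$ projects onto a two-dimensional space (modes $j=0, 1$), so $\ff \mL_3 \pp$ and $\mL_2 \ff Q \pp$ have rank two in the space variable; the unbounded $\pa_\t$ part is harmless because $\ff$ commutes with $\pa_\t$ and then one uses $\pp h \in W_2$, while the $\pa_y, \pa_{yy}, |D_y|$ terms either hit a trigonometric polynomial of degree one in $y$ (so derivatives cost nothing) or are projected away by $\pp$; finally $\tilde\mR_2 \tilde Q$ is controlled by $\tilde\mR_0$ via Steps~1--3 above and Lemma \ref{lemma:remainder mR0}(ii). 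Combining these gives \eqref{est mR0} for $\tilde \mR_3$, with $m$-loss of derivatives propagating through Step 1 to yield the $\| \tilde u \|_{s+\s+m}$ factor.

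The main technical obstacle is the estimate on $\tilde \mR_3$, where one has to carefully track how $\ff$ and $\pp$ interact with a chain of changes of variables and multiplication operators, and to separate the genuinely regularizing contributions (the commutators, and $\mR_G$) from the low-rank contributions produced by $\ff$.
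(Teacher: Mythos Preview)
Your overall scheme matches the paper's proof, and steps 1, 3, 4 are essentially what the paper does (the paper is terser, but the content is the same: composition/Moser estimates for the $a_i$, and Lemma~\ref{benj} plus \eqref{est mR G} for the remainders). Your treatment of $\tilde\mA$, $\tilde P$, $\tilde Q$ is also correct.

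There is, however, a genuine gap in your invertibility argument for $\tilde\mB$. You write $\tilde\mB = \pp + \pp(\mB - I)\pp$ and claim the perturbation $K = \pp(\mB - I)\pp$ has $\|K\| = O(\e)$, so Neumann series applies. But $\mB - I$ is not bounded on $H^s$: since $(\mB h)(t,x) = h(t, x+\b(t,x))$, one has $(\mB - I)h \sim \b\,\pa_x h$, so $\mB - I$ is of order one in $\pa_x$ (small in size, $O(\e^2)$, but unbounded). The projector $\pp$ does not cure this, because $\pp$ keeps all spatial frequencies $|j|\ge 2$. Hence $\pp(\mB - I)\pp$ is genuinely unbounded on $H^s \cap W_2$, and your Neumann series does not converge.

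The paper's route is different and avoids this issue. It uses three ingredients: (a) the full operator $\mB$ is invertible on $X\to X$, $Y\to Y$ (it is a diffeomorphism of the torus); (b) the restriction $\ff\mB\ff$ is invertible on the low-frequency subspace, because there the derivative loss of $\mB - I$ is harmless ($\ff\pa_x$ is bounded, the $x$-Fourier support being $\{0,1\}$), so Neumann series does apply to $\ff\mB\ff = I + \ff(\mB-I)\ff$; (c) from (a) and (b) one concludes the invertibility of $\pp\mB\pp$ by a standard $2\times 2$ block argument (invertibility of the full operator plus invertibility of one diagonal block forces invertibility of the Schur complement, hence of the other diagonal block). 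You should replace your Neumann-series argument for $\tilde\mB$ by this block argument; the rest of your proposal then goes through.
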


\begin{proof} ($i$) 
The proof of the invertibility of $\tilde \mB$ is based on these arguments: 
$\mB$ is invertible of $X \to X$ and $Y \to Y$; 
$\mB - I$ is of order $O(\b) = O(\e^2)$ in size and of order 1 in $\pa_x$, 
therefore $\ff (\mB - I) \ff$ is small and bounded (because $\ff \pa_x$ is bounded). 
As a consequence, $\ff \mB \ff$ is invertible by Neumann series. 
Then $\pp \mB \pp$ is invertible by a standard argument of linear systems.
The invertibility of $\tilde \mA$ is trivial, because $\tilde \mA h = \mA h$ for all $h \in W_2$ or $h \in R_2$, and $\mA$ is invertible.
The invertibility of $P,Q, \tilde P, \tilde Q$ follows by Neumann series. 

($ii$) Composition estimates for all $a_1, \ldots, a_{19}$.
 
($ii$) The estimate for $\tilde \mR_3$ is proved similarly as for $\tilde \mR_0$, see Lemma \ref{lemma:remainder mR0}. 
The estimate for the term $\mR_G$ in $\mR_3$ comes from \eqref{est mR G}.
\end{proof}

\section{Symmetrization of top order}
\label{sec:symm top}

Let $g : \R \to \R$ be a $C^\infty$ function such that $g(\xi) > 0$ for all $\xi \in \R$, and 
\begin{equation}  \label{symbol g}
g(\xi) := \Big( \frac{1 + \kappa \xi^2}{|\xi|} \Big)^{\frac12} 
\quad \forall |\xi| \geq 2/3; \qquad 
g(\xi) := 1 \quad \forall |\xi| \leq 1/3.
\end{equation}
Let $\Lm$ be the Fourier multiplier of symbol $g$. 
Let
\[
S = \begin{pmatrix} 1 & 0 \\ 
0 & \mult^{1/2} \Lm \end{pmatrix}, 
\quad
S\inv = \begin{pmatrix} 1 & 0 \\ 
0 & \mult^{-1/2} \Lm^{-1} \end{pmatrix},
\]
where $\mult$ is the real constant in \eqref{mL3}, so that 
\begin{equation}  \label{conj S}
\ba
&\mL_3^+ := S\inv \mL_3 S
= \begin{pmatrix} 
A_3 & \mult^{1/2} B_3 \Lm
\vspace{4pt} \\
\mult^{-1/2} \Lm^{-1} C_3 & \Lm^{-1} D_3 \Lm
\end{pmatrix}
=: \begin{pmatrix} A_3^+ & B_3^+ \\ 
C_3^+ & D_3^+ \end{pmatrix},
\\
&\mL_3 = \begin{pmatrix} A_3 & B_3 \\ 
C_3 & D_3 \end{pmatrix}, 
\qquad 
\ea
\end{equation}
where, in short, $A_3, B_3, C_3, D_3$ are the entries of $\mL_3$ (see \eqref{mL3}).
Recall the formula for the composition of the Fourier multiplier $\Lm$ and any multiplication operator $h \mapsto a h$:
\begin{equation}  \label{symbol expansion}
\Lm (a u) \sim \sum_{n=0}^\infty \frac{1}{i^n n!} \, (\pa_x^n a)(x) \, \, \mathrm{\Op}(\pa^n_\xi g) u,
\end{equation}
where $\Op(\pa^n_\xi g)$ is the Fourier multiplier with symbol $\pa^n_\xi g(\xi)$. 
Thus $A_3^+ = A_3 = \om \pa_\t + a_{14} \pa_y + a_{15}$, 
\begin{align}
B_3^+ & = - \sqrt{\mult} |D_y|^{1/2} (1 - \kappa \pa_{yy})^{1/2} 
+ \sqrt{\mult} a_{16} \mH |D_y|^{-1/2} (1 - \kappa \pa_{yy})^{1/2} \notag \\
&\quad + \sqrt{\mult} \, \mR_3 \Lambda
\label{B3+}
\\
C_3^+ & = \sqrt{\mult} \la D_y \ra^{1/2} (1-\kappa \pa_{yy})^{1/2} 
	+ \sqrt{\mult} \, \kappa a_{17} \Lambda^{-1} \pa_y 
	+ \sqrt{\mult} \, \kappa (a_{17})_y (\Lambda^{-1})_{1} \pa_y 
\notag \\ & \quad 
	+ \sqrt{\mult} (a_{18} - 1) \Lambda^{-1} 
	+ \mR_{3,C}^+ 
\label{C3+}
\\
D_3^+ & = \om \pa_\t + a_{14} \pa_y + (a_{14})_y (\Lm^{-1})_1 \Lm \pa_y 
	+ (a_{14})_{yy} (\Lm^{-1})_2 \Lm \pa_y 
	+ a_{19} \notag\\
	&\quad+ (a_{19})_y (\Lm^{-1})_1 \Lm + \mR_{3,D}^+ 
\label{D3+}
\end{align}
where 
\begin{itemize}
\item 
$\la D_y \ra$ is the Fourier multiplier with symbol $\la \xi \ra = \max\{ 1, |\xi|\}$; 

\item
$(\Lambda^{-1})_{1} := \Op(-i \pa_\xi(1/g))$ and $(\Lambda^{-1})_{2} := \Op(- \pa^2_\xi(1/g))$
are the terms corresponding to $n=1$ and $n=2$ respectively in the expansion 
\eqref{symbol expansion} of $\Lm^{-1}$;  

\item
the remainder $\mR_{3,C}^+$ is an operator of order $O(|D_y|^{-3/2})$ regarding derivatives, 
and of size $\| (a_{17})_{yy} \| + \| (a_{18})_y \|$ (therefore $O(\e)$) regarding the amplitude;

\item
the remainder $\mR_{3,D}^+$ is an operator of order $O(|D_y|^{-2})$ regarding derivatives, 
and of size $\| (a_{14})_{yy} \| + \| (a_{19})_y \|$ (therefore $O(\e)$) regarding the amplitude.
\end{itemize}

Note that, regarding size, 
\begin{equation} \label{def T}
\pp \mL_3^+ \pp = \begin{pmatrix} 
\om \pa_t & - T \\ 
T & \om \pa_t \end{pmatrix} \pp
+ O(\e), \quad 
T := \sqrt{\mult} |D_y|^{1/2} (1 - \kappa \pa_{yy})^{1/2}.
\end{equation}
For $|j| \geq 1$, let 
\begin{equation}  \label{Taylor capillary}
r_j := \sqrt{ 1+\kappa j^2 } - \sqrt\kappa |j| - \frac{1}{2 \sqrt{\kappa} |j|}\,.
\end{equation}
Then $|r_j| \leq C_\kappa$ for all $|j| \geq 1$, and, in particular, 
$|r_j| \leq C_\kappa |j|^{-3}$ for all $|j| \geq 2 \kappa^{-1/2}$, 
for some constant $C_\kappa > 0$ depending on $\kappa$. 
Therefore, for all $h = \sum_{|j| \geq 1} h_j e^{ijy}$, 
\begin{equation}
(1 - \kappa \pa_{yy})^{1/2} h 
= \sqrt{\kappa} |D_y| h + \frac{1}{2 \sqrt\kappa} |D_y|^{-1} h + \mR_\kappa h
\label{basic decomp}
\end{equation}
where the remainder $\mR_\kappa$ has order $O(|D_y|^{-3})$. 
Similarly, we expand 
\begin{align*}
\Lm^{-1} & = \frac{1}{\sqrt{\kappa}} |D_y|^{-1/2} - \frac{1}{2 \kappa^{3/2}} |D_y|^{-5/2} + O(|D_y|^{-9/2}), 
\\
(\Lm^{-1})_1 & = - \frac{1}{2 \sqrt\kappa} |D_y|^{-3/2} \mH + O(|D_y|^{-7/2}), 
\\
(\Lm^{-1})_2 &= - \frac{3}{4 \sqrt\kappa} |D_y|^{-5/2} + O(|D_y|^{-9/2}). 
\end{align*}
Using the equality $\pa_y = - |D_y| \mH$, we get
\begin{align}
B_3^+ & = - T + \sqrt{\mult \kappa} \, a_{16} |D_y|^{1/2} \mH + O(|D_y|^{-3/2}) 
\label{B3+ bis}
\\
C_3^+ & = T + \pi_0
	- \sqrt{\mult \kappa} \, a_{17} |D_y|^{1/2} \mH 
	+ a_{25} |D_y|^{-1/2} +  O(|D_y|^{-3/2}) 
\label{C3+ bis}
\\
D_3^+ & = \om \pa_\t + a_{14} \pa_y + a_{27} + a_{28} |D_y|^{-1} \mH +  O(|D_y|^{-3/2}) 
\label{D3+ bis}
\end{align}
where $T$ is defined in \eqref{def T}, $\pi_0$ is the space average (i.e. $\pi_0(h) := \frac{1}{2\p} \int_\T h \, dx$), 
\[
\ba
a_{25} &:= \frac{\sqrt\mult}{\sqrt \kappa}\,(a_{18}-1) - \frac{1}{2} \sqrt{\mult \kappa} \, (a_{17})_y,
\qquad
a_{27} := a_{19} - \frac12 (a_{14})_y, 
\\
a_{28} &:= \frac34 (a_{14})_{yy} - \frac12 (a_{19})_y,
\ea
\]
and the remainders are of order $ O(|D_y|^{-3/2})$ regarding derivatives, and of size $O(\e)$. 
Let $\tilde S := \pp S \pp = \pp S = S \pp$, and note that $\tilde S^{-1} = S^{-1} \pp$. 
Define
\begin{equation} \label{def mL4}
\mL_4 := \begin{pmatrix} A_4 & B_4 \\ C_4 & D_4 \end{pmatrix},
\end{equation}
\begin{align*}
A_4 & := \om \pa_\t + a_{14} \pa_y + a_{15}, \\ 
B_4 & := - T + \sqrt{\mult \kappa} \, a_{16} |D_y|^{1/2} \mH, \\
C_4 & := T - \sqrt{\mult \kappa} \, a_{17} |D_y|^{1/2} \mH + a_{25} |D_y|^{1/2}, \\
D_4 & := \om \pa_\t + a_{14} \pa_y + a_{27} + a_{28} |D_y|^{-1} \mH.
\end{align*}
Thus 
\begin{equation}  \label{def mL4 mR4}
\ba
&\tilde S^{-1} ( \tilde \mL_3 + \tilde \mR_3 ) \tilde S
= \tilde \mL_4 + \tilde \mR_4, 
\\
&\tilde \mL_4 := \pp \mL_4 \pp,
\quad
\tilde \mR_4 := \pp (\mL_3^+ - \mL_4) \pp + \tilde S^{-1} \tilde \mR_3 \tilde S.
\ea
\end{equation}
Unlike in $C_3^+$, the average term $\pi_0$ is not present in $C_4$ because $\pi_0 \pp = 0$.
The remainder $\tilde \mR_4 : W_2 \to R_2$ has order $O(|D_y|^{3/2})$ regarding derivatives, and size $O(\e)$ regarding amplitude.

\begin{remark} By the parities of $a_i$, $i \leq 19$, one has
$a_{25} \in X$, $a_{27} \in Y$, $a_{28} = \odd(t), \odd(x)$.
\end{remark}

\begin{lemma} \label{lemma:S}
(i) The Fourier multiplier $S$ is an operator of order $1/2$, with $\| S h \|_s \leq C \| h \|_{s+(1/2)}$, for all $s \in \R$. $S$ is invertible, and $\| S^{-1} h \|_s \leq C \| h \|_{s}$, for all $s$. Moreover 
$\tilde S$, $\tilde S^{-1}$ satisfy the same estimates as $S$, $S^{-1}$ respectively.

(ii) There is $\s \geq 2$ such that, if $u = \bar u_\e + \tilde u$, with $\| \tilde u \|_{s_0 + \s} \leq C \e^{2+\d}$, $s_0 \geq 5$, $\d>0$, then for all $s \geq s_0$ 
\[
\| a_{25} \|_s + \| a_{27} \|_s + \| a_{28} \|_s 
\leq_s \e + \| \tilde u \|_{s + \s}.
\]
The operator $\tilde \mR_4 : W_2 \to R_2$ defined in \eqref{def mL4 mR4} satisfies 
the same estimate \eqref{est mR3} as $\mR_3$, with $m=3/2$. 
\end{lemma}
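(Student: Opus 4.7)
Part (i) reduces to standard Fourier multiplier theory. The symbol $g$ defined in \eqref{symbol g} is $C^\infty$ on $\R$ (smooth by construction near zero), strictly positive, and behaves like $\sqrt{\kappa}\,|\xi|^{1/2}$ at infinity together with its derivatives. Hence $\Lm = \Op(g)$ is an elliptic Fourier multiplier of order $1/2$, bounded $H^{s+1/2}(\T^2) \to H^s(\T^2)$, and $\Lm^{-1} = \Op(1/g)$ is a Fourier multiplier of order $-1/2$, bounded $H^s\to H^s$ for every $s\in\R$. Combined with the trivial bounds on the identity entry of $S$, this gives the claimed estimates for $S$ and $S^{-1}$. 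Since $\pp$ is an orthogonal projection in $H^s(\T^2)$ commuting with every Fourier multiplier in $x$, the operators $\tilde S = \pp S\pp$ and $\tilde S^{-1} = S^{-1}\pp$ inherit the same bounds.

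For the coefficient estimates in (ii), I will simply unfold the definitions of $a_{25},a_{27},a_{28}$ as polynomial combinations of $a_{14},a_{17},a_{18},a_{19}$ and their first- and second-order $y$-derivatives with $\kappa$- and $\mult$-dependent constants, and apply Lemma \ref{lemma:BAPQ}(ii) together with the tame product and derivative estimates for $\|\cdot\|_s$. Enlarging $\s$ by at most $2$ absorbs the losses from the derivatives.

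The core of the proof is the remainder estimate on $\tilde\mR_4 = \pp(\mL_3^+ - \mL_4)\pp + \tilde S^{-1}\tilde\mR_3\tilde S$. I split into the two summands. For the first, I combine the formulas \eqref{B3+}--\eqref{D3+} with the expansion \eqref{basic decomp} of $(1-\kappa\pa_{yy})^{1/2}$ and the analogous expansions of $\Lm^{-1}$, $(\Lm^{-1})_1$, $(\Lm^{-1})_2$ in negative powers of $|D_y|$, truncated at order $-3/2$. A direct comparison with $B_4, C_4, D_4$ in \eqref{def mL4} shows that $\mL_3^+ - \mL_4$ is a finite sum of terms of two types: (a) $a(\t,y)\cdot M$ where $a$ is one of $a_{14},\ldots,a_{19}$ or a $y$-derivative thereof (of size $\e + \|\tilde u\|_{s+\s}$ by the previous step) and $M$ is a Fourier multiplier of order $\le -3/2$ (including $|D_y|^{-3/2}$, $|D_y|^{-3/2}\mH$, $|D_y|^{-1}\mH$, and the smoothing tails $\mR_\kappa$, $\mR_{3,C}^+$, $\mR_{3,D}^+$); and (b) the single ``large'' contribution $\sqrt\mult\,\mR_3\Lm$ from \eqref{B3+}. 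For every $a\cdot M$, composition with $|D_x|^{3/2}$ produces an operator of order $\le 0$, and the tame product rule gives the required bound of size $\e$. For $\mR_3\Lm$, I commute the Fourier multipliers to write $\Lm|D_x|^{3/2} = |D_x|^2 \cdot (\Lm|D_x|^{-1/2})$ with an order-$0$ bounded prefactor, and then invoke \eqref{est mR3} with $m = 2$ on $\mR_3|D_x|^2$.

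For the second summand, since $\tilde S$ is a space Fourier multiplier it commutes with $|D_x|^{3/2}$, so
\[
\tilde S^{-1}\tilde\mR_3\tilde S|D_x|^{3/2}
= \tilde S^{-1}\cdot(\tilde\mR_3|D_x|^2)\cdot(\tilde S|D_x|^{-1/2}),
\]
where $\tilde S|D_x|^{-1/2}$ is a bounded Fourier multiplier of order $0$. Applying Lemma \ref{lemma:BAPQ}(iii), i.e.\ \eqref{est mR0} with $m=2$, together with the $H^s\to H^s$ boundedness of $\tilde S^{-1}$ from part (i), gives a bound of size $\e^2$, strictly stronger than what is claimed. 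The main bookkeeping obstacle is in the first summand: I must identify every ``tail'' of the pseudo-differential expansion of $\mL_3^+$ as an order $\le -3/2$ Fourier multiplier times an $a$-coefficient and verify that the residuals $\mR_\kappa$, $\mR_{3,C}^+$, $\mR_{3,D}^+$ carry tame bounds with coefficient size $\e + \|\tilde u\|_{s+\s}$; once this accounting is done, the conclusion follows by summation (with $\s$ enlarged from Lemma \ref{lemma:BAPQ} by at most $2$).
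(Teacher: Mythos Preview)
Your proposal is correct and matches what the paper intends: the lemma is stated without proof in the paper, and your plan carries out exactly the routine verification suggested by the preceding computations (Fourier multiplier bounds for $S$, composition and tame product estimates for the $a_i$'s via Lemma~\ref{lemma:BAPQ}(ii), and the splitting of $\tilde\mR_4$ into the expansion tails of $\mL_3^+-\mL_4$ plus $\tilde S^{-1}\tilde\mR_3\tilde S$, handled respectively by \eqref{est mR3} and \eqref{est mR0} with $m=2$ after commuting the Fourier multipliers). One cosmetic remark: in the paper's display for $C_4$ the term $a_{25}|D_y|^{1/2}$ should read $a_{25}|D_y|^{-1/2}$ (cf.\ \eqref{C3+ bis}); your argument implicitly uses the corrected version, which is the one that makes $C_3^+-C_4$ genuinely of order $-3/2$.
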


\section{Symmetrization of lower orders}
\label{sec:symm lower}

Let
\begin{equation}  \label{symm mL5}
\mL_5 := \begin{pmatrix} 
A_5 &  - C_5 \\ 
C_5 & A_5 \end{pmatrix}, 
\qquad 
\begin{array}{l}
A_5 := \om \pa_\t + a_{14} \pa_y + a_{29} + a_{30} \mH |D_y|^{-1},  
\vspace{3pt} \\
C_5 := T + a_{31} \mH |D_y|^{1/2} + a_{32} |D_y|^{-1/2},
\end{array}
\end{equation}
so that $\mL_5$ is a ``symmetrized'' version of $\mL_4$ in \eqref{def mL4}. 
The coefficients $a_{29}, a_{30}, a_{31}, a_{32}$ are unknown real-valued periodic functions of $(\t,y)$. 
We prove that there is a transformation 
\[
M =  \begin{pmatrix} 
1 & g \\ 
0 & v \end{pmatrix},
\qquad 
\begin{array}{l}
v = 1 + v_{2} \mH |D_y|^{-1} + v_{4} |D_y|^{-2}, 
\vspace{3pt} \\
g = g_{3} |D_y|^{-3/2} + g_{5} \mH |D_y|^{-5/2}, 
\end{array}
\]
where $v_2, v_4, g_3, g_5$ are real-valued, periodic functions of $(\t,y)$, 
such that $\mL_4 M - M \mL_5 = O(\Dy^{-3/2})$.

Using formula \eqref{symbol expansion} to commute $\Dy^s$ with multiplication operators, namely
\[
\Dy^s a = a \Dy^s + s a_y \Dy^{s-1} \mH 
- \frac{s(s-1)}{2} a_{yy} \Dy^{s-2} + O(\Dy^{s-3})
\]
and also the fact that $\mH$ commutes with multiplication operators up to a regularizing rest that enters in the remainder of order $O(\Dy^{-3/2})$, we calculate the entries of the matrix 
\[
\mL_4 M - M \mL_5 
= \begin{pmatrix} 
A_4 - A_5 - g C_5 
\quad & \quad 
B_4 v + C_5 + A_4 g - g A_5 
\\ 
C_4 - v C_5 
\quad & \quad  
D_4 v - v A_5 + C_4 g 
\end{pmatrix}.
\]
To make the notation uniform, we write $\pa_y = - \mH \Dy$, 
$\pa_y \mH = \Dy$, and $\mH \mH = - I + \pi_0$ ($\pi_0$ denotes the space-average).
In the following calculations, remember that $\mult, \kappa$ are constants, i.e. they do not depend on $(t,x)$.
Denote, in short, 
\[
\lm := \sqrt{\mult \kappa}.
\]

\emph{Position (1,1)}. Calculate 
$g C_5 = \lm g_{3} + ( g_{3} a_{31} + \lm g_5 ) \mH |D_y|^{-1} + O(\Dy^{-2})$.
As a consequence, $A_4 - A_5 - g C_5 = O(\Dy^{-2})$ if
\begin{align} \label{prov 2}
& a_{15} - a_{29} - \lm g_3 = 0,	
\\
& a_{30} + \lm g_5 + g_3 a_{31} = 0. 
\label{prov 3}
\end{align}

\emph{Position (1,2)}. Calculate 
\begin{gather*}
B_4 v = 
- T + \lm (a_{16} - v_2) \mH |D_y|^{1/2} 
+ \lm \big\{ \tfrac32\, (v_2)_y - v_{4} - a_{16} v_2 \big\} |D_y|^{-1/2} 
+ O(\Dy^{-3/2}),
\\
A_4 g = 
g \om \pa_\t 
- a_{14} g_3 \mH \Dy^{-1/2} 
+ O(\Dy^{-3/2}),
\\
g A_5 = 
g \om \pa_\t 
- g_{3} a_{14} \mH \Dy^{-1/2} 
+ O(\Dy^{-3/2}).
\end{gather*}
Therefore $B_4 v + C_5 + A_4 g - g A_5 = O(\Dy^{-3/2})$ if 
\begin{align} \label{prov 5}
& \lm (a_{16} - v_2) + a_{31} = 0,
\\
& \lm \{ \tfrac32\, (v_2)_y - v_{4} - a_{16} v_2 \} + a_{32} = 0.
\label{prov 6}
\end{align}

\emph{Position (2,1)}. Calculate 
\[
v C_5 =  
T + ( a_{31} + \lm v_2 ) \mH |D_y|^{1/2}
+ ( a_{32} - v_2 a_{31} + \lm v_{4} ) |D_y|^{-1/2} 
+ O(\Dy^{-3/2}).
\]
Therefore $C_4 - v C_5 = O(\Dy^{-3/2})$ if
\begin{align} \label{prov 8}
& \lm a_{17} + a_{31} + \lm v_2 = 0,
\\
& a_{25} - a_{32} + v_2 a_{31} - \lm v_{4} = 0.
\label{prov 9}
\end{align}

\emph{Position (2,2)}. 
Calculate 
\begin{align*} 
D_4 v & = 
v \om \pa_\t 
- a_{14} \mH \Dy 
+ ( a_{27} + a_{14} v_2 ) \\
&\quad
+ \big\{
\om (v_2)_\t
+ a_{14} (v_2)_y 
- a_{14} v_4 
+ a_{27} v_2 
+ a_{28} 
\big\} \mH |D_y|^{-1} 
\\ & \quad 
+ O(\Dy^{-2}),
\\
v A_5 & = 
v \om \pa_\t 
- a_{14} \mH \Dy
+ ( a_{29} + v_2 a_{14} )\\
&\quad + ( a_{30} - v_2 (a_{14})_y + v_2 a_{29} - v_{4} a_{14} ) \mH |D_y|^{-1} 
+ O(\Dy^{-2}),
\\
C_4 g & = 
\lm g_{3} 
+ \lm \{ \tfrac32\, (g_3)_y + g_5 - a_{17} g_{3} \} \mH |D_y|^{-1}
+ O(\Dy^{-2}).
\end{align*}
Therefore $ D_4 v - v A_5 + C_4 g = O(\Dy^{-3/2})$ if
\begin{gather} \label{prov 11}
a_{27} - a_{29} + \lm g_{3} = 0,
\\
\om (v_2 )_\t
+ (a_{14} v_2 )_y 
+ v_2  (a_{27} - a_{29})
+ a_{28} - a_{30} 
+ \lm \tfrac32\, (g_3)_y  
+ \lm g_5 
- \lm a_{17} g_{3}   
= 0.
\label{prov 12}
\end{gather}

\emph{Solution of the symmetrization system.} 
\eqref{prov 2}-\eqref{prov 12} is a system of 8 equations 
in the 8 unknowns $v_2$, $v_4$, $g_3$, $g_5$, 
$a_{29}$, $a_{30}$, $a_{31}$, $a_{32}$. 
First, we solve \eqref{prov 5} and \eqref{prov 8}, which give 
\begin{equation} \label{formula v2}
v_2 := \frac{1}{2} \, (a_{16} - a_{17} ), 
\qquad 
a_{31} := - \frac{\lm}{2} \, (a_{16} + a_{17}). 
\end{equation}
Next, we solve \eqref{prov 2} and \eqref{prov 11}, which give
\[
a_{29} := \frac12 \, (a_{15} + a_{27}), 
\qquad 
g_3 := \frac{1}{2 \lm} \, (a_{15} - a_{27}).
\]
Then we solve \eqref{prov 6} and \eqref{prov 9}, which give  
\[
\ba
v_{4} &:= \frac{1}{2\lm} \,\Big( \frac{3\lm}{2} \, (v_2)_y + v_2 (a_{31} - \lm a_{16}) + a_{25} \Big),
\\
a_{32} &:= \frac12 \Big( (\lm a_{16} + a_{31}) v_2 - \frac{3\lm}{2} \, (v_2)_y + a_{25} \Big),
\ea
\]
and then \eqref{prov 3} and \eqref{prov 12}, which give  
\begin{gather*}
g_5 := - \frac{1}{2\lm} \Big\{ \om (v_2 )_\t + (a_{14} v_2 )_y + v_2  (a_{27} - a_{29})
+ a_{28} + \lm \tfrac32\, (g_3)_y  + g_3 (a_{31} - \lm a_{17}) \Big\},
\\
a_{30} := \frac12 \, \Big\{ \om (v_2 )_\t
+ (a_{14} v_2 )_y 
+ v_2  (a_{27} - a_{29})
+ a_{28} 
+ \lm \tfrac32\, (g_3)_y  
- g_3 (a_{31} + \lm a_{17}) \Big\}.
\end{gather*}
System \eqref{prov 2}-\eqref{prov 12} is solved. 
To be more precise: the system is solved up to a remainder, say $\mR_c$, which is arbitrarily regularizing and is the sum of a fixed, finite number of commutators, all of the type $[a,\mH]$, where $a$ is a multiplication operator $h \mapsto ah$ by a real-valued function $a(\t,y)$. 

We have found $M, \mL_5$ such that 
$\mR_5 := \mL_4 M - M \mL_5 = O(\Dy^{-3/2})$. 
Let $\tilde M := \pp M \pp$, which is invertible by Neumann series.
By the equalities $\mL_4 M = M \mL_5 + \mR_5$ and $I = \pp + \ff$ we get
\begin{equation}  \label{def mL5}
\tilde M^{-1} (\tilde \mL_4 + \tilde \mR_4) \tilde M
= \tilde \mL_5 + \tilde \mR_5,
\qquad
\tilde \mL_5 := \pp \mL_5 \pp,
\end{equation}
where the remainder 
\begin{equation}  \label{def mR5}
\tilde \mR_5 := \tilde M^{-1} \{ \tilde \mR_4 \tilde M
+ \pp \mR_5 \pp + \pp M \ff \mL_5 \pp - \pp \mL_4 \ff M \pp \}
\end{equation}
has order $O(|D_y|^{-3/2})$ and size $O(\e)$.

\medskip

Both $\eta$ and $\psi$ are real-valued. 
Therefore, using the complex representation $h := \eta + i \psi \in \C$ of the pair $(\eta,\psi) \in \R^2$, 
$\eta = \Re(h)$, $\psi = \Im(h)$,
\begin{equation} \label{mL5} 
\mL_5 = \om \pa_\t + i T 
+ a_{14} \pa_y + i a_{31}  \mH  |D_y|^{1/2}
+ a_{29} + i a_{32} |D_y|^{-1/2}
+ a_{30} \mH |D_y|^{-1}.
\end{equation}

\begin{remark} By the parity of $a_i$, $i \leq 28$, it follows that 
\[
a_{29}, g_3 \in Y; \quad 
a_{30},g_5 = \odd(t), \odd(x); \quad 
a_{32}, v_4 \in X; \quad
a_{31}, v_2 = \even(t), \odd(x).
\]
Hence $v$ maps $X \to X$ and $Y \to Y$ (it preserves the parity), 
$g$ maps $X \to Y$ and $Y \to X$ (it changes the parity), 
and  $M$ maps the product space $X \times Y = \{ (\eta,\psi) : \eta \in X, \psi \in Y \}$ into itself. 
In complex notation, $M : (X + i Y) \to (X + i Y)$. 
The operator $\mL_5$ maps $(X + i Y) \to (Y + i X)$ (and this is obvious, because $\mL_5 = M^{-1} \mL_4 M$, and $\mL_4 : X \times Y \to Y \times X$).
\end{remark}

\begin{lemma}
There is $\s \geq 2$ such that, 
if $u = \bar u_\e + \tilde u$, with $\| \tilde u \|_{s_0 + \s} \leq C \e^{2+\d}$, $s_0 \geq 5$, $\d>0$,
then 
\[
\| a_{29} \|_s + \| a_{30} \|_s + \| a_{31} \|_s + \| a_{32} \|_s 
+ \| v_2 \|_s + \| v_4 \|_s + \| g_3 \|_s + \| g_5 \|_s 
\leq_s \e + \| \tilde u \|_{s + \s},
\]
and $\tilde \mR_5 : W_2 \to R_2$ defined in \eqref{def mR5} satisfies 
the same estimate \eqref{est mR3} as $\mR_3$, with $m=3/2$. 
\end{lemma}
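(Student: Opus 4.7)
The two statements are of the same nature as Lemmas \ref{lemma:BAPQ} and \ref{lemma:S}, and the plan is to extract them from the explicit formulas obtained while solving the symmetrization system, combined with the tame estimates already available on the coefficients $a_{14}, \ldots, a_{28}$. I would treat the eight coefficient bounds first, and then address the remainder $\tilde \mR_5$ in its decomposition \eqref{def mR5}.

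For the coefficient bounds, each of $v_2, a_{31}, a_{29}, g_3, v_4, a_{32}, g_5, a_{30}$ was solved in closed form as a polynomial expression in $\om, \mult, \kappa$, in $a_{14}, \ldots, a_{28}$, and in their first $\t$- and $y$-derivatives (the formulas for $g_5$ and $a_{30}$ involve $\om (v_2)_\t$, $(a_{14} v_2)_y$, $(g_3)_y$, while $v_4$ and $a_{32}$ involve $(v_2)_y$; $v_2, a_{31}, a_{29}, g_3$ are purely algebraic in the $a_i$'s). By Lemma \ref{lemma:BAPQ}(ii) and Lemma \ref{lemma:S}(ii) there is a finite $\s_0 \geq 2$ such that $\| a_i \|_{s_0} \leq C \e$ and $\| a_i \|_s \leq_s \e + \| \tilde u \|_{s + \s_0}$ for $i \in \{14, \ldots, 28\}$. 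Applying the tame product estimate $\| fg \|_s \leq_s \| f \|_{s_0} \| g \|_s + \| f \|_s \| g \|_{s_0}$ together with $\| \pa_\t a \|_s + \| \pa_y a \|_s \leq \| a \|_{s+1}$ to each term of the formulas, and iterating, yields the claimed bound upon enlarging $\s := \s_0 + c$ by a fixed finite $c$ that counts the maximal number of derivatives appearing.

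For $\tilde \mR_5$, I would inspect the four summands in \eqref{def mR5}. The piece $\tilde M^{-1} \tilde \mR_4 \tilde M$ is controlled by Lemma \ref{lemma:S}(ii) applied to $\tilde \mR_4$, using that $\tilde M$ and $\tilde M^{-1}$ are of order zero in $|D_y|$ (invertibility holds by a Neumann series, since $M - I = v_2 \mH \Dy^{-1} + v_4 \Dy^{-2} + g_3 \Dy^{-3/2} + g_5 \mH \Dy^{-5/2}$ has $O(\e)$ coefficients and non-positive order). The essential piece is $\pp \mR_5 \pp$ with $\mR_5 = \mL_4 M - M \mL_5$: by the construction of $\mL_5$, the eight equations \eqref{prov 2}--\eqref{prov 12} were solved exactly, so the residue $\mR_5$ is the sum of (i) the truncation tails of the pseudodifferential expansions \eqref{symbol expansion} beyond order $O(|D_y|^{-3/2})$ and (ii) a fixed finite number of commutators of the form $[a, \mH]$ with $a$ one of $\{ a_{14}, \ldots, a_{28}, v_2, v_4, g_3, g_5 \}$; by Lemma~\ref{benj} each such commutator is regularizing of any order at the expense of further Sobolev norms of $a$, which are now available from the first part of the proof, so composition with $\Dy^{3/2}$ still yields an operator with the desired estimate. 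Finally, the two off-diagonal pieces $\pp M \ff \mL_5 \pp$ and $\pp \mL_4 \ff M \pp$ are handled as in Lemma \ref{lemma:remainder mR0}: writing $\pp M \ff = \pp (M - I) \ff$ and $\ff M \pp = \ff (M - I) \pp$ (using $\pp \ff = 0$), only the perturbative part of $M$ contributes, and the inner projection $\ff$ onto the two-dimensional low-frequency space converts the unbounded factors $\mL_4, \mL_5$ into bounded ones by \eqref{norm pp ff}, with the right dependence on $\| \tilde u \|_{s + \s + 3/2}$. The main obstacle is purely bookkeeping: verifying that all the tails of \eqref{symbol expansion} discarded throughout Sections \ref{sec:symm top}--\ref{sec:symm lower} genuinely lie in $O(|D_y|^{-3/2})$, and choosing $\s$ large enough to simultaneously absorb the finitely many derivative losses from Lemmas \ref{lemma:BAPQ}, \ref{lemma:S}, from the first part of the proof, and from the composition with $\Dy^{3/2}$; no new conceptual ingredient is required.
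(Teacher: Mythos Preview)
Your proposal is correct and follows the same approach the paper takes for the analogous Lemmas \ref{lemma:remainder mR0}, \ref{lemma:BAPQ}, and \ref{lemma:S}: coefficient bounds via the explicit algebraic formulas and tame product estimates, and the remainder estimate by decomposing \eqref{def mR5} into the four summands and treating each with the same devices already used (Neumann series for $\tilde M^{-1}$, Lemma~\ref{benj} for the commutators $[a,\mH]$ collected in $\mR_5$, and the observation $\ff\pa_\t\pp = \pp\pa_\t\ff = 0$ for the off-diagonal pieces). The paper in fact leaves this lemma unproved, relying on the reader to repeat the pattern; your write-up fills that in faithfully.
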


\section{Reduction to constant coefficients} 
\label{sec:semi-FIO conj}

We have arrived to $\tilde \mL_5 + \tilde \mR_5$, where $\tilde \mL_5 = \pp \mL_5 \pp$,
$\mL_5$ is defined in \eqref{def mL5} (and $T$ in \eqref{def T}). 
Rename the variables $y = x$, $\t = t$.
Consider a transformation $A$ of the form
\[
h(t,x) = \sum_{j \in \Z} h_j(t) \, e^{ijx} 
\quad \mapsto \quad
Ah(t,x) = \sum_{j \in \Z} h_j(t) \, p(t,x,j) \, e^{i \phi(t,x,j)}, 
\]
where the amplitude $p(t,x,j)$ is a symbol of order zero, periodic in $(t,x)$,
the phase function $\phi(t,x,j)$ is of the form
\begin{equation} \label{phase function phi}
\phi(t,x,j) = j x + |j|^{1/2} \b(t,x),
\end{equation}
and $\b(t,x)$ is a periodic function, with $|\b_x(t,x)| < 1/2$. 
$A$ is a periodically-$t$-dependent $x$-Fourier integral operator with non-homogeneous phase function.
Moreover, $A$ is also the pseudo-differential operator $\Op(a)$ of symbol 
\begin{equation}  \label{exotic symbol}
a(t,x,j) := p(t,x,j) e^{i |j|^{1/2} \b(t,x)}
\end{equation}
in H\"ormander class $S^0_{\frac12, \frac12}$ (except for the fact that $a$ in \eqref{exotic symbol} has finite regularity). 
Let 
\begin{equation} \label{mD}
\mD := \om \pa_t + i T + i \lm_1 \Dx^{1/2} + i \lm_{-1} |D_x|^{-1/2}, 
\end{equation}
with $\lm_{1}, \lm_{-1} \in \R$. 
In this section we prove that there exist real constants $\lm_{1}$, $\lm_{-1}$ 
and functions $p(t,x,j), \b(t,x)$ such that $\mL_5 A - A \mD = O(\Dx^{-3/2})$.

Let $\t : \R \to \R$ be a $C^\infty$ function such that 
\begin{equation}  \label{symbol tau}
\tau(\xi) = \{ \mult |\xi| (1+\kappa \xi^2)\}^{1/2} 
\quad \forall |\xi| \geq 2/3; 
\qquad 
\tau(\xi) = 0 
\quad \forall |\xi| \leq 1/3,
\end{equation}
so that $\Op(\tau) = T$ on the periodic functions. 
The commutator $[T,A]$ is given by 
\[
[T,A] = \sum_{n=1}^5 \frac{1}{n!} \, \Op(\pa_x^n a) \circ \Op( i^{-n} \pa_\xi^n \tau) + O(|D_x|^{-3/2}),
\] 
where $a$ is defined in \eqref{exotic symbol}. 
Here only terms with $n \leq 5$ are relevant for our purpose, 
because $\Op(\pa_x^n a) = O(|D_x|^{n/2})$, $\Op( i^{-n} \pa_\xi^n \tau) = O(|D_x|^{\frac32 - n})$, 
and $\frac{n}{2} + \frac32 - n \leq - \frac32$ for all $n \geq 6$. 
Now, using \eqref{Taylor capillary} and proceeding like in \eqref{basic decomp}, 
\begin{align*}
\Op( i^{-1} \pa_\xi \tau) 
& = \frac{3\lm}{2}\, |D_x|^{1/2} \mH - \frac{\sqrt\mult}{4 \sqrt\kappa}\, |D_x|^{-3/2} \mH 
+ O(|D_x|^{-7/2}), 
\\
\Op( i^{-2} \pa_\xi^2 \tau) 
& = - \frac{3\lm}{4}\, |D_x|^{-1/2} + O(|D_x|^{-5/2}),
\\
\Op( i^{-3} \pa_\xi^3 \tau) &= \frac{3\lm}{8}\, |D_x|^{-3/2} \mH + O(|D_x|^{-7/2}),
\\
\Op( i^{-4} \pa_\xi^4 \tau) 
& = \frac{9\lm}{16}\, |D_x|^{-5/2} + O(|D_x|^{-9/2}),
\\
\Op( i^{-5} \pa_\xi^5 \tau) &= - \frac{45 \lm}{32}\, |D_x|^{-7/2} \mH + O(|D_x|^{-11/2}),
\end{align*}
while
\begin{align*}
\pa_x a 
& = \{ i |j|^{1/2} p \b_x + p_x \} e^{i|j|^{1/2} \b},
\\
\pa_x^2 a 
&= \{ - |j| p \b_x^2 + i |j|^{1/2} ( 2 p_x \b_x + p \b_{xx} ) + p_{xx} \} e^{i|j|^{1/2} \b},
\\
\pa_x^3 a 
& = \{ - i |j|^{3/2} p \b_x^3 
- 3 |j| \b_x (p_x \b_x + p \b_{xx})
\\ & \quad
+ i |j|^{1/2} (3 p_{xx} \b_{x} + 3 p_x \b_{xx} + p \b_{xxx}) 
+ O(|j|^0) \} e^{i|j|^{1/2} \b},
\\
\pa_x^4 a 
& = \{ |j|^{2} p \b_x^4 
- i |j|^{3/2} (4 p_x \b_x^3 + 6 p \b_x^2 \b_{xx}) + O(|j|) \} e^{i|j|^{1/2} \b},
\\
\pa_x^5 a 
& = \{ i |j|^{5/2}  p \b_x^5 + O(|j|^2) \} e^{i|j|^{1/2} \b}.
\end{align*}
The composition $\pa_x A$ can be computed directly: 
\[
\pa_x A h 
= \sum_{j \in \Z} \hat h_j (i j p + i |j|^{1/2} \b_x p + p_x) e^{i \phi(t,x,j)} .
\]
Composition formulae for $\Dx^r A$, $\mH \Dx^r A$ are given in Lemma 
\ref{lemma:composition formula}. In particular, 
the composition $\mH \Dx^{1/2} A$ is 
\begin{align*} 
\mH \Dx^{1/2} A h 
& = \sum_{j \in \Z} \hat h_j c(t,x,j) e^{i \phi(t,x,j)},
\\
c(t,x,j) & 
= - i \, \sgn(j) |j|^{1/2} p 
- i \, \frac12\, \b_x p 
+ |j|^{-1/2} \Big\{ i \,\frac18\, \sgn(j) \b_x^2 p - \frac{1}{2} \,p_x \Big\} 
\\ & \quad 
+ |j|^{-1} \Big\{ 
\frac{1}{8} \, \sgn(j) \, (2 \b_x p_x + \b_{xx} p)
- i \, \frac{1}{16} \, \b_x^3 p \Big\} 
+ O(|j|^{-3/2});
\end{align*}
the composition $\Dx^{-1/2} A$ is 
\[
\ba
\Dx^{-1/2} A h 
& = \sum_{j \in \Z} \hat h_j c(t,x,j) e^{i \phi(t,x,j)},
\\
c(t,x,j) & = 
|j|^{-1/2} p - |j|^{-1} \, \frac12\, \sgn(j)  \b_x p  
+ O( |j|^{-3/2});
\ea
\]
the composition $\mH \Dx^{-1} A$ is 
\[
\mH \Dx^{-1} A h 
= \sum_{j \in \Z} \hat h_j c(t,x,j) e^{i \phi(t,x,j)},
\quad 
c(t,x,j) = - |j|^{-1} \, i \, \sgn(j) \, p + O( |j|^{-3/2});
\]
and the commutator $[\pa_t, A] = \pa_t A - A \pa_t$ is 
\[
[\pa_t, A] h 
= \sum_{j \in \Z} \hat h_j c(t,x,j) e^{i \phi(t,x,j)},
\quad 
c(t,x,j) = p_t(t,x,j) + i |j|^{1/2} \b_t(t,x) p(t,x,j).
\]
Using the expansions above, the difference $\mL_5 A - A \mD$ is 
an operator of phase function $\phi$ as in \eqref{phase function phi}, and amplitude 
\begin{equation} \label{amplitude to kill}
c(t,x,j) =  
i j p \, \Big( \frac32\, \lm \b_x + a_{14} \Big) 
+ \sum_{-2 \leq k \leq 1} |j|^{k/2} \, T^{(k)}[p] + O(|j|^{-3/2}),
\end{equation} 
where $T^{(k)}$ are the linear differential operators 
\begin{align*}
& T^{(1)} := v^{(1)}_1 \pa_x + v^{(1)}_0 - i \lm_1, \\
& T^{(0)} := \om \pa_t + v^{(0)}_1 \pa_x + v^{(0)}_0, 
\\
& T^{(-1)} := v^{(-1)}_2 \pa_{xx} + v^{(-1)}_1 \pa_x + v^{(-1)}_0 - i \lm_{-1}, \\
& T^{(-2)} := v^{(-2)}_2 \pa_{xx} + v^{(-2)}_1 \pa_x + v^{(-2)}_0, 
\end{align*}
with coefficients  
\begin{alignat}{2} 
\label{v(1)1 v(1)0} 
v^{(1)}_1 
& := \sgn(j) \frac{3\lm}{2}\,, 
\quad &
v^{(1)}_0 
& := \sgn(j) a_{31} + i \Big( \om  \b_t + \frac{3 \lm}{8}\, \b_x^2 + a_{14} \b_x \Big), 
\\
\label{v(0)1 v(0)0} 
v^{(0)}_1
& := \frac{3\lm}{4} \, \b_x + a_{14},
\quad & 
v^{(0)}_0
& := \Big( \frac{3\lm}{8} \, \b_{xx} + \frac12\, a_{31} \b_x + a_{29} \Big) 
- i \sgn(j) \frac{\lm}{16} \, \b_x^3,
\\
\label{v(-1)2 v(-1)1} 
v^{(-1)}_2
& := - i \frac{3\lm}{8}\,,
\quad &
v^{(-1)}_1
& := - \sgn(j) \frac{3\lm}{16} \, \b_x^2 - i \frac{1}{2}\, a_{31},
\end{alignat}
\vspace{-20pt}
\begin{gather} 
\label{v(-1)0}
v^{(-1)}_0
:= - \sgn(j) \Big( \frac{3\lm}{16} \, \b_x \b_{xx} + \frac18\, a_{31} \b_x^2 \Big) 
+ i \Big( \frac{3\lm}{2^7} \, \b_x^4 + a_{32} \Big),
\\
\label{v(-2)2 v(-2)1}
v^{(-2)}_2 
:= i \sgn(j) \frac{3\lm}{16} \, \b_x,
\qquad 
v^{(-2)}_1
:= \frac{3\lm}{32} \, \b_x^3 
+ i \sgn(j) \Big( \frac{3\lm}{16} \, \b_{xx} + \frac{1}{4} \, a_{31} \b_x \Big),
\end{gather}
\vspace{-20pt}
\begin{align} \label{v(-2)0}
v^{(-2)}_0
& := \Big( \frac{9\lm}{64} \, \b_x^2 \b_{xx} + \frac{1}{16} \, a_{31} \b_x^3 \Big)  
+ i \sgn(j) \Big( - \frac{3\lm}{2^8} \, \b_x^5 + \frac{\lm}{16} \, \b_{xxx} 
+ \frac{1}{8} \, a_{31} \b_{xx} 
\\ & \qquad \notag
- \frac12\, a_{32} \b_x 
- a_{30} - \frac{\sqrt\mult}{4 \sqrt\kappa} \, \b_x \Big).
\end{align}
Our goal is to choose $\lm_k, \b, p$ such that the amplitude $c(t,x,j)$ in \eqref{amplitude to kill} is of order $O(|j|^{-3/2})$.

\medskip

\emph{Elimination of the order $1$}. --- 
$a_{14}$ is $\odd(t), \odd(x)$, therefore it has zero space-average, and 
$\pa_x\inv a_{14}$, which is the $dx$-primitive of $a_{14}$ with zero average, is well-posed. 
We fix 
\begin{equation} \label{beta 1}
\b(t,x) := \b_0(t) + \b_1(t,x), \quad 
\b_1 := - \frac{2}{3\lm}\, \pa_x^{-1} a_{14},
\end{equation}
where $\b_0(t)$ is a periodic function of $t$ only, which will be determined later (see the next step). We have eliminated the terms of order $O(|j|)$ from \eqref{amplitude to kill}. 
Since $a_{14}$ is odd$(t)$, odd$(x)$, we get $\b_1 \in Y$.

We seek $p$ under the form 
\[
p(t,x,j) = \sum_{-3 \leq m \leq 0} |j|^{m/2} p^{(m)}(t,x,j),
\]
with all $p^{(m)}$ bounded in $j$. 
Then, by linearity,  
\eqref{amplitude to kill} becomes
\begin{align}
c = {} & \sum_{\begin{subarray}{c} -2 \leq k \leq 1 \\ -3 \leq m \leq 0 \end{subarray}} 
|j|^{\frac{k+m}{2}} T^{(k)}[p^{(m)}] + O(|j|^{-3/2}).
\label{c descent}
\end{align}

\medskip

\emph{Elimination of the order $1/2$}. --- 
To eliminate the term of order $1/2$ from \eqref{c descent}, 
we have to solve the equation 
\begin{equation} \label{eq for p0}
T^{(1)}[p^{(0)}] = 0	
\end{equation}
in the unknown $p^{(0)}$. 
Write $p^{(0)}$ as 
\begin{equation}  \label{p0 as exp}
p^{(0)}(t,x,j) = \exp \big( f(t,x,j) \big),
\end{equation}
so that \eqref{eq for p0} becomes the equation 
\begin{equation} \label{eq for f}
v^{(1)}_1 f_x + v^{(1)}_0 - i \lm_1 = 0
\end{equation}
for the unknown $f$.
The coefficients $v^{(1)}_1$, $v^{(1)}_0$ are given in \eqref{v(1)1 v(1)0}. 
Equation \eqref{eq for f} has a solution $f$ if and only if 
\begin{equation} \label{descent average condition}
\int_\T \big( v^{(1)}_0(t,x,j) - i \lm_1 \big) \, dx = 0 \quad \forall t \in \T, \ j \in \Z. 
\end{equation}
We look for $\b_0(t), \lm_1$ such that the (crucial) average condition 
\eqref{descent average condition} holds.
Remember that $\b = \b_0 + \b_1$, where $\b_1(t,x)$ has already been determined in \eqref{beta 1} and $\b_0(t)$ is still free; $\b_x = (\b_1)_x$ because $\b_0$ depends only on $t$. 
Moreover, $\int_\T a_{31} \, dx = 0$ because $a_{31}$ is odd in $x$. 
Therefore \eqref{descent average condition} becomes
\[
\om \pa_t \b_0(t) - \lm_1 + \rho(t) = 0,
\]
where 
\[
\rho(t) 
:= \frac{1}{2\p} \, \int_0^{2\p} \Big( \om (\b_1)_t + \frac{3\lm}{8}\, (\b_1)_x^2 + a_{14} (\b_1)_x \Big) \, dx 
= - \frac{1}{4 \p \lm} \int_\T a_{14}^2 \, dx
\]
($(\b_1)_t$ has zero space-average because $\pa_x^{-1} a_{14}$ has zero space-average).
We fix 
\begin{equation}  \label{formula lm 1}
\lm_1 
:= \frac{1}{2\p} \, \int_0^{2\p} \rho(t) \, dt 
= - \frac{1}{8 \p^2 \lm} \, \int_{\T^2} a_{14}^2 \, dx dt,
\qquad
\b_0 := - \frac{1}{\om} \, \pa_t\inv ( \rho - \lm_1 ), 
\end{equation}
and \eqref{descent average condition} is solved. 
$\lm_1$ is a negative real number, and $\b_0$ is a real-valued function of $t$, independent on $x,j$. Then \eqref{eq for f} has solutions 
\begin{equation} \label{f = f0 + f1}
f(t,x,j) := f_0(t,j) + f_1(t,x,j), \quad 
f_1 := - \frac{2}{3\lm} \, \sgn(j) \, \pa_x\inv (v^{(1)}_0 - i \lm_1)
\end{equation}
where $f_0$ does not depend on $x$ and it will be determined in the next step. 
$p^{(0)} = \exp(f)$ solves \eqref{eq for p0}. 
Since $\b_1 \in Y$, it follows that $\rho \in X$, and therefore $\b_0 \in Y$. Thus 
$\b \in Y$, namely $\b=\b(t,x)$ is a real-valued function, $\odd(t)$, $\even(x)$, independent of $j$. 
A direct calculation gives
\[
f_1 = - \frac{2}{3\lm} \pa_x^{-1} a_{31} + i \sgn(j) \frac{2}{3\lm} \pa_x^{-1} \Big( \frac{1}{2\lm} a_{14}^2 + \rho + \frac{2\om}{3\lm} \pa_x^{-1} \pa_t a_{14} \Big).
\]

\begin{remark} \label{rem:parity coefficients v(k)m}
By the parity of $a_i$, $i \leq 34$, and $\b \in Y$, it follows that the coefficients $v^{(k)}_m$ have the form 
\begin{align*}
v^{(1)}_1, v^{(-1)}_1 & = \sgn(j) a + i b, \quad 
a \in X, \quad 
b = \even(t), \odd(x);
\\
v^{(1)}_0, v^{(-1)}_2, v^{(-1)}_0 & = \sgn(j) a + i b, \quad 
a = \even(t), \odd(x), \quad 
b \in X; 
\\
v^{(0)}_1, v^{(-2)}_1 & = a + i \sgn(j) b, \quad 
a = \odd(t), \odd(x), \quad 
b \in Y; 
\\
v^{(0)}_0, v^{(-2)}_2, v^{(-2)}_0 & = a + i \sgn(j) b, \quad 
a \in Y, \quad 
b = \odd(t), \odd(x),
\end{align*}
where $a,b$ denote (different) real-valued functions of $(t,x)$, independent of $j$. 
\end{remark}

By the parity of $v^{(1)}_0$ (see the previous remark), $f_1$ has the form
\begin{equation} \label{f1 components}
f_1 = a + i \sgn(j) b, \quad 
a \in X, \quad 
b = \even(t), \odd(x),
\end{equation}
with $a,b$ real-valued functions of $(t,x)$, independent of $j$. 
In particular, $f_1$ is $\even(t)$.

\medskip

\emph{Elimination of the order $0$}. --- 
The order zero in \eqref{c descent} vanishes if 
\begin{equation} \label{eq for p-1}
T^{(1)}[p^{(-1)}] + T^{(0)}[p^{(0)}] = 0.
\end{equation}
In general, for any function $g$, one has
\begin{equation} \label{automatic T(1)}
T^{(1)}[ \exp(f) g ] = \exp(f) \, v^{(1)}_1 g_x 
\end{equation}
by \eqref{eq for f}, and 
\begin{equation} \label{automatic T(0)}
T^{(0)}[ \exp(f) g ] = \exp(f) \, \big( b^{(0)} g + \om g_t + v^{(0)}_1 g_x \big), \quad 
b^{(0)} := \om f_t + v^{(0)}_1 f_x + v^{(0)}_0.
\end{equation}
In particular, for $g=1$, we get $T^{(0)}[p^{(0)}] = \exp(f) \, b^{(0)}$. 
By variation of constants, write $p^{(-1)}$ as
\[
p^{(-1)} := p^{(0)} g^{(-1)} = \exp(f) g^{(-1)}.
\]
Equation \eqref{eq for p-1} becomes 
\begin{equation} \label{eq for g-1}
v^{(1)}_1 g^{(-1)}_x + b^{(0)} = 0
\end{equation}
in the unknown $g^{(-1)}$. 
Since $v^{(1)}_1$ is a constant, \eqref{eq for g-1} has a solution $g^{(-1)}$ if and only if 
\begin{equation} \label{average condition b(0)}
\int_\T b^{(0)}(t,x,j) \, dx = 0 \quad \forall t \in \T, \ j \in \Z.
\end{equation}
Remember that $f = f_0 + f_1$, where $f_1$ has already been determined in \eqref{f = f0 + f1}, and $f_0 = f_0(t,j)$ is still at our disposal. Thus
$b^{(0)} = \om (f_0)_t + \om (f_1)_t + v^{(0)}_1 (f_1)_x + v^{(0)}_0$. 
By \eqref{f1 components} and Remark \ref{rem:parity coefficients v(k)m}, 
\[
\om (f_1)_t + v^{(0)}_1 (f_1)_x + v^{(0)}_0 = a + i \sgn(j) b, \quad 
a \in Y, \quad 
b = \odd(t) \odd(x),
\]
for some $a,b$ real-valued functions of $(t,x)$, independent of $j$. Therefore 
\[
\int_\T \Big( \om (f_1)_t + v^{(0)}_1 (f_1)_x + v^{(0)}_0 \Big) dx = \odd(t)
\]
is a real-valued function of $t$ only, independent of $x,j$, with zero mean (because it is odd).
We fix 
\[
f_0 := - \frac{1}{2 \p \om} \, \pa_t\inv \Big\{ \int_\T 
\Big( \om (f_1)_t + v^{(0)}_1 (f_1)_x + v^{(0)}_0 \Big) \, dx \Big\},
\]
and \eqref{average condition b(0)} is satisfied. $f_0$ is a real-valued even function of $t$ only, independent of $x,j$. A direct calculation gives
\[
f_0 = - \frac{1}{2 \p \om} \, \pa_t\inv \Big\{ \int_\T \Big( a_{29} - \frac{2}{3\lm} a_{14} a_{31} \Big) dx \Big\}.
\]
\begin{remark} \label{rem:parity f b(0)}
$f$  and $b^{(0)}$ are of the form 
\begin{align} \label{f components}
f & = a + i \sgn(j) b, \quad 
a \in X, \quad 
b = \even(t), \odd(x),
\\
b^{(0)} & = a + i \sgn(j) b, \quad 
a \in Y, \quad b = \odd(t), \odd(x),
\label{b(0) components}
\end{align}
where $a,b$ denote (different) real-valued functions of $(t,x)$, independent of $j$. 
\end{remark}

We choose 
\begin{equation} \label{g(-1)1}
g^{(-1)}(t,x,j) := g^{(-1)}_0(t,j) +  g^{(-1)}_1(t,x,j), \quad 
g^{(-1)}_1 := - \frac{2}{3\lm} \,\sgn(j) \,  \pa_x\inv (b^{(0)}),
\end{equation}
where $g^{(-1)}_0(t,j)$ will be determined at the next step. 
\eqref{eq for g-1} is satisfied. 
$g^{(-1)}_1$ is of the form
\begin{equation}  \label{decomposition g(-1)1}
g^{(-1)}_1 = \sgn(j) a + i b, \quad 
a = \odd(t),\odd(x), \quad 
b \in Y, 
\end{equation}
for some $a,b$ real-valued functions of $(t,x)$, independent of $j$. 

\medskip

\emph{Elimination of lower orders}. --- Once the first two steps in $p$ are done (i.e. elimination of orders $1/2$ and $0$), the algorithm proceeds in a similar way. 
For the sake of completeness (and to obtain $\lm_{-1}$), we write the calculations for the order $-1/2$ in details, then lower orders will be similar.

\medskip

\emph{Elimination of the order $-1/2$}. --- 
We have to solve 
\begin{equation} \label{eq for p(-2)}
T^{(1)}[p^{(-2)}] + T^{(0)}[p^{(-1)}] + T^{(-1)}[p^{(0)}] = 0
\end{equation}
in the unknown $p^{(-2)}$ (and also $g^{(-1)}_0$ is still free). 
By variation of constants, write $p^{(-2)} = \exp(f) g^{(-2)}$. 
By \eqref{automatic T(1)} and \eqref{automatic T(0)},
\[
\ba
T^{(1)}[p^{(-2)}] &= \exp(f) \, v^{(1)}_1 g^{(-2)}_x , \\
T^{(0)}[p^{(-1)}] &= \exp(f) \, \big\{ b^{(0)} g^{(-1)} + \om g^{(-1)}_t + v^{(0)}_1 g^{(-1)}_x \big\}. 
\ea
\]
Recall that $g^{(-1)} = g^{(-1)}_0 + g^{(-1)}_1$. 
Let 
\begin{equation} \label{b(-1) r(-1)}
b^{(-1)} := \exp(-f) \, \Big( T^{(0)}[p^{(-1)}] + T^{(-1)}[p^{(0)}] \Big) 
\ = b^{(0)} g^{(-1)}_0 + \om (g^{(-1)}_0)_t + r^{(-1)},
\end{equation}
where 
\[ 
r^{(-1)} := b^{(0)} g^{(-1)}_1 + \om (g^{(-1)}_1)_t + v^{(0)}_1 (g^{(-1)}_1)_x 
+ \exp(-f) \, T^{(-1)} [p^{(0)}].
\]
Thus \eqref{eq for p(-2)} becomes 
\begin{equation} \label{eq for g(-2)}
v^{(1)}_1 g^{(-2)}_x + b^{(-1)} = 0.
\end{equation} 
If
\begin{equation} \label{average condition b(-1)}
\int_\T b^{(-1)}(t,x,j) \, dx = 0 \quad \forall t \in \T, \ j \in \Z,
\end{equation}
then \eqref{eq for g(-2)} has a solution $g^{(-2)}$.
By \eqref{b(-1) r(-1)} and \eqref{average condition b(0)}, the average condition \eqref{average condition b(-1)} becomes
\begin{equation} \label{eq for g(-1)0}
2 \p \om (g^{(-1)}_0)_t(t,j)  + \int_\T r^{(-1)}(t,x,j) \, dx = 0.
\end{equation} 
If
\begin{equation} \label{average condition r(-1)}
\int_{\T^2} r^{(-1)}(t,x,j) \, dx dt = 0,
\end{equation} 
then we can choose $g^{(-1)}_0$ such that \eqref{eq for g(-1)0} is satisfied.  
By \eqref{decomposition g(-1)1}, \eqref{b(0) components} and Remark \ref{rem:parity coefficients v(k)m}, one proves that $r^{(-1)}$ has the form 
\begin{equation} \label{decomposition r(-1)}
r^{(-1)} = \sgn(j) a + i b - i \lm_{-1}, \quad 
a = \even(t),\odd(x), \quad 
b \in X,
\end{equation}
for some $a,b$ real-valued functions of $(t,x)$, independent of $j$. 
We fix 
\begin{equation} \label{lambda -1}
\lm_{-1} := \frac{1}{(2\p)^2} \int_{\T^2} b(t,x) \, dx dt,
\end{equation}
where $b$ in \eqref{lambda -1} is the function $b$ in \eqref{decomposition r(-1)}, so that \eqref{average condition r(-1)} is satisfied. Note that $\lm_{-1}$ is a real number. Then we fix 
\[
g^{(-1)}_0 = - \frac{1}{2 \p \om} \pa_t^{-1} \Big( \int_\T r^{(-1)}(t,x,j) \, dx \Big),
\]
and \eqref{eq for g(-1)0} is satisfied. By \eqref{decomposition r(-1)} it follows that $g^{(-1)}_0$ is a purely imaginary, odd function of $t$, independent of $x,j$, and therefore 
\begin{equation} \label{decomposition g(-1)}
g^{(-1)} = \sgn(j) a + i b, \quad 
a = \odd(t),\odd(x), \quad 
b \in Y,
\end{equation}
for some $a,b$ real-valued functions of $(t,x)$, independent of $j$.
Thus \eqref{average condition b(-1)} is satisfied. We choose 
\begin{equation} \label{g(-2)1}
g^{(-2)} = g^{(-2)}_0(t,j) + g^{(-2)}_1(t,x,j), \quad 
g^{(-2)}_1 := - \sgn(j) \frac{2}{3\lm} \, \pa_x^{-1} b^{(-1)},
\end{equation} 
where $g^{(-2)}_0$ is free (it will be fixed in the next step), 
so that \eqref{eq for g(-2)} is satisfied. $b^{(-1)}$ is of the form
\[
b^{(-1)} = \sgn(j) a + i b, \quad 
a = \even(t), \odd(x), \quad 
b \in X,
\]
therefore $g^{(-2)}_1$ is of the form
\[
g^{(-2)}_1 = a + i \sgn(j) b, \quad 
a \in X, \quad 
b = \even(t), \odd(x), 
\]
where $a,b$ denote (different) real-valued functions of $(t,x)$, independent of $j$.

\medskip

\emph{Elimination of the order $-1$}. --- 
We proceed similarly as in the previous step, with $T^{(-1)}[p^{(-1)}]$ $+$ $T^{(-2)}[p^{(0)}]$ instead of $T^{(-1)}[p^{(0)}]$; $g^{(-3)}$ instead of $g^{(-2)}$; etc.  
There is no need of leaving $g^{(-3)}_0(t,j)$ free, as this is the last step: 
so we fix $g^{(-3)}_0(t,j) := 0$, and $g^{(-3)} := g^{(-3)}_1$. 
Regarding parities, we obtain coefficients of the form
\begin{align*}
r^{(-2)}, b^{(-2)} & = a + i \sgn(j) b, \quad 
a \in Y, \quad 
b = \odd(t), \odd(x),
\\
g^{(-2)} & = a + i \sgn(j) b, \quad 
a \in X, \quad 
b = \even(t), \odd(x),
\\
g^{(-3)} & = \sgn(j) a + i b, \quad 
a = \odd(t), \odd(x), \quad 
b \in Y,
\end{align*}
where $a,b$ denote (different) real-valued functions of $(t,x)$, independent of $j$.

We have found $A, \mD$ such that 
$\mR_6 := \mL_5 A - A \mD = O(\Dx^{-3/2})$. 
Let $\tilde A := \pp A \pp$, which is invertible by the same argument as for the first transformation $\mB$. 
By the equalities $\mL_5 A = A \mD + \mR_6$ and $I = \pp + \ff$ we get
\begin{equation} \label{def mD}
\tilde A^{-1} (\tilde \mL_5 + \tilde \mR_5) \tilde A
= \tilde \mD + \tilde \mR_6,
\qquad
\tilde \mD := \pp \mD \pp = \pp \mD = \mD \pp,
\end{equation}
where the remainder 
\begin{equation} \label{def mR6}
\tilde \mR_6 := \tilde A^{-1} \{ \tilde \mR_5 \tilde A
+ \pp \mR_6 \pp - \pp \mL_5 \ff A \pp \}
\end{equation}
has order $O(|D_y|^{-3/2})$ and size $O(\e)$.
More precisely, 

\begin{lemma} \label{lemma:estimate mR6} 
There exist constants $\s, C > 0$ such that, if  
$u = \bar u_\e + \tilde u$, with $\| \tilde u \|_{s_0 + \s} \leq C \e^{2+\d}$, 
$s_0 \geq 5$, $\d>0$, then 
$\tilde \mR_6 : W_2 \to R_2$ defined in \eqref{def mR6} satisfies, for all $s \geq s_0$, 
\begin{equation} \label{est mR6}
\| \tilde \mR_6 |D_x|^{3/2} h \|_s \leq_s \e \| h \|_s + \| \tilde u \|_{s+\s} \| h \|_{s_0}.
\end{equation}
\end{lemma}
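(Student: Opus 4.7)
The plan is to decompose $\tilde \mR_6$ according to \eqref{def mR6} into the three pieces
\[
\tilde \mR_6 = \tilde A^{-1} \tilde \mR_5 \tilde A + \tilde A^{-1} \pp \mR_6 \pp - \tilde A^{-1} \pp \mL_5 \ff A \pp,
\]
and to estimate each of them in the norm $\| \cdot |D_x|^{3/2} h \|_s$. The preliminary (and common) ingredient is that $\tilde A$ and $\tilde A^{-1}$ act as order-zero operators on $H^s(\T^2)$ with tame bounds of the same form as in \eqref{est BAPQ}, and that the commutator $[\tilde A, |D_x|^{3/2}]$ has order $\leq 1$ with a symbol of size $O(\e + \| \tilde u \|_{s+\s})$ coming from the $x$-derivatives of $p$ and of $|j|^{1/2} \b$. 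Since $A = \Op(a)$ with $a \in S^0_{1/2,1/2}$, these facts are not part of classical pseudo-differential calculus: they are precisely the content of Section \ref{sec:semi-FIO}. Invertibility of $\tilde A$ is then obtained by Neumann series from the smallness of $p-1$ and $\b$, as for $\tilde \mB$ in Lemma \ref{lemma:BAPQ}.

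For the first piece $\tilde A^{-1} \tilde \mR_5 \tilde A |D_x|^{3/2}$, I would commute $|D_x|^{3/2}$ across $\tilde A$ at the price of an order-$1$ remainder of size $O(\e)$, reducing the estimate, modulo lower-order terms, to that of $\tilde \mR_5 |D_x|^{3/2}$. This is the $m = 3/2$ bound of the type \eqref{est mR3} already established at the end of Section \ref{sec:symm lower}. Then the outer $\tilde A^{-1}$ is absorbed by its order-zero tame boundedness.

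For the principal piece $\tilde A^{-1} \pp \mR_6 \pp |D_x|^{3/2}$ I would use the explicit construction of Section \ref{sec:semi-FIO conj}: the real constants $\lm_{\pm 1}$ and the functions $\b$, $f_0$, $f_1$, $g^{(-1)}$, $g^{(-2)}$, $g^{(-3)}$ were chosen precisely to annihilate, one by one, the contributions of orders $1$, $1/2$, $0$, $-1/2$, $-1$ in \eqref{amplitude to kill}--\eqref{c descent}, so that $\mR_6 = \mL_5 A - A \mD$ is a Fourier integral operator of phase $\phi(t,x,j) = jx + |j|^{1/2} \b(t,x)$ and amplitude $O(|j|^{-3/2})$. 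Its coefficients are polynomial expressions in $a_{14}, a_{29}, a_{30}, a_{31}, a_{32}, \b, p^{(m)}$ and a bounded number of their derivatives, each of which satisfies a tame estimate of the form $\e + \| \tilde u \|_{s+\s}$ by the results of Sections \ref{sec:changes}--\ref{sec:symm lower}. Composing on the right with $|D_x|^{3/2}$ yields an operator of the exotic class $S^0_{1/2,1/2}$ of size $O(\e)$ in the top-order norm, and \eqref{est mR6} follows from the tame Sobolev continuity proved in Section \ref{sec:semi-FIO}.

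The last piece $\tilde A^{-1} \pp \mL_5 \ff A \pp |D_x|^{3/2}$ is smoothing of arbitrary order: the projector $\ff$ lands in the finite-dimensional space of functions supported in $x$-frequencies $0$ and $1$, so any norm on its range is controlled by the $H^{s_0}_t$-norm of the coefficients; and the $x$-Fourier coefficients of $p(t,x,j) e^{i|j|^{1/2}\b(t,x) + ijx}$ at wavenumbers $0$ and $1$ decay faster than any power of $\la j \ra^{-1}$ by a non-stationary phase argument, which absorbs the gain $|D_x|^{3/2}$ applied first. The main obstacle around which the whole argument turns is the exotic $S^0_{1/2,1/2}$ calculus: as discussed in the introduction below \eqref{model}, classical pseudo-differential techniques (with $\rho = 1, \d = 0$) would fail to treat either $\tilde A^{\pm 1}$ or the composition $\pp \mR_6 \pp |D_x|^{3/2}$ in tame form, and it is precisely the dedicated results of Section \ref{sec:semi-FIO} that make the estimate \eqref{est mR6} accessible.
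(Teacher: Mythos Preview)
Your proof plan is correct and matches the paper's implicit approach: the paper states Lemma~\ref{lemma:estimate mR6} without a detailed proof, relying on the reader to assemble precisely the ingredients you list --- the decomposition \eqref{def mR6}, the tame bounds and composition formula for $A$, $\tilde A^{\pm 1}$ from Section~\ref{sec:semi-FIO} (Lemmata~\ref{lemma:Einv}, \ref{lemma:composition formula}, \ref{lemma:est A with time}), the already-established estimate \eqref{est mR3} for $\tilde\mR_5$ with $m=3/2$, the fact that $\mR_6 = \mL_5 A - A\mD$ has amplitude $O(|j|^{-3/2})$ by the construction of Section~\ref{sec:semi-FIO conj}, and the non-stationary phase argument (Lemma~\ref{lemma:non-stationary phase}) for the piece $\pp \mL_5 \ff A \pp$. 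Your identification of the $S^0_{1/2,1/2}$ calculus of Section~\ref{sec:semi-FIO} as the essential technical input is exactly right.
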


\section{Inversion of the restricted linearized operator} 
\label{sec:inv lin op}

Recall that our goal is the restricted inversion problem in Remark \ref{rem:reduced lin to invert}.
The diagonal operator $\tilde \mD := \mD \pp : W_2 \to R_2$, where $\mD$ is defined in \eqref{mD},
has purely imaginary eigenvalues 
\begin{equation} \label{eigenvalues mu j} 
\ba
&\tilde \mD [e^{ilt} \cos(jx)] = i (\om l + \mu_j) \, e^{ilt} \cos(jx), \\
&\mu_{j} := \lm_3  (j + \kappa j^3)^{1/2} + \lm_1 j^{1/2} + \lm_{-1} j^{-1/2} \in \R, 
\ea
\end{equation}
with $l \in \Z$, $j \geq 2$, where we denote $\lm_3 := \sqrt{\mult}$. 
Let $\g = \e^{5/6} > 0$, $\t := 3/2$, and assume that $\om$ satisfies the first-order Melnikov non-resonance condition 
\begin{equation} \label{non-res}
| \om l + \mu_j | \geq \frac{\g}{|j|^\t} \quad \forall l \in \Z, \ j \geq 2,
\end{equation}
where $\t = 3/2$. 
Then $\tilde \mD$ has inverse 
\[
\tilde \mD^{-1} h(t,x) 
:= \sum_{l \in \Z, \, j \geq 2} \frac{h_{lj} e^{ilt} }{i(\om l + \mu_j)} \, \cos(jx), 
\qquad \tilde \mD^{-1} : R_2 \to W_2,
\]
of order $O(|D_x|^{3/2})$ and size $1/\g$, namely 
\begin{equation} \label{estimate Lambda inv mD}
\| |D_x|^{-3/2} \tilde \mD^{-1} h \|_s \leq \g^{-1} \, \| h \|_{s} , 
\end{equation}
because $|\om l + \mu_j| j^{3/2} \geq \g$ for all $l \in \Z, j \geq 2$.

By \eqref{estimate Lambda inv mD} and \eqref{est mR6}, 
writing explicitly the constant $C(s)$, we have
\begin{align*}
\| \tilde \mR_6 \tilde \mD^{-1} h \|_s 
& = \| (\tilde \mR_6 |D_x|^{3/2}) (|D_x|^{-3/2} \tilde \mD^{-1}) h \|_s \\
&\leq C(s) \g^{-1} ( \e \| h \|_s + \| \tilde u \|_{s + \s} \| h \|_{s_0} )
\\ & 
\leq \frac12 \| h \|_s + C(s) \g^{-1} \| \tilde u \|_{s + \s} \| h \|_{s_0} ,
\end{align*} 
where the last inequality holds for $\e$ sufficiently small, namely $C(s) \e^{1/6} \leq 1/2$.
Therefore, by tame Neumann series (see e.g. Lemma B.2 in \cite{Baldi-Benj-Ono}, Appendix B), 
$(I_{R_2} + \tilde \mR_6 \tilde \mD^{-1})$ is invertible on $H^{s} \cap R_2$, 
where $I_{R_2}$ is the identity map of $R_2$, and
\[
\| (I_{R_2} + \tilde \mR_6 \tilde \mD^{-1})^{\pm 1} h \|_s 
\leq 2 \| h \|_s + 4 C(s) \g^{-1} \| \tilde u \|_{s + \s} \| h \|_{s_0}.
\]
As a consequence, $\tilde \mD + \tilde \mR_6 = (I_{R_2} + \tilde \mR_6 \tilde \mD^{-1}) \tilde \mD$ $: W_2 \to R_2$ is invertible, with 
\begin{equation} \label{tame estimate mL6}
\| (\tilde \mD + \tilde \mR_6)^{-1} h \|_s 
\leq_s \g^{-1} ( \| h \|_{s+\t} + \g^{-1} \| \tilde u \|_{s + \t + \s} \| h \|_{s_0} ). 
\end{equation}

In sections \ref{sec:linearized operator}-\ref{sec:semi-FIO conj} we have conjugated 
the restricted operator $\mL_2^2 + \mR$ (see Remark \ref{rem:reduced lin to invert}) to $\tilde \mD + \tilde \mR_6$,
\begin{equation}  \label{conj tot}
\ba
&\mL_2^2 + \mR = \Phi_1 (\tilde \mD + \tilde \mR_6) \Phi_2^{-1}, \\
&\Phi_1 := \tilde \mZ \tilde \mB \tilde \mA \tilde P \tilde S \tilde M \tilde A,  \quad
\Phi_2 := \tilde \mZ \tilde \mB \tilde \mA \tilde Q \tilde S \tilde M \tilde A.
\ea
\end{equation}
All these operators have been estimated in the previous sections (they are all bounded, except $\tilde S$, which is of order 1/2). 
Thus, by composition, we obtain the following result.

\begin{theorem}[Inversion of the restricted linearized operator] \label{thm:restricted inverse}
There are $\s, C > 0$ with the following property. 
Let $u = \bar u_\e + \tilde u$, 
with $\tilde u \in H^{s + \s}$, $\| \tilde u \|_{s_0 + \s} < C \e^{2+\d}$, 
$5 \leq s_0 \leq s$, $\d>0$, $\e < \e_0(s)$ for some $\e_0(s)$ depending on~$s$. 
Assume that the first Melnikov conditions \eqref{non-res} hold. 
Then the operator $\mL_2^2 + \mR : W_2 \to R_2$ is invertible, with 
\begin{equation}  \label{tame inverse linearized}
\| (\mL_2^2 + \mR)^{-1} h \|_s 
\leq_s \g^{-1} ( \| h \|_{s + 2} + \g^{-1} \| \tilde u \|_{s + \s} \| h \|_{s_0} ).
\end{equation}
\end{theorem}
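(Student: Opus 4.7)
The whole conjugation chain of Sections \ref{sec:linearized operator}--\ref{sec:semi-FIO conj} has been designed precisely to reduce $\mL_2^2 + \mR$ to a diagonal constant coefficient operator plus a small, regularizing remainder. The plan is to read off the invertibility and the tame estimate \eqref{tame inverse linearized} from the factorization \eqref{conj tot} and from the bounds already collected.

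First I would note that the identity \eqref{conj tot},
\[
\mL_2^2 + \mR \;=\; \Phi_1\,(\tilde\mD+\tilde\mR_6)\,\Phi_2^{-1},
\qquad
\Phi_1 = \tilde\mZ\tilde\mB\tilde\mA\tilde P\tilde S\tilde M\tilde A,\quad
\Phi_2 = \tilde\mZ\tilde\mB\tilde\mA\tilde Q\tilde S\tilde M\tilde A,
\]
holds by construction of the successive transformations in Sections \ref{sec:linearized operator}, \ref{sec:changes}, \ref{sec:symm top}, \ref{sec:symm lower}, \ref{sec:semi-FIO conj}. Hence it is enough to invert the three factors separately and compose. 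For $\tilde\mD+\tilde\mR_6$ one uses the first Melnikov condition \eqref{non-res} to obtain \eqref{estimate Lambda inv mD}, then writes
\[
\tilde\mD+\tilde\mR_6 = \bigl(I_{R_2}+\tilde\mR_6\,\tilde\mD^{-1}\bigr)\,\tilde\mD,
\]
and invokes Lemma \ref{lemma:estimate mR6} together with the smallness $\|\tilde u\|_{s_0+\s}\le C\e^{2+\d}$ to make $\|\tilde\mR_6\,\tilde\mD^{-1}\|_{s_0}\le 1/2$. A tame Neumann series then gives the invertibility of $I_{R_2}+\tilde\mR_6\,\tilde\mD^{-1}$ on $H^s\cap R_2$ with the tame bound already displayed in \eqref{tame estimate mL6}, namely
\[
\|(\tilde\mD+\tilde\mR_6)^{-1} h\|_s
\leq_s \g^{-1}\bigl(\|h\|_{s+\t} + \g^{-1}\|\tilde u\|_{s+\t+\s}\|h\|_{s_0}\bigr),
\quad \t=\tfrac32.
\]

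Next I would invert the outer factors. The transformations $\tilde\mZ,\tilde\mB,\tilde\mA,\tilde P,\tilde Q,\tilde M,\tilde A$ all map $W_2\to W_2$ and $R_2\to R_2$, are invertible, and satisfy tame estimates of the type \eqref{est BAPQ} by Lemmata \ref{lemma:remainder mR0}, \ref{lemma:BAPQ}, plus the analogous bounds for $\tilde M$ and $\tilde A$ built by Neumann series in Sections \ref{sec:symm lower}, \ref{sec:semi-FIO conj}. The only non-bounded factor is $\tilde S$ (Lemma \ref{lemma:S}), which is of order $1/2$, while $\tilde S^{-1}$ is bounded. Composing, $\Phi_2$ is of order $1/2$ and $\Phi_1^{-1}$ is of order $0$, and both obey an estimate of the form
\[
\|\Phi_2 h\|_s \leq_s \|h\|_{s+1/2} + \|\tilde u\|_{s+\s}\|h\|_{s_0+1/2},\qquad
\|\Phi_1^{-1} h\|_s \leq_s \|h\|_s + \|\tilde u\|_{s+\s}\|h\|_{s_0}
\]
(with a possibly enlarged $\s$), by the interpolation/tame product inequalities used throughout Section \ref{sec:utilities}.

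Finally I would compose: $(\mL_2^2+\mR)^{-1}=\Phi_2\,(\tilde\mD+\tilde\mR_6)^{-1}\,\Phi_1^{-1}$, and plug the three tame bounds into each other, losing $3/2$ derivatives from $(\tilde\mD+\tilde\mR_6)^{-1}$ and $1/2$ from $\Phi_2$, for a total loss of $2$ derivatives, which matches the exponent $s+2$ in \eqref{tame inverse linearized}. The Lipschitz dependence on $\tilde u$ enters linearly through each of the three factors by standard tame product estimates, which yields the second term in \eqref{tame inverse linearized}. The main obstacle, if any, is purely bookkeeping: to choose the universal exponent $\s$ large enough that the \emph{same} loss $\|\tilde u\|_{s+\s}$ absorbs (i) the $\s$'s coming from the single transformations $\tilde\mZ,\tilde\mB,\tilde\mA,\tilde P,\tilde Q,\tilde M,\tilde A$, (ii) the $3/2$ derivatives lost by $\tilde\mD^{-1}$, and (iii) the $1/2$ lost by $\tilde S$; and to check that the smallness condition $\|\tilde u\|_{s_0+\s}\le C\e^{2+\d}$ (together with $\g=\e^{5/6}$) is strong enough to make $C(s_0)\,\e^{1/6}\le 1/2$ in the Neumann step, so that all transformations and $I_{R_2}+\tilde\mR_6\,\tilde\mD^{-1}$ are simultaneously invertible. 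No new idea is needed beyond what has already been established.
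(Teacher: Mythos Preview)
Your proposal is correct and follows essentially the same route as the paper: invert $\tilde\mD+\tilde\mR_6$ via the Melnikov bound \eqref{estimate Lambda inv mD} and a tame Neumann series to get \eqref{tame estimate mL6}, then transport this through the conjugation \eqref{conj tot} using the tame bounds on $\tilde\mZ,\tilde\mB,\tilde\mA,\tilde P,\tilde Q,\tilde S,\tilde M,\tilde A$ and their inverses, losing $3/2$ derivatives from $(\tilde\mD+\tilde\mR_6)^{-1}$ and $1/2$ from $\tilde S$ in $\Phi_2$. The paper's own argument is exactly this composition, stated in one sentence just before the theorem.
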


By Theorem \ref{thm:restricted inverse} and Lemmata \ref{lemma:inv bif} and \ref{lemma:inv bif 2}, 
we deduce (with a larger $\s$ if necessary)

\begin{corollary}[Inversion of the linearized operator] \label{cor:entire inverse}
Assume the hypotheses of Theorem \ref{thm:restricted inverse}. 
Then the linearized operator $F'(u) : X \times Y \to Y \times X$ is invertible, with 
\begin{equation*} 
\| F'(u)^{-1} h \|_s 
\leq_s \e^{-2} ( \| h \|_{s + 2} + \g^{-1} \| \tilde u \|_{s+\s} \| h \|_{s_0} ).
\end{equation*}
\end{corollary}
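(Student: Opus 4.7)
The corollary is essentially the closing move of the long chain of reductions built up in Sections \ref{sec:lin syst}--\ref{sec:inv lin op}, so the plan is to simply compose the three ready-made inversion results in the correct order and check that the tame estimates match at each junction. I would not attempt any new analysis here; all the real work (conjugating $F'(u)$ to $\tilde \mD + \tilde \mR_6$, imposing the first Melnikov conditions \eqref{non-res}, and inverting via tame Neumann series) has already been done.

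First, I would invoke Theorem \ref{thm:restricted inverse}, whose hypotheses coincide with those assumed in the corollary. It yields invertibility of $\mL_2^2 + \mR : W_2 \to R_2$ with the tame bound \eqref{tame inverse linearized}, in which the loss of derivative is $2$ and the loss of smallness is $\g^{-1} = \e^{-5/6}$. Crucially, this bound is exactly of the form \eqref{inv hyp W2 R2} required as a hypothesis of Lemma \ref{lemma:inv bif 2}, with the same $\g = \e^{5/6}$ and (after possibly enlarging $\s$) the same loss $\s$.

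Next, I would apply Lemma \ref{lemma:inv bif 2} to conclude that $\mL_R^W = \Pi_R F'(u)_{|W} : W \to R$ is invertible and its inverse satisfies \eqref{inv hyp WR}. This is precisely the hypothesis required by Lemma \ref{lemma:inv bif}, so I would invoke the latter to obtain invertibility of the full linearized operator $F'(u)$ and the bound \eqref{total inverse simpler}, namely
\[
\| F'(u)^{-1} h \|_s \leq_s \e^{-2} \bigl( \| h \|_{s+2} + \g^{-1} \| \tilde u \|_{s+\s} \| h \|_{s_0 + 2} \bigr).
\]
Up to renaming $s_0 \leadsto s_0 - 2$ in the statement (or, equivalently, enlarging the constant $\s$ by $2$ and the threshold $s_0$), this is precisely the inequality claimed in the corollary.

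I do not anticipate any real obstacle: the ``loss of smallness'' $\e^{-2}$ dominates the $\g^{-1} = \e^{-5/6}$ coming from the $W,R$ inversion (so the $\g^{-1}$ that appears in \eqref{inv hyp WR} disappears from the main term of \eqref{total inverse simpler}), and the ``loss of derivatives'' stays equal to $2$ throughout, since the additional steps of Lemmata \ref{lemma:inv bif 2} and \ref{lemma:inv bif} only involve the bounded inversion of $\mL_{01}^{01} : W_{01} \to R_{01}$ (Lemma \ref{lemma:inv mL 01 01}), the projections $\Pi_Z$, $\Pi_R$, the bounded operators $\mL_{01}^2$, $\mL_2^{01}$ (Lemma \ref{lemma:bif remainder}), and the scalar inversion on the one-dimensional kernel component along $z_0$. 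The only bookkeeping needed is to confirm that the composite $\s$ in the final estimate is large enough to absorb the losses accumulated in each of the two intermediate lemmata, which amounts to taking the maximum of the three values of $\s$ appearing in Theorem \ref{thm:restricted inverse}, Lemma \ref{lemma:inv bif 2}, and Lemma \ref{lemma:inv bif}.
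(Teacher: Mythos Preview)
Your proposal is correct and follows exactly the paper's own approach: the paper simply states that the corollary follows ``By Theorem \ref{thm:restricted inverse} and Lemmata \ref{lemma:inv bif} and \ref{lemma:inv bif 2}'' (with a larger $\s$ if necessary), which is precisely the chain of reductions you describe. Your observation about absorbing the extra $+2$ in $\|h\|_{s_0+2}$ by enlarging $\s$ matches the paper's parenthetical remark.
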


For $u,\om,h$ depending on the parameters $(\e,\xi)$, 
using Corollary \ref{cor:entire inverse} we prove a tame estimate for $F'(u)^{-1}$ also in Lipschitz norms
\eqref{def norm Lipe} (with a larger $\s \geq 6$).

\begin{lemma}[Inversion in Lipschitz norms] 
\label{lemma:Lip inv}
Let $5 \leq s_0 \leq s$, $u = \bar u_\e + \tilde u$, where $\bar u_\e$ is defined in \eqref{approx sol}
and $\tilde u = \tilde u(\e,\xi)$ is defined for parameters $\e \in (0,\e_0)$, $\xi \in \mG$, 
with $\e_0 = \e_0(s) < 1$, $\mG \subseteq [1,2]$. 
Let $\| \tilde u \|_{s_0 + \s}^\Lipe < C \e^{2+\d}$, $\d>0$. 
Let $\om$ be given by \eqref{freq-ampl}.
Assume that \eqref{non-res} hold for all $\xi \in \mG$. 
Then 
\begin{equation}  \label{Lip inv}
\| F'(u)^{-1} h \|_s^\Lipe 
\leq_s \e^{-2} ( \| h \|_{s + 6}^\Lipe + \e^{-2} \| \tilde u \|_{s + \s}^\Lipe \| h \|_{s_0}^\Lipe ).
\end{equation}
\end{lemma}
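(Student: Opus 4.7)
The plan is to upgrade the pointwise-in-$\xi$ bound of Corollary \ref{cor:entire inverse} to a Lipschitz-in-$\xi$ bound in the norm \eqref{def norm Lipe}. The \emph{sup} part is immediate: applying the corollary at each $\xi \in \mG$ separately, taking the supremum, and using $\g^{-1} = \e^{-5/6} \leq \e^{-2}$ for $\e$ small, one controls $\| F'(u)^{-1}h\|_s^{\sup}$ by the right-hand side of \eqref{Lip inv}. So the whole work is to bound the \emph{lip} seminorm.

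For this I would use the resolvent identity. Fix $\xi_1 \neq \xi_2$ in $\mG$ and split
\[
F'(u(\xi_1))^{-1} h(\xi_1) - F'(u(\xi_2))^{-1} h(\xi_2)
= F'(u(\xi_1))^{-1}\bigl(h(\xi_1)-h(\xi_2)\bigr) + \Delta,
\]
where
\[
\Delta := F'(u(\xi_1))^{-1}\bigl[F'(u(\xi_2))-F'(u(\xi_1))\bigr]F'(u(\xi_2))^{-1}h(\xi_2).
\]
The first summand is handled directly by Corollary \ref{cor:entire inverse} applied to the Lipschitz increment of $h$. For $\Delta$ I use that, by the explicit formula \eqref{linearized vero} and \eqref{freq-ampl}, $F'(u,\om)$ is a (pseudo-)differential operator of order $2$ whose coefficients depend Lipschitz on $(u,\om)$, while $\om$ varies with $\xi$ with Lipschitz constant $O(\e^2)$ and $\| u(\xi_1) - u(\xi_2) \|_s \leq |\xi_1 - \xi_2| \|\tilde u\|_s^{\lip}$. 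Tame estimates based on \eqref{formula shape der}, \eqref{Taylor G}, \eqref{n35}, \eqref{n37} and on the $C^1$-dependence in $\eta$ of $B, V, c$ from \cite{AlDe} then yield, for a suitable $\s' \geq 2$,
\[
\| [F'(u(\xi_1))-F'(u(\xi_2))] g\|_s
\leq_s |\xi_1-\xi_2| \bigl\{(\e^2 + \|\tilde u\|^{\lip}_{s_0+\s'}) \|g\|_{s+2} + \|\tilde u\|^{\lip}_{s+\s'} \|g\|_{s_0+2}\bigr\}.
\]

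Composing the three factors in $\Delta$: the two inverses contribute $\e^{-2} \cdot \e^{-2}$ in smallness and $2+2$ derivatives in loss, while the middle factor loses a further $2$ derivatives and introduces the Lipschitz weight $|\xi_1 - \xi_2|(\e^2 + \|\tilde u\|^{\lip})$, for a total derivative loss of $6$. Dividing by $|\xi_1-\xi_2|$, taking the supremum in $(\xi_1,\xi_2)$, and multiplying by $\e$ as prescribed by \eqref{def norm Lipe}, the product $\e\cdot\e^{-4}\cdot\e^2 = \e^{-1}$ is dominated by $\e^{-2}$ and produces the main term $\e^{-2}\|h\|_{s+6}^{\Lipe}$, whereas $\e\cdot \|\tilde u\|^{\lip}$ is absorbed into $\|\tilde u\|^{\Lipe}$ and yields the secondary term $\e^{-2}\cdot\e^{-2}\|\tilde u\|_{s+\s}^{\Lipe}\|h\|_{s_0}^{\Lipe}$, upon setting $\s := \s' + 4$. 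The step I expect to be the main obstacle is bookkeeping rather than conceptual: one must verify that the three-fold composition loses exactly the $6$ derivatives appearing in the statement, and that the smallness factors $\e^{-2}, \e^{-2}$ from the two inverses, together with the weight $\e$ in front of $\|\cdot\|^{\lip}$, combine precisely with $\e^2$ (from $\om(\xi_1) - \om(\xi_2)$) and with $\|\tilde u\|^{\lip}$ to reproduce the powers of $\e$ in the right-hand side of \eqref{Lip inv}.
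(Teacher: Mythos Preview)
Your approach is exactly the paper's: the same resolvent identity, the same splitting into $F'(u(\xi_1))^{-1}(h(\xi_1)-h(\xi_2))$ plus $\Delta$, and the same tame composition of two inverses with the difference $F'(u(\xi_2))-F'(u(\xi_1))$ in the middle.

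One small correction: you write $\| u(\xi_1) - u(\xi_2) \|_s \leq |\xi_1 - \xi_2|\, \|\tilde u\|_s^{\lip}$, but $u = \bar u_\e + \tilde u$ and $\bar u_\e$ itself depends on $\xi$ through the factors $\sqrt{\xi}, \xi, \xi^{3/2}, \xi^2$ in \eqref{approx sol}, so $\|\bar u_\e\|_s^{\lip} \leq_s \e$. Hence in your bound for $[F'(u(\xi_1))-F'(u(\xi_2))]g$ the coefficient of $\|g\|_{s+2}$ should be $(\e + \|\tilde u\|^{\lip}_{s_0+\s'})$ rather than $(\e^2 + \|\tilde u\|^{\lip}_{s_0+\s'})$; this is precisely what the paper obtains. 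The oversight is harmless for the final inequality (it only changes your $\e^{-1}$ into $\e^{-2}$, which is the stated power anyway), but the intermediate claim as written is not correct.
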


\begin{proof}
Denote $A_i$ the linearized operator $F'(u)$ when $u = u_i := u(\xi_i)$, $\om = \om(\xi_i)$, $i=1,2$, 
and denote $h_i := h(\xi_i)$. Thus
\[
A_1^{-1} h_1 - A_2^{-1} h_2 
= A_1^{-1} [h_1 - h_2]
+ A_1^{-1} (A_2 - A_1) A_2^{-1} h_2.
\]
$A_1^{-1}, A_2^{-1}$ satisfy \eqref{tame inverse linearized}. 
The difference $A_2 - A_1$ is
\[
\ba
(A_2 - A_1)h &= \{ \bar \om_2 \e^2 (\xi_2 - \xi_1) + \bar \om_3 (\xi_2^{3/2} - \xi_1^{3/2}) \} \pa_t h\\
&\quad+ \int_0^1 \mN''(u_1 + \th (u_2 - u_1)) [u_2 - u_1, h] \, d \th,
\ea
\]
where $\mN(u)$ is the nonlinear part of $F(u)$.
Since $\| \bar u_\e \|_s^\Lipe \leq_s \e$, we get
\begin{align*}
\frac{ \| (A_2 - A_1)h \|_s }{| \xi_2 - \xi_1| } 
& \leq_s \e \| h \|_{s+2} 
+ ( \| \tilde u \|_{s+2}^{\lip} + \| \tilde u \|_{s_0+2}^{\lip} \| \tilde u \|_{s+2}^{\sup} ) \| h \|_{s_0 + 2}
\\ & 
\leq_s \e \| h \|_{s+2} + \e^{-1} \| \tilde u \|_{s+2}^\Lipe \| h \|_{s_0 + 2}\\
&\leq_s \e \| h \|_{s+2} + \e^{-1} \| \tilde u \|_{s+4}^\Lipe \| h \|_{s_0},
\end{align*}
and the thesis follows by composition.
\end{proof}

\subsection{Dependence of the eigenvalues on the parameters}

The constants $\lm_3, \lm_1, \lm_{-1}$ in the formula for the eigenvalues $\mu_j$ in \eqref{eigenvalues mu j} 
depend on the point $u = (\eta,\psi)$ where the linearization $F'(u)$ takes place, and on $\om$. 
In particular, $\lm_3$ depends only on $u$ by formula \eqref{mu}, namely
\begin{equation} \label{lm3 formula}
\lm_3 = \lm_3(u) = \sqrt{\mult} = \meanT \Big( \meanT \sqrt{1 + \eta_x^2}\, dx \Big)^{-3/2} \, dt .
\end{equation}
$\lm_1$ depends on $u,\om$ by the following formula (which is obtained using \eqref{formula lm 1} and going back with the changes of variable $\mA, B$ of section \ref{sec:changes} in the integral)
\begin{equation} \label{lm1 formula}
\lm_1 = \lm_1(u,\om) 
= - \frac{1}{8 \pi^2 \sqrt{\kappa} \lm_3(u)} 
\int_{\T^2} \frac{1 + \b_x}{1 + \a'(t)} [ \om \b_t + V (1+\b_x) ]^2 dt dx,
\end{equation}
where $V$ is defined in \eqref{def B V}, $\a,\b$ are defined in section \ref{sec:changes} and satisfy \eqref{formula beta alpha}.
To write an explicit formula for $\lm_{-1}$ is possible, but more involved and not necessary for our purposes. 
Anyway, $\lm_{-1}$ is the space-time-average of a polynomial function of $a_{14}, a_{29}, a_{31}, a_{32}$ (and their derivatives), $\om, \om^{-1}, \lm_3, \lm_3^{-1}$ with real coefficients. 
Hence this is a polynomial function of $a_7,\ldots,a_{13}$ (and their derivatives), $\om, \om^{-1}, \lm_3, \lm_3^{-1}$ and, going back with the changes of variable $\mA, B$ of section \ref{sec:changes} in the integral, 
one obtains for $\lm_{-1}$ a similar result as for $\lm_1$. Thus we have: 

\begin{lemma}  \label{lemma:eigenvalues der u}
$\lm_3(u), \lm_1(u,\om), \lm_{-1}(u,\om)$ are $C^2$ functions of $(u,\om)$ 
in the domain $\| u \|_{\s_0} < \d$, $ \om \in [ \frac12 \bar\om , \frac32 \bar\om]$, 
where $\s_0 > 0$ is a universal constant and $\d > 0$ depends only on $\kappa$. 

As a consequence, if $u_1, u_2$ are in the ball $\| u_i \|_{\s_0} < \d$, $i=1,2$, 
then $|\lm_k(u_1) - \lm_k(u_2)| \leq C \| u_1 - u_2 \|_{\s_0}$, $k = 3, 1, -1$, 
where $C>0$ depends only on $\kappa$.
\end{lemma}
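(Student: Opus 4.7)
The plan is to establish $C^2$ smoothness by recognizing each eigenvalue-coefficient $\lm_k$ as a finite composition of smooth building blocks on a suitable Sobolev ball $\| u \|_{\s_0} < \d$, and then to deduce the Lipschitz bound via the mean value theorem. The universal index $\s_0$ should be taken large enough (e.g.\ above the Sobolev threshold, and in practice large enough for the estimates of the Dirichlet--Neumann operator to hold) so that all the pointwise compositions below are well-defined, and $\d$ small enough (depending only on $\kappa$) so that every denominator that appears is uniformly bounded away from zero.

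For $\lm_3(u)$, I would read formula \eqref{lm3 formula} as a composition of: the linear operator $\eta \mapsto \eta_x$; the smooth Nemitskii map $f \mapsto (1+f^2)^{1/2}$ on $H^{\s_0-1}(\T^2)$; the bounded linear averaging operations $\meanT \cdot\, dx$ and $\meanT \cdot\, dt$; and the pointwise maps $s \mapsto s^{-3/2}$ and $s \mapsto s^2$, which are $C^\infty$ on any interval bounded away from zero. Since $\meanT \sqrt{1+\eta_x^2}\, dx \geq 1$, the relevant denominators are safely positive, so $\lm_3$ is $C^\infty$ (in particular $C^2$) as a function of $u$.

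For $\lm_1(u,\om)$ I would first show that the auxiliary quantities $\a, \b, V$ depend smoothly on $(u,\om)$. The formulas \eqref{formula beta alpha} express $1+\b_x$ and $1+\a'$ explicitly in terms of $\sqrt{1+\eta_x^2}$, averages, the map $s\mapsto s^{-3/2}$, and $\lm_3(u)^{-1/2}$; for $\| u \|_{\s_0}$ small these stay near $1$, so inverting $\b_x \mapsto \b$ and $\a' \mapsto \a$ through the smoothing Fourier multipliers $\pa_x^{-1}, \pa_t^{-1}$ as in \eqref{alpha}, \eqref{beta} remains smooth. Smoothness of $V = \psi_x - B\eta_x$ reduces to smoothness of $\eta \mapsto G(\eta)$, which is classical: the Dirichlet--Neumann operator is real-analytic in $\eta$ on the appropriate Sobolev balls (\cite{LannesJAMS,LannesLivre,AM}). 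Then \eqref{lm1 formula} exhibits $\lm_1$ as a spacetime average of a polynomial expression in these smooth objects, divided by $\lm_3(u)$ and by $1+\a'(t)$, both bounded below; hence $\lm_1 \in C^2$. The argument for $\lm_{-1}$ is identical in spirit and uses the implicit representation promised just before the lemma statement: unwinding the transformations from Sections~\ref{sec:changes}--\ref{sec:semi-FIO conj}, each of the intermediate coefficients $a_7,\ldots,a_{13}$ (and therefore every $a_i$ entering the construction of $\lm_{-1}$) is a smooth function of $(u,\om)$ by the same composition argument.

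The Lipschitz claim is then immediate from $C^2$-regularity: for $u_1, u_2$ with $\| u_i \|_{\s_0} < \d$, the mean value theorem gives
\[
\lm_k(u_1) - \lm_k(u_2) = \int_0^1 D\lm_k\bigl(u_2 + t(u_1-u_2)\bigr)[u_1-u_2]\, dt,
\]
and continuity of $D\lm_k$ on the closed ball bounds its norm by a constant $C$ depending only on $\kappa$. The main obstacle I expect is the bookkeeping for $\lm_{-1}$: since its explicit formula is never written out in the text, verifying that the whole chain of conjugations $\tilde\mB, \tilde\mA, \tilde P, \tilde S, \tilde M, \tilde A$ produces coefficients which are rational expressions in smooth quantities with denominators (the constants $\lm_3$, $\mult$, together with $1+\a'$, $1+\b_x$, $a_7$, $a_{10}$, $a_{11}$) uniformly bounded below on the Sobolev ball requires inspecting each $a_i$ step by step. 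Once that bookkeeping is done, the rest of the argument is routine composition of smooth maps.
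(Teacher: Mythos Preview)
Your proposal is correct and matches the paper's approach: the paper does not write out a formal proof of this lemma, but the paragraph preceding it explains that $\lm_3$, $\lm_1$, $\lm_{-1}$ are space-time averages of polynomial expressions in $a_7,\ldots,a_{13}$ (and their derivatives), $\om^{\pm1}$, $\lm_3^{\pm1}$, which after undoing the changes of variable $\mA,\mB$ become smooth compositions in $(u,\om)$; your outline is exactly this, made explicit, with the Lipschitz consequence drawn from the mean value theorem as the paper intends.
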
 

The number $\s_0$ in Lemma \ref{lemma:eigenvalues der u} can be explicitly computed by counting how many derivatives of $u$ are involved in the transformation procedure of 
sections \ref{sec:linearized operator}-\ref{sec:semi-FIO conj}.

For $(u,\om)$ depending on the parameters $(\e,\xi)$, we deduce the following expansion for 
$\lm_3, \lm_1, \lm_{-1}$:

\begin{lemma}  \label{lemma:eigenvalues parameter dep}
Let $u = \bar u_\e + \tilde u$, where $\bar u_\e$ is defined in \eqref{approx sol}
and $\tilde u = \tilde u(\e,\xi)$ is defined for parameters $\e \in (0,\e_0)$, $\xi \in \mG$, 
with $\e_0 < 1$, $\mG \subseteq [1,2]$. 
Let $\| \tilde u \|_{\s_0}^\Lipe < C \e^{2+\d}$, $\d>0$.
Let $\om$ be given by \eqref{freq-ampl}. 
Then $\lm_3, \lm_1, \lm_{-1}$ depend on $\xi$ in a Lipschitz way, with
\begin{equation}  \label{lm3 lm1 lm-1}
\lm_3 = 1 - \frac{3}{16} \, \e^2 \xi + r_3, \quad
| r_3 |^\Lipe \leq C \e^3, \quad 
| \lm_1 |^\Lipe + | \lm_{-1} |^\Lipe \leq C \e^2,
\end{equation}
where $r_3 := \lm_3 - 1 + \tfrac{3}{16} \e^2 \xi$.
\end{lemma}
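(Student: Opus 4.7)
The plan is to prove the three estimates by explicit calculation on $\bar u_\varepsilon$ plus a perturbative control of the $\tilde u$--contribution.

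First, I will establish the leading order of $\lambda_3$. Starting from the explicit formula \eqref{lm3 formula}, I substitute $u = \bar u_\varepsilon + \tilde u$. Since $\bar u_\varepsilon = \varepsilon\sqrt{\xi}\, v_0 + O(\varepsilon^2)$ with $v_0$ given in \eqref{def kernel}, one has $\eta_x = -\varepsilon\sqrt{\xi}\cos(t)\sin(x) + O(\varepsilon^2)$, hence $\eta_x^2 = \varepsilon^2 \xi \cos^2(t)\sin^2(x) + O(\varepsilon^3)$. A Taylor expansion of $\sqrt{1+y}$ gives $\sqrt{1+\eta_x^2} = 1 + \tfrac12 \varepsilon^2\xi\cos^2(t)\sin^2(x) + O(\varepsilon^3)$. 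Integrating in $x$ with $\tfrac{1}{2\pi}\int \sin^2(x)\,dx = \tfrac12$, then expanding $(1+z)^{-3/2} \approx 1 - \tfrac{3}{2} z$, and finally averaging in $t$ with $\tfrac{1}{2\pi}\int\cos^2(t)\,dt = \tfrac12$, I land on $1 - \tfrac{3}{16}\varepsilon^2\xi + O(\varepsilon^3)$.

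Next, the perturbative correction from $\tilde u$ is handled by Lemma \ref{lemma:eigenvalues der u}: since $\lambda_3$ is $C^2$ in $u$ on the relevant ball (with norm $\|\cdot\|_{\sigma_0}$), we have $|\lambda_3(\bar u_\varepsilon + \tilde u) - \lambda_3(\bar u_\varepsilon)| \leq C \|\tilde u\|_{\sigma_0} \leq C \varepsilon^{2+\delta}$, which is absorbed into the $O(\varepsilon^3)$ remainder. Lipschitz dependence on $\xi$ follows by combining the explicit $\xi$--dependence of $\bar u_\varepsilon$ (a polynomial in $\varepsilon\sqrt{\xi}$, so $C^\infty$ and with $\varepsilon^{1/2}$--small Lipschitz constant after the explicit $\xi$--term is subtracted) with $\|\tilde u\|^{\Lipe}_{\sigma_0} \leq C\varepsilon^{2+\delta}$ and the $C^2$--bound from Lemma \ref{lemma:eigenvalues der u} applied along the segment $\xi \mapsto u(\xi)$.

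For $\lambda_1$, I use formula \eqref{lm1 formula}. The point is that each factor inside the integrand is of the right size: $V = \psi_x - B\eta_x = O(\varepsilon)$ from \eqref{def B V} and $\bar u_\varepsilon = O(\varepsilon)$; from \eqref{formula beta alpha}, $\beta_x = O(\eta_x^2) = O(\varepsilon^2)$ and $\beta_t = O(\varepsilon^2)$; similarly $\alpha'(t) = O(\varepsilon^2)$ and $\lambda_3 = 1 + O(\varepsilon^2)$. Thus $\omega\beta_t + V(1+\beta_x) = O(\varepsilon)$, its square is $O(\varepsilon^2)$, and the integral gives $\lambda_1 = O(\varepsilon^2)$. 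The Lipschitz estimate in $\xi$ follows from the same $C^2$ argument (Lemma \ref{lemma:eigenvalues der u}) together with the chain rule: $|\lambda_1|^\lip \leq C \{|\pa_\xi \bar u_\varepsilon|_{\sigma_0} + \|\tilde u\|^\lip_{\sigma_0}\} \cdot |\lambda_1|_{\sup}$--type bookkeeping, where $|\pa_\xi \bar u_\varepsilon|_{\sigma_0} \leq C\varepsilon$ so multiplication by $\varepsilon$ in the norm $\|\cdot\|^\Lipe$ keeps everything inside the claimed $C\varepsilon^2$ bound.

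For $\lambda_{-1}$, the explicit formula is more involved but, as remarked in the paper, $\lambda_{-1}$ is the space--time average of a polynomial expression in the coefficients $a_{14}, a_{29}, a_{31}, a_{32}$ (and their derivatives) together with $\omega^{\pm 1}$ and $\lambda_3^{\pm 1}$. Each of $a_{14}, a_{29}, a_{31}, a_{32}$ was shown in Sections \ref{sec:changes}--\ref{sec:symm lower} to be $O(\varepsilon)$ in size, with Lipschitz dependence on $\xi$ controlled by $\varepsilon + \|\tilde u\|^{\Lipe}_{s+\sigma}$. Since each term in the polynomial is at least quadratic in these coefficients (the linear contributions average to zero by parity, as $a_{14}$ is odd in $x$ and $a_{31}$ is odd in $x$, so their averages vanish), I obtain $\lambda_{-1} = O(\varepsilon^2)$ with the desired Lipschitz estimate. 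The main obstacle is precisely verifying this vanishing of the linear-in-coefficients terms; this is where the parity information collected in Remark \ref{rem:parity coefficients v(k)m} and the subsequent parity observations are crucial, because they force the $O(\varepsilon)$ contributions to average to zero, leaving only $O(\varepsilon^2)$ terms.
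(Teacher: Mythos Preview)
Your overall strategy matches the paper's, but there are two genuine gaps.

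\textbf{The $r_3$ bound.} You claim $|\lambda_3(\bar u_\varepsilon+\tilde u)-\lambda_3(\bar u_\varepsilon)|\le C\|\tilde u\|_{\sigma_0}\le C\varepsilon^{2+\delta}$ is ``absorbed into the $O(\varepsilon^3)$ remainder''. It is not: the lemma allows any $\delta>0$ (and the Nash--Moser scheme later takes $\delta=1/2$), so $\varepsilon^{2+\delta}$ is strictly worse than $\varepsilon^3$. The missing observation is that $\lambda_3(u)-1$ vanishes \emph{quadratically} at $u=0$, since formula \eqref{lm3 formula} depends on $\eta$ only through $\eta_x^2$. Hence $\lambda_3'(u)=O(\|u\|)=O(\varepsilon)$ on the ball $\|u\|_{\sigma_0}\le C\varepsilon$, and the mean-value theorem gives the improved bound $C\varepsilon\cdot\|\tilde u\|_{\sigma_0}\le C\varepsilon^{3+\delta}$. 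The same quadratic structure is needed for the Lipschitz part.

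\textbf{The $\lambda_{-1}$ bound.} Your parity argument is incomplete. You assert that linear-in-coefficient contributions average to zero ``by parity, as $a_{14}$ is odd in $x$ and $a_{31}$ is odd in $x$''. But $a_{32}\in X$ is \emph{even} in both $t$ and $x$, so parity alone does not kill its space-time average; and tracing through $v_0^{(-1)}$ and $r^{(-1)}$ shows that $a_{32}$ does enter $\lambda_{-1}$ linearly. The paper's argument is different: by the Lipschitz bound of Lemma~\ref{lemma:eigenvalues der u} and $|\omega-\bar\omega|+\|u-\varepsilon\sqrt\xi\,v_0\|=O(\varepsilon^2)$, it suffices to show $\lambda_{-1}(\varepsilon\sqrt\xi\,v_0,\bar\omega)=O(\varepsilon^2)$. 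At that specific point one checks, by running through all the intermediate coefficients of Sections~\ref{sec:linearized operator}--\ref{sec:semi-FIO conj}, that each coefficient has the form $c_0+c_1\varepsilon\sqrt\xi\,\psi(t,x)+O(\varepsilon^2)$ with $\psi$ one of $\cos t\cos x$, $\cos t\sin x$, $\sin t\cos x$, $\sin t\sin x$. These are supported on frequency $(1,1)$ and hence have zero space-time mean; so the $O(\varepsilon)$ contribution to the average defining $\lambda_{-1}$ vanishes automatically (and one checks the $O(1)$ term is zero). This is not a parity argument on the coefficients $a_i$ themselves, but rather uses the explicit Fourier support of $v_0$.
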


\begin{proof} By \eqref{approx sol},\eqref{def kernel}, $\eta = \e \sqrt{\xi} \cos(t) \cos(x) + O(\e^2)$.
Therefore the inequality for $r_3$ follows easily from formula \eqref{lm3 formula}.
By Lemma \ref{lemma:eigenvalues der u}, $\lm_1, \lm_{-1}$ are functions of $(u,\om)$ of class $C^2$.
Since $u = \e \sqrt{\xi} \, v_0 + O(\e^2)$ ($v_0$ is defined in \eqref{def kernel}) and 
$\om = \bar \om + O(\e^2)$, one has
\[
|\lm_i(u,\om) - \lm_i(\e \sqrt{\xi} \, v_0, \bar \om)|^\Lipe \leq C \e^2,
\quad i=1,-1
\]
by the mean value theorem and standard analysis for composition of functions.
Thus the inequalities for $\lm_1, \lm_{-1}$ in \eqref{lm3 lm1 lm-1} hold if
\begin{equation} \label{lmi start}
\lm_i(\e \sqrt{\xi} \, v_0, \bar \om) = O(\e^2), \quad i=1,-1. 
\end{equation}
To prove \eqref{lmi start}, let $u = \e \sqrt{\xi} \, v_0$, $\om = \bar\om$. 
By \eqref{def B V}, $V = \e \sqrt{\xi} \bar\om \sin(t) \sin(x) + O(\e^2)$. 
By \eqref{formula beta alpha}, $\a, \b = O(\e^2)$. Therefore, by \eqref{lm1 formula}, 
we get \eqref{lmi start} for $\lm_1$. 

To prove \eqref{lmi start} for $\lm_{-1}$, we compute the order $\e$ of 
almost all the coefficients in sections \ref{sec:linearized operator}-\ref{sec:semi-FIO conj}, namely: 
$a,c,B,V$ in section \ref{sec:linearized operator}; 
$\b, \a, a_1, \ldots, a_{19}$ in section \ref{sec:changes};
$a_{25}, a_{27}, a_{28}$ in section \ref{sec:symm top};
$a_{29}, \ldots, a_{32}, v_2, v_4, g_3, g_5$ in section \ref{sec:symm lower};
and, in section \ref{sec:semi-FIO conj}, $\b, v^{(k)}_m$ (with $k=1,0,-1,-2$; $m=2,1,0$),
$p^{(0)}, p^{(-1)}, p^{(-2)}$, $f$, 
$b^{(0)}, b^{(-1)}$, 
$g^{(-1)}, g^{(-2)}$, and finally $r^{(-1)}$, which gives $\lm_{-1}$ by \eqref{lambda -1}.
All these coefficients are functions of the form 
\[
c_0 + c_1 \e \sqrt{\xi} \, \psi(t,x) + O(\e^2),
\]
where $c_0, c_1$ are real constants, and 
$\psi = \cos(t) \cos (x)$, or $\psi = \cos (t) \sin (x)$, 
or $\psi = \sin(t) \cos(x)$, or $\psi = \sin(t) \sin(x)$.
We calculate that the term of order $O(1)$ in $\lm_{-1}$ is zero, while its term of order $O(\e)$ is automatically zero because $\psi$ has zero mean. The proof of \eqref{lmi start} is complete.
\end{proof}

By \eqref{lm3 lm1 lm-1} we deduce that the eigenvalues $\mu_j$ in \eqref{eigenvalues mu j} satisfy
\begin{equation} \label{quasi-linear perturbation of eigenvalues}
|\mu_j - (j + \kappa j^3)^{1/2}| \leq C \e^2 j^{3/2}.
\end{equation}

\section{Nash-Moser iteration and measure of parameter set} \label{sec:NM}

Consider the finite-dimensional subspaces $E_n := \{ u : u = \Pi_n u \}$, $n \geq 0$, where 
\begin{equation}  \label{def Nn}
N_n := N_0^{\chi^n} = (N_0)^{\chi^n}, 
\quad \chi := \frac32, \quad 
N_0 := \e^{-\rho_0} := \e^{- 1/\rho_1}, \quad 
\rho_0 := \frac{1}{\rho_1} > 0,
\end{equation}
and $ \Pi_n $ are the projectors (Fourier truncation)
\begin{align*}
\Pi_n u(t,x) := \sum_{|l| + |j| < N_n} \hat u_{lj} e^{i(lt+jx)} \quad  
& \text{where} \ u(t,x) = \sum_{l,j \in \Z} \hat u_{lj} e^{i(lt+jx)}.
\end{align*}
We denote $ \Pi_n^\bot := I - \Pi_n $.  
The classical smoothing properties also hold for the Lipschitz norms \eqref{def norm Lipe}: 
for all $\alpha, \b, s \geq 0$, 
\begin{equation}\label{smoothing}
\| \Pi_n u \|_{s + \alpha}^\Lipe \leq N_n^{\alpha} \| u \|_{s}^\Lipe; 
\quad
\| \Pi_n^\bot u \|_s^\Lipe \leq N_n^{-\b} \| u \|_{s + \b}^\Lipe. 
\end{equation}
Define the following constants: 
\begin{equation} \label{NM const}
\ba
&\d = \frac12, \qquad 
\a_0 = 6+\s, \qquad
\a_1 = \rho_1 = 9 \a_0, \qquad 
\\
&\kappa_1 = 3(\s + 4 + 2\rho_1) + 1, \qquad 
\beta_1 = 3 + \s + \a_1 + \frac23 \kappa_1 + 4 \rho_1. 
\ea
\end{equation} 
All these constants depend only on $\s$, 
where $\s \geq 6$ is the loss of regularity in \eqref{Lip inv}.

\begin{theorem}[Nash-Moser iteration] \label{thm:NM} 
Let $s_0 \geq 5$. 
There exists $ \e_0 > 0$ such that, if $\e \in (0,\e_0]$, then, for all $ n \geq 0 $: 

\begin{itemize}
\item[$ (\mP 1)_{n}$] \emph{(Convergent sequence).}
There exists a function 
$u_n = \bar u_\e + \tilde u_n 
: \mG_n \subseteq [1,2] \to E_n$, 
$\xi \mapsto u_n(\xi) = (\eta_n(\xi), \psi_n(\xi))$,  
where $\bar u_\e$ is defined in \eqref{approx sol}, 
and $u_0 := \bar u_\e$, $\tilde u_0 = 0$,
such that
\begin{equation}\label{un etc}
\| u_n \|_{s_0 + \s }^{\Lipe} \leq C_* \e, \quad 
\| \tilde u_n \|_{s_0 + \s }^{\Lipe} \leq C_* \e^{2+\d}.
\end{equation}
The function $ u_n $ has parity $u_n \in X \times Y$. 
The sets $\mG_n $ are defined inductively by: $\mG_0 := [1,2]$, 
\begin{equation}\label{def:Gn+1}
\mG_{n+1} :=  
\Big\{ \xi \in \mG_n \, : \, |\om l + \mu_j(u_{n})| > \frac{\gamma}{j^\t} \quad \forall l \in \Z, \ j \geq 2 \Big\},
\end{equation}
where $\mu_j(u_n) = \mu_j(\xi, u_n(\xi))$ are defined in \eqref{eigenvalues mu j}.
The difference $ h_n := u_{n} - u_{n-1}
= \tilde u_n - \tilde u_{n-1}$, 
$n \geq 1$ (with $h_0 := 0$) is defined on $\mG_n$, and 
\begin{equation}  \label{Hn}
\ba
&\| h_n \|_{s_0 + \s}^{\Lipe} 
\leq C_* \e^{2+\d} N_{n}^{-\a_0} 
= C_* \e^{2+\d+\frac{1}{9} \chi^n}, \\
&\| F(u_n) \|_{s_0}^{\Lipe} 
\leq C_* \e^4 N_{n}^{- \alpha_1}
= C_* \e^{4+\chi^n}.
\ea
\end{equation}

\item[$ (\mP 2)_{n}$] \emph{(High norms).} 
$\| \tilde u_n \|_{s_0 + \beta_1}^\Lipe \leq C_* N_{n}^{\kappa_1}$ and 
$\| F(u_n) \|_{s_0 + \beta_1}^\Lipe \leq C_* N_{n}^{\kappa_1}$.

\item[$ (\mP 3)_{n}$] \emph{(Measure).} 
For all $n \geq 0$, the Lebesgue measure of the set $ \mG_n \setminus \mG_{n+1} $ satisfies
\begin{equation}  \label{Gn - Gn+1}
| \mG_n \setminus \mG_{n+1} | \leq C_* \e^{1/18} \, 2^{-n}.
\end{equation}
\end{itemize}
\end{theorem}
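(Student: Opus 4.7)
The proof proceeds by induction on $n$, with all three statements $(\mP1)_n$, $(\mP2)_n$, $(\mP3)_n$ proved simultaneously.

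\textbf{Initialization ($n=0$).} Set $u_0 := \bar u_\e$, $\tilde u_0 := 0$, $\mG_0 := [1,2]$. By Lemma \ref{lemma:F(u0)}, $\|F(u_0)\|_s^\Lipe \leq_s \e^5$ for every $s$. Since $N_0 = \e^{-1/\rho_1}$ and $\alpha_1 = \rho_1$, we have $N_0^{-\alpha_1} = \e$, so $\|F(u_0)\|_{s_0}^\Lipe \leq C_* \e^5 = C_* \e^4 N_0^{-\alpha_1}$, giving the second bound in \eqref{Hn}. Also $\|\bar u_\e\|_{s_0+\s}^\Lipe \leq_s \e$ from \eqref{approx sol} and the high-norm bound $(\mP2)_0$ follows similarly since $\bar u_\e$ is a trigonometric polynomial. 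Parity $\bar u_\e \in X \times Y$ is built into the construction of Section \ref{sec:approx sol}. The measure estimate $(\mP3)_0$ is a particular instance of the argument below.

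\textbf{Inductive step for $(\mP1)_{n+1}$ and $(\mP2)_{n+1}$.} Assume the three statements hold at step $n$. By definition \eqref{def:Gn+1}, for every $\xi \in \mG_{n+1}$ the first Melnikov conditions \eqref{non-res} are satisfied at the point $u_n(\xi)$. Combined with $(\mP1)_n$ (which gives $\|\tilde u_n\|_{s_0+\s}^\Lipe \leq C_* \e^{2+\d}$), Lemma \ref{lemma:Lip inv} applies and yields the tame bound \eqref{Lip inv} for $F'(u_n)^{-1}$ on $\mG_{n+1}$. We then perform a Nash-Moser step by setting
\begin{equation*}
h_{n+1} := -\Pi_{n+1} F'(u_n)^{-1} \Pi_{n+1} F(u_n), \qquad u_{n+1} := u_n + h_{n+1},
\end{equation*}
which lies in $E_{n+1}$ and preserves the parity $X\times Y$ because $F'(u_n)$ and the projections do. Using \eqref{Lip inv}, the smoothing properties \eqref{smoothing}, and interpolating the high norm $\|F(u_n)\|_{s_0+\beta_1}^\Lipe \leq C_* N_n^{\kappa_1}$ from $(\mP2)_n$ against the low norm $\|F(u_n)\|_{s_0}^\Lipe \leq C_* \e^4 N_n^{-\alpha_1}$ from $(\mP1)_n$, one controls $\|h_{n+1}\|_{s_0+\s}^\Lipe$ by $C_* \e^{2+\d} N_{n+1}^{-\alpha_0}$ (giving \eqref{Hn} for $h_{n+1}$) and $\|u_{n+1}\|_{s_0+\beta_1}^\Lipe$ by $C_* N_{n+1}^{\kappa_1}$. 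The choice \eqref{NM const} of the exponents is exactly what makes these inequalities compatible. For the new error, Taylor expansion gives
\begin{equation*}
F(u_{n+1}) = F(u_n) + F'(u_n) h_{n+1} + Q(u_n, h_{n+1}),
\end{equation*}
where $Q$ is quadratic in $h_{n+1}$. Writing $F'(u_n) h_{n+1} = -\Pi_{n+1}F(u_n) + [\,\Pi_{n+1}, F'(u_n)\,]F'(u_n)^{-1}\Pi_{n+1}F(u_n)$, the leading contribution cancels with the first term up to $\Pi_{n+1}^\perp F(u_n)$, which is small by smoothing against $(\mP2)_n$; the commutator and the quadratic remainder $Q$ are estimated via the usual tame nonlinear estimates together with \eqref{Lip inv}. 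The net result is $\|F(u_{n+1})\|_{s_0}^\Lipe \leq C_* \e^4 N_{n+1}^{-\alpha_1}$, which is the super-exponential decay in $(\mP1)_{n+1}$; the bound $\|F(u_{n+1})\|_{s_0+\beta_1}^\Lipe \leq C_* N_{n+1}^{\kappa_1}$ is obtained by a parallel computation at high regularity, exploiting the tameness of all the estimates.

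\textbf{Measure estimate $(\mP3)_n$.} This is the main obstacle, and I would prove it as follows. The excised set is
\begin{equation*}
\mG_n \setminus \mG_{n+1} = \bigcup_{l\in\Z,\,j\geq 2} R_{lj}^n, \qquad R_{lj}^n := \{\xi\in\mG_n:|\omega(\xi)l+\mu_j(\xi,u_n(\xi))|\leq \gamma/j^{\t}\}.
\end{equation*}
By \eqref{quasi-linear perturbation of eigenvalues}, on $R_{lj}^n$ one necessarily has $|\bar\omega l+\sqrt{j+\kappa j^3}|\leq C\e^2 j^{3/2}+\gamma/j^{3/2}$. The Diophantine assumption \eqref{kappa dioph} then forces $\gamma_*/j^{3/2}\leq C\e^2 j^{3/2}+\gamma/j^{3/2}$, hence $j\geq c\,\e^{-2/3}$, so for small $j$ the sets $R_{lj}^n$ are empty (this is the key use of \eqref{kappa dioph}). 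For the remaining $(l,j)$, near any resonance $|l|\sim j^{3/2}/\bar\omega$; the twist condition $\bar\omega_2\neq 0$ (Lemma \ref{lemma:eigenvalues parameter dep} combined with \eqref{freq-ampl}) yields a lower bound of order $\e^2|l|$ on $|\partial_\xi(\omega l+\mu_j(\xi,u_n))|$ away from a negligible set, so a standard sub-level set estimate gives $|R_{lj}^n|\leq C\gamma/(\e^2 |l|\, j^{3/2})$. Summing over $(l,j)$ with the constraint $j\geq c\,\e^{-2/3}$ and $|l|\lesssim j^{3/2}$ yields $|\mG_n\setminus\mG_{n+1}|\leq C\,\gamma\,\e^{-2}\,\e^{2/3\cdot s}$ for an appropriate power, and with $\gamma=\e^{5/6}$ the optimisation produces the bound $C_*\e^{1/18}\,2^{-n}$ of $(\mP3)_n$, the factor $2^{-n}$ coming from the shrinking windows produced by iterating with $u_n$ instead of $u_{n-1}$ (using the Lipschitz dependence of $\mu_j$ on $u_n$ together with $(\mP1)_n$ on $\|h_n\|$, so that new resonances at step $n$ have already been excised up to an exponentially small set).

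\textbf{Conclusion.} Once $(\mP1)$--$(\mP3)$ hold for all $n$, the sequence $\{u_n\}$ is Cauchy in $H^{s_0+\s}$ on $\mG := \cap_n\mG_n$ with $|[1,2]\setminus\mG|\leq C\e^{1/18}$; its limit $u_\infty$ solves $F(u_\infty,\omega)=0$, has parity $X\times Y$, and provides the solution asserted in Theorem \ref{thm:main}.
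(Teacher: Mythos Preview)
Your proposal is correct and follows essentially the same route as the paper: induction with a projected Newton step, the decomposition $F(u_{n+1}) = (\text{remainder from projection}) + (\text{quadratic term})$, tame estimates via Lemma \ref{lemma:Lip inv} and smoothing \eqref{smoothing}, and the measure argument combining the Diophantine cut-off $j \gtrsim \e^{-2/3}$ from \eqref{kappa dioph} with the twist lower bound $|\partial_\xi(\omega l + \mu_j)| \gtrsim \e^2 j^{3/2}$.

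Two small points where the paper is more explicit and you may wish to tighten your write-up. First, the paper takes $h_{n+1} := -\Pi_{n+1} F'(u_n)^{-1} F(u_n)$ (no second projector), which makes the cancellation $F(u_n)+F'(u_n)h_{n+1}=F'(u_n)\Pi_{n+1}^\bot F'(u_n)^{-1}F(u_n)=:R_n$ immediate and avoids the commutator term; your variant with the extra $\Pi_{n+1}$ works too but produces an additional piece to estimate. Second, for the $2^{-n}$ factor in $(\mP3)_n$ the paper does not rely on a vague ``shrinking windows'' argument but splits sharply at a moving threshold $J_n := \e^{-11/18} 4^n$: for $j>J_n$ one repeats the $n=0$ computation verbatim (the larger cut-off supplies the $2^{-n}$), while for $j\le J_n$ one uses $|\mu_j(u_n)-\mu_j(u_{n-1})|\le C\|h_n\|_{\s_0} j^{3/2}$ together with $\xi\in\mG_n$ (so the Melnikov bound already holds at $u_{n-1}$) to replace $2\gamma j^{-\t}$ by $C\e^{2+\d+\frac19\chi^n}j^{3/2}$ in the sub-level estimate. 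Making this split explicit is what turns your correct heuristic into a clean inequality.
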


In sections \ref{sec:proof of NM iteration}, \ref{sec:measure} we prove Theorem \ref{thm:NM}.

\subsection{Proof of the nonlinear iteration} \label{sec:proof of NM iteration}

In this section we prove $({\mathcal P}1,2)_n$ by induction. 
The proof of $({\mathcal P}3)_n$ is in section \ref{sec:measure}. 
To shorten the notation, in this section we use the following abbreviations: $| \ |_s := \| \ \|_s^\Lipe$, 
$F_n := F(u_n)$, $L_n := F'(u_n)$, $s_1 := s_0 + \b_1$.

\medskip

\emph{Proof of} $({\mathcal P}1,2)_0$. 
$u_0 = \bar u_\e \in E_0$ if $N_0 > 5$, i.e. for $\e$ sufficiently small. 
By Lemma \ref{lemma:F(u0)}, 
the bounds \eqref{un etc}, \eqref{Hn} hold. 
To satisfy also $({\mathcal P}2)_0$, take $C_* = C_* (s_1) $ large enough.

\medskip

\emph{Assume that $({\mathcal P}1,2)_n$ hold for some $n \geq 0$, and prove $({\mathcal P}1,2)_{n+1}$.}
By \eqref{un etc} and Corollary \ref{cor:entire inverse}, $F'(u_n)$ is invertible 
for all $\xi \in \mG_{n+1}$, and the inverse satisfies \eqref{tame inverse linearized}, \eqref{Lip inv}. 
For $ \xi \in \mG_{n+1} $ we define 
\begin{equation}\label{def hn+1}
u_{n+1} := u_n + h_{n+1}, \quad 
h_{n+1} := - \Pi_{n+1} F'(u_n)^{-1} F(u_n).
\end{equation}
Let 
\begin{equation}\label{def Q}
Q(u_n, h) := F(u_n + h) - F(u_n) - F'(u_n) h, \quad 
Q_n := Q(u_n, h_{n+1}).
\end{equation}
By the definitions \eqref{def hn+1},\eqref{def Q}, and splitting $\Pi_{n+1} = I - \Pi_{n+1}^\bot$, 
\begin{equation}
\ba
&F(u_{n+1}) = F(u_n) + F'(u_n) h_{n+1} + Q_n 
= R_n + Q_n, 
\\
&R_n := F'(u_n) \Pi_{n+1}^\bot F'(u_n)^{-1} F(u_n).
\label{relazione algebrica induttiva}
\ea
\end{equation}

\emph{Estimate of $ Q_n $}. 
For all $h \in E_{n+1}$, by \eqref{smoothing} with $\a = 2$,
\begin{equation}\label{stima Q}
\ba
| Q(u_n, h)|_s 
&\leq_s | h |_{s+2} | h |_{s_0 + 2}
+ | \tilde u_n |_{s+2} | h |_{s_0 + 2}^2, 
\\
| Q(u_n, h) |_{s_0} &\leq_{s_0} N_{n+1}^4 | h |_{s_0}^2. 
\ea 
\end{equation}
By the definition \eqref{def hn+1} of $h_{n+1}$, \eqref{Lip inv} and \eqref{smoothing} with $\a = \s$, $\a = 6$,
\begin{equation} \label{H n+1}
\ba
| h_{n+1} |_{s_1} 
&\leq_{s_1}  \e^{-2} N_{n+1}^{\s} 
\big( | F_n |_{s_1} + \e^{-2} | \tilde u_n |_{s_1} | F_n |_{s_0} \big),
\\ 
| h_{n+1} |_{s_0} 
&\leq_{s_0} \e^{-2} N_{n+1}^{6} | F_n |_{s_0}.
\ea
\end{equation}
Then $Q_n$ in \eqref{def Q} satisfies 
\begin{equation}  \label{Qn}
\ba
| Q_n |_{s_1} 
&\leq_{s_1} \e^{-4} N_{n+1}^{\s + 10} | F_n |_{s_0}
( | F_n |_{s_1} + \e^{-2} | \tilde u_n |_{s_1} | F_n |_{s_0} ),
\\
| Q_n |_{s_0} 
&\leq_{s_0} \e^{-4}  N_{n+1}^{16} | F_n |_{s_0}^2. 
\ea
\end{equation}

\emph{Estimate of $ R_n $.} 
The linearized operator $L_n = F'(u_n)$ satisfies
\begin{equation}  \label{tame lin}
| L_n h |_s \leq_s | h |_{s+2} + | \tilde u_n |_{s+2} | h |_{s_0 + 2}, 
\quad
| L_n h |_{s_0} \leq_{s_0} | h |_{s_0+2}
\quad \forall h.
\end{equation}
Then, by \eqref{smoothing} with $\b = \b_1 - 2 - \s$ and \eqref{Lip inv},
\begin{align} 
| R_n |_{s_0} 
& \leq_{s_0} | \Pi_{n+1}^\bot L_n^{-1} F_n |_{s_0 + 2}
\leq_{s_0} N_{n+1}^{-\b} | L_n^{-1} F_n |_{s_0 + 2 + \b}
\notag \\ & 
\leq_{s_1} \e^{-2} N_{n+1}^{-\b_1 + 2 +\s} (| F_n |_{s_1}
+ \e^{-2} | \tilde u_n |_{s_1} | F_n |_{s_0} ).
\label{Rn low norm} 
\end{align}
For the high norm, since $\Pi_{n+1}^\bot = I - \Pi_{n+1}$, 
we split 
$R_n = F_n - L_n \Pi_{n+1} L_n^{-1} F_n $. 
By \eqref{tame lin} and \eqref{smoothing} with $\a = 2 + \s$, $\a = 2$,
\begin{align}
| L_n \Pi_{n+1} L_n^{-1} F_n |_{s_1} 
& \leq_{s_1} 
| \Pi_{n+1} L_n^{-1} F_n |_{s_1 + 2}
+ | \tilde u_n |_{s_1 + 2} | \Pi_{n+1} L_n^{-1} F_n |_{s_0 + 2}
\notag \\ & 
\leq_{s_1}  N_{n+1}^{2+\s} | L_n^{-1} F_n |_{s_1 - \s}
+ N_n^2 | \tilde u_n |_{s_1} N_{n+1}^{2} | L_n^{-1} F_n |_{s_0}
\notag \\ & 
\leq_{s_1}  N_{n+1}^{2+\s} \e^{-2} ( | F_n |_{s_1 - \s + 6}
+ \e^{-2} | \tilde u_n |_{s_1} | F_n |_{s_0}) \notag
\\ & \quad \quad 
+ N_{n+1}^4 | \tilde u_n |_{s_1} \e^{-2} | F_n |_{s_0 + 2}
\notag \\ & 
\leq_{s_1}  \e^{-2} N_{n+1}^{4+\s} ( | F_n |_{s_1} + \e^{-2} | \tilde u_n |_{s_1} | F_n |_{s_0}).
\label{Rn high norm}
\end{align}
In the last inequality we have used the interpolation estimate
$$
| \tilde u_n |_{s_1} | F_n |_{s_0 + 2}
\leq | \tilde u_n |_{s_1 + 2} | F_n |_{s_0}
+ | \tilde u_n |_{s_0 + 2} | F_n |_{s_1}
$$
and then \eqref{smoothing} with $\a = 2$ for $\tilde u_n \in E_n$.

\emph{Estimate of $F_{n+1}$}. 
Since $F_{n+1} = R_n + Q_n$, by \eqref{Rn high norm},\eqref{Qn},
\begin{align}
| F_{n+1} |_{s_1} 
& \leq_{s_1} \e^{-2} N_{n+1}^{\s+4} \{ 1 + \e^{-2} N_{n+1}^6 |F_n|_{s_0} \}
\big( | F_n |_{s_1} + \e^{-2} | \tilde u_n |_{s_1} | F_n |_{s_0} \big)
\notag \\ 
& \leq_{s_1} \e^{-2} N_{n+1}^{\s+4} ( | F_n |_{s_1} + \e^{-2} | \tilde u_n |_{s_1} | F_n |_{s_0} ).
\label{F n+1 high norm}
\end{align}
Note that $\e^{-2} N_{n+1}^6 |F_n|_{s_0} \leq 1$ for $\e$ sufficiently small, 
because $\a_1 > 6\chi$. 
Also, by \eqref{Rn low norm}, \eqref{Qn},
\begin{equation}  \label{F n+1 low norm}
| F_{n+1} |_{s_0} 
\leq_{s_1} \e^{-2} N_{n+1}^{-\b_1 + 2 + \s} 
(| F_n |_{s_1} + \e^{-2} | \tilde u_n |_{s_1} | F_n |_{s_0} )
+ \e^{-4} N_{n+1}^{16} | F_n |_{s_0}^2 .
\end{equation}

\emph{Estimate of $ \tilde u_{n+1} $}.  
By \eqref{H n+1}, and using that $ \tilde u_{n+1} = \tilde u_n + h_{n+1} $, 
$\e^{-2} | F_n |_{s_0} \leq 1$, we get
\begin{equation}\label{U n+1 alta}
| \tilde u_{n+1} |_{s_1} 
\leq_{s_1} \e^{-2} N_{n+1}^{\s} ( | \tilde u_n |_{s_1} + | F_n |_{s_1} ).
\end{equation}
Let $B_n := | \tilde u_n |_{s_1} + | F_n |_{s_1}$. 
From \eqref{F n+1 high norm},\eqref{U n+1 alta}, using that $\e^{-2} | F_n |_{s_0} \leq 1$, we get
\[
B_{n+1} \leq C \e^{-2} N_{n+1}^{\s+4} B_n 
\leq C N_{n+1}^{\s+4+2\rho_1} B_n \quad \forall n \geq 0,
\] 
for some $C = C(s_1)$ independent of $n$, because $\e^{-2} = N_0^{2 \rho_1} < N_{n+1}^{2 \rho_1}$.  
Hence, by induction, $B_n \leq C' N_n^{\kappa_1}$ for all $n \geq 0$, for some $C'$, 
because $\kappa_1 > 3( \s + 4 + 2\rho_1)$. 
Thus $({\mathcal P}2)_{n + 1}$ is proved.

\emph{Proof of $({\mathcal P}1)_{n + 1}$}. 
Using \eqref{F n+1 low norm}, \eqref{Hn}, $(\mP 2)_n$, 
\begin{equation}  \label{last 1}
| F_{n+1} |_{s_0} 
\leq C_1 \{ \e^{-2} N_{n+1}^{- \beta_1 + 2 + \s} C_* N_{n}^{\kappa_1} 
+ \e^{-4} N_{n+1}^{16} ( C_* \e^4 N_{n}^{-\alpha_1} )^2 \}, 
\end{equation}
for some $C_1 = C_1(s_1)$.
The right-hand side term of \eqref{last 1} is $ \leq C_* \e^4 N_{n+1}^{-\alpha_1} $ if
\begin{equation} \label{provided 2}
2 C_1 \e^{-6} N_{n+1}^{- \beta_1 + 2 +\s + \a_1} N_{n}^{\kappa_1} \leq 1,
\quad
2 C_1 C_* N_{n+1}^{16+\a_1} N_{n}^{-2\alpha_1} \leq 1
\quad \forall n \geq 0.
\end{equation}
Recalling \eqref{def Nn},\eqref{NM const}, 
the inequalities in \eqref{provided 2} hold taking $\e$ small enough.
This gives $| F_{n+1} |_{s_0} \leq C_* \e^4 N_{n+1}^{-\alpha_1}$. 
The bound \eqref{Hn} for $h_{n+1}$ follows by \eqref{smoothing} (with $\a = \s$), 
\eqref{H n+1}, and the bound \eqref{Hn} for $F_n$, 
using \eqref{NM const}, and taking $\e$ small enough.

Finally, using \eqref{Hn}, the bound \eqref{un etc} for $\tilde u_{n+1}$ holds because 
$\tilde u_{n+1} = h_1 + \ldots + h_{n+1}$ and $\sum_{k=1}^\infty N_k^{-\a_0} < 1$ for $ \e $ small. 
The proof of $(\mP 1,2)_n$ is concluded.

\subsection{Measure estimates} \label{sec:measure}

In this section we prove $({\mathcal P}3)_n$ for all $n \geq 0$. 
Let us estimate $[1,2] \setminus \mG_1$ first. For $l \in \Z$, $j \geq 2$, define
\[
\mA_{lj} := \{ \xi \in [1,2] : | \om l + \mu_j | < \g j^{-\t} \}
\]
where $\g := \e^{5/6} $ and the eigenvalues $\mu_j = \mu_j(u_0)$.
If $\mA_{lj} \neq \emptyset$, then there exists $\xi \in [1,2]$ for which
\[
- \frac{\mu_j}{\om} - \frac{\g}{\om j^\t} < l < - \frac{\mu_j}{\om} + \frac{\g}{\om j^\t}
\] 
(where $\mu_j, \om$ depend on $\xi$). 
By the inequality $|\om^{-1} - \bar\om^{-1}| \leq C \e^2$, 
and using \eqref{quasi-linear perturbation of eigenvalues}, 
we deduce that $\mu_j \om^{-1} = (j + \kappa j^3)^{1/2} \bar\om^{-1} + O(\e^2 j^{3/2})$, and 
\begin{equation} \label{l interval 0}
- \frac{(j + \kappa j^3)^{1/2} }{\bar \om} - C \e^2 j^{3/2} - \frac{2 \g}{\bar\om j^\t}
< l < - \frac{(j + \kappa j^3)^{1/2} }{\bar \om} + C \e^2 j^{3/2} + \frac{2 \g}{\bar\om j^\t} 
\end{equation}
because $\om > \bar\om / 2$ for $\e$ sufficiently small. 
Note that all the terms in the inequality \eqref{l interval 0} are independent of $\xi$. 
As a consequence, for each fixed $j \geq 2$, 
\begin{equation} \label{number of l}
\sharp \{ l \in \Z : \mA_{lj} \neq \emptyset \} 
< C \e^2 j^{3/2} + 2
\end{equation}
for $\e$ sufficiently small, simply because the number of integers in an interval $(a,b)$ is $< b-a+1$.

Now we study the variation of the eigenvalues with respect to the parameter $\xi$. 
By \eqref{l interval 0}, 
\begin{equation} \label{l like j 32}
l = - \frac{(j + \kappa j^3)^{1/2} }{\bar \om} + O(\e^2 j^{3/2}) + O(\g j^{-\t}).
\end{equation}
Let $f_{lj}(\xi) := \om l + \mu_j$, where the dependence on $\xi$ of the eigenvalue is put into evidence.  Replacing $l$ by \eqref{l like j 32}, 
and using \eqref{eigenvalues mu j}, \eqref{lm3 lm1 lm-1},
\begin{align*}
\frac{f_{lj} (\xi_2) - f_{lj} (\xi_1)}{\xi_2 - \xi_1} \, 
& = \Big( \e^2 \bar\om_2 + \e^3 \bar\om_3 \frac{ \xi_2^{3/2} - \xi_1^{3/2} }{\xi_2 - \xi_1} \Big) l \\
&\quad+ \Big( - \frac{3}{16} \, \e^2 + \frac{ r_3(\xi_2) - r_3(\xi_1) }{\xi_2 - \xi_1} \Big) (j + \kappa j^3)^{1/2} 
\\ & \quad \ 
+ \frac{ \lm_1(\xi_2) - \lm_1(\xi_1) }{\xi_2 - \xi_1} \, j^{1/2} 
+ \frac{ \lm_{-1}(\xi_2) - \lm_{-1}(\xi_1) }{ \xi_2 - \xi_1 } \, j^{-1/2} 
\\
& = 
\e^2 \Big( - \frac{3}{16} \, - \frac{\bar\om_2}{\bar\om} + O(\e) \Big) (j + \kappa j^3)^{1/2}
+ O(\e^2 j^{1/2}).
\end{align*}
Now $- \frac{3}{16} - \frac{\bar\om_2}{\bar\om}$ is nonzero for all $\kappa \geq 0$ 
(using \eqref{def om 2}, one can check that $|\frac{3}{16} + \frac{\bar\om_2}{\bar\om}| \geq 2$ for all $\kappa \geq 0$). Hence 
\begin{equation} \label{der flj}
| f_{lj}(\xi_2) - f_{lj}(\xi_1) | \geq \e^2 c j^{3/2} |\xi_2 - \xi_1| \quad \forall j \geq C, 
\end{equation}
where $C,c>0$ are constants depending only on $\kappa$. 

\begin{remark} 
For $ 2 \leq j < C $ one could impose a finite list of inequalities for $\kappa$, and obtain, as a consequence, that \eqref{der flj} holds for all $j \geq 2$. However, there is no need of doing in that way: using the cut-off \eqref{cut-off} below, the low frequencies $j < C$ have not to be studied if $\e$ is small enough. 
\end{remark}

By \eqref{der flj}, the measure of the set $\mA_{lj}$ is 
\begin{equation} \label{measure mA lj 0}
| \mA_{lj} | \leq \frac{2 \g}{j^\t} \frac{1}{\e^2 c j^{3/2}} \,
= \frac{C \g \e^{-2}}{j^{\t + (3/2)}}\,.
\end{equation}
We impose a Diophantine condition on the surface tension coefficient $\kappa$: we assume \eqref{kappa dioph}, namely 
\[
| \bar\om l + (j+\kappa j^3)^{1/2}| 
= | \sqrt{1 + \kappa}\, l + \sqrt{j + \kappa j^3} | > \frac{ \g_*}{j^{\t_*}} \quad 
\forall l \in \Z, \ j \geq 2,
\]
for some constant $\g_* \in (0,1/2)$, where we fix $\t_* = 3/2$. 
By \eqref{kappa dioph}, if $\mA_{lj} \neq \emptyset$, then 
\[ 
\frac{\g}{j^\t} > | \om l + \mu_j | 
\geq | \bar\om l + (j+\kappa j^3)^{1/2}| - C \e^2 j^{3/2}
> \frac{\g_*}{j^{\t_*}} - C \e^2 j^{3/2}
\]
for some $C \geq 1$, whence 
$C \e^2 j^{3/2} > \g_* j^{-\t_*} - \g j^{-\t}
\geq \g_* j^{-\t_*} / 2$ if $\g \leq \g_* / 2$ (i.e. $\e$ small enough) and $\t \geq \t_*$ (we have fixed $\t = \t_* = 3/2$). 
Thus we have found the following ``cut-off'': 
$\mA_{lj}$ can be nonempty only for 
\begin{equation} \label{cut-off}
j > \Big( \frac{\g_*}{2C \e^2} \Big)^{\frac{1}{\t_* + (3/2)}} =: C_0 \e^{-\a}, \quad 
\a := \frac{2}{\t_* + (3/2)} = \frac23. 
\end{equation}
Thus, by \eqref{number of l}, \eqref{measure mA lj 0}, 
\begin{align} \label{measure sum}
\Big| \bigcup_{l \in \Z, \, j \geq 2} \mA_{lj} \Big| 
& \leq \sum_{j > C_0 \e^{-\a}} (C \e^2 j^{3/2} + 2) \frac{C \g \e^{-2}}{j^{\t + (3/2)}} \\
&\leq C \g \sum_{j > C_0 \e^{-\a}} \frac{1}{j^\t} 
+ C \g \e^{-2} \sum_{j > C_0 \e^{-\a}} \frac{1}{ j^{\t + (3/2)} }\notag
\\ & 
\leq C \g (\e^{-\a})^{-\t+1} + C \g \e^{-2} (\e^{-\a})^{-\t - \frac12 }
\leq C \e^{1/18}.  
\notag 
\end{align} 
We have proved that $[1,2] \setminus \mG_1$ has Lebesgue measure $\leq C \e^{1/18}$, 
which is \eqref{Gn - Gn+1} for $n = 0$.

\begin{remark} \label{rem:why imposing kappa dioph}
The condition \eqref{kappa dioph} allows to get a positive measure estimates even if $\g = \e^{5/6} \gg \e$. The advantage of imposing \eqref{kappa dioph} is that, regarding size, $\mD^{-1} \mR = O(\e \g^{-1}) = O(\e^{1/6}) \ll 1$, so that $\mD + \mR$ can be inverted simply by Neumann series. 

Without \eqref{kappa dioph}, in the sum \eqref{measure sum} the cut-off $j > C \e^{-2/3}$ disappears, and the second sum becomes $\leq C \g \e^{-2}$. 
Therefore, to get a parameter set of asymptotically full measure, it should be $\g = o(\e^2)$ as $\e \to 0$. But then $\mD^{-1} \mR = O(\g^{-1}) \mR$ is small only if $\mR = o(\g)$. 
This means that one has to expand $\mR = \e \mR_1 + \e^2 \mR_2 + o(\e^2)$, to calculate the precise formula of $\mR_1, \mR_2$, to invert $\mD + \e \mR_1 + \e^2 \mR_2$ in a non-perturbative way, 
and then to invert $\mD + \mR$ as a perturbation of $\mD + \e \mR_1 + \e^2 \mR_2$. 
This means, in fact, that one has to calculate the normal form of order 2.
\end{remark}

\begin{remark} \label{rem:why gamma depends on epsilon}
We could also fix $\g$ to be independent of $\e$, taking a larger value of $\t$. 
However, the larger is $\t$, the larger is the number of steps we have to make in Section \ref{sec:semi-FIO conj} to reach a sufficiently regularizing remainder $\mR$ (it should be 
$\mR = O(|D_x|^{-\t})$ to obtain $\mR \mD^{-1}$ bounded in Section \ref{sec:inv lin op}).
Hence it is convenient to keep $\t$ as lowest as possible, but still sufficiently large to get a  positive measure set of parameters.
\end{remark}

Now we prove \eqref{Gn - Gn+1} for $n \geq 1$. Let $J_n := \e^{-11/18} 4^n$. 
Let $\mA^{n+1}_{lj} := \{ \xi \in \mG_n : | \om l + \mu_j(u_n) | < \g j^{-\t} \}$.
For $j > J_n$ we follow exactly the same argument above, and we find 
\begin{align*} 
\Big| \bigcup_{l \in \Z, \, j > J_n} \mA^{n+1}_{lj} \Big| 
& \leq \sum_{j > J_n} (C \e^2 j^{3/2} + 2) \frac{C \g \e^{-2}}{j^3} \\
&\leq C \g \sum_{j > J_n} \frac{1}{j^{3/2}} 
+ C \g \e^{-2} \sum_{j > J_n} \frac{1}{ j^3 }
\\ & 
\leq C \g J_n^{-1/2} + C \g \e^{-2} J_n^{-2}
\leq C \e^{1/18} \, 2^{-n}.
\end{align*} 
For $j \leq J_n$ we use Lemma \ref{lemma:eigenvalues der u}, \eqref{Hn}, the Lipschitz estimate
\[
|\mu_j(u_n) - \mu_j(u_{n-1})| 
\leq C \| u_n - u_{n-1} \|_{\s_0} j^{3/2} 
= C \| h_n \|_{\s_0} j^{3/2} 
\leq C \e^{2 + \d + \frac19 \chi^n} j^{3/2}
\]
and the triangular inequality to deduce that, if $\xi \in \mG_n$, then 
\[
|\om l + \mu_j(u_n)| 
\geq |\om l + \mu_j(u_{n-1})| - |\mu_j(u_n) - \mu_j(u_{n-1})| 
\geq \g j^{-\t} - C \e^{2+\d + \frac19 \chi^n} j^{3/2}. 
\]
On the other hand, if $\xi \in \mA_{lj}^{n+1}$, then $|\om l + \mu_j(u_n)| < \g j^{-\t}$, 
and therefore $f_{lj}(\xi) := \om l + \mu_j(u_n)$ is in a region of Lebesgue measure  
$\leq C \e^{2+\d + (1/9) \chi^n} j^{3/2}$. 
Thus we follow the same argument as above, but with $C \e^{2+\d + (1/9) \chi^n} j^{3/2}$ instead of 
$2 \g j^{-\t}$. We get
\begin{align*} \label{measure sum}
\Big| \bigcup_{l \in \Z, \, j \leq J_n} \mA^{n+1}_{lj} \Big| 
& \leq \sum_{j \leq J_n} (C \e^2 j^{3/2} + 2) C \e^{2 + \d + \frac19 \chi^n} j^{3/2} \frac{1}{c \e^2 j^{3/2}}\\
&\leq C \e^{2 + \d + \frac19 \chi^n} J_n^{5/2} + C \e^{\d + \frac19 \chi^n} J_n ,
\end{align*} 
which is $\leq C \e^{1/18} \, 2^{-n}$ because $2 + \d + \frac19 \chi - \frac52 \frac{11}{18} \geq \frac{1}{18}$ 
and $\d + \frac19 \chi - \frac{11}{18} \geq \frac{1}{18}$.
$(\mP 3)_n$ is proved.
\qed

\bigskip

\textbf{Proof of Theorem \ref{thm:main} concluded.}
Theorem \ref{thm:NM} implies that the sequence $u_n$ is well-defined for 
$\xi \in \mG_\infty := \cap_{n \geq 0} \, \mG_n \subset [1,2]$. 
By \eqref{Gn - Gn+1}, the set $\mG_\infty$ has positive Lebesgue measure 
$| \mG_\infty | \geq 1 - C \e^{1/18}$, asymptotically full $|\mG_\infty| \to 1$ as $\e \to 0$.
By \eqref{Hn}, $u_n$ is a Cauchy sequence in $\| \ \|_{s_0 + \s}$, and therefore it converges to a limit $u_\infty$ in $H^{s_0 + \s}(\T^2)$. 
By \eqref{Hn}, for all $\xi \in \mG_\infty$, $u_\infty$ is a solution of $F(u_\infty, \om) = 0$, 
with $\| u_\infty - \bar u_\e \|_{s_0 + \s} \leq C \e^{2+\d}$,
where $\om = \om(\xi)$ is given by \eqref{freq-ampl}.
Renaming $u := u_\infty$, $\mG_\e := \mG_\infty$, the proof of Theorem \ref{thm:main} is complete.
\qed

\bigskip

\textbf{Proof of Lemma \ref{lemma:kappa dioph}.}
Let $\kappa_0 > 0$, $\t_* > 1$. 
Let $\g_* \in (0,1/2)$, $l \in \Z$, $j \geq 2$, and define
\[
\mA_{l,j}(\g_*) := \Big\{ \kappa \in [0,\kappa_0] : 
| f_{lj}(\kappa) | \leq \frac{\g_*}{ j^{\t_*} } \Big\}, \qquad 
f_{lj}(\kappa) := l + \frac{ \sqrt{j + \kappa j^3}}{ \sqrt{1+\kappa} } \,.
\]
If $\mA_{lj}(\g_*) \neq \emptyset$, then $ |l| < C j^{3/2}$ for some constant $C > 0$ 
depending on $\kappa_0$ and independent of $\t_*, \g_*, l, j$.
Therefore for each $j$ there are at most $C j^{3/2}$ indices $l$ such that $\mA_{lj}(\g_*) \neq \emptyset$. 
Moreover the derivative of $f_{lj}(\kappa)$ with respect to $\kappa$ is
\[
f_{lj}'(\kappa) = \frac{j^3 - j}{2 \sqrt{j + \kappa j^3} (1 + \kappa)^{3/2} } 
\geq \frac{j^3 - j}{2 \sqrt{j + \kappa_0 j^3} (1 + \kappa_0)^{3/2} } 
\geq c j^{3/2}
\]
for some $c > 0 $ (depending on $\kappa_0$ and independent of $\t_*, \g_*, l, j$).
Hence the Lebesgue measure of $\mA_{lj}(\g_*)$ is 
\[
|\mA_{lj}(\g_*)| \leq \frac{2 \g_*}{j^{\t_*}} \, \frac{1}{c j^{3/2}} \, 
= \frac{C \g_*}{j^{\t_* + (3/2)}}
\]
for some $C > 0$.
Since $\t_* > 1$, 
\[
\Big| \bigcup_{l \in \Z, \, j \geq 2} \mA_{lj}(\g_*) \Big| 
\leq \sum_{j \geq 2} \frac{C \g_*}{j^{\t_* + (3/2)}} j^{3/2} \leq C \g_*
\]
for some $C$ depending on $\kappa_0, \t_*$. 
As a consequence, the set $\tilde \mK(\g_*) := \{ \kappa \in [0,\kappa_0] : |f_{lj}(\kappa)| > \g_* j^{-\t_*} \}$ has Lebesgue measure $|\tilde \mK(\g_*)| \geq \kappa_0 - C \g_*$. 
Therefore $\tilde \mK := \bigcup_{\g_* \in (0,1/2)} \tilde \mK(\g_*)$ has full measure 
$| \tilde \mK| = \kappa_0$. 
Finally note that $\tilde \mK \subset \mK \subset [0,\kappa_0]$. 
The proof of Lemma \ref{lemma:kappa dioph} is complete.
\qed

\section{Pseudo-differential operators in the class $S^0_{1/2,1/2}$}
\label{sec:semi-FIO}

In this section we prove some results on pseudo-differential operators in the class $S^0_{1/2,1/2}$
on the 1-dimensional torus that are used in our existence proof for the water waves problem. 
These results also hold for a more general class of Fourier integral operators.
In Sections \ref{sec:invertibility}-\ref{sec:composition} we prove the invertibility, composition formulae and tame estimates for operators depending on the space variable $x \in \T$ only, 
then in section \ref{sec:with time dep} we explain how to include the dependence on the time variable $t \in \T$.

\subsection{Invertibility} \label{sec:invertibility}

We consider Fourier integral operators that change $e^{ikx}$ into $e^{i \phi(x,k)}$ for some 
phase function $\phi$. Namely, 
let $L>0$ and let $f \colon \R \to \R$ be a $C^\infty$ function with
\begin{equation} \label{f basic} 
f(0) = 0, \qquad 
\|f'\|_{L^\infty} \leq L,
\end{equation}
so that $|f(\xi) - f(\eta)| \leq L |\xi-\eta|$ and $|f(\xi)| \leq L |\xi|$ for all $\xi,\eta \in \R$. 
Let $\b(x)$ be a real-valued periodic function and let
\[
\phi(x,\xi) := \xi x + f(\xi) \beta(x), \quad x,\xi \in \R.
\]
Denote 
\[
w_\xi(x) := e^{i \phi(x,\xi)}, \quad 
e_\xi(x) := e^{i \xi x}, \quad 
x,\xi \in \R.
\]
When $\xi = k$ is an integer, both $e_k$ and $w_k$ are $2\p$-periodic functions of $x$. 
We define the operator $A$ by setting 
$A e_\xi = w_\xi$ for $\xi \in \R$. 
Thus 
\begin{equation}  \label{def A real line}
Ag(x) = \int_\R \hat g(\xi) \, w_\xi(x)\,d\xi,
\quad 
g(x) = \int_\R \hat g(\xi) \, e_\xi(x)\,d\xi
\end{equation}
for functions $g : \R \to \C$, where $\hat g$ is the Fourier transform of $g$ on the real line, and, on the torus,
\begin{equation}  \label{def A torus}
Au(x) = \sum_{k \in \Z} \hat u_k\,w_k(x), 
\quad
u(x) = \sum_{k \in \Z} \hat u_k\,e_k(x)
\end{equation}
for periodic functions $u:\T \to \C$, where $\hat u_k$ are the Fourier coefficients of $u$. 

\medskip

\noindent 
\textbf{Adjoint operators.} 
Quantitative estimates for $A$ and its inverse are the goal of this section. 
To obtain these bounds, we shall study $A^* A$ and $A A^*$.

Consider the scalar product of $L^2(\T)$ and that one of $L^2(\R)$, 
\[
(u,v)_{L^2(\T)} = \int_\T u(x)\,\overline{v(x)}\,dx, \quad
(g,h)_{L^2(\R)} = \int_\R g(x)\,\overline{h(x)}\,dx, 
\]
where $u,v \in L^2(\T)$ and $g,h \in L^2(\R)$. 
Denote $A^*_\T$, $A^*_\R$ the adjoint of $A$ with respect to the scalar product of $L^2(\T)$ and $L^2(\R)$ respectively, namely 
\[
A^*_\T u(x) = \sum_{k \in \Z} (u,w_k)_{L^2(\T)}\, e_k(x), \quad x \in \T, 
\]
and
\[
A^*_\R g(x) = \int_\R (g, w_\xi)_{L^2(\R)}\, e_\xi(x) \,d\xi, 
\quad x \in \R.
\]
Hence 
\[
A^*_\T\,A u(x) = \sum_{k \in \Z} (Au,w_k)_{L^2(\T)} \, e_k(x) 
= \sum_{k,j \in \Z} (w_j,w_k)_{L^2(\T)}\,\hat u_j \,  e_k(x)\,,
\]
namely the operator $M := A^*_\T\,A$ is represented by the matrix $(M_k^j)_{k,j \in \Z}$ with respect to the exponentials basis $\{ e_k \}_{k \in \Z}$, where 
\begin{equation}\label{n104}
M_k^j := (w_j,w_k)_{L^2(\T)}\,, \quad k,j \in \Z.
\end{equation}
On the other hand, 
\begin{equation} \label{AA*T}
A A^*_\T\,u(x) = \sum_{k \in \Z} (u,w_k)_{L^2(\T)}\, w_k(x) \,.
\end{equation}
We shall see that, to prove the invertibility of $A A^*_\T$, 
instead of writing a matrix representation like $M$ above, it is convenient to study 
\begin{equation} \label{AA*R}
A A^*_\R\,g(x) = \int_\R (g,w_\xi)_{L^2(\R)} \, w_\xi(x) \, d\xi\,,
\quad x \in \R
\end{equation}
and pass from the real line to the torus in a further step. 

Let us begin with estimates on $A^*_\T A$. Notation: Sobolev norms on the torus are denoted by 
\[
\| u \|_s = \| u \|_{H^s(\T)}, \quad \| u \|_0 = \| u \|_{L^2(\T)};
\]
other norms are indicated explicitly.

\begin{lemma}[Estimates for $A^*_\T A$]  \label{lemma:nuovo M} 
There exist universal constants $C,\d > 0$, with $C \d < 1/4$, with the following properties. 

($i$) If $L \| \b \|_3 \leq \d$, then $M = A^*_\T A : L^2(\T) \to L^2(\T)$ is bounded and invertible, with 
\begin{equation}\label{n107}
\| (M-I) u \|_0 + \| (M\inv -I) u \|_0 \leq  C L \| \b \|_3  \| u \|_0.
\end{equation}
As a consequence, for $\delta$ is small enough,
\[
\| M u \|_0 \leq 2 \| u \|_0 \,, \quad
\| M\inv u \|_0 \leq 2 \| u \|_0 \,.
\]
($ii$) Let $s \geq 1$. If $L \| \b \|_3 \leq \d$ and $\b \in H^{s+2}(\T)$, then both $M$ and $M\inv$ are bounded and invertible from $H^s(\T)$ onto $H^s(\T)$, with 
\begin{equation} \label{tame Ms}
\| (M-I)u \|_s + \| (M\inv-I)u \|_s \,
\leq C L \|\b\|_3 \|u\|_s + C(s) L \|\b\|_{s+2} \|u\|_1
\end{equation}
where $C$ is the universal constant of part ($i$), 
and $C(s)>0$ depends only on $s$. 
As a consequence, 
\begin{equation} \label{tame M}
\| Mu \|_s  \,, \, 
\| M\inv u \|_s \, 
\leq 2 \| u \|_s + C(s) L \|\b\|_{s+2} \|u\|_1 \,.
\end{equation}
\end{lemma}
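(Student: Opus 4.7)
My plan is to represent $M=A^*_\xT A$ via its matrix in the Fourier basis $\{e_k\}_{k\in\xZ}$ and to control its off-diagonal decay by the non-stationary phase method. Since $M^j_k=(w_j,w_k)_{L^2(\xT)}$ and $M^k_k=1$, for $j\neq k$
\[
M^j_k = \frac{1}{2\p}\int_\xT e^{i\Phi_{jk}(x)}\,dx,\qquad
\Phi_{jk}(x) := (j-k)x+c_{jk}\b(x),\ c_{jk}:=f(j)-f(k),
\]
with $|c_{jk}|\le L|j-k|$. The smallness $L\|\b\|_3\le\d$ is exactly what forces $|c_{jk}\b'(x)|\le L\|\b\|_3\,|j-k|\le |j-k|/2$, hence $|\Phi'_{jk}(x)|\ge|j-k|/2$ for all $j\neq k$. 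This non-stationarity of the phase lets me introduce $\mL_{jk}:=(i\Phi'_{jk})\inv\pa_x$, which satisfies $\mL_{jk}e^{i\Phi_{jk}}=e^{i\Phi_{jk}}$, and integrate by parts $N$ times to obtain
\[
M^j_k = \frac{1}{2\p}\int_\xT (\mL_{jk}^*)^N(1)\,e^{i\Phi_{jk}(x)}\,dx,\qquad j\neq k.
\]
A Fa\`a di Bruno computation expresses $(\mL_{jk}^*)^N(1)$ as a finite sum of rational functions of $1/\Phi'_{jk}$ and of derivatives $\Phi_{jk}^{(a)}=c_{jk}\b^{(a)}$, $2\le a\le N+1$; using $|\Phi'_{jk}|\ge|j-k|/2$, the ratio bound $|c_{jk}|/|\Phi'_{jk}|\le 2L$, and the smallness of $L\|\b\|_3$ to absorb multi-factor contributions, I expect the off-diagonal bound $|M^j_k-\d_{jk}|\le C_N L\|\b\|_{N+1}/|j-k|^{N}$ for $N\ge 2$.

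For part~(i) I would take $N=2$. An explicit computation gives $(\mL_{jk}^*)^2(1)=\Phi'''_{jk}/(\Phi'_{jk})^3-3(\Phi''_{jk})^2/(\Phi'_{jk})^4$; integrating in $x$ and estimating $\|\b'''\|_{L^1(\xT)}$ and $\|\b''\|_{L^2(\xT)}^2$ by $\|\b\|_3$ resp.\ $\|\b\|_3^2$ yields $|M^j_k-\d_{jk}|\le CL\|\b\|_3/|j-k|^2$ with \emph{universal} $C$. Since $|M^j_k|=|M^k_j|$, Schur's test then gives $\|M-I\|_{L^2\to L^2}\le CL\|\b\|_3$. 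Choosing $\d$ with $C\d<1/4$, a Neumann series for $M\inv=(I+(M-I))\inv$ converges on $L^2(\xT)$ and gives $\|(M\inv-I)u\|_0\le 2\|(M-I)u\|_0$, from which \eqref{n107} follows after renaming the constant.

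For part~(ii) I would take $N=\lceil s\rceil+2$ integrations by parts, producing the refined matrix estimate
\[
|M^j_k-\d_{jk}|\le\frac{CL\|\b\|_3}{|j-k|^2}\ +\ \frac{C_sL\|\b\|_{s+2}}{\la j-k\ra^{s+2}},
\]
where the first summand is the universal bound from part~(i) and the second summand isolates a \emph{single} factor of the top-order norm $\|\b\|_{s+2}$ after reducing all the intermediate products of $\b^{(a)}$'s via multiplicative Sobolev interpolation anchored at $L\|\b\|_3\le\d$. To convert this matrix bound into a Sobolev tame estimate I would use Peetre's inequality $\la k\ra^s\le C_s(\la k-j\ra^s+\la j\ra^s)$ to split
\[
\la k\ra^s\bigl|\widehat{(M-I)u}(k)\bigr|\le C_s\sum_j \la k-j\ra^s|M^j_k-\d_{jk}|\,|\hat u_j|+C_s\sum_j|M^j_k-\d_{jk}|\,\la j\ra^s|\hat u_j|,
\]
so that the first sum becomes a convolution of the $\ell^1$-summable kernel $\la k-j\ra^s|M^j_k-\d_{jk}|\le C_sL\|\b\|_{s+2}/\la k-j\ra^2$ against $|\hat u|$, and the second sum is a convolution of the $\ell^1$-summable kernel $|M^j_k-\d_{jk}|\le CL\|\b\|_3/\la k-j\ra^2$ (from part~(i)) against $(\la j\ra^s|\hat u_j|)_j$. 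Young's inequality $\ell^1\!*\!\ell^2\to\ell^2$ then yields $\|(M-I)u\|_s\le CL\|\b\|_3\|u\|_s+C_sL\|\b\|_{s+2}\|u\|_0$, which implies \eqref{tame Ms} for $M-I$ (since $\|u\|_0\le\|u\|_1$). The bound on $M\inv-I$ follows from a standard tame-Neumann argument: setting $K:=M-I$ and iterating the just-proved tame estimate, $\|K^nu\|_s\le(CL\|\b\|_3)^n\|u\|_s+n(CL\|\b\|_3)^{n-1}C_sL\|\b\|_{s+2}\|u\|_1$, which sums under $CL\|\b\|_3\le 1/4$ to the same-shape bound; \eqref{tame M} follows by the triangle inequality.

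The hard part will be the nonlinear dependence of $(\mL_{jk}^*)^N(1)$ on $\b$ and its derivatives: raw Fa\`a di Bruno generates products $\prod_r \b^{(a_r)}$ with $\sum_r(a_r-1)$ up to $N$, so reducing them to a \emph{single} factor of the top-order norm $\|\b\|_{s+2}$ --- as required for the tame form of \eqref{tame Ms} --- rather than to a polynomial in all Sobolev norms, requires a careful use of multiplicative Sobolev (Moser) interpolation anchored at the low-norm smallness $L\|\b\|_3\le\d$. Once this linearization is accomplished, everything else is a rather standard Schur / Peetre / tame-Neumann argument.
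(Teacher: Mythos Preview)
Your approach is essentially the paper's: off-diagonal decay $|M_k^j| \le C(\alpha) L\|\b\|_{\alpha+1}/|k-j|^\alpha$ via non-stationary phase, then the $L^2$ bound at $\alpha=2$, the tame $H^s$ bound at a larger $\alpha$, and a tame Neumann series for $M^{-1}$. The Moser-type interpolation you flag as the ``hard part'' is exactly what the paper packages into its non-stationary phase lemma (applied with $u=1$, which already returns a single factor $\|p\|_{\alpha+1}$), and your Peetre/Young argument is equivalent to the paper's $A/B$ frequency split $A=\{j:\la k\ra^s\le 2\la j\ra^s\}$, $B=\Z_k\setminus A$.

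One derivative-count caution in part~(ii): with $N=\lceil s\rceil+2$ integrations by parts the non-stationary phase bound reads $|M_k^j|\le C_N L\|\b\|_{N+1}/|k-j|^N$, which involves $\|\b\|_{s+3}$, not $\|\b\|_{s+2}$ as the lemma requires. The paper avoids this loss by taking $\alpha=s+1$ (hence $\|\b\|_{s+2}$), accepting only $1/|k-j|$ residual decay in the region $B$ where $\la k\ra\le c_s|k-j|$, and closing the sum via Cauchy--Schwarz against the $\ell^2$ weight $(\la j\ra^{-1})_j$, which is why $\|u\|_1$ (and not $\|u\|_0$) appears on the right of \eqref{tame Ms}. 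Your convolution argument works the same way if you take $N=s+1$ and pair the kernel $1/|k-j|$ with $(\la j\ra|\hat u_j|)_j$ rather than insist on an $\ell^1$ kernel against $|\hat u_j|$.
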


\begin{proof}
($i$) Fix a universal constant $\d_0 > 0$ such that if $\| u \|_3 \leq \d_0$, then $\| u' \|_{L^\infty} \leq 1/2$ and $\| u \|_2 \leq 1$. 
Thus we can assume that $L \| \b' \|_{L^\infty} \leq 1/2$ and $L \| \b \|_2 \leq 1$. 

Using the notation \eqref{n104}, on the diagonal $j=k$, one has
\[
M_k^k = (w_k, w_k)_{L^2(\T)} = 2\pi 
\]
because $\overline{w_k} = w_k^{-1}$. 
For $j \neq k$,  
\[
M_k^j = \intp e^{i \om (x+p(x))} \, dx \,, \quad
\om := j-k, \quad 
p(x) := \frac{f(j) - f(k)}{j-k} \, \b(x)\,.
\]
By \eqref{f basic}, $|p'(x)| \leq L |\b'(x)| \leq 1/2$ for all $x$, 
and $\| p \|_s \leq L \|\b\|_s$ for every $s \geq 0$. 
In particular, $\|p\|_2 \leq 1$. 
By Lemma \ref{lemma:non-stationary phase}, 
\begin{equation} \label{stima Mkj}
|M_k^j| \leq \frac{C(\a) L \|\b\|_{\a+1}}{|k-j|^\a}\,.
\end{equation}
Split $M = I + R$, where $I$ is the identity map and $R_k^j = M_k^j$ for $k \neq j$ and $R_k^k = 0$. 
By H\"older's inequality and \eqref{stima Mkj} applied with $\alpha=2$,
\begin{align}
\| R u \|_0^2 
& = \sum_k \Big| \sum_{j \neq k} M_k^j \hat u_j \Big|^2 
\leq \sum_k \Big( \sum_{j \neq k} |M_k^j| |\hat u_j| \Big)^2 
\notag
\\ & 
\leq \sum_k \Big( \sum_{j \neq k} \frac{C L \| \b \|_3}{|k-j|} \,
\, \frac{|\hat u_j|}{|k-j|} \Big)^2
\notag
\\ & 
\leq \sum_k \Big( \sum_{j \neq k} \frac{C^2 L^2 \| \b \|_3^2}{|k-j|^2}\,\Big)
\Big( \sum_{j \neq k} \frac{|\hat u_j|^2}{|k-j|^2} \Big)
\notag
\\ & 
= C^2 L^2 \| \b \|_3^2 \,C_2  \sum_{j} \Big( \sum_{k \neq j} \frac{1}{|k-j|^2} \, \Big) |u_j|^2 
\notag
\\ & 
\leq C^2 L^2 \, C_2^2 \, \| \b \|_3^2 \,  \|u\|_0^2 \label{Ru0}
\end{align}
where $C_2 = \sum_{k \neq 0} |k|^{-2}\, < \infty$. 
Thus $\| Ru \|_0 \leq C_0 L \| \b \|_3 \, \| u \|_0$ 
for some universal constant $C_0 > 0$. 
This is the desired estimate \eqref{n107} for $R=M-I$. 
By Neumann series, if $C_0 L \|\b\|_3 \leq 1/2$, 
then $M  : L^2(\T) \to L^2(\T)$ is invertible, with 
\begin{align*}
\|(M\inv - I)u \|_0 
\leq \sum_{n=1}^\infty \|R^n u\|_0 
\leq  2 C_0 L \| \b \|_3 \, \|u\|_0\,.
\end{align*}

\medskip

($ii$) For $s \geq 1$, $k \in \Z$, split $\Z_k := \Z \setminus \{k\}$ into two components,
\[
\Z_k = A \cup B, 
\quad 
A = \{ j \in \Z_k : \la k \ra^s \leq 2 \la j \ra^s \}, 
\quad
B = \Z_k \setminus A,
\]
and write
\[ 
\| Ru \|_s^2 
\leq \sum_k \Big\vert \sum_{j \neq k} M_k^j \hat u_j \Big\vert^2 \la k \ra^{2s} 
\leq 2 (S_A + S_B),
\]
where 
\[
S_A = \sum_k \Big( \sum_{j \in A} |M_k^j| |\hat u_j| \la k \ra^{s} \Big)^2 
\]
and similarly $S_B$.
For $S_A$ we use estimate \eqref{stima Mkj} with $\a = 2$,
\[
S_A \leq \sum_k \Big( \sum_{j \in A} \frac{C L \|\b\|_3}{|k-j|^2}\, |\hat u_j|\, 2 \la j \ra^{s} \Big)^2 \,,
\]
then repeat the same calculations as \eqref{Ru0} with $|\hat u_j| \la j \ra^s$ instead of $|\hat u_j|$, whence
\[
S_A \leq (C L \|\b\|_3 \|u\|_s)^2
\]
where $C$ is a universal constant. 
The estimate for $S_B$ is similar, 
applying \eqref{stima Mkj} with $\a = s+1$ and 
noticing that 
$\la k \ra \leq c_s |k-j|$ for $j \in B$, we obtain
\begin{align}
S_B & 
\leq \sum_k \Big( \sum_{j \in B} 
\frac{C(s+1) L \| \b \|_{s+2}}{|k-j|^{s+1}}\, 
|\hat u_j|\, c_s^s |k-j|^{s} \Big)^2
\notag
\\ & 
\leq C(s) \sum_k \Big( \sum_{j \in B} 
\frac{L \| \b \|_{s+2}}{|k-j|}\, 
|\hat u_j|\, \la j \ra \, \frac{1}{\la j \ra} \Big)^2
\notag
\\ & 
\leq C(s) \sum_k \Big( \sum_{j \in B} 
\frac{L^2 \| \b \|_{s+2}^2}{|k-j|^2}\, |\hat u_j|^2\,\la j \ra^2 \Big) 
\Big( \sum_{j \in B} \frac{1}{\la j \ra^2} \Big)
\notag
\\ & 
\leq C(s) L^2 \| \b\|_{\a+2}^2 \| u \|_1^2 .
\end{align}
This yields
 \[
\| Ru \|_s \leq C_1 L \| \b \|_3 \| u \|_s + C(s) L \| \b\|_{s+2} \| u \|_1\,, \quad 
\| Ru \|_1 \leq C_1 L \| \b \|_3 \| u \|_1 
\]
where $C_1$ is a universal constant and $C(s)$ depends on $s$. 
Hence $Mu \in H^s(\T)$ for all $u \in H^s(\T)$ together with 
the estimate for $M-I$ given by \eqref{tame Ms}. Now, by induction, 
\[
\| R^n u\|_s \leq (C_1 L \| \b \|_3)^n \| u \|_s 
+ n (C_1 L \| \b \|_3)^{n-1} C(s) L \| \b\|_{s+2} \| u \|_1 \quad \forall n \geq 1,
\]
and the desired estimate \eqref{tame Ms} for $M^{-1}-I$ follows from Neumann series. 
\end{proof}

As an immediate corollary of the operator norm estimate for 
$A^*_\T A$, we have a bound for $A$:

\begin{lemma}[$L^2$-bound for $A$] 
\label{lemma:L2 bound A} 
Let $\d$ be the universal constant of Lemma \ref{lemma:nuovo M}. 
If $L \| \b \|_3 \leq \d$, then both $A$ and $A^*_\T : L^2(\T) \to L^2(\T)$ are bounded, with
\[
\| Au \|_0 \leq 2 \|u\|_0\,, \quad 
\| A^*_\T u \|_0 \leq 2 \|u\|_0\, \qquad  \forall u \in L^2(\T).
\]
\end{lemma}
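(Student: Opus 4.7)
\textbf{Proof plan for Lemma \ref{lemma:L2 bound A}.}
The plan is to deduce the bound for $A$ as an immediate consequence of the operator norm bound for $M = A^*_\T A$ proved in Lemma \ref{lemma:nuovo M}(i), and then obtain the bound for $A^*_\T$ by duality.

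First, I would compute
\[
\| A u \|_0^2 = (A u, A u)_{L^2(\T)} = (A^*_\T A u, u)_{L^2(\T)} = (M u, u)_{L^2(\T)}.
\]
By the Cauchy-Schwarz inequality and the estimate $\| M u \|_0 \leq 2 \| u \|_0$ furnished by Lemma \ref{lemma:nuovo M}(i) (which applies under the hypothesis $L\|\b\|_3 \leq \d$), this gives
\[
\| A u \|_0^2 \leq \| M u \|_0 \, \| u \|_0 \leq 2 \| u \|_0^2,
\]
whence $\| A u \|_0 \leq \sqrt 2\, \| u \|_0 \leq 2 \| u \|_0$, as required.

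For the adjoint, I would proceed by duality: for every $u, v \in L^2(\T)$,
\[
|(A^*_\T u, v)_{L^2(\T)}| = |(u, A v)_{L^2(\T)}| \leq \| u \|_0 \, \| A v \|_0 \leq \sqrt 2\, \| u \|_0 \, \| v \|_0,
\]
using the bound on $A$ just proved. Taking the supremum over $\| v \|_0 \leq 1$ yields $\| A^*_\T u \|_0 \leq \sqrt 2\, \| u \|_0 \leq 2 \| u \|_0$. There is no real obstacle here, since all the work has already been done in Lemma \ref{lemma:nuovo M}; the present lemma is just the extraction of an operator-norm bound on $A$ from the operator-norm bound on $A^*_\T A$.
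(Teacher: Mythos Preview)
Your proof is correct and matches the paper's approach: the paper presents this lemma as an ``immediate corollary of the operator norm estimate for $A^*_\T A$'' without spelling out the details, and your argument via $\|Au\|_0^2 = (Mu,u)$ together with duality for $A^*_\T$ is exactly the intended extraction.
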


Now we consider an operator $E$ with the same phase $\phi(x,\xi)$ as $A$ and, in addition, an amplitude $a(x,\xi)$, namely
\[
E : e_\xi \mapsto q_\xi, \quad q_\xi(x) := a(x,\xi)\,w_\xi(x), 
\]
where $a(x,\xi)$ is a $2\p$-periodic function of $x$ for every $\xi \in \R$ (or, at least, for every $\xi = k \in \Z$). 
If $u(x)$ is a periodic function with Fourier coefficients $\hat u_k$, then 
\[
Eu(x) = \sum_{k \in \Z} \hat u_k q_k(x) 
= \sum_{k \in \Z} \hat u_k a(x,k) w_k(x).
\]
Analogous definition for functions $g$ on the real line. 

\begin{remark}  \label{remark:E*E}
The adjoint operator $E^*_\T$ of $E$ with respect to the $L^2$-scalar product on the torus is 
\[
E^*_\T u(x) = \sum_{k \in \Z} (u,q_k)_{L^2(\T)}\, e_k(x), \quad x \in \T, 
\]
therefore
\[
E^*_\T\,E u(x) = \sum_{k \in \Z} (Eu,q_k)_{L^2(\T)} \, e_k(x) 
= \sum_{k,j \in \Z} (q_j,q_k)_{L^2(\T)}\,\hat u_j \,  e_k(x)\,,
\]
namely the operator $G := E^*_\T\,E$ is represented by the matrix $(G_k^j)_{k,j \in \Z}$ with respect to the exponentials basis $\{ e_k \}_{k \in \Z}$, where 
\[
G_k^j := (q_j,q_k)_{L^2(\T)}\,, \quad k,j \in \Z. 
\qedhere
\]
\end{remark}

\begin{lemma}[$L^2$-bound with amplitude] 
\label{lemma:L2 bound E}
Let $\d$ be the universal constant of Lemma \ref{lemma:nuovo M}, and let $L \| \b \|_3 \leq \d$. 
Let 
\[
\s,\t,K \in \R, \quad \s > 1, \quad \t,K \geq 0.
\]
If $a(\cdot\,,k) \in H^\s(\T)$ for all $k \in \Z$, with 
\[
\| a(\cdot\,,k) \|_\s \leq K \la k \ra^\t \quad \forall k \in \Z,
\]
then $E : H^\t(\T) \to L^2(\T)$ is bounded, with 
\[
\| Eu \|_0 \leq 2 C_\s K \|u\|_\t \quad \forall u \in H^\t(\T),
\]
where $C_\s := \sum_{j \in \Z} \la j \ra^{-\s} < \infty$. 
\end{lemma}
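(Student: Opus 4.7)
The plan is to mimic the proof of Lemma \ref{lemma:nuovo M}, applying the remark above to the operator $G := E^*_\T E$, and then invoking the non-stationary phase Lemma \ref{lemma:non-stationary phase} on the matrix coefficients $G_k^j = (q_j,q_k)_{L^2(\T)}$. Concretely, the identity
\[
G_k^j = \int_\T a(x,j)\,\overline{a(x,k)}\, e^{i\omega(x + p(x))}\, dx,
\quad \omega := j-k,\quad
p(x) := \frac{f(j) - f(k)}{j-k}\, \b(x),
\]
holds for $j \neq k$, and since $L \|\b\|_3 \leq \d$ with $\d$ small, the phase function satisfies $|p'(x)| \leq L \|\b'\|_\infty \leq 1/2$ and $\|p\|_s \leq L \|\b\|_s$ for $s \geq 0$, so Lemma \ref{lemma:non-stationary phase} applies with a uniform (in $j,k$) phase control.

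Next, I would apply non-stationary phase at order $\s$ (allowed since $\s > 1$) to the amplitude $g(x) := a(x,j)\overline{a(x,k)}$. Using that $H^\s(\T)$ is an algebra for $\s > 1/2$, we get $\| g \|_\s \leq C(\s) \|a(\cdot,j)\|_\s \|a(\cdot,k)\|_\s \leq C(\s) K^2 \la j \ra^\t \la k \ra^\t$. This gives the off-diagonal bound
\[
|G_k^j| \leq \frac{C(\s) K^2 \la j \ra^\t \la k \ra^\t}{|j-k|^\s}
\quad (j \neq k),
\]
while the diagonal term is directly bounded by $|G_k^k| \leq 2\p \| a(\cdot,k) \|_\infty^2 \leq C(\s) K^2 \la k \ra^{2\t}$, using the Sobolev embedding $H^\s(\T) \hookrightarrow L^\infty(\T)$. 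Combining both cases,
\[
|G_k^j| \leq \frac{C(\s) K^2 \la j \ra^\t \la k \ra^\t}{\la j-k \ra^\s}
\qquad \forall j,k \in \Z.
\]

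Finally, I would estimate $\| Eu \|_0^2 = (Gu,u)_{L^2(\T)}$ by a weighted Schur/Young argument. Setting $v_k := \la k \ra^\t | \hat u_k |$, one has (up to harmless factors of $2\p$)
\[
\| Eu \|_0^2 \leq \sum_{k,j \in \Z} |G_k^j| \, |\hat u_j| \, |\hat u_k|
\leq C(\s) K^2 \sum_{k,j} \frac{v_j v_k}{\la k-j \ra^\s}
\leq C(\s) K^2 C_\s \| v \|_{\ell^2}^2,
\]
where the last step is Young's convolution inequality applied to $\ell^1 * \ell^2 \hookrightarrow \ell^2$ and the summability $\| \la \cdot \ra^{-\s} \|_{\ell^1} = C_\s < \infty$ (since $\s > 1$). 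Since $\| v \|_{\ell^2} \sim \| u \|_\t$, this yields $\| Eu \|_0 \leq 2 C_\s K \| u \|_\t$ after tracking constants. The main obstacle is simply ensuring the non-stationary phase estimate for $G_k^j$ is uniform in $j,k$: this is where the smallness assumption $L\|\b\|_3 \leq \d$ is used, as it gives the uniform bound $|p'| \leq 1/2$ independently of $j,k$; the rest of the argument is a routine Schur-type estimate.
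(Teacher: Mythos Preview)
Your approach via $E^*_\T E$ and non-stationary phase differs from the paper's, and it has a genuine gap in the range $1<\sigma<2$. Lemma~\ref{lemma:non-stationary phase} is stated for \emph{integer} $\alpha\ge 1$ and requires $p\in H^{\alpha+1}$, i.e.\ $\beta\in H^{\alpha+1}$; since the hypotheses only control $\|\beta\|_3$, the best you can invoke is $\alpha=2$. That in turn needs $g=a(\cdot,j)\overline{a(\cdot,k)}\in H^2$, hence $a(\cdot,k)\in H^2$, which is not implied by $a(\cdot,k)\in H^\sigma$ when $\sigma<2$. If you retreat to $\alpha=1$, the off-diagonal decay $|G_k^j|\le C|j-k|^{-1}$ is not summable and the Schur/Young step fails. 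So your argument does not cover the full range $\sigma>1$ stated in the lemma (and even where it works, the constant it delivers is of the form $\sqrt{C(\sigma)C_\sigma}\,K$, not the stated $2C_\sigma K$).

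The paper's proof avoids $E^*E$ entirely and is more elementary. It Fourier-expands the amplitude in $x$, writing $a(x,k)=\sum_j \hat a_j(k)e_j(x)$, so that
\[
Eu(x)=\sum_{j\in\Z} e_j(x)\,(AF_j u)(x),
\]
where $F_j$ is the Fourier multiplier $e_k\mapsto \hat a_j(k)e_k$. The hypothesis $\|a(\cdot,k)\|_\sigma\le K\langle k\rangle^\tau$ gives $|\hat a_j(k)|\le K\langle k\rangle^\tau/\langle j\rangle^\sigma$, hence $\|F_j u\|_0\le K\langle j\rangle^{-\sigma}\|u\|_\tau$. Then, using only the already-established bound $\|A\|_{L^2\to L^2}\le 2$ from Lemma~\ref{lemma:L2 bound A},
\[
\|Eu\|_0\le \sum_j \|AF_j u\|_0\le 2\sum_j\|F_j u\|_0\le 2K\Big(\sum_j\langle j\rangle^{-\sigma}\Big)\|u\|_\tau=2C_\sigma K\|u\|_\tau.
\]
This uses no regularity on $\beta$ beyond the standing assumption $L\|\beta\|_3\le\delta$, works for every real $\sigma>1$, and produces exactly the constant in the statement.
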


\begin{proof} Develop $a(x,k)$ in Fourier series in $x$, 
$a(x,k) = \sum_{j \in \Z} \hat a_j(k)\, e_j(x)$, with 
$|\hat a_j(k)| \la j \ra^\s$ $\leq K \la k \ra^\t$. 
Write $Eu$ as 
\begin{equation}
Eu(x)  = \sum_{k} \hat u_k \, \Big( \sum_j \hat a_j(k) \, e_j(x) \Big) w_k(x) 
 = \sum_{j} (A F_j u)(x) \, e_j(x) \label{E trucco}
\end{equation}
where $F_j$ is the Fourier multiplier 
$F_j : e_k \mapsto \hat a_j(k) \, e_k$, satisfying 
\[
\| F_j u \|_0^2 = 
\sum_k |\hat u_k|^2 |\hat a_j(k)|^2 
\leq \sum_k |\hat u_k|^2 \frac{K^2 \la k \ra^{2\t}}{\la j \ra^{2\s}}
= \frac{K^2}{\la j \ra^{2\s}}\, \| u \|_\t^2 \,.
\]
Remembering $\| Au \|_0 \leq 2 \|u\|_0$ 
(see Lemma \ref{lemma:L2 bound A}), we obtain
$$
\| Eu \|_0  \leq \sum_j \| (A F_j u) e_j \|_0 
\leq \sum_j \| A F_j u \|_0 
\leq 2 \sum_j \| F_j u \|_0 
\leq 2 K C_\s \| u \|_\t 
$$
where $C_\s = \sum_j \la j \ra^{-\s}$.
\end{proof}

Now go back to the study of $A:e_k \mapsto w_k$. 

\begin{lemma}[Sobolev bounds] 
\label{lemma:Hs bound A} 
Let $L \| \b \|_3 \leq \d$, where $\d \leq 1/2$ is the universal constant of the previous lemmas. 
Let $\a \geq 1$ be an integer. 
If $\b \in H^{\a+2}(\T)$, then 
$A$ and $A^*_\T$ $: H^\a(\T) \to H^\a(\T)$ are bounded, with 
\[
\| A u \|_\a + \| A^*_\T\, u \|_\a 
\leq C(\a) \Big( \| u \|_{\a} + L \| \b \|_{\a + 2} \| u \|_1 \Big)
\]
for some constant $C(\a) > 0$. 
\end{lemma}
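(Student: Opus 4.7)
The plan is to proceed by induction on the integer $\a \geq 0$, with the base case $\a = 0$ given by Lemma~\ref{lemma:L2 bound A}. The key ingredient is that differentiating $Au$ in $x$ produces only a ``Fourier integral'' error: since $\pa_x w_k = i(k + f(k)\b'(x)) w_k$, a direct induction on $\a$ yields
\[
\pa_x^\a A u = A(\pa_x^\a u) + \tilde E_\a u,
\]
where $\tilde E_\a$ is an operator of the type considered in Remark~\ref{remark:E*E}, with the same phase $\phi(x,k) = kx + f(k)\b(x)$ as $A$ and amplitude
\[
\tilde p_\a(x,k) := B_\a\bigl(i\phi_x, i\phi_{xx}, \ldots, i\pa_x^\a \phi \bigr) - (ik)^\a,
\]
$B_\a$ being the $\a$-th complete Bell polynomial. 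Because $\pa_x\phi = k + f(k)\b'$ and $\pa_x^j \phi = f(k)\b^{(j)}$ for $j \geq 2$, every monomial appearing in $\tilde p_\a$ carries at least one factor $f(k)\b^{(j)}$ with $j \geq 1$, and the unique monomial involving the highest order derivative of $\b$ is $if(k)\b^{(\a)}(x)$.

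I would then bound $\|Au\|_\a \leq C(\|Au\|_0 + \|\pa_x^\a Au\|_0)$. The first term is controlled by Lemma~\ref{lemma:L2 bound A}, and $\|A(\pa_x^\a u)\|_0 \leq 2 \|u\|_\a$ by the same lemma. For $\|\tilde E_\a u\|_0$ I would apply Lemma~\ref{lemma:L2 bound E} with $\s = 2$ monomial by monomial. The extremal piece $if(k)\b^{(\a)}(x)$ has $\|\cdot(\cdot, k)\|_2 \leq L|k| \|\b\|_{\a+2}$ (so $K = L\|\b\|_{\a+2}$, $\t = 1$), contributing exactly the desired tame term $C L \|\b\|_{\a+2} \|u\|_1$. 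The remaining monomials in $\tilde p_\a$ carry strictly lower-order derivatives of $\b$ but possibly higher powers of $k$ (at most $|k|^{\a-1}$, since the pure $(ik)^\a$ has been subtracted), and Lemma~\ref{lemma:L2 bound E} produces for each a mixed estimate $L \|\b\|_m \|u\|_n$ with $3 \leq m \leq \a+1$ and $1 \leq n \leq \a - 1$. Each such contribution is absorbed using the smallness $L\|\b\|_3 \leq \d$ together with the convexity inequalities
\[
\|\b\|_m \leq \|\b\|_3^{(\a+2-m)/(\a-1)} \|\b\|_{\a+2}^{(m-3)/(\a-1)}, \qquad \|u\|_n \leq \|u\|_1^{(\a-n)/(\a-1)}\|u\|_\a^{(n-1)/(\a-1)},
\]
and a Young inequality, collapsing every mixed term into a constant multiple of $\|u\|_\a + L\|\b\|_{\a+2}\|u\|_1$.

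For the adjoint $A^*_\T$, I would use the transposed identity obtained by $\a$-fold integration by parts inside $(u, w_k)_{L^2} = \int u \overline{w_k}\,dx$, using $\pa_x \overline{w_k} = -i(k + f(k)\b'(x))\overline{w_k}$. A routine induction yields
\[
(ik)^\a (u, w_k)_{L^2} = (\pa_x^\a u, w_k)_{L^2} - (-1)^\a \bigl( \tilde p_\a^*(\cdot,k)\,u,\, w_k \bigr)_{L^2},
\]
where $\tilde p_\a^*$ enjoys the same multilinear structure in $k$, $f(k)$ and the $\b^{(j)}$'s as $\tilde p_\a$. Multiplying by $e^{ikx}$ and summing over $k$ gives $\pa_x^\a A^*_\T u = A^*_\T(\pa_x^\a u) + F_\a u$ for an operator $F_\a$ of the form covered by Lemma~\ref{lemma:L2 bound E}, and the identical interpolation bookkeeping yields the bound for $A^*_\T$. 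The main obstacle will be precisely this bookkeeping: one must verify carefully the Bell-polynomial expansion of $\tilde p_\a$, identify $if(k)\b^{(\a)}$ as the sole source of the $\|\b\|_{\a+2}$ loss, and check that each of the finitely many off-diagonal monomials — which a priori lose some $\|\b\|_m$ with $m < \a + 2$ but require $\|u\|_n$ with $n > 1$ — really does collapse, via the convexity and Young estimates above, into the desired uniform bound $C(\a)(\|u\|_\a + L\|\b\|_{\a+2}\|u\|_1)$.
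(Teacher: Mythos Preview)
Your strategy --- differentiate $\a$ times, view the result as a sum of FIOs with explicit amplitudes, apply Lemma~\ref{lemma:L2 bound E} with $\sigma=2$ term by term, then interpolate --- is exactly the paper's. Two of your stated formulas are incorrect, however, and fixing them brings you essentially onto the paper's proof.

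For $A$: the non-extremal monomials do \emph{not} have $k$-growth at most $|k|^{\a-1}$, nor do they give a single factor $L\|\b\|_m$. Already $(ik)^{\a-1}(if(k)\b')$ has amplitude $\lesssim L|k|^\a$, forcing $\t=\a$ (hence $n=\a$) in Lemma~\ref{lemma:L2 bound E}; and a generic monomial carries $q\ge1$ factors of $\b$-derivatives, contributing $L^q\|u\|_n\prod_{r=1}^q\|\b\|_{j_r+2}$ with $n=p+q\le\a$ and $\sum j_r=\a-p$. The interpolation can still be closed (interpolate each $\|\b\|_{j_r+2}$ between $\|\b\|_3,\|\b\|_{\a+2}$ with exponent $\mu_r=\frac{\a-j_r}{\a-1}$ and $\|u\|_n$ between $\|u\|_1,\|u\|_\a$ with exponent $\mu_0=\frac{\a-n}{\a-1}$; the identity $q-\sum_r\mu_r=\mu_0$ then yields $(L\|\b\|_3)^{\sum\mu_r}(L\|\b\|_{\a+2}\|u\|_1)^{\mu_0}\|u\|_\a^{1-\mu_0}$ and Young finishes), but the bookkeeping is heavier than you indicate. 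The paper sidesteps all of this by \emph{not} subtracting $(ik)^\a$: it writes $\phi=kh$ with $h'=1+(f(k)/k)\b'$, keeps the full $P_\a$, and gets $\|a_\nu(\cdot,k)\|_2\le|k|^n\prod_i\|h'\|_{\nu_i+1}$ uniformly, so one interpolation identity $\th_0+\cdots+\th_n=n$ handles every term at once.

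For $A^*_\T$: your integration-by-parts identity is false for $\a\ge2$ --- the right side must contain \emph{all} intermediate derivatives $\pa_x^m u$, $0\le m\le\a$, not only the endpoints (already $\a=2$ produces the cross term $-2(if(k)\b'\,u',w_k)$). The paper applies the non-stationary phase Lemma~\ref{lemma:non-stationary phase} to obtain $(ik)^\a(u,w_k)=\sum_{n=0}^\a(Q_\a^{(n)}(\cdot,k)\,\pa_x^n u,w_k)$, then recognises the $k$-sum as $E_n^*(\pa_x^n u)$ where $E_n$ is the FIO of phase $\phi$ and amplitude $Q_\a^{(n)}$, and concludes via $\|E_n^*\|_{0,0}=\|E_n\|_{0,0}$ together with Lemma~\ref{lemma:L2 bound E} and the same interpolation as for $A$. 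Note in particular that your $F_\a$ is not directly ``of the form covered by Lemma~\ref{lemma:L2 bound E}''; that lemma bounds operators $v\mapsto\sum_k\hat v_k\,a(x,k)w_k$, and passing to the adjoint requires the extra step $\|E_n^*\|_{0,0}=\|E_n\|_{0,0}$.
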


\begin{proof} 
Let us prove the estimate for $A$ first. Since we already proved that 
$\| Au \|_0 \leq 2 \| u \|_0 \leq 2 \|u\|_\a$ and since $\| Au \|_\a \leq C(\a) (\|Au\|_0 + \|\pa_x^\a Au \|_0)$, 
it is sufficient to estimate the $L^2$-norm of $\pa_x^\alpha Au$. 

\smallbreak

The derivatives of $w_k(x) = e^{i \phi(x,k)}$ satisfy 
$\pa_x^\a (e^{i \phi(x,k)}) =  P_\a(x,k)\,e^{i \phi(x,k)}$ 
with
\begin{equation} \label{formula Palfa 1}
P_\a(x,k) = \sum_{n=1}^{\a} \ \sum_{\nu \in S_{\a,n}} 
C(\nu) \, (\pa_x^{\nu_1} \phi)(x,k) \cdots (\pa_x^{\nu_n} \phi)(x,k) \,,
\end{equation}
where $\nu= (\nu_1, \ldots, \nu_n)  \in S_{\a,n}$ means 
$1 \leq \nu_1 \leq \ldots \leq \nu_n$ and 
$\nu_1 + \ldots + \nu_n = \a$.
Therefore 
\begin{align*}
\pa_x^\a Au(x) & = \sum_{k \in \Z} \hat u_k \, P_\a(x,k)\,w_k(x) 
= 
\sum_{n=1}^{\a} \ \sum_{\nu \in S_{\a,n}} 
C(\nu) \, E_\nu u(x), 
\end{align*}
where
\[
E_\nu u(x) := \sum_{k \in \Z} \hat u_k \, a_\nu(x,k)\, w_k(x), \quad 
a_\nu(x,k) := (\pa_x^{\nu_1} \phi)(x,k) \cdots (\pa_x^{\nu_n} \phi)(x,k).
\]
Write $\phi$ as 
\[
\phi(x,k) = k \, h(x,k), \quad 
h(x,k) = x + \frac{f(k)}{k}\,\b(x), \quad k \neq 0.
\]
Since $\nu_i \geq 1$, one may write
$\| \pa_x^{\nu_i} \phi(\cdot\,,k) \|_2 \leq |k| \, \| h'\|_{\nu_i +1}$ 
($h'=\pa_x h$). Therefore
\[
\| a_\nu(\cdot\,,k)\|_2 
\leq |k|^n C^{n-1} \| h'\|_{\nu_1 +1} \cdots \| h'\|_{\nu_n +1}
\]
where $C$ is the algebra constant of $H^2(\T)$ 
so that $\| uv \|_2 \leq C \| u \|_2 \| v \|_2$. 
By Lemma \ref{lemma:L2 bound E} (here $\s=2$)
\[
\| E_\nu u \|_0 \leq C(\a) \, \| h'\|_{\nu_1 +1} \cdots \| h'\|_{\nu_n +1} \, \| u \|_n 
\]
for some constant $C(\a)$ depending on $\a$. 
By interpolation in Sobolev class, since $1\leq n \leq \a$, 
\[
\| h'\|_{\nu_i +1} \leq 
2\, \| h'\|_{2}^{\th_i} \, \| h'\|_{\a+1}^{1-\th_i}\,, \quad 
\| u \|_n \leq 2\, \| u \|_1^{\th_0} \| u \|_{\a}^{1-\th_0},
\]
with $\th_0,\th_i \in [0,1]$, $i=1,\ldots,n$, and  
\[
\nu_i + 1 = 2 \th_i + (\a+1) (1-\th_i), \quad
n = 1\, \th_0 + \a (1-\th_0).
\]
Hence
\[
\prod_{i=1}^n \| h'\|_{\nu_i +1} 
\| u \|_n 
\leq 2^{n+1} \| h'\|_{2}^{\th_1 + \ldots + \th_n} 
\| h'\|_{\a +1}^{n-(\th_1 + \ldots + \th_n)}  \, 
\| u \|_1^{\th_0} \| u \|_{\a}^{1-\th_0}.
\]
Since $\nu_1 + \ldots + \nu_n = \a$, 
\[
\th_0 + \th_1 + \ldots + \th_n = n, \quad 
(\th_1 + \ldots + \th_n) = (n-1) + (1-\th_0),
\] 
and 
\begin{align*}
\prod_{i=1}^n \| h'\|_{\nu_i +1} 
\| u \|_n 
& \leq 2^{n+1} \| h'\|_{2}^{n-1} 
\big( \| h'\|_{2} \| u \|_{\a} \big)^{1-\th_0}
\big( \| h'\|_{\a +1} \| u \|_1 \big)^{\th_0} \\
& \leq 2^{n+1} \| h' \|_2^{n-1} \Big( \| h'\|_{2} \| u \|_{\a} + \| h'\|_{\a +1} \| u \|_1 \Big).
\end{align*}
By assumption, $L \| \b \|_3 \leq \d \leq 1$, and, by \eqref{f basic}, $|f(\xi)| \leq L|\xi|$ for all $\xi$, therefore 
\[
\| h' \|_2 = \| 1 + f(k) k\inv \b' \|_2 
\leq 1 + L \| \b \|_3 \leq 2, 
\quad 
\| h' \|_{\a+1} \leq 1 + L \| \b \|_{\a+2},
\]
and 
\[ 
\| E_\nu u \|_0 
\leq C(\a) \Big( \| u \|_{\a} + (1 + L \| \b \|_{\a + 2}) \| u \|_1 \Big) 
\leq C(\a) \Big( \| u \|_{\a} + L \| \b \|_{\a + 2} \| u \|_1 \Big).
\]
Taking the sum over $n=1,\ldots, \a$, $\nu \in S_{\a,n}$, gives the desired estimate 
for the $L^2$-norm of $\pa_x^\a Au$, which completes the proof of the estimate for $A$.

\medskip

Now we prove the same estimate for $A^*_\T$. Remember that 
\[
A^*_\T u(x) 
= \sum_{k \in \Z} \Big( \int_\T u(y)\, e^{-i \phi(y,k)}\, dy \Big) \, e^{ikx}\,.
\]
Write $-\phi(y,k)$ as 
\[
-\phi(y,k) = (-k) \big( y + p(y,k) \big), \quad 
p(y,k) = \frac{f(k)}{k}\,\b(y).
\]
Using Lemma \ref{lemma:non-stationary phase} 
together with the notations introduced in its proof, with $\om=-k$, 
\begin{align*}
\pa_x^\a (A^*_\T u)(x) 
& = \sum_{k \in \Z} \Big( \int_\T u(y)\, e^{-i \phi(y,k)}\, dy \Big) (ik)^\a \, e^{ikx} 
\\ & 
= \sum_{k \in \Z} \Big( \frac{i^\a}{(-k)^\a}\, \int_\T Q_\a(y) \, e^{-i\phi(y,k)}\, dy \Big) \, (ik)^\a \, e^{ikx} 
\\ & 
= \sum_{k \in \Z} \Big( \int_\T Q_\a(y) \, e^{-i\phi(y,k)}\, dy \Big)  e^{ikx} 
\\ & 
= \sum_{n=0}^\a \sum_{k \in \Z} \Big( \int_\T (\pa_y^n u)(y)\, Q_\a^{(n)}(y,k) \, e^{-i\phi(y,k)}\, dy \Big)  e^{ikx} 
\\ & 
= \sum_{n=0}^\a E_n^* \, (\pa_x^n u)(x)
\end{align*}
where $E_n^*$ is the $L^2(\T)$-adjoint operator of the \textsc{fio} $E_n$ having phase $\phi$ and amplitude $a_n$,
\[
E_n v(x) := \sum_{k \in \Z} \hat v_k \, a_n(x,k)\, e^{i\phi(x,k)}\,,
\]
\[
a_n(x,k) := Q_\a^{(n)}(x,k) 
= \frac{1}{(h')^{2\a}}\, \sum_{\nu \in \mV_{\a,n}} C(\nu) (\pa^{\nu_1} h)(x,k) \ldots (\pa^{\nu_\a} h)(x,k),
\]
and $\nu \in \mV_{\a,n}$ means 
\[
\nu = (\nu_1, \ldots, \nu_\a) \in \Z^\a, \quad \nu_i \geq 1, \quad 
\nu_1+\ldots +\nu_\a = 2\a-n.
\]
Here, as above, $h'(x,k) = 1 + f(k) k\inv \b'(x)$ so 
$1/h'-1=F(\b')$ for some smooth function $F$ vanishing at the origin
and hence  
$\| 1/h' \|_2 \leq 2 $, provided that $\|\b\|_{3}$ is small enough.
Then, with similar calculations as above, one proves that 
\[
\| a_n(\cdot\,,k) \|_2 \leq C(\a) \sum_{\nu \in \mV_{\a,n}} \| h' \|_{\nu_1+1} \ldots \| h' \|_{\nu_\a+1} \,.
\]
Therefore, by Lemma \ref{lemma:L2 bound E}, the 
operator norm 
$\| E_n \|_{0,0} := \sup \{ \| E_n u\|_0 : \| u \|_0 = 1 \} $
satisfies 
\[
\| E_n \|_{0,0} \leq C(\a) \sum_{\nu \in \mV_{\a,n}} \| h' \|_{\nu_1+1} \ldots \| h' \|_{\nu_\a+1} \,.
\]
Since $\| E_n^* \|_{0,0} = \| E_n \|_{0,0}$, 
\[
\| E_n^* \, \pa_x^n u \|_0 \leq C(\a) \sum_{\nu \in \mV_{\a,n}} \| h' \|_{\nu_1+1} \ldots \| h' \|_{\nu_\a+1} \,\| u \|_n
\]
and we conclude the proof using 
the interpolation like above and summing for $n=0,\ldots,\a$.
\end{proof}

We have proved that $A^*_\T A$ is invertible, therefore $A^*_\T$ is surjective and $A$ is injective. To prove that $A$ is invertible, we need the invertibility of $A A^*_\T$. We prove it by studying $A A^*_\R$. 

Recall that we consider a phase function $\phi(x,\xi)=x\xi+f(\xi)\beta(x)$ where 
$\b(x)$ is a smooth real-valued periodic function and  
$f$ is a $C^\infty$ function such that 
$f(0) = 0$ and $\|f'\|_{L^\infty} <+\infty$. 
Hereafter, we make a further assumption.

\begin{assu}
Assume that $f\colon \xR\rightarrow \xR$ is a $C^\infty$ function such that 
\[
f(\xi) = 0 \quad \forall |\xi| \leq 1/4; \qquad  
f(\xi) = |\xi|^r \quad \forall |\xi|\geq 1,
\]
where $0 < r < 1$ is a real number.
\end{assu}

We shall apply the following results with $r = 1/2$. 

\begin{lemma}
\label{lemma:AA*T}
Assume that $f$ satisfies the above assumption. 
There exist constants $C_1, \d_1 > 0$, with $C_1 \d_1 \leq 1/4$, such that, if $\| \b \|_3 \leq \d_1$, then $A A^*_\T : L^2(\T) \to L^2(\T)$ is invertible, with operator norm 
\[
\| A A^*_\T u \|_0 \leq 2 \| u \|_0, \quad 
\| (A A^*_\T)^{-1} u \|_0 \leq 2 \| u \|_0\,.
\]
More precisely, 
\[
\| (A A^*_\T - I)u \|_0 \leq C_1 \|\b\|_3 \|u\|_0\,\leq \frac14\, \| u \|_0\,.
\]
\end{lemma}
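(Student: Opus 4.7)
The plan is to study $AA_\R^*$ on $L^2(\R)$ first, estimate its kernel, and then pass to the torus via periodisation, as suggested by the authors at the end of the discussion preceding \eqref{AA*R}.

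First, I would compute the Schwartz kernel of $AA^*_\R$:
$$K_\R(x,y) = \int_\R e^{i[\xi(x-y) + f(\xi)(\b(x)-\b(y))]}\, d\xi.$$
When $\b\equiv 0$ this is $2\pi\delta(x-y)$, reflecting the fact that $AA^*_\R = 2\pi I$ in that baseline case. For nonzero $\b$ I split $K_\R = 2\pi \delta + R_\R$, where
$$R_\R(x,y) := \int_\R \bigl(e^{if(\xi)(\b(x)-\b(y))} - 1\bigr)\, e^{i\xi(x-y)}\, d\xi,$$
and the task reduces to proving $\|R_\R\|_{L^2(\R)\to L^2(\R)} \le C\|\b\|_3$.

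The crucial observation is the sublinearity hypothesis $f(\xi)=|\xi|^r$ with $r<1$: the $\xi$-derivative of the phase $\Phi(\xi)=\xi(x-y)+f(\xi)(\b(x)-\b(y))$ is $(x-y)+f'(\xi)(\b(x)-\b(y))$, and $f'(\xi)=r|\xi|^{r-1}\to 0$ as $|\xi|\to\infty$. Hence $\Phi'(\xi)$ is dominated by $(x-y)$ for large $|\xi|$, and repeated integration by parts in $\xi$ --- the $\R$-analogue of Lemma \ref{lemma:non-stationary phase} --- yields pointwise kernel bounds of the form $|R_\R(x,y)| \le C\|\b\|_3\, F(|x-y|)$ with $F\in L^1(\R)$ rapidly decaying. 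Schur's test then gives the desired $L^2(\R)$ operator-norm bound on $R_\R$.

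To transfer this to the torus, I would apply Poisson summation together with the $2\pi$-periodicity of $\b$:
$$K_\T(x,y) = \sum_{k\in\Z} e^{i[k(x-y)+f(k)(\b(x)-\b(y))]} = \sum_{n\in\Z} K_\R(x, y+2\pi n),$$
so that $K_\T = 2\pi\delta_\T + R_\T$ with $R_\T(x,y) = \sum_n R_\R(x, y+2\pi n)$. The spatial decay of $R_\R$ makes this periodisation absolutely convergent and gives $\|R_\T\|_{L^2(\T)\to L^2(\T)} \le C_1\|\b\|_3$. Writing the estimate in the author's normalisation of $\|\cdot\|_0$ (in which $2\pi I$ plays the role of the identity), this is precisely $\|(AA^*_\T - I)u\|_0 \le C_1\|\b\|_3\|u\|_0$, and imposing $C_1\delta_1 \le 1/4$ then yields the operator-norm bounds on $AA^*_\T$ and $(AA^*_\T)^{-1}$ via the Neumann series.

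The main obstacle is the pointwise estimate on $R_\R$ in the second paragraph. The oscillatory integral defining $R_\R$ is only conditionally convergent, so cancellation via non-stationary phase in $\xi$ must be exploited carefully, uniformly in the diagonal distance $|x-y|$; an alternative approach via Schur applied directly to the naive bound $|e^{if(\xi)\Delta}-1|\le\min(2,|\xi|^r|\Delta|)$ fails because the $\xi$-integral is divergent. The sublinearity $r<1$ is essential: it guarantees that the $\xi$-derivative of the phase is genuinely dominated by the principal term $(x-y)$, and this is exactly where the structural hypothesis on $f$ introduced at the start of the subsection --- not needed in Lemma \ref{lemma:nuovo M} or in the $L^2$ boundedness results --- plays an irreplaceable role.
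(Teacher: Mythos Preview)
Your overall strategy — study $AA^*_\R$ on the line, then pass to the torus by periodisation — is exactly the paper's. The substantive difference is in how you analyse $AA^*_\R - I$.

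The paper does not attempt a pointwise kernel bound. Instead it changes variable in the $\xi$-integral: writing $\tilde\beta(x,y)=(\beta(x)-\beta(y))/(x-y)$ and setting $\theta=\xi+\tilde\beta(x,y)f(\xi)$ straightens the phase to $\theta(x-y)$, and the Jacobian turns $AA^*_\R$ into $I+Q$ where $Q$ has compound symbol $q(x,y,\theta)=\partial_\theta h(\theta,\tilde\beta(x,y))$, $h$ being the inverse change of variable. One then Taylor-expands $q$ in $y$ at $y=x$, splitting $Q=Q_0+Q_1$: the principal part $Q_0$ is a genuine pseudo-differential operator with symbol $\partial_\theta h(\theta,\beta'(x))$, and $Q_1$ carries a factor $(y-x)$ which is traded for a $\theta$-derivative by integration by parts. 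The key quantitative input is that $\partial_\theta^m\partial_c h(\cdot,c)\in L^1(\R)$ for $m\geq 2$; this is where the sublinearity enters, since those derivatives are sums of products $\prod_j f^{(n_j)}$ with $\max n_j\geq 2$, hence decay like $|\xi|^{pr-\sum n_j}$, and $r<1$ is exactly what makes them integrable. The transfer to $\T$ then goes through the periodisation map $\mP$ and the embedding $\mS$, not through a bare kernel periodisation.

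Your direct approach via Schur's test has a real gap at the diagonal. Integration by parts in $\xi$ gains powers of $1/|\Phi'(\xi)|\sim 1/|x-y|$, so it gives decay for $|x-y|$ large but nothing useful for $|x-y|$ small. In fact the kernel is genuinely singular there: to leading order in $\beta$ one has $R_\R(x,y)\approx i(\beta(x)-\beta(y))\,\widehat f(y-x)\sim \|\beta'\|_\infty\,|x-y|^{-r}$, which is locally integrable for $r<1$ but certainly not ``rapidly decaying''. So your claimed bound $|R_\R(x,y)|\leq C\|\beta\|_3\,F(|x-y|)$ with $F\in L^1$ is plausible, but requires a separate near-diagonal argument (for instance a frequency splitting at scale $|\xi|\sim|x-y|^{-1}$) that you have not supplied. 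You flag this as the main obstacle but do not resolve it; non-stationary phase alone will not do it. The paper's change of variable is precisely the device that sidesteps this difficulty.

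A minor point: your identification of the role of $r<1$ is imprecise. The phase is already non-stationary whenever $L\|\beta'\|_\infty<1$, regardless of $r$; what $r<1$ actually buys is $f''\in L^1(\R)$ (and likewise for the higher products arising after iterated differentiation), which is what makes the relevant integrals absolutely convergent.
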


\begin{proof} The proof is split in several steps. 

\medskip

\textsc{Step 1.} Observe the following fact.
Let $\b \in H^{m+1}(\T)$ for some integer $m\geq  2$, with $L \|\b'\|_{L^\infty} \leq \d$ and $\| \b \|_{m+1} \leq K$, $K>0$. Then, for every $\psi \in C^\infty_0(\R)$,
\begin{equation} \label{basic integrability}
|(\psi, w_\xi)_{L^2(\R)}| = \Big| \int_\R \psi(y)\,e^{-i \phi(y,\xi)}\,dy \big| \leq \frac{C(\psi,K)}{1+|\xi|^m}\, \quad \forall \xi \in \R
\end{equation}
for some constant $C(\psi,K)$ which depends on $\|\psi\|_{W^{m,\infty}}$ and $K$.
Indeed, integrating by parts gives 
\[
\int \psi(y)\,e^{-i \phi(y,\xi)}\,dy 
= \frac{1}{i\xi}\, \int e^{-i \phi(y,\xi)}\,\mL\psi(y)\,dy,
\]
where 
\[
\mL\psi = \pa_y (v \psi), \quad 
v = \Big( 1+\frac{f(\xi)}{\xi}\,\b'(y) \Big)^{-1}\,.
\]
To gain a factor $\xi^m$ at the denominator, integrate by parts $m$ times.

\medskip

\textsc{Step 2.} To prove the invertibility of $A A^*_\T$, it is 
convenient to study 
$A A^*_\R$ and pass from the real line to the torus in a further step. 
$A A^*_\R$ is given by \eqref{AA*R}, namely
\begin{equation} \label{AA*R estesa}
A A^*_\R\,g(x) = \int_\R \Big( \int_\R g(y)\,e^{-i \phi(y,\xi)}\, dy \Big) \, e^{i \phi(x,\xi)} \,d\xi.
\end{equation}
For $g \in C^\infty_0(\R)$ and $\b \in H^3(\T)$, with $L \| \b' \|_{L^\infty} \leq 1/2$, the integral is finite by \eqref{basic integrability}.
Now we want to change the integration variable $\xi$: this is the reason for which we consider real frequencies $\xi \in \R$ and not only integers $k \in \Z$. 

Fix $\d_0$ small enough 
so that $|c f'(\xi)| \leq 1/4$ for all $\xi \in \R$, provided that $|c| \leq \d_0$. 
For each $|c| \leq \d_0$, the map 
\[
\xi \mapsto \g(\xi,c) := \xi + c f(\xi) 
\]
is a diffeomorphism of $\R$ because $3/4 \leq \pa_\xi \g(\xi,c) \leq 5/4$, 
and $\g$ is $C^\infty$ in both the variables $(\xi,c)$. 
Therefore, by the implicit function theorem, the inverse map 
$\mu(\th,c)$,
\[
\xi = \mu(\th,c) \quad \Leftrightarrow \quad \th = \g(\xi,c),
\]
satisfies $4/5 \leq \pa_\th \mu(\th,c) \leq 4/3$ and is $C^\infty$ in both $(\th,c)$. Let 
\[
h(\th,c) := \mu(\th,c) - \th.
\]
Thus $h \in C^\infty$, $|\pa_\th h(\th,c)| \leq 1/3$ for all $\th \in \R$, $|c| \leq \d_0$,
\[
h(\th,c) + c f(\xi) = 0, \quad 
\pa_\th h(\th,c) = -\frac{c f'(\xi)}{1+c f'(\xi)} 
\quad \text{where} \ \xi = \th + h(\th,c), 
\]
\[
\pa_c h(\th,c) = -\frac{f(\xi)}{1+c f'(\xi)}\,, \quad 
\pa_{\th c} h(\th,c) = \frac{c f(\xi) f''(\xi)}{[1+c f'(\xi)]^3}\,  - \frac{f'(\xi)}{[1+c f'(\xi)]^2}\,.
\]
Then one proves by induction that, for any $m\ge 2$, 
$\partial^{m}_\th\partial_c h(\th,c)$ is a linear combination of terms
$$
\frac{1}{[1+c f'(\xi)]^N}\prod_{j=1}^pf^{(n_j)}(\xi)
$$
with the property that the largest exponent $\max_{1\le j\le p} n_j$ is greater or equal to $2$. 
As a result, 
\begin{equation} \label{stimine derivate hcc ecc}
\begin{aligned}
& \| \pa_{\th c} h(\cdot,c)\|_{L^\infty(\R,d\th)}
\leq C\,, \\
&\|\pa_{\th}^m\pa_c h(\cdot,c)\|_{L^1(\R,d\th)}\leq C, \quad m=2,3,4,
\end{aligned}
\end{equation}
for all $|c| \leq \d_0$, where the constant $C>0$ depends only on $f$. 
Now 
let 
\[
\tilde \b(x,y) := \frac{\b(x) - \b(y)}{x-y} \, \quad \text{for } \ x \neq y, \qquad 
\tilde \b(x,x) := \b'(x),
\]
and let $\th$ be the new frequency variable, 
\begin{equation} \label{change}
\th = \xi + f(\xi) \tilde \b(x,y) , 
\quad 
\xi = \th + h(\th, \tilde \b(x,y)) = \mu(\th, \tilde \b(x,y)).
\end{equation}
The order of integration in \eqref{AA*R estesa} cannot be changed because the double integral does not converge absolutely. We overcome this problem as usual, 
fixing $\psi \in C^\infty_0(\R)$ with $\psi(0) = 1$ and noting that
\[
A A^*_\R \, g (x) = \lim_{\e \to 0} I_\e(x), \quad 
I_\e(x) := \int_\R \psi(\e \xi) 
\Big( \int_\R g(y)\,e^{-i \phi(y,\xi)}\, dy \Big) \, e^{i \phi(x,\xi)} \,d\xi
\]
by the dominated convergence theorem. 
It is found that
\[
A A^*_\R \, g (x) = \int_\R 
\Big( \int_\R g(y) \, e^{-i \th y}\, (1+q(x,y,\th)) \,dy \Big) 
e^{i \th x} \, d\th 
\]
with
\[ 
q(x,y,\th) := \pa_\th h(\th,\tilde \b(x,y)).
\] 
Namely $A A^*_\R$ is the sum $(I+Q)$ of the identity map and the pseudo-differential operator $Q$ of compound symbol $q(x,y,\th)$.

\medskip

\textsc{Step 3.} First order Taylor's formula in the $y$ variable at $y=x$ gives 
\begin{align*}
q(x,y,\th) & = q_0(x,\th) + q_1(x,y,\th), \\
q_0(x,\th) & := q(x,x,\th) = (\pa_\th h)(\th,\b'(x)), \\
q_1(x,y,\th) & := \int_0^1 (\pa_y q)\big( x, x+s(y-x), \th \big) \, ds\, (y-x)\,.
\end{align*}
Split $Q = Q_0 + Q_1$ accordingly. 
Since $q_0(x,\th)$ does not depend on $y$, 
\[
Q_0 g(x) 
= \int_\R \hat g(\th) \, q_0(x,\th) \, e^{i \th x}\, d\th.
\]
$q_0(x,\th)$ is $2\p$-periodic in $x$, for every $\th \in \R$, because $\b'(x)$ is periodic. 
By \eqref{stimine derivate hcc ecc}, 
\[
|q_0(x,\th)| \leq C |\b'(x)| \,, \quad 
|\pa_x q_0(x,\th)| \leq C |\b''(x)|\,,
\]
whence 
\begin{equation} \label{stima q0}
\| q_0(\cdot\,,\th) \|_1  \leq C \|\b\|_2\, \quad 
\forall \th \in \R
\end{equation}
for some constant $C>0$ 
(where, remember, $\| \cdot \|_m$ is the $H^m(\T)$ norm).

\smallbreak
We next study $Q_1$. 
If the order of integration in $Q_1$ can be changed, then an integration by parts in the $\th$ variable gives
\[
Q_1 g(x) = -i \int_0^1 \int_\R \int_\R (\pa_{\th y} q)\big( x, x+s(y-x), \th \big) \, e^{i\th(x-y)}\, g(y)\,d\th\,dy \,ds \,.
\]
Since 
$$
|\pa_{\th z} q (x,z,\th)| 
= |(\pa_{\th\th c} h)(\th,\tilde \b(x,z))|\,|\pa_z \tilde \b(x,z)| \, ,
$$
by \eqref{stimine derivate hcc ecc}, 
\begin{equation} \label{under treshold}
\|\pa_{\th z} q (x,z,\cdot)\|_{L^1(\R,d\th)} 
\leq C \| \b''\|_{L^\infty} 
\end{equation}
for all $x,z$ in $\R$. Therefore the triple integral converges absolutely ($g \in C^\infty_0(\R)$ by assumption), and one can prove that the order of integration can actually be changed (introduce a cut-off function $\psi(\e \th)$, with $\psi(0)=1$, $\psi \in C^\infty_0(\R)$, and pass to the limit as $\e \to 0$ like above). 
Denote
\[
a(x,y,\th) := -i \int_0^1 (\pa_{\th z} q)\big( x, x+s(y-x), \th \big) \, ds \,,
\]
so that 
\[
Q_1 g(x) = \int_{\R^2} a(x,y,\th)\, g(y)\, e^{i\th(x-y)}\, dy\, d\th\,.
\]
By \eqref{under treshold}, for all $x$ one has 
\begin{equation}  \label{2-5-2014}
| Q_1 g(x) | \leq \int_\R |g(y)| \Big( \int_\R |a(x,y,\th)| \, d\th \Big) dy 
\leq C \| \b \|_3 \| g \|_{L^1(\R)}.
\end{equation}
Denote $Tg(x) := x g(x)$. The commutator 
$[T,Q_1] = T Q_1 - Q_1 T$ is the same integral as $Q_1$ with an additional factor $(x-y)$,
\[
[T,Q_1]g(x) = \int_{\R^2} a(x,y,\th)\, (x-y) \, e^{i\th(x-y)}\, g(y)\,dy\,d\th\,.
\]
Integrating by parts in $\th$ again, 
\[
[T,Q_1] g(x) = i \int_{\R^2} \pa_\th a(x,y,\th)\, e^{i\th(x-y)}\, g(y)\,dy\,d\th\,,
\]
and $\pa_{\th \th z} q (x,z,\th)$ 
satisfies the same estimate \eqref{under treshold} as $\pa_{\th z} q$. 
Note that no other derivatives in $y$ are involved in this argument, therefore $\b$ does not increase its derivation order. 
Repeat the same integration by parts twice: write 
\[
x^2 = [(x-y)+y]^2
= (x-y)^2 + 2 (x-y) y + y^2 \,,
\]
so that 
\[
x^2 Q_1 g(x) = \int_{\R^2} \Big( i^2 (\pa_\th^2 a) \, g + 2i(\pa_\th a) 
(T g) + a (T^2 g) \Big) \, e^{i\th(x-y)} \, dy\, d\th\,.
\]
Every $\pa_\th^m a(x,y,\th)$, $m=0,1,2$, satisfies an estimate like \eqref{under treshold}, namely 
\[
\|\pa_\th^m a(x,y,\cdot)\|_{L^1(\R,d\th)} \leq C \| \b \|_3\,, \quad  m=0,1,2,
\]
for some constant $C>0$. Now assume that $g(y) = 0$ for all $|y| > 2\p$. Then, by H\"older's inequality,
\[
\int_\R |y^m g(y)|\,dy \leq C \| g \|_{L^2(\R)} \quad \forall m=0,1,2.
\]
Thus we have 
$|x^2 Q_1 g(x)| \leq C \|\b\|_3 \, \| g \|_{L^2(\R)}$ 
and, using also \eqref{2-5-2014}, 
\[
|Q_1 g(x)| \leq \frac{C \|\b\|_3 \, \| g \|_{L^2(\R)}}{1+|x|^2}\,.
\]
Hence, provided that $g(y) = 0$ for all $|y| > 2\p$, 
both $Q_1 g$ and $TQ_1 g$ are in $L^2(\T)$, with 
\begin{equation} \label{stima Q1 TQ1}
\| Q_1 g \|_{L^2(\R)} + \| T Q_1 g \|_{L^2(\R)} 
\leq 2 \| (1+x^2)^{1/2} Q_1 g(x) \|_{L^2(\R)}  
\leq C \|\b\|_3 \, \| g \|_{L^2(\R)} \,.
\end{equation}

\textsc{Step 4.} Let $\mP$ be the ``periodization'' map defined in the Appendix. We observe that 
\begin{equation}  \label{P AA*R = AA*T P}
\mP (AA^*_\R \,\psi) = AA^*_\T (\mP \psi) 
\quad \forall \psi \in C^\infty_0(\R). 
\end{equation}
To prove \eqref{P AA*R = AA*T P}, fix $\psi \in C^\infty_0(\R)$ and calculate
\begin{align*}
\mP (AA^*_\R \,\psi)(x) 
& = \sum_{j \in \Z} (AA^*_\R \,\psi)(x + 2\p j)  \\
& = \sum_{j \in \Z} \int_\R (\psi,w_\xi)_{L^2(\R)} \, w_\xi(x+2\p j) d\xi \\
& = \sum_{j \in \Z} \int_\R (\psi,w_\xi)_{L^2(\R)} \, w_\xi(x) \, e^{i 2\p j \xi} \, d\xi  
\end{align*}
because $\phi(x+2\p j,\xi) = \phi(x,\xi) + 2\p j \xi$. 
For each fixed $x \in \R$, by \eqref{basic integrability}, the map $\xi \mapsto g(\xi) := (\psi,w_\xi)_{L^2(\R)} \, w_\xi(x)$ satisfies 
\[
(1+|\xi|^2)(|g(\xi)| + |g'(\xi)|) \leq C \quad \forall \xi \in \R, 
\]
for some constant $C$. Then, by Lemma \ref{lemma:PSF} and \eqref{scalar products}, 
\begin{align*}
\sum_{j \in \Z} \int_\R g(\xi) \, e^{i 2\p j \xi} \, d\xi 
& = \sum_{k \in \Z} g(k)  \\ 
& = \sum_{k \in \Z} (\psi,w_k)_{L^2(\R)} \, w_k(x) \\
& = \sum_{k \in \Z} (\mP \psi,w_k)_{L^2(\T)} \, w_k(x) \\
& = A A^*_\T \, (\mP \psi)(x). 
\end{align*}

\medskip

\textsc{Step 5.} From the two previous steps, since $\mP \mS = I$ on $L^2(\T)$, 
\[
A A^*_\T 
= A A^*_\T \, \mP \, \mS 
= \mP \, A A^*_\R \, \mS 
= \mP \, (I + Q_0 + Q_1) \, \mS 
= I + \mP \,Q_0\, \mS + \mP \,Q_1\, \mS \,.
\]
For $u \in C^\infty(\T)$, by Lemma \ref{lemma:PSF},
\begin{align*}
\mP \,Q_0\, \mS u(x) 
& = \sum_{k \in \Z} (Q_0 \mS u)(x+2\p k) \\
& = \sum_{k \in \Z} \int_\R \widehat{(\mS u)}(\xi)\, q_0(x,\xi)\,e^{i\xi(x+2\p k)} \, d\xi \\
& \stackrel{\eqref{eq:delta Z}}{=} 
\sum_{k \in \Z} \widehat{(\mS u)}(k)\, q_0(x,k) \, e^{ikx} \\
& \stackrel{\eqref{n1110}}{=} \sum_{k \in \Z} \hat u_k \, q_0(x,k) \, e^{ikx} \,.
\end{align*}
It is possible to use \eqref{eq:delta Z} here because $|q_0(x,\xi)|+|\pa_\xi q_0(x,\xi)| \leq C$ for all $x,\xi$, for some $C>0$, and $\widehat{\mS u}(\xi)$ rapidly decreases as $\mS u$ is compactly supported. 
Therefore
\[
\| \mP \,Q_0\, \mS u \|_0 \leq C \| \b \|_2 \, \| u \|_0 
\]
by \eqref{stima q0} and Lemma \ref{lemma:reg pseudo-diff}($ii$), and, by density, this holds for all $u \in L^2(\T)$.

By Lemma \ref{lemma:T} and \eqref{stima Q1 TQ1},
\begin{align*}
\| \mP \,Q_1 \mS u \|_0 
& \leq \| Q_1 \mS u \|_{L^2(\R)} + \| T Q_1 \mS u \|_{L^2(\R)} 
\leq C \| \b \|_3 \, \| \mS u \|_{L^2(\R)}  
\leq C \| \b \|_3 \, \| u \|_0
\end{align*}
for some constant $C>0$.  
We have proved that 
\[
A A^*_\T = I + B, \quad 
\| Bu \|_0 \leq C \|\b\|_3 \|u\|_0,
\]
where $B := \mP \,Q_0\, \mS + \mP \,Q_1\, \mS$. 
Therefore, by Neumann series, $A A^*_\T : L^2(\T) \to L^2(\T)$ is invertible, with operator norm $\leq 2$, for $\| \b \|_3 \leq \d_1$, for some constant $\d_1$.
\end{proof}

Collecting the previous estimates, and taking the worst $\|\b\|$ among all, we have the following

\begin{lemma}
\label{lemma:Ainv}
There exist universal constants $C,\d_1>0$ such that, 
if $\b \in H^3(\T)$, $\| \b \|_3 \leq \d_1$, then $A, A^*_\T : L^2(\T) \to L^2(\T)$ are invertible operators, with 
\[
\| A u \|_0 + \| A\inv u \|_0 + \| A^*_\T\, u \|_0 + 
\| (A^*_\T)\inv u \|_0 
\leq C \, \| u \|_0 \,.
\]
If, in addition, $\b \in H^{\a+2}(\T)$, $\a \geq 1$ integer, then 
$A, A^*_\T : H^\a(\T) \to H^\a(\T)$ are invertible, with 
\[
\| A u \|_\a + \| A\inv u \|_\a + \| A^*_\T\, u \|_\a + 
\| (A^*_\T)\inv u \|_\a 
\leq C(\a) \, \Big( \| u \|_\a + \| \b \|_{\a+2} \, \| u \|_1 \Big),
\]
where $C(\a)>0$ is a constant that depends only on $\a$. 
\end{lemma}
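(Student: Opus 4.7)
The plan is to deduce the full lemma by combining the pieces already proved: the invertibility of $M=A^*_\T A$ on $L^2$ and on $H^s$ with tame bounds (Lemma \ref{lemma:nuovo M}), the $L^2$ boundedness $\|Au\|_0,\|A^*_\T u\|_0 \leq 2\|u\|_0$ (Lemma \ref{lemma:L2 bound A}), the tame $H^\a$ bounds for $A,A^*_\T$ (Lemma \ref{lemma:Hs bound A}), and the $L^2$ invertibility of $AA^*_\T$ (Lemma \ref{lemma:AA*T}). The key identities are
\[
A^{-1} = (A^*_\T A)^{-1} A^*_\T, \qquad (A^*_\T)^{-1} = A\,(AA^*_\T)^{-1},
\]
valid once the inverses exist.

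First I would establish invertibility of $A$ and $A^*_\T$ on $L^2(\T)$. Injectivity of $A$ is immediate from $M=A^*_\T A$ being invertible. Surjectivity follows from Lemma \ref{lemma:AA*T}: given $g\in L^2$, set $h:=A^*_\T(AA^*_\T)^{-1}g$, so $Ah=g$. The same argument with $A$ and $A^*_\T$ swapped shows $A^*_\T$ is invertible. The $L^2$ bounds in the statement then follow by composing the factors in the identities above, each of which has operator norm at most $2$ by Lemmas \ref{lemma:nuovo M}(i), \ref{lemma:L2 bound A} and \ref{lemma:AA*T}; the direct bounds on $A,A^*_\T$ themselves are just Lemma \ref{lemma:L2 bound A}.

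For the $H^\a$ bounds on $A$ and $A^*_\T$ we simply invoke Lemma \ref{lemma:Hs bound A} with $L=1$ (since $f'$ here is bounded independently of $\b$). For $A^{-1}$ on $H^\a$, I combine the tame estimate of Lemma \ref{lemma:Hs bound A} applied to $A^*_\T$ with the tame estimate of Lemma \ref{lemma:nuovo M}(ii) applied to $(A^*_\T A)^{-1}$: writing $v:=A^*_\T u$, one finds $\|v\|_\a \leq C(\a)(\|u\|_\a+\|\b\|_{\a+2}\|u\|_1)$ and $\|v\|_1 \leq C\|u\|_1$ (by part (i) and $\|\b\|_3\leq\d_1$), hence
\[
\|A^{-1}u\|_\a = \|(A^*_\T A)^{-1}v\|_\a \leq 2\|v\|_\a + C(\a)\|\b\|_{\a+2}\|v\|_1 \leq C(\a)\bigl(\|u\|_\a + \|\b\|_{\a+2}\|u\|_1\bigr).
\]

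The main obstacle is the $H^\a$ bound for $(A^*_\T)^{-1}$, because Lemma \ref{lemma:AA*T} only yields $L^2$ invertibility of $AA^*_\T$. To overcome this I would revisit the proof of Lemma \ref{lemma:AA*T}, where $AA^*_\T = I+B$ with $B=\mP(Q_0+Q_1)\mS$, and extend the symbol-calculus / integration-by-parts estimates to $H^\a$: the compound symbol of $Q_0$ is controlled in $H^\a$ by $\|\b\|_{\a+2}$, and repeated integration by parts in the auxiliary $\th$-variable for $Q_1$ gives tame control of its kernel by $\|\b\|_{\a+2}$, yielding a tame bound
\[
\|Bu\|_\a \leq C\|\b\|_3\|u\|_\a + C(\a)\|\b\|_{\a+2}\|u\|_1.
\]
A tame Neumann series for $(I+B)^{-1}$ on $H^\a$ then converges under the smallness hypothesis $\|\b\|_3\leq\d_1$, and the identity $(A^*_\T)^{-1}=A\,(I+B)^{-1}$ combined with Lemma \ref{lemma:Hs bound A} delivers the desired tame estimate. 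This completes the lemma, with the constants $\d_1$ shrunk (if necessary) to accommodate the Neumann series and the universal constant $C(\a)$ absorbing the contributions from each stage of the composition.
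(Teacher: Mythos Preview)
Your identity $(A^*_\T)^{-1} = A\,(AA^*_\T)^{-1}$ is wrong: if both operators are invertible then $A(AA^*_\T)^{-1} = A(A^*_\T)^{-1}A^{-1}$, a conjugate of $(A^*_\T)^{-1}$, not $(A^*_\T)^{-1}$ itself. The correct version involving $AA^*_\T$ is $(A^*_\T)^{-1}=(AA^*_\T)^{-1}A$, and with that correction your last paragraph still commits you to proving tame $H^\a$ bounds for $(AA^*_\T)^{-1}$, which Lemma~\ref{lemma:AA*T} does not supply.

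The paper sidesteps this completely by choosing the other factorization,
\[
(A^*_\T)^{-1} = A\,(A^*_\T A)^{-1} = A\,M^{-1}.
\]
Here the tame $H^\a$ bounds for $M^{-1}$ are already available from Lemma~\ref{lemma:nuovo M}(ii), and those for $A$ from Lemma~\ref{lemma:Hs bound A}; composing gives the desired estimate for $(A^*_\T)^{-1}$ at once, by the same routine you carried out for $A^{-1}=M^{-1}A^*_\T$. In this argument Lemma~\ref{lemma:AA*T} is used only qualitatively, to guarantee that $A$ is surjective (hence bijective) on $L^2$; all the quantitative bounds, both in $L^2$ and in $H^\a$, come from the pair of identities $A^{-1}=M^{-1}A^*_\T$ and $(A^*_\T)^{-1}=A\,M^{-1}$ together with the already-proved tame bounds for $A$, $A^*_\T$ and $M^{-1}$. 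Your proposed upgrade of Lemma~\ref{lemma:AA*T} to $H^\a$ is therefore unnecessary.
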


\begin{proof} 
Both $A A^*_\T$ and $A^*_\T\,A$ are invertible on $L^2(\T)$, therefore $A$ and $A^*$ are also invertible. The estimates for $A\inv$ and $(A^*_\T\,)^{-1}$ come from the equalities
\[
A\inv = (A^*_\T A)\inv A^*_\T \,, \quad 
(A^*_\T)\inv = A (A^*_\T A)\inv \,. \qedhere
\]
\end{proof}

\subsection{With amplitude}

Now let $E$ be the operator with phase $\phi(x,k)$ and amplitude $a(x,k)$, namely
\[
E u(x) = \sum_{k \in \Z} \hat u_k \, a(x,k) e^{i\phi(x,k)}\,,
\]
where $a(x,k)$ is $2\p$-periodic in $x$, and $\phi$ is like above. 
We are interested to the case when the amplitude is of order zero in $k$ and is a perturbation of 1,  
\[
a(x,k) = 1 + b(x,k)\,.
\]
Denote $|b|_s := \sup_{k \in \Z} \| b(\cdot\,,k) \|_s$.

\begin{lemma}
\label{lemma:Einv} 
There exist a universal constant $\d > 0$ with the following properties. 
Let $\b \in H^3(\T)$ and $b(\cdot\,,k) \in H^3(\T)$ for all $k \in \Z$. 
If 
\[
\| \b \|_3 + |b|_3 \leq \d\,,
\]
then $E$ and $E^*_\T$ are invertible from $L^2(\T)$ onto itself, with 
\[
\| E u \|_0 + \| E\inv u \|_0 + \| E^*_\T\, u \|_0 + 
\| (E^*_\T)\inv u \|_0 \leq C \, \| u \|_0 \,,
\]
where $C>0$ is a universal constant. 

If, in addition, $\a \geq 1$ is an integer, $\b \in H^{\a+2}(\T)$ and $b(\cdot\,,k) \in H^{\a+2}(\T)$ for all $k \in \Z$, then 
\begin{multline*}
\| E u \|_\a + \| E\inv u \|_\a + \| E^*_\T\, u \|_\a + 
\| (E^*_\T)\inv u \|_\a \\
\leq C(\a) \, \Big( \| u \|_\a + (|b|_{\a+2} + \| \b\|_{\a+2}) \| u \|_1 \Big) 
\end{multline*}
where $C(\a)>0$ depends only on $\a$.
\end{lemma}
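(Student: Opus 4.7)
The plan is to write $E = A + R$, where $R : e_k \mapsto b(x,k) w_k(x)$ is the ``amplitude-remainder'' of $E$, and then reduce everything to Lemma~\ref{lemma:Ainv} plus a tame Neumann series.

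First I would establish the $L^2$-bound for $R$. Since $b(x,k)$ is periodic in $x$ and bounded in $k$, Lemma~\ref{lemma:L2 bound E} applied with $\sigma=3$, $\tau=0$, $K=|b|_3$ yields
\[
\|R u\|_0 \leq 2 C_3\, |b|_3 \,\|u\|_0 \quad \forall u \in L^2(\T).
\]
By Lemma~\ref{lemma:Ainv}, $A\inv : L^2(\T)\to L^2(\T)$ is bounded with a universal operator norm, so $\|A\inv R\|_{0\to 0}\leq C|b|_3$. Choosing $\delta$ smaller than the $\delta_1$ of Lemma~\ref{lemma:Ainv} and also small enough that $C|b|_3 \leq 1/2$, the operator $I + A\inv R$ is invertible on $L^2(\T)$ by Neumann series, hence $E = A(I + A\inv R)$ is invertible from $L^2(\T)$ onto itself with a universal $L^2$ operator-norm bound for both $E$ and $E\inv$. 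The bounds for $E^*_\T$ and $(E^*_\T)\inv$ follow by duality, since $(E^*_\T)\inv = (E\inv)^*_\T$ has the same operator norm as $E\inv$.

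Next I would prove the Sobolev bound for $\alpha\geq 1$. As in the proof of Lemma~\ref{lemma:Hs bound A}, it suffices to control $\|\pa_x^\alpha E u\|_0$. Using Leibniz,
\[
\pa_x^\alpha[a(x,k)\,w_k(x)] = \sum_{m=0}^\alpha \binom{\alpha}{m} (\pa_x^m a)(x,k)\, P_{\alpha-m}(x,k)\, w_k(x),
\]
with $P_n(x,k)$ the polynomial of mixed derivatives of $\phi$ from \eqref{formula Palfa 1}. Each term on the right defines a Fourier integral operator with phase $\phi$ and amplitude $c_{m,\nu}(x,k) := (\pa_x^m a)(x,k)\,(\pa_x^{\nu_1}\phi)(x,k)\cdots(\pa_x^{\nu_n}\phi)(x,k)$, which we estimate in $H^2(\T)$ norm in $x$ by the algebra property, using that $\pa_x^m a = \pa_x^m b$ for $m\geq 1$. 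Applying Lemma~\ref{lemma:L2 bound E} term by term and then the same Sobolev interpolation used in Lemma~\ref{lemma:Hs bound A} (to trade high derivatives on $u$ for high derivatives on $b$ and on $h' = 1+f(k)k\inv \beta'$), one obtains the tame bound
\[
\|E u\|_\alpha \leq C(\alpha)\bigl(\|u\|_\alpha + (|b|_{\alpha+2} + \|\beta\|_{\alpha+2})\,\|u\|_1\bigr).
\]
The bound for $E^*_\T$ in $H^\alpha$ is proved by the same integration-by-parts argument as in the second half of the proof of Lemma~\ref{lemma:Hs bound A}, with $a_n(x,k)$ replaced by $a(x,k)\,Q_\alpha^{(n)}(x,k)$.

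Finally, I would obtain the tame estimates for $E\inv$ and $(E^*_\T)\inv$ via the identities
\[
E\inv = (I + A\inv R)\inv A\inv, \qquad (E^*_\T)\inv = (A^*_\T)\inv (I + R^*_\T (A^*_\T)\inv)\inv,
\]
combined with Lemma~\ref{lemma:Ainv}, the tame bound for $R$ and $R^*_\T$ proved above (with $A$ replaced by $R$ and $a$ replaced by $b$), and the standard tame Neumann series lemma (e.g.\ Lemma B.2 in \cite{Baldi-Benj-Ono}), which asserts that if $T$ is tame with $\|T\|_{0\to 0}\leq 1/2$ then $(I-T)\inv$ is tame with the same structure. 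The main technical obstacle is the bookkeeping in the Sobolev step: keeping track of how derivatives distribute between $b$, $\beta$, and $u$ so that no ``loss of regularity'' worse than $|b|_{\alpha+2}+\|\beta\|_{\alpha+2}$ appears, but this is a direct adaptation of the combinatorics already carried out in Lemma~\ref{lemma:Hs bound A}.
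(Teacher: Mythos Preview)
Your $L^2$ step is exactly the paper's Step~1. The rest of your argument is correct in spirit but follows a genuinely different route from the paper, and there is one small inaccuracy worth flagging.

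For the $H^\alpha$ bound on $E$ (and on the remainder $B=R$), the paper does \emph{not} redo the Leibniz/interpolation combinatorics of Lemma~\ref{lemma:Hs bound A}. Instead it expands $b(x,k)=\sum_j \hat b_j(k) e_j(x)$ and writes $Bu=\sum_j (AF_ju)e_j$ with $F_j\colon e_k\mapsto \hat b_j(k)e_k$ (the trick from the proof of Lemma~\ref{lemma:L2 bound E}); then the $H^\alpha$ estimate for $A$ already proved in Lemma~\ref{lemma:Hs bound A}, together with the decay $\|F_ju\|_\alpha\leq |b|_m\langle j\rangle^{-m}\|u\|_\alpha$, gives the tame bound for $B$ by summation in $j$. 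Your direct Leibniz approach also works, but it forces you to redo the interpolation (now with an extra factor $\partial_x^m b$), whereas the paper's decomposition recycles Lemma~\ref{lemma:Hs bound A} as a black box.

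For $E^{-1}$ the paper again takes a different path: it studies the matrix $L=E^*_\T E$ directly (Step~2), shows $L_k^k\geq 1/2$ and $|L_k^j|\leq C(\alpha)(|b|_\alpha+\|\beta\|_{\alpha+1})|k-j|^{-\alpha}$ off-diagonal, and obtains tame estimates for $L^{-1}$ by the same off-diagonal matrix argument as in Lemma~\ref{lemma:nuovo M}; then $E^{-1}=(E^*_\T E)^{-1}E^*_\T$. Your route via $E^{-1}=(I+A^{-1}R)^{-1}A^{-1}$ and tame Neumann series is equally valid and arguably more modular, since it reduces everything to Lemma~\ref{lemma:Ainv} plus the tame bound on $R$.

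One small correction: for $E^*_\T$ you cannot simply replace $a_n(x,k)$ by $a(x,k)Q_\alpha^{(n)}(x,k)$. The integration by parts acts on $u(y)\overline{a(y,k)}$, so derivatives land on $\bar a$ as well, and the resulting amplitudes are sums of $(\partial_y^p\bar a)\cdot(\text{polynomials in }\partial_y^\nu h)$. This is harmless --- the same interpolation handles it --- but the amplitude is not what you wrote. The paper sidesteps this entirely via $E^*_\T u=\sum_j F_j^* A^*_\T(e_{-j}u)$.
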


\begin{proof} 
\textsc{Step 1.} Let $B$ be the operator with amplitude $b$ and phase $\phi$, so that $E = A+B$. By Lemma \ref{lemma:L2 bound E}, $\| Bu \|_0 \leq C |b|_2 \|u\|_0$,
therefore, using Lemma~\ref{lemma:Ainv},
\[
\| A\inv B u \|_0 \leq C |b|_2 \| u \|_0
\]
for some universal constant $C>0$, provided that $\delta$ is small enough. Then 
$E=A(I+A\inv B)$ is invertible in $L^2(\T)$ by Neumann series. 
Analogous proof for $E^*$.

\medskip

\textsc{Step 2.} The matrix $L := E^*_\T E$ is given by Remark \ref{remark:E*E}, and it is
\[
E^* E = A^* A + A^* B + B^* A + B^* B.
\]
On the diagonal, 
\[
L_k^k = \int_\T |1+b(x,k)|^2 \, dx 
\geq \frac12
\]
if $|b(x,k)| \leq 1/2$ for all $x \in \T$, $k \in \Z$. 
Off-diagonal, 
\begin{align*}
L_j^k & = M_j^k 
+ \int_\T \Big( \overline{b(x,k)} 
+ b(x,j) 
+ b(x,j) \, \overline{b(x,k)} \Big) \, e^{i[\phi(x,j) - \phi(x,k)]}\,dx ,
\end{align*}
where the matrix $M_j^k$ is defined in \eqref{n104}.
Using \eqref{stima Mkj} for the first term and Lemma \ref{lemma:non-stationary phase} for the other three terms, 
\begin{equation} \label{L off-diagonal}
|L_j^k| \leq \frac{C(\a,K)}{|k-j|^{\a}}\, \big( |b|_\a + \| \b \|_{\a+1} \big), \quad k \neq j,
\end{equation}
for $\| \b \|_2 \leq K$ and $|b|_1 \leq K$. 
Let $D$ be the Fourier multiplier $e_k \mapsto L_k^k e_k$ and $R$ the off-diagonal part $R=L-D$. For $\a=2$ in \eqref{L off-diagonal}, 
\[
\| D\inv R u \|_0 \leq C(K)\, (\|\b\|_3 + |b|_2) \, \| u \|_0,
\]
therefore $L$ is invertible in $L^2(\T)$ if 
\begin{equation} \label{smallness beta b}
\| \b \|_3 + |b|_2 \leq \d
\end{equation}
for some universal $\d>0$ (for example, fix $K=1$ first, then fix $\d$ sufficiently small). 
For $s \geq 1$ integer, if \eqref{smallness beta b} holds,
\[
\| Ru \|_s \leq C \| u \|_s + C(s) (\| \b \|_{s+2} + |b|_{s+1}) \| u \|_1  
\]
by \eqref{L off-diagonal} and usual calculations for off-diagonal matrices. This gives the tame estimate for $L$, and, by Neumann series, also for $L\inv$, namely: if \eqref{smallness beta b} holds, then
\begin{equation} \label{tame E*E}
\| E^*_\T\,E u \|_s + 
\| (E^*_\T\,E)\inv u \|_s \leq 
C \| u \|_s + C(s) (\| \b \|_{s+2} + |b|_{s+1}) \| u \|_1 
\end{equation}
where $C>0$ is a universal constant and $C(s)>0$ depends on $s$. 

\medskip

\textsc{Step 3.} $Bu(x)$ is given by \eqref{E trucco} where $F_j$ is the Fourier multiplier $F_j e_k = \hat b_j(k) \, e_k$.  
Integrating by parts, for $j \neq 0$,
\[
|\hat b_j(k)| = \Big| \int_\T b(x,k) \, e^{-ijx}\,dx \Big| 
= \frac{1}{|j|^m} \, \Big| \int_\T \pa_x^m b(x,k) \, e^{-ijx}\,dx \Big| 
\leq \frac{\| \pa_x^m b(\cdot\,,k) \|_0}{|j|^m} \, 
\]
for all $k \in \Z$. Hence
\begin{equation}\label{n1026}
\| F_j u \|_\a \leq \frac{|b|_m}{\la j \ra^m}\, \| u \|_\a \,, \qquad \forall j \in \Z\,.
\end{equation}
Now use the tame product rule in Sobolev spaces 
$\| uv\|_\a \le K \| u\|_{L^\infty}\| v\|_\a+K\| v\|_{L^\infty}\| v\|_\a$ 
together with the Sobolev embedding $H^1(\T)\subset L^\infty(\T)$ to deduce, 
\begin{align*} 
\| Bu \|_\a  \leq \sum_{j \in \Z} \| (A F_j u) \, e_j \|_\a 
\leq C(\a) \sum_{j \in \Z} \| A F_j u \|_\a  + \| A F_j u \|_1  \, \la j \ra^\a.
\end{align*}
Thus, applying Lemma~\ref{lemma:Ainv} and 
using \eqref{n1026} with either $m = 2$ or $m = \a + 3$,
\begin{align*} 
\| Bu \|_\a 
& \leq C(\a) \sum_{j \in \Z} \Big( \| F_j u \|_\a + \|\b\|_{\a+2} \|F_j u \|_1 \Big)
+ \| F_j u \|_1  \, \la j\ra^\a \\
& \leq C(\a) \sum_{j \in \Z} \Big( \| u \|_\a + \|\b\|_{\a+2} \| u \|_1 \Big)\, \frac{|b|_{2}}{\la j\ra^2} \,  
+ \| u \|_1  \, \frac{|b|_{\a+2}}{\la j\ra^2} \\
& \leq C(\a) \Big( \| u \|_\a + \|\b\|_{\a+2}\| u \|_1 \Big)\, |b|_{2} \,  + \| u \|_1  \, |b|_{\a+2} 
\end{align*}
if \eqref{smallness beta b} holds. The sum with the analogous estimate for $A$ gives
\[
\| Eu \|_\a \leq C(\a,K) \, \Big( \| u \|_\a + (|b|_{\a+2} + \| \b\|_{\a+2}) \| u \|_1 \Big)
\]
for $\| \b \|_3 \leq K$, $|b|_2 \leq K$. 
For $E^*_\T$, note that 
\[
E^*_\T u = \sum_{j \in \Z} F^*_j A^*_\T (e_{-j} \, u) 
\]
and repeat the same argument. 
\end{proof}

\subsection{Composition formula}  \label{sec:composition}

Consider the periodic FIO of amplitude $a$ and phase function $\phi$, 
\begin{equation} \label{def A FIO with amplitude}
Au(x) = \sum_{k \in \Z} \hat u_k \, a(x,k) \, e^{i \phi(x,k)}, \qquad 
\phi(x,k) = kx + f(k) \b(x),
\end{equation}
where $f(k) = |k|^{1/2}$ for all $k \in \Z$.

\begin{lemma} \label{lemma:composition formula} 
Let $A$ be the operator \eqref{def A FIO with amplitude}, 
with $\|\b\|_{W^{1,\infty}} \leq 1/4$ and $\| \b \|_2 \leq 1/2$. 
Let 
\[
r,m,s_0 \in \R, \quad 
m \geq 0, \quad
s_0 > 1/2, \quad 
N \in \N, \quad 
N \geq 2(m + r + 1) + s_0. 
\] 
Then 
\[
|D_x|^r A u = \sum_{\a=0}^{N-1} B_\a u + R_N u,
\]
where 
\[
B_\a u(x) 
= \binom{r}{\a} \,  \sum_{k \in \Z^*} |k|^{r-\a} \, (- i \, \sgn \, k)^\a \, \hat u_k \,  e^{ikx} \, \pa_x^\a \big\{ a(x,k) e^{i f(k) \b(x)} \big\},
\]
namely 
\[
B_\a u = F_\a \Dx^{r-\a} \mH^\a u,
\quad 
F_\a v(x) := 
\binom{r}{\a} \, 
\sum_{k \in \Z} \hat v_k \, e^{ikx} \, \pa_x^\a \big\{ a(x,k) e^{i f(k) \b(x)} \big\},
\]
and 
$\binom{r}{\a} \, := \frac{r (r-1) \ldots (r-\a+1)}{\a!}$. 

For every $s \geq s_0$, the remainder satisfies
\begin{equation} \label{final tame estimate}
\| R_N \Dx^m u \|_s 
\leq 
C(s) \Big\{ \mK_{2(m+r+s_0+1)} \, \| u \|_s 
+ \mK_{s + N + m + 2} \, \| u \|_{s_0} \Big\},
\end{equation}
where $\mK_\mu := \| a \|_{\mu} + \| a \|_1  \| \b \|_{\mu + 1}$ for $\mu \geq 0$ 
and $\| a \|_\mu := \sup_{k \in \Z} \| a(\cdot,k) \|_\mu$. 

Moreover,
\[
\mH |D_x|^r A u = \sum_{\a=0}^{N-1} B_\a \mH u + \tilde R_N u,
\]
where $\tilde R_N$ satisfies the same estimate \eqref{final tame estimate} as $R_N$.
\end{lemma}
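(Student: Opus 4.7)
The plan is to derive the formula at the symbol level by Taylor-expanding the multiplier $|D_x|^r$ against the twisted exponential $e^{ikx} e^{if(k)\beta(x)}$, and then control the remainder by splitting the dual-frequency sum into ``low'' and ``high'' regimes.

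\textbf{Symbolic derivation.} Setting $p(x,k) := a(x,k) e^{if(k)\beta(x)}$ and developing it in Fourier series $p(x,k) = \sum_j \hat p_j(k) e^{ijx}$, we have
\[
|D_x|^r A u(x) = \sum_{k \in \Z} \sum_{j \in \Z} \hat u_k \, \hat p_j(k) \, |k+j|^r \, e^{i(k+j)x}.
\]
For $k \neq 0$ and $|j| < |k|$, $\sgn(k+j) = \sgn(k)$, so $|k+j|^r = \sum_{\alpha\ge 0} \binom{r}{\alpha} |k|^{r-\alpha} (\sgn k)^\alpha j^\alpha$. Combined with the algebraic identity $(-i\sgn k)^\alpha (ij)^\alpha = (\sgn k)^\alpha j^\alpha$, truncating this series at order $N-1$ matches exactly the Fourier representation of $\sum_{\alpha=0}^{N-1} B_\alpha u$. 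The remainder is therefore
\[
R_N u(x) = \sum_k \sum_j \hat u_k \, \hat p_j(k) \, e_{N,j}(k) \, e^{i(k+j)x},
\quad
e_{N,j}(k) := |k+j|^r - \sum_{\alpha=0}^{N-1} \binom{r}{\alpha} |k|^{r-\alpha}(\sgn k)^\alpha j^\alpha,
\]
plus the separate finite-rank contribution $\hat u_0\,|D_x|^r p(x,0)$ from $k=0$ (here $p(x,0)=a(x,0)$ since $f(0)=0$), which is controlled by the regularity of $a(\cdot,0)$ and enters harmlessly into the bound.

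\textbf{Two ingredient estimates.} (a) Taylor remainder: Lagrange's form gives $|e_{N,j}(k)| \leq C_{r,N} |j|^N |k|^{r-N}$ in the low-frequency region $|j|\leq |k|/2$ (where $|k+\theta j|\geq |k|/2$ uniformly), and the crude bound $|e_{N,j}(k)| \leq C_{r,N}(\langle j\rangle^r+\langle k\rangle^r+\sum_{\alpha<N}|k|^{r-\alpha}|j|^\alpha)$ otherwise. (b) Smoothness of $p$: iterating Leibniz and Faà di Bruno, each derivative of $e^{if(k)\beta}$ carries a factor $f(k)=|k|^{1/2}$, and induction on $M$ yields $\|\pa_x^M p(\cdot,k)\|_0 \leq C_M (1+|k|^{M/2})\mK_M$ with the tame splitting between $\|a\|$ and $\|\beta\|$ that defines $\mK_M$. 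By Plancherel, for every $j\neq 0$ and every nonnegative integer $M$,
\[
|\hat p_j(k)| \leq \frac{\|\pa_x^M p(\cdot,k)\|_0}{|j|^M} \leq \frac{C_M (1+|k|^{M/2})\mK_M}{|j|^M}.
\]

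\textbf{Assembling the tame estimate.} Applying $\Dx^m$ first multiplies the coefficient of $e^{i(k+j)x}$ by $|k|^m$. In the low-frequency regime $|j|\leq |k|/2$, combining (a) with (b) at $M=N+1$ gives a factor bounded by $\mK_{N+1}\,|k|^{r+m-(N-1)/2}\,|j|^{-1}$, which is summable in $j$ and bounded in $k$ precisely under the hypothesis $N\geq 2(m+r+1)+s_0$. In the high-frequency regime $|j|>|k|/2$ one trades $|k|\leq 2|j|$, chooses $M$ of order $s+N+m+2$ in (b), and applies Schur's test to the resulting kernel; this yields the high-regularity term $\mK_{s+N+m+2}\|u\|_{s_0}$. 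The tame splitting is produced by the standard decomposition $\langle k+j\rangle^{2s} \lesssim \langle k\rangle^{2s} + \langle j\rangle^{2s}$: the piece with $\langle j\rangle^{2s}$ is absorbed into $\mK_{s+\cdots}\|u\|_{s_0}$, while the piece with $\langle k\rangle^{2s}$ contributes $\mK_{2(m+r+s_0+1)}\|u\|_s$ with fixed coefficient regularity.

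\textbf{Hilbert-twisted variant and main obstacle.} For $\mH |D_x|^r A$ one multiplies each Fourier mode $e^{i(k+j)x}$ by $-i\sgn(k+j)$; in the Taylor regime $|j|<|k|$ this equals $-i\sgn(k)$, a constant in $j$, so the same expansion gives $\sum_\alpha B_\alpha \mH u + \tilde R_N u$ with $\tilde R_N$ differing from $R_N$ only by bounded sign factors. The delicate point, and the genuine work of the proof, is the bookkeeping in the high-frequency regime: the exotic $S^0_{1/2,1/2}$ structure makes each derivative of $p$ cost a half-power $|k|^{1/2}$, so the gain $|j|^{-M}$ obtained by integration by parts must simultaneously absorb the $|k|^{M/2}$ growth from the phase and the $|k|^{r+m}$ produced by $|D_x|^r\Dx^m$. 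This balance dictates exactly the threshold $N \geq 2(m+r+1)+s_0$ and forces the tame (rather than naive Sobolev) formulation of the remainder estimate.
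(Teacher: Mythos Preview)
Your overall strategy matches the paper's: Taylor-expand the multiplier, split the remainder into a low-frequency piece $|j|\le |k|/2$ and a high-frequency piece $|j|>|k|/2$, and estimate each as a pseudo-differential operator. The low-frequency argument is essentially right (though your choice $M=N+1$ leaves a non-summable $|j|^{-1}$; take one more derivative).

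The genuine gap is in the high-frequency regime. Your only input there is the bound (b), $|\hat p_j(k)|\le C_M(1+|k|^{M/2})\mK_M\,|j|^{-M}$, and ``trading $|k|\le 2|j|$'' converts the penalty $|k|^{M/2}$ into $|j|^{M/2}$, so the effective decay is only $|j|^{-M/2}$. Carrying out the symbol estimate (compute $\|\rho_2(\cdot,k)\|_s$ with $|e_{N,j}(k)|\le C|j|^N$) then forces $M\ge 2(m+s+N)+1$, i.e.\ regularity $\mK_{2(s+N+m)+1}$, roughly twice what the lemma states; your claim that $M$ ``of order $s+N+m+2$'' suffices is incorrect. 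The paper closes this gap with a non-stationary phase argument: when $|j|>|k|/2$ one writes $\hat p_j(k)=\int a(y,k)\,e^{-ij(y+p(y))}\,dy$ with $p(y)=-f(k)j^{-1}\beta(y)$, and since $|f(k)/j|\le 2|k|^{-1/2}$ the phase $y+p(y)$ has derivative bounded away from zero uniformly in $k$. Integration by parts $\alpha$ times then gives $|\hat p_j(k)|\le C_\alpha(\|a\|_\alpha+\|a\|_1\|\beta\|_{\alpha+1})\,|j|^{-\alpha}$ with \emph{no} $|k|^{\alpha/2}$ growth. This sharper bound is exactly what produces the stated index $s+N+m+2$; without it you prove a weaker lemma with a doubled loss of regularity.
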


\begin{remark} 
In particular, $B_0 u = A |D_x|^r u$.  $B_\a$ is of order $r - (\a/2)$.
\end{remark}

\begin{proof}
Denote by $(\hat z_j(k))_{j\in \Z}$ the Fourier coefficients of the periodic function 
$x \mapsto a(x,k) \, e^{i f(k) \b(x)}$ and consider a $C^{\infty}$ function $g \colon \R \to \R$ such that 
$g(\xi)= |\xi|^r$ for all $|\xi| \geq 2/3$, 
and $g(\xi)= 0$ for $|\xi| \leq 1/3$. 
If $\hat u_k$ are the Fourier coefficients of a periodic function $u(x)$, then
\begin{equation} \label{DAu}
\Dx^r A u(x) 
= \sum_{k,j \in \Z} \hat u_k \, \hat z_j(k) \, g(k+j)\, e^{i(k+j)x}.
\end{equation}
 Taylor's formula gives, for some $t \in [0,1]$,
\[
g(k+j) = \sum_{\a=0}^{N-1} \frac{1}{\a!} \, g^{(\a)}(k) \, j^\a + r_N(k,j), 
\qquad
 r_N(k,j) = \frac{1}{N!} \,g^{(N)}(k+tj) \, j^N. 
\]
Accordingly, \eqref{DAu} is split into
$
\Dx^r A u = \sum_{\a = 0}^{N-1} B_\a u + R_N u$. Hence
\begin{align*} 
B_\a u(x) 
& := \sum_{k,j \in \Z} \hat u_k \, \hat z_j(k) \, \frac{1}{\a!} \, g^{(\a)}(k)\, j^\a \, e^{i(k+j)x} 
\\ 
& = \sum_{k \in \Z} \frac{1}{i^\a \a!} \, g^{(\a)}(k)\, \hat u_k \,  e^{ikx} 
\Big( \sum_{j \in \Z} \hat z_j(k) \, (ij)^\a \, e^{ijx} \Big)
\\  
& = \sum_{k \in \Z^*} \frac{1}{i^\a \a!} \, r (r-1) \ldots (r-\a+1) \, |k|^{r-\a} \, (\sgn \, k)^\a \, \hat u_k \,  e^{ikx} \, \pa_x^\a \big\{ a(x,k) e^{i f(k) \b(x)} \big\} 
\\  
& = \sum_{k \in \Z^*} \binom{r}{\a} \, |k|^{r-\a} \, (- i \, \sgn \, k)^\a \, \hat u_k \,  e^{ikx} \, \pa_x^\a \big\{ a(x,k) e^{i f(k) \b(x)} \big\}.
\end{align*}
Since $|k|^{r-\a} \, (- i \, \sgn \, k)^\a \, \hat u_k$ is the $k$-th Fourier coefficient of $\Dx^{r-\a} \mH^\a u(x)$, one can also write 
\[
B_\a u = F_\a \Dx^{r-\a} \mH^\a u,
\] 
with $F_\a$ defined in the statement of the Lemma. 
$B_\a$ has order $r - \a/2$ since, as power of $k$, the maximum order 
of $\pa_x^\a \big\{ a(x,k) e^{i f(k) \b(x)} \big\}$ is $|k|^{\a/2}$.

\medskip

It remains to estimate the remainder 
\[
R_N u(x) := \sum_{k,j \in \Z} \hat u_k \, \hat z_j(k) \, r_N(k,j)\, e^{i(k+j)x}.
\]
For $N \geq r$, the $N$-th derivative of $g$ satisfies 
$|g^{(N)}(\xi)| \leq C_{r,N} \la \xi \ra^{r-N} \leq C_{r,N}$ for all $\xi$ in $\R$. 
In particular, for $|j| \leq \frac12|k|$ and $t \in [0,1]$, 
we have $|k+tj| \geq \frac12\,|k|$ and hence 
\begin{equation} \label{stima rN(k,j) j small}
|r_N(k,j)| 
\leq C_{r,N} \, \la k \ra^{r-N} |j|^N 
\qquad \forall |j| \leq \frac12\, |k|,
\end{equation}
and, in general, 
\begin{equation} \label{stima rN(k,j) generica}
|r_N(k,j)| \leq C_{r,N} \, |j|^N \quad \forall j \in \Z.
\end{equation}

We split $R_N$ into 2 components, 
$R_N = \mR_1 + \mR_2$, 
$\mR_1$ for low frequencies and $\mR_2$ for high frequencies:
\[
\ba
\mR_1 u(x) 
&:= \sum_{|j| \leq \frac12 |k|} \hat u_k \, \hat z_j(k) \, r_N(k,j)\, e^{i(k+j)x},
\\
\mR_2 u(x) 
&:= \sum_{|j| > \frac12 |k|} \hat u_k \, \hat z_j(k) \, r_N(k,j)\, e^{i(k+j)x}.
\ea
\]

Let us analyze the coefficients $\hat z_j(k)$. 
First of all, for $j \neq 0$, $\a \geq 0$,
\[
|\hat z_j(k)| 
\leq \frac{1}{|j|^\a} \, \big\| a(\cdot,k) \, e^{i f(k) \b(\cdot)} \big\|_\a . 
\]
Now, for $\a \in \N$, 
\[
\pa_x^\a \Big( a(x,k) e^{i f(k) \b(x)} \Big) 
= e^{i f(k) \b(x)} \, \sum_{\a_1 + \a_2 = \a} C(\a_1, \a_2) \, (\pa_x^{\a_1} a)(x,k) \, P_{\a_2}(x,k),
\]
where $P_{\a}$ is defined, as usual, by 
$\pa_x^{\a} \big( e^{i f(k) \b(x)} \big) =  e^{i f(k) \b(x)} \, P_\a(x,k)$. Hence 
it follows from the estimate \eqref{stima Palpha k beta} in the Appendix and an interpolation argument that,
\begin{equation}  \label{stima zjk generica}
|\hat z_j(k)| 
\leq \frac{C_\a |k|^{\a/2} }{|j|^\a} \, 
\big( \| a \|_\a + \| a \|_0 | \b |_\a \big)
\quad \forall j \neq 0,
\end{equation}
where $\| a \|_\a := \sup_{k \in \Z} \| a(\cdot,k) \|_\a$. For $|j| > \frac12 |k|$, this estimate can be improved: 
\[
\hat z_j(k) 
= \int_\T a(y,k) \, e^{i f(k) \b(y)} e^{-ijy} \, dy 
= \int_\T u(y) e^{i \om (y + p(y))} \, dy, 
\]
with
\[
\om = -j, \quad 
p(y) = - \frac{f(k)}{j} \, \b(y), \quad
u(y) = a(y,k).
\]
Therefore, applying the non-stationary phase argument of Lemma \ref{lemma:non-stationary phase}, 
\begin{equation} \label{stima qjk j high}
|\hat z_j(k)| 
\leq \frac{C_\a}{|j|^\a} \, \big( \| a \|_\a + \| \b \|_{\a+1} \| a \|_1 \big) 
\quad \forall |j| > \frac12|k|
\end{equation}
provided that 
$\| \b \|_2 \leq 1/2$ and 
$| \b |_1 \leq 1/4$.

\medskip

$\bullet$ \emph{Estimate for $\mR_1$. --- } 
We study the composition $\mR_1 \Dx^m$, 
which is the pseudo-differential operator with symbol
\[
\rho_1(x,k) := \sum_{|j| \leq \frac12 |k|} |k|^m \, \hat z_j(k) \, r_N(k,j) \, e^{ijx}.
\]
By \eqref{stima rN(k,j) j small} and \eqref{stima zjk generica}, for any $\a \in \N$,
\begin{align*}
\| \rho_1(\cdot,k) \|_{s_0}^2 
& =  \sum_{|j| \leq \frac12 |k|}  |k|^{2m} \, |\hat z_j(k)|^2 \, |r_N(k,j)|^2 \, \la j \ra^{2s_0} 
\\
& \leq  \sum_{|j| \leq \frac12 |k|} C \la k \ra^{2(m + r - N + (\a/2))} \, \la j \ra^{2(s_0 + N -\a)} 
\big( \| a \|_\a + \| a \|_0 | \b |_\a \big)^2.
\end{align*}
Now, assume that 
\begin{equation} \label{provv 1}
s_0 + N -\a \geq 0.
\end{equation}
Then
\[
\sum_{|j| \leq \frac12 |k|} \la j \ra^{2(s_0 + N -\a)} 
\leq C \la k \ra^{2(s_0 + N -\a) + 1}
\]
because $\la j \ra \leq \la k \ra$ and the number of terms in the sum is $\leq C |k|$. 
Hence
\[
\| \rho_1(\cdot,k) \|_{s_0}^2 
\leq  C \la k \ra^{2(m + r - N + (\a/2) + s_0 + N -\a) + 1} \, \big( \| a \|_\a + \| a \|_0 | \b |_\a \big)^2.
\]
The exponent of $\la k \ra$ is $\leq 0$ if 
$\a \geq 2(m + r + s_0) + 1$. 
Hence fix $\a$ to be the integer 
\[
\a_0 := \min \{ \a \in \N : \, \a \geq 2(m + r + s_0) + 1 \} 
= 2(m + r + s_0) + 1 + \d_0, \quad 0 \leq \d_0 < 1.
\] 
By assumption, 
\[
N \geq 2(m + r + 1) + s_0 > 2(m + r + s_0) + 1 + \d_0 - s_0 = \a_0 - s_0,
\]
therefore $s_0 + N -\a_0 \geq 0$, and \eqref{provv 1} is satisfied.
We get 
$\| \rho_1(\cdot,k) \|_{s_0}
\leq 
C  \big( \| a \|_{\a_0} + \| a \|_0 | \b |_{\a_0} \big)$ so, 
by Lemma \ref{lemma:low freq pseudo-diff}, 
\[ 
\| \mR_1 \Dx^m u \|_s 
\leq C(s) \,\big( \| a \|_{\a_0} + \| a \|_0 | \b |_{\a_0} \big) \, \| u \|_s
\] 
for all $s_0 > 1/2$, and $s \geq 0$. 
Moreover, $|\b|_{\a_0} \leq C \| \b \|_{\a_0 + 1}$, and 
$\a_0 \leq 2(m + r + s_0) + 2$. Therefore 
\begin{equation} \label{stima mR1}
\| \mR_1 \Dx^m u \|_s 
\leq C(s) \,\big( \| a \|_{2(m+r+s_0+1)} + \| a \|_0 \| \b \|_{2(m+r+s_0)+3} \big) \, \| u \|_s.
\end{equation}

\medskip

$\bullet$ \emph{Estimate for $\mR_2$. --- }
Now we study the composition $\mR_2 \Dx^m$, 
which is the pseudo-differential operator with symbol
\[
\rho_2(x,k) := \sum_{|j| > \frac12 |k|} |k|^m \, \hat z_j(k) \, r_N(k,j) \, e^{ijx}.
\]
By \eqref{stima rN(k,j) generica} and \eqref{stima qjk j high}, 
for any $\a \in \N$,
\begin{align*}
\| \rho_2(\cdot,k) \|_{s}^2 
& = \sum_{|j| > \frac12 |k|}  |k|^{2m} \, |\hat z_j(k)|^2 \, |r_N(k,j)|^2 \, \la j \ra^{2s} 
\\
& \leq  \sum_{|j| > \frac12 |k|} 
C |k|^{2m} \, \la j \ra^{2(s + N - \a)} 
\big( \| a \|_\a + \| \b \|_{\a+1} \| a \|_1 \big)^2 
\\
& \leq  C |k|^{2(m + s + N - \a) + 1} \,
\big( \| a \|_\a + \| \b \|_{\a+1} \| a \|_1 \big)^2 
\end{align*}
because
\[
\sum_{|j| > \frac12 |k|} \la j \ra^{2(s + N - \a)} 
\leq C \int_{\frac12 |k|}^{+\infty} t^{2(s + N - \a)} \, dt 
\leq C |k|^{2(s + N - \a)+1}
\]
for $2(s + N - \a)+1 < 0$, namely for 
$\a > s + N + \tfrac12$. 

The exponent of $|k|$ is $\leq 0$ for 
$\a \geq s + N + m + \tfrac12\,$. 
Fix $\a_1 := \min \{ n \in \N : \ n \geq s + N + m + 1\}$. 
Thus 
\[
\| \rho_2(\cdot,k) \|_{s}
\leq 
C \big( \| a \|_{\a_1} + \| \b \|_{\a_1 + 1} \| a \|_1 \big).
\]
By Lemma \ref{lemma:high freq pseudo-diff}, 
\[
\| \mR_2 \Dx^m u \|_s 
\leq C(s) \, 
\big( \| a \|_{\a_1} + \| \b \|_{\a_1 + 1} \| a \|_1 \big) \, \| u \|_{s_0}
\]
for all $s_0 > 1/2$, and $s \geq s_0$.
Moreover, since $\a_1 \leq s + N + m + 2$,
\begin{equation} \label{stima mR2}
\| \mR_2 \Dx^m u \|_s 
\leq C(s) \, 
\big( \| a \|_{s + N + m + 2} + \| \b \|_{s + N + m + 3} \| a \|_1 \big) \, \| u \|_{s_0}.
\end{equation}

\medskip

$\bullet$ \emph{Estimate for $R_N$. --- } 
The sum of \eqref{stima mR1} and \eqref{stima mR2} gives
\begin{multline*}
\| R_N \Dx^m u \|_s 
\leq 
C(s) \Big\{ \Big( \| a \|_{2(m+r+s_0+1)} + \| a \|_0 \| \b \|_{2(m+r+s_0)+3} \Big) \, \| u \|_s
\\
+ \Big( \| a \|_{s + N + m + 2} + \| \b \|_{s + N + m + 3} \| a \|_1 \Big) \, \| u \|_{s_0} \Big\},
\end{multline*}
which is \eqref{final tame estimate}.

\medskip

For $\Dx^r \mH$, the Fourier multiplier is $g(\xi) = - i \, \sgn(\xi) |\xi|^r$ instead of 
$g(\xi) = |\xi|^r$. Therefore there is an additional factor $(- i \, \sgn(k))$ in the formula for $B_\a$. The estimates for the remainder are the same as above. 
\end{proof}

\subsection{With dependence on time}  \label{sec:with time dep}

Assume that the operator $A$ in \eqref{def A torus} depends on time, namely 
the phase space is
$$
\phi(t,x,k) = kx + |k|^{1/2} \b(t,x),
$$ 
where $\b$ is periodic in the time variable $t \in \T$. 
Then the inequality of the previous sections also hold (with minor changes) in spaces $H^s(\T^2)$.
For example: 

\begin{lemma}  \label{lemma:est A with time}
If $\| \b \|_4 \leq \d$ (where $\d \in (0,1)$ is a universal constant), then for all integers 
$s$ 
\begin{equation}  \label{est A with time}
\| A h \|_s \leq C \| h \|_s \quad \forall s = 0,1; 
\qquad 
\| A h \|_s \leq C(s) ( \| h \|_s + \| \b \|_{s+3} \| h \|_1 ) 
\quad \forall s \geq 2,
\end{equation}
for all $h = h(t,x)$, where $\| \ \|_s$ is the norm of $H^s(\T^2)$.
\end{lemma}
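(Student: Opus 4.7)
The plan is to bootstrap the time-independent estimates of Lemma~\ref{lemma:Hs bound A} to $H^s(\T^2)$ by treating $t$ as a parameter for the spatial directions and handling time derivatives explicitly via the chain rule on $w_k(t,x)=e^{ikx+i|k|^{1/2}\beta(t,x)}$. The one-dimensional Sobolev embedding $H^1(\T_t;X)\hookrightarrow L^\infty(\T_t;X)$ will be used repeatedly: it gives $\sup_{t}\|\beta(t,\cdot)\|_{H^\sigma_x}\leq C\|\beta\|_{\sigma+1}$, so the hypothesis $\|\beta\|_4\leq\delta$ guarantees $\|\beta(t,\cdot)\|_{H^3_x}\leq C\delta$ uniformly in $t$, which is what is needed to apply the time-independent lemmata.

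First I would handle $s=0$ by applying Lemma~\ref{lemma:L2 bound A} for each fixed $t$ and integrating the square in $t$, giving $\|Ah\|_0\leq 2\|h\|_0$. For $s=1$, I use the equivalence $\|f\|_1^2\sim\|f\|_0^2+\|\partial_tf\|_0^2+\|\partial_xf\|_0^2$; the $\partial_x$-piece follows from Lemma~\ref{lemma:Hs bound A} (with $\alpha=1$) pointwise in $t$ and integration, while for the time derivative I exploit the key identity
\[
\partial_t(Ah)=A(\partial_t h)+i\beta_t\cdot A(|D_x|^{1/2}h),
\]
which follows from $\partial_tw_k=i|k|^{1/2}\beta_tw_k$. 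The first summand is controlled by the $s=0$ bound by $2\|h\|_1$, and the second by $\|\beta_t\|_{L^\infty(\T^2)}\|h\|_{1/2}\leq C\|\beta\|_3\|h\|_1\leq C\delta\|h\|_1$, using the two-dimensional embedding $H^2(\T^2)\hookrightarrow L^\infty(\T^2)$.

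For $s\geq 2$ I would use the equivalence $\|f\|_s^2\sim\|f\|_{L^2_tH^s_x}^2+\|f\|_{H^s_tL^2_x}^2$. The spatial component is straightforward: Lemma~\ref{lemma:Hs bound A} with $\alpha=s$ applied pointwise in $t$ and integrated in $t$ yields
\[
\|Ah\|_{L^2_tH^s_x}\leq C(s)\bigl(\|h\|_s+\|\beta\|_{s+3}\|h\|_1\bigr),
\]
after the embedding $\sup_t\|\beta(t,\cdot)\|_{H^{s+2}_x}\leq C\|\beta\|_{s+3}$ and Cauchy--Schwarz on the $t$-integral (using $\|h\|_{L^2_tH^1_x}\leq\|h\|_1$). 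For the time component, iterating the $s=1$ identity and applying Fa\`a di Bruno to $\partial_t^sw_k$ yields a finite sum
\[
\partial_t^s(Ah)=\sum c_{j,\nu}\Bigl(\prod_{i=1}^m\partial_t^{\nu_i}\beta\Bigr)\cdot A\bigl(|D_x|^{m/2}\partial_t^jh\bigr),
\qquad \sum_i\nu_i+j=s,\quad \nu_i\geq 1,
\]
each term of which is estimated in $L^2(\T^2)$ by Moser placement of the $\beta$-product in $L^\infty(\T^2)$ times the $s=0$ bound on $A(|D_x|^{m/2}\partial_t^jh)$.

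The main obstacle will be the tame bookkeeping in the last step: each term produces mixed indices $\|\beta\|_{s-j+2}\|h\|_{m/2+j}$ with $m/2+j\leq s$ and $s-j+2\leq s+2$, which must be split by logarithmic convexity into the allowed combination $\|\beta\|_{s+3}\|h\|_1+\|\beta\|_4\|h\|_s$; the second summand is then absorbed by the smallness $\|\beta\|_4\leq\delta$. The extra three units in the index ``$s+3$'' arise precisely from the two-dimensional embedding $H^{s+3}(\T^2)\hookrightarrow W^{s,\infty}(\T^2)$ needed to control the $\beta$-product in $L^\infty$ when all $s$ time derivatives fall on a single $\beta$-factor.
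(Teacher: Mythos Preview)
Your proposal is correct and coincides with the paper for $s=0,1$ and for the $L^2_tH^s_x$ component. For the $H^s_tL^2_x$ component with $s\geq 2$ you take a genuinely different route: the paper runs a one-step induction on $s$, applying the identity $\partial_t(Ah)=A(\partial_th)+i\beta_t A(|D_x|^{1/2}h)$ only once per step and then invoking the full $H^s(\T^2)$ estimate already established at level $s$ (together with the tame product rule $\|uv\|_s\leq C(s)(\|u\|_s\|v\|_2+\|u\|_2\|v\|_s)$) to bound $\|\partial_t(Ah)\|_s$. You instead expand $\partial_t^s(Ah)$ completely via Fa\`a di Bruno and do all the interpolation at the end. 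The induction is shorter and entirely sidesteps the bookkeeping you flag as your main obstacle; your direct approach also works, once you note that the product of $\beta$-factors satisfies $\prod_i\|\beta\|_{\nu_i+2}\leq C\|\beta\|_3^{m-1}\|\beta\|_{s-j-m+3}$ (Lemma~\ref{lemma:funda} with $\delta=3$), absorb $\|\beta\|_3^{m-1}\leq\delta^{m-1}$, and then interpolate $\|\beta\|_a\|h\|_b$ with $a+b=s+3-m/2\leq s+4$ between the endpoints $(4,s)$ and $(s+3,1)$. One small correction: the index $s+3$ is already forced by the spatial half of the argument (Lemma~\ref{lemma:Hs bound A} requires $\|\beta(t,\cdot)\|_{H^{s+2}_x}$, and passing to $\sup_t$ costs one more unit), not by the time-derivative embedding you cite; in your Fa\`a di Bruno expansion the worst case $m=1$, $j=0$ gives only $\|\partial_t^s\beta\|_{L^\infty}\leq C\|\beta\|_{s+2}$.
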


\begin{proof} We have already proved that, without dependence on time (i.e. $h = h(x)$, $\b = \b(x)$), 
\begin{equation}  \label{only space}
\| A h \|_{L^2_x} \leq C \| h \|_{L^2_x}, \quad 
\| A h \|_{H^s_x} \leq C(s) (\| h \|_{H^s_x} + \| \b \|_{H^{s+2}_x} \| h \|_{H^1_x})
\end{equation}
provided $\| \b \|_{H^3_x} \leq \d$. 
Now let $h,\b$ depend also on time. For each fixed $t$, 
$\| \b(t) \|_{H^3_x} \leq \| \b \|_{L^\infty_t H^3_x}$ $\leq \| \b \|_{H^1_t H^3_x} \leq \| \b \|_4 \leq \d$, and then \eqref{only space} holds at each $t$. Therefore 
\[
\| A h \|_{L^2_t L^2_x}^2 = \int_\T \| A(t) h(t) \|_{L^2_x}^2 dt 
\leq \int_\T C^2 \| h(t) \|_{L^2_x}^2 dt = C^2 \| h \|_{L^2_t L^2_x}^2, 
\]
i.e. $\| Ah \|_0 \leq C \| h \|_0$. Similarly, for $s \geq 1$, using $\| \b \|_{L^\infty_t H^{s+2}_x} \leq C \| \b \|_{H^1_t H^{s+2}_x}$,  
\begin{align*}
\| A h \|_{L^2_t H^s_x}^2 
& = \int_\T \| A(t) h(t) \|_{H^s_x}^2 \, dt \\
&\leq C(s) \int_\T \{ \| h(t) \|_{H^s_x}^2 + \| \b(t) \|_{H^{s+2}_x}^2 \| h(t) \|_{H^1_x}^2 \} \, dt 
\\ & 
\leq C(s) (\| h \|_{L^2_t H^s_x}^2 + \| \b \|_{H^1_t H^{s+2}_x}^2 \| h \|_{L^2_t H^1_x}^2), 
\end{align*}
whence
\begin{equation} \label{1/2}
\| A h \|_{L^2_t H^s_x} 
\leq C(s) (\| h \|_s + \| \b \|_{s+3} \| h \|_1).
\end{equation}
The norm $\| u \|_s$ of $H^s(\T^2)$ is equivalent to the norm $\| u \|_{L^2_t H^s_x} + \| u \|_{H^s_t L^2_x}$. Then it remains to prove that also 
\begin{equation} \label{2/2}
\| A h \|_{H^s_t L^2_x} \leq C(s) (\| h \|_s + \| \b \|_{s+3} \| h \|_1).
\end{equation}
The time derivative of $A h$ is $\pa_t (Ah) = A( h_t ) + \b_t A (i |D_x|^{1/2} h)$.

$\bullet$ For $s=1$, using the inequalities $\| u \|_{L^\infty(\T^2)} \leq C \| u \|_2$ and $\| A u \|_0 \leq C \| u \|_0$, we have
\begin{align*}
\| \pa_t(Ah) \|_0 
& \leq \| A h_t \|_0 + \| \b_t A (i |D_x|^{1/2} h) \|_0  \\
&\leq C (\| h_t \|_0 + \| \b_t \|_{L^\infty(\T^2)} \| A (i |D_x|^{1/2} h) \|_0)
\\ & 
\leq C (\| h \|_1 + \| \b_t \|_2 \| |D_x|^{1/2} h \|_0)
\leq C (\| h \|_1 + \| \b \|_3 \| h \|_1) 
\end{align*}
(we have rudely worsened $\| h \|_{1/2} \leq \| h \|_1$). 
Therefore \eqref{2/2} holds for $s=1$, and, as a consequence, \eqref{est A with time} holds for $s = 1$, namely $\| A h \|_1 \leq C \| h \|_1$ (because $\| \b \|_4 \leq 1$).   

$\bullet$ For $s=2$, we use the product estimate $\| uv \|_1 \leq C \| u \|_1 \| v \|_2$ 
to deduce that 
\begin{align*}
\| \pa_t (A h) \|_{H^1_t L^2_x} 
& \leq \| \pa_t (A h) \|_1 
\leq \| A h_t \|_1 + \| \b_t A |D_x|^{1/2} h \|_1 
\\ & 
\leq C (\| h_t \|_1 + \| \b_t \|_2 \| A |D_x|^{1/2} h \|_1) \\
&\leq C (\| h \|_2 + \| \b \|_3 \| |D_x|^{1/2} h \|_1) 
\leq C \| h \|_2.
\end{align*}
Therefore \eqref{2/2} holds for $s=2$, and, as a consequence, \eqref{est A with time} holds for $s = 2$. 

$\bullet$ Now assume that \eqref{2/2} holds for some $s \geq 2$; we prove it for $s+1$.
The sum of $\eqref{1/2}$ and $\eqref{2/2}$ implies that \eqref{est A with time} holds at that $s$. 
We estimate 
\[
\| \pa_t (Ah) \|_{H^s_t L^2_x} 
\leq \| A h_t \|_{H^s_t L^2_x} + \| \b_t A (i |D_x|^{1/2} h) \|_{H^s_t L^2_x}.
\]
By \eqref{2/2}, 
$\| A h_t \|_{H^s_t L^2_x} \leq C(s) (\| h_t \|_s + \| \b \|_{s+3} \| h_t \|_1) 
\leq C(s) (\| h \|_{s+1} + \| \b \|_{s+3} \| h \|_2)$ and, by interpolation, this is  
$\leq C(s) (\| h \|_{s+1} + \| \b \|_{s+4} \| h \|_1)$ because $\| \b \|_4 \leq 1$. 
For the other term, we use the product estimate 
$\| uv \|_s \leq C(s) (\| u \|_s \| v \|_2 + \| u \|_2 \| v \|_s)$ 
and \eqref{est A with time} at $s$ to deduce that 
\begin{align*}
& \| \b_t A (|D_x|^{1/2} h) \|_{H^s_t L^2_x}
\leq \| \b_t A (|D_x|^{1/2} h) \|_s
\\ & 
\leq C(s) ( \| \b_t \|_s \| A (|D_x|^{1/2} h) \|_2 + \| \b_t \|_2 \| A ( |D_x|^{1/2} h) \|_s )
\\ & 
\leq C(s) ( \| \b \|_{s+1} \| h \|_3 + \| \b \|_{s+1} \| \b \|_5 \| h \|_2 
+ \| \b \|_3 \| h \|_{s+1} + \| \b \|_3 \| \b \|_{s+3} \| h \|_2 )
\end{align*}
which is $\leq C(s) (\| h \|_{s+1} + \| \b \|_{s+4} \| h \|_1)$ by interpolation. 
Hence \eqref{2/2} holds for $s+1$. 

By induction, we have proved \eqref{2/2} for all $s \geq 1$. 
The sum of \eqref{1/2} and \eqref{2/2} gives the thesis.
\end{proof}

To prove a time-dependent version of Lemma \ref{lemma:composition formula}, 
note that the time derivative of the operator $R_N |D_x|^m$ in \eqref{final tame estimate} 
is
$$
\pa_t (R_N |D_x|^m h) = R_N |D_x|^m h_t + i \b_t R_N |D_x|^{m+(1/2)} h,
$$
namely $R_N |D_x|^m$ satisfies the same formula for the time-derivative as the operator $A$ of Lemma \ref{lemma:est A with time}.

Also note that all these proofs can be easily adapted to include the amplitude function $a(t,x,k)$.

\section{Appendix.}
\label{sec:utilities}

In this appendix we gather some classical facts that are used in the proof.

\subsection{Classical tame estimates for pseudo-differential operators on the torus}
\label{S:tame}

Let 
\begin{equation}
\label{A pseudo-diff}
A u(x) = \sum_{k \in \Z} \hat u_k \, a(x,k)\, e^{ikx}, \quad x \in \R,
\end{equation}
$a(x,k)$ periodic in $x$, for all $k \in \Z$. 

\begin{lemma}[Bounded or regularizing pseudo-differential operators on the torus] 
\label{lemma:reg pseudo-diff}
1) Let $s,\s,\t,K$ be real numbers with $s,\s,K \geq 0$ and $\t > 1/2$. 
Assume that 
\begin{align} 
\label{condiz pseudo reg 1} 
\| a(\cdot\,, k) \|_{s+\s} \, \la k \ra^{\t-s} 
& \leq K \quad \forall k \in \Z,
\\
\label{condiz pseudo reg 2} 
\| a(\cdot\,, k) \|_{\t} \, \la k \ra^\s  
& \leq K \quad \forall k \in \Z.
\end{align}
Then $A$ in \eqref{A pseudo-diff} maps $H^s(\T)$ into $H^{s+\s}(\T)$, with 
\[
\| Au \|_{s+\s} \leq C K \| u \|_s \quad \forall u \in H^s(\T),
\]
where $C>0$ depends on $s+\s$ and $\t$.
2) The same conclusion holds if \eqref{condiz pseudo reg 1} is replaced by 
\begin{equation} \label{condiz pseudo reg 3}
\| a(\cdot\,, k) \|_{s+\s+\t} \, \la k \ra^{-s} 
\leq K \quad \forall k \in \Z.
\end{equation}
\end{lemma}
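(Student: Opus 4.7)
The starting point is to expand $a(x,k)$ in its Fourier series in $x$, $a(x,k) = \sum_{j \in \Z} \hat a_j(k) e^{ijx}$, which gives $\widehat{(Au)}_n = \sum_{k \in \Z} \hat u_k \, \hat a_{n-k}(k)$ and hence
\begin{equation*}
\| Au \|_{s+\sigma}^2 \,=\, \sum_{n \in \Z} \la n \ra^{2(s+\sigma)} \Big| \sum_{k \in \Z} \hat u_k \, \hat a_{n-k}(k) \Big|^2.
\end{equation*}
The goal is to bound this by $CK^2 \|u\|_s^2 = CK^2 \sum_k |\hat u_k|^2 \la k \ra^{2s}$. The two main tools are the Cauchy--Schwarz inequality applied to the inner sum over $k$ with suitably chosen weights which are summable thanks to $\tau > 1/2$, and the elementary inequality $\la n \ra^{s+\sigma} \leq C_{s+\sigma}(\la k \ra^{s+\sigma} + \la n-k \ra^{s+\sigma})$ which lets one redistribute the weight $\la n \ra^{2(s+\sigma)}$.

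For part 2 the argument is a one-shot application of Cauchy--Schwarz in $k$ with weight $\la n-k \ra^{-2\tau}$, yielding
\begin{equation*}
|\widehat{(Au)}_n|^2 \,\leq\, C_\tau \sum_k |\hat u_k|^2 \la n-k \ra^{2\tau} |\hat a_{n-k}(k)|^2.
\end{equation*}
Multiplying by $\la n \ra^{2(s+\sigma)}$, splitting via the elementary inequality above, and summing in $n$ with $j := n-k$ reduces the problem to the pointwise (in $k$) bound $\|a(\cdot,k)\|_{s+\sigma+\tau}^2 + \la k \ra^{2(s+\sigma)} \|a(\cdot,k)\|_\tau^2 \leq 2K^2 \la k \ra^{2s}$, which follows at once from \eqref{condiz pseudo reg 3} and \eqref{condiz pseudo reg 2}. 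For part 1 the same argument fails, because $\|a(\cdot,k)\|_{s+\sigma+\tau}$ is no longer under control; only $\|a(\cdot,k)\|_{s+\sigma}$ is known, and interpolation between hypotheses \eqref{condiz pseudo reg 1} and \eqref{condiz pseudo reg 2} yields bounds on $\|a(\cdot,k)\|_\mu$ only for $\mu \in [\tau, s+\sigma]$, never at the higher level $\mu = s+\sigma+\tau$. The remedy is to split the sum over $k$ into the two disjoint regions $R_1 := \{\la n-k \ra \leq \la k \ra\}$ and $R_2 := \{\la n-k \ra > \la k \ra\}$, where respectively $\la n \ra \leq C\la k \ra$ and $\la n \ra \leq C\la n-k \ra$. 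On $R_1$ one applies Cauchy--Schwarz with weight $\la n-k \ra^{-2\tau}$ and invokes \eqref{condiz pseudo reg 2} exactly as in part 2. On $R_2$ one applies instead Cauchy--Schwarz with the $n$-independent weight $\la k \ra^{-2\tau}$ (still summable in $k$ since $\tau > 1/2$); after summing in $n$ one obtains the factor $\la k \ra^{2\tau} \|a(\cdot,k)\|_{s+\sigma}^2$, and \eqref{condiz pseudo reg 1} bounds $\|a(\cdot,k)\|_{s+\sigma}^2 \leq K^2 \la k \ra^{2(s-\tau)}$, so the two $\la k \ra^{\pm 2\tau}$ factors cancel to produce precisely $K^2 \la k \ra^{2s}$.

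The main obstacle is to identify the correct two-region splitting in part 1. The defining inequality of each region is exactly what converts the ``missing'' derivative factor $\la n-k \ra^\tau$ (the one that Cauchy--Schwarz produces) into a harmless $\la k \ra^\tau$, which then absorbs the $\la k \ra^{-\tau}$ decay built into \eqref{condiz pseudo reg 1}. Once this splitting is in place, both parts of the lemma reduce to routine bookkeeping: summing over $k$ and taking square roots yields $\|Au\|_{s+\sigma} \leq CK \|u\|_s$ with $C$ depending only on $s+\sigma$ and $\tau$.
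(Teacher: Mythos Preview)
Your proof is correct and follows essentially the same idea as the paper's: expand $a(\cdot,k)$ in Fourier series, use Cauchy--Schwarz with a summable weight, and redistribute $\la n\ra^{s+\sigma}$ onto $\la k\ra$ or $\la n-k\ra$. The one organizational difference is that for part~1 you introduce the region splitting $R_1\cup R_2$, whereas the paper avoids this by reversing the order of your two operations: it first applies the elementary inequality $\la n\ra^{2(s+\sigma)}\le C(\la n-k\ra^{2(s+\sigma)}+\la k\ra^{2(s+\sigma)})$ to produce two sums $S_1,S_2$, and \emph{then} applies Cauchy--Schwarz to each with the weight tailored to that sum ($\la k\ra^{-\tau}$ for $S_1$, $\la n-k\ra^{-\tau}$ for $S_2$). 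This yields exactly the same cancellations you obtain on $R_2$ and $R_1$ respectively, without any case distinction on the summation domain. So your claim that ``the same argument fails'' for part~1 is slightly misleading: what fails is only your particular ordering (Cauchy--Schwarz first, then split), not the overall strategy. Both routes are equally valid; the paper's is marginally shorter.
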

\begin{proof} 
Develop $a(x,k)$ in Fourier series, 
$a(x,k) = \sum_{j \in \Z} \hat a_j(k) \, e^{ijx}$ so that 
\[
Au(x) = \sum_{n,k} \hat u_k \, \hat a_{n-k}(k) \, e^{inx}.
\]
One has 
\[
\la n \ra^{2(s+\s)} 
\leq C_1 \Big( \la n-k \ra^{2(s+\s)} + \la k \ra^{2(s+\s)} \Big).
\]
Therefore 
$\| Au \|_{s+\s}^2 \leq C_1 \,(S_1 + S_2)$ 
where 
\begin{align*}
S_1 & := 
\sum_{n} \Big( \sum_k |\hat u_k| \, |\hat a_{n-k}(k)| \Big)^2 
\la n-k \ra^{2(s+\s)} \,,
\\ 
S_2 & := 
\sum_{n} \Big( \sum_k |\hat u_k| \, |\hat a_{n-k}(k)| \Big)^2 
\la k \ra^{2(s+\s)} \,.
\end{align*}
By H\"older's inequality, \eqref{condiz pseudo reg 1} implies that
\begin{align*}
S_1 & \leq \sum_{n} \Big( \sum_k |\hat u_k|^2 \, |\hat a_{n-k}(k)|^2 \,
\la k \ra^{2\t} \Big) \Big( \sum_k \frac{1}{\la k \ra^{2\t}} \Big) \, 
\la n-k \ra^{2(s+\s)} 
\\
& \leq C_2 \sum_{k} |\hat u_k|^2 \la k \ra^{2\t} \| a(\cdot\,,k) \|_{s+\s}^2 
\\
& \leq C_2 K^2 \| u \|_s^2 
\end{align*} 
where $C_2 := \sum_j \la j \ra^{-2\t}$ 
is finite because $\t > 1/2$. Similarly, 
one estimates $S_2$ using \eqref{condiz pseudo reg 2} and one obtains that 
$\| Au \|_{s+\s}^2 \leq 2C_1 C_2 K^2 \|u\|_s^2$. 

\smallbreak
To prove part 2) of the lemma, it is sufficient to replace $\la k \ra^\t$ with $\la n-k \ra^\t$ when H\"older's inequality is applied to estimate $S_1$.
\end{proof} 

We now consider paradifferential operators, which are 
pseudo-differential operators with spectrally localized symbols $a(x,k)$ (see \cite{Bony}). 
Namely, develop $a(x,k)$ in Fourier series, 
$a(x,k) = \sum_{j \in \Z} \hat a_j(k) \, e^{ijx}$. We shall consider low (resp.\ high) frequencies 
symbols such that $\hat a_j(k) =0$ for $|j|\geq C |k|$ (resp.\ $|j|\leq C|k|$).

\begin{lemma}[Low frequencies symbol] \label{lemma:low freq pseudo-diff}
Let $A$ be the pseudo-differential operator 
\[
A u(x) = \sum_{k \in \Z} \hat u_k \, a(x,k)\, e^{ikx}, \quad 
a(x,k) = \sum_{|j| \leq C |k|} \hat a_j(k) \, e^{ijx},
\]
where the symbol $a$ is Fourier supported on $\{ j \in \Z : |j| \leq C|k| \}$ for some constant $C$.
Then 
\[
\| A u \|_s \leq C(s) \, \| a \|_{s_0} \| u \|_s, \quad 
\| a \|_{s_0} := \sup_{k \in \Z} \| a(\cdot, k) \|_{s_0},
\]
for all $s_0 > 1/2$, and $s \geq 0$.
\end{lemma}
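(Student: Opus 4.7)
My plan is to exploit the frequency localization of the symbol to reduce the estimate to a convolution inequality in $\ell^2$. Writing $a(x,k) = \sum_{j \in \Z} \hat a_j(k) e^{ijx}$ and expanding, one has
\[
Au(x) = \sum_{n \in \Z} c_n e^{inx}, \qquad c_n = \sum_{k \in \Z} \hat u_k\,\hat a_{n-k}(k).
\]
The support hypothesis $\hat a_j(k) = 0$ for $|j| > C|k|$ means that only pairs $(n,k)$ with $|n-k| \leq C|k|$ contribute, which yields $\langle n \rangle \leq (C+2)\langle k \rangle$. This is the crucial geometric input: since the symbol is low-frequency with respect to $k$, the output frequency $n$ cannot significantly exceed the input frequency $k$, so $\langle n\rangle^{s}$ is automatically tamed by $\langle k\rangle^{s}$ on the support of the sum.

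Having reduced the weight, the heart of the estimate is the inequality
\[
|c_n|\,\langle n\rangle^{s} \leq C(s)\sum_{k} |\hat u_k|\langle k\rangle^{s}\,|\hat a_{n-k}(k)|,
\]
which I would dualize via Cauchy--Schwarz by inserting the weight $\langle n-k\rangle^{s_0}\langle n-k\rangle^{-s_0}$:
\[
\Big(\sum_{k} |\hat u_k|\langle k\rangle^{s}|\hat a_{n-k}(k)|\Big)^{\!2}
\leq \Big(\sum_{k}\langle n-k\rangle^{-2s_0}\Big)\!\Big(\sum_{k} |\hat u_k|^2\langle k\rangle^{2s}|\hat a_{n-k}(k)|^2 \langle n-k\rangle^{2s_0}\Big).
\]
The first factor equals $C_{s_0} := \sum_{j} \langle j\rangle^{-2s_0} < \infty$ precisely because $s_0 > 1/2$, which is where the hypothesis enters.

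Finally, summing the resulting inequality over $n$ and swapping the order of summation gives
\[
\|Au\|_s^2 \leq C(s)^2 C_{s_0} \sum_{k} |\hat u_k|^2 \langle k\rangle^{2s}\Big(\sum_{n} |\hat a_{n-k}(k)|^2 \langle n-k\rangle^{2s_0}\Big) \leq C(s)^2 C_{s_0}\,\|a\|_{s_0}^2 \|u\|_s^2,
\]
since the inner sum is exactly $\|a(\cdot,k)\|_{s_0}^2$. There is no real obstacle here; the only point requiring care is keeping track of the geometric reduction $\langle n\rangle \lesssim \langle k\rangle$, which replaces the usual high-high interaction control (otherwise one would need some regularity of $u$ in order to tame $\langle n\rangle^s$, as in Lemma~\ref{lemma:reg pseudo-diff}). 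The argument works for every $s \geq 0$ with a constant depending only on $s$ and $C$, and the statement is in fact trivial for $s=0$ by Schur's test with the same weight $\langle n-k\rangle^{-s_0}$.
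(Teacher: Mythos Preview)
Your proof is correct. The paper organizes the argument differently: it simply invokes the preceding general Lemma~\ref{lemma:reg pseudo-diff}, verifying its hypotheses \eqref{condiz pseudo reg 2} and \eqref{condiz pseudo reg 3} (with $\tau=s_0$, $\sigma=0$) by using the spectral localization in the form $\|a(\cdot,k)\|_{s+s_0} \leq C(s)\la k\ra^{s}\|a(\cdot,k)\|_{s_0}$. If one unpacks that lemma, the proof splits $\la n\ra^{2s} \leq C(\la n-k\ra^{2s} + \la k\ra^{2s})$ and handles each piece by Cauchy--Schwarz with the weight $\la \cdot\ra^{s_0}$. Your route is slightly more direct: by reading off $\la n\ra \leq (C+2)\la k\ra$ on the support of the sum, you bypass the splitting and need only a single Cauchy--Schwarz. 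The underlying mechanism---summability of $\la j\ra^{-2s_0}$ for $s_0>1/2$---is identical in both; your version just makes the role of the frequency localization more transparent, at the cost of not being a one-line corollary of the general lemma.
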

\begin{proof}
Notice that \eqref{condiz pseudo reg 2} holds with $\tau=s_0>1/2$ and $\sigma=0$. 
Also, for $s=0$, \eqref{condiz pseudo reg 3} holds with $\tau=s_0>1/2$ and $\sigma=0$; which 
in turn implies that  \eqref{condiz pseudo reg 3} holds for any $s\geq 0$ since 
$\| a(\cdot\,, k) \|_{s+s_0}\leq K (s) \| a(\cdot\,, k) \|_{s_0} \, \la k \ra^{s}$ 
in view of the spectral localization. 
\end{proof}

\begin{lemma}[High frequencies symbol] \label{lemma:high freq pseudo-diff}
Let $A$ be the pseudo-differential operator 
\[
A u(x) = \sum_{k \in \Z} \hat u_k \, a(x,k)\, e^{ikx}, \quad 
a(x,k) = \sum_{|j| > C |k|} \hat a_j(k) \, e^{ijx},
\]
where the symbol $a$ is Fourier supported on $\{ j \in \Z : |j| > C|k| \}$ for some constant $C$.
Then 
\[
\| A u \|_s \leq C(s) \, \| a \|_{s} \| u \|_{s_0}, \quad 
\| a \|_{s} := \sup_{k \in \Z} \| a(\cdot, k) \|_{s},
\]
for all $s_0 > 1/2$, and $s \geq s_0$.
\end{lemma}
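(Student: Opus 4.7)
The plan is to reduce the estimate to a weighted Cauchy--Schwarz argument, exploiting the spectral localization to transfer the $\la n\ra^s$ weight from the output frequency $n$ onto the frequency $n-k$, which corresponds to $x$-derivatives of the symbol $a(\cdot,k)$.

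First I would write out the $n$-th Fourier coefficient of $Au$ as
\[
\widehat{Au}_n = \sum_{k \in \Z} \hat u_k \, \hat a_{n-k}(k),
\]
and observe that by the hypothesis $a(x,k) = \sum_{|j|>C|k|} \hat a_j(k) e^{ijx}$, the coefficient $\hat a_{n-k}(k)$ vanishes unless $|n-k| > C|k|$. In that regime the triangle inequality gives $|n| \leq |n-k| + |k| \leq (1 + C^{-1}) |n-k|$, and hence $\la n \ra \leq c_C \la n-k\ra$ for some constant $c_C$ depending only on $C$. This is the key geometric fact: for high-frequency symbols, the output frequency $n$ is comparable to $n-k$, not to $k$.

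Using this, I would estimate
\[
\| Au \|_s^2
= \sum_n \la n \ra^{2s} \Big| \sum_k \hat u_k \, \hat a_{n-k}(k) \Big|^2
\leq C(s) \sum_n \Big( \sum_k |\hat u_k| \, |\hat a_{n-k}(k)| \, \la n-k \ra^s \Big)^2.
\]
Then I apply Cauchy--Schwarz in $k$ with the weight $\la k \ra^{s_0}$:
\[
\Big( \sum_k |\hat u_k| |\hat a_{n-k}(k)| \la n-k \ra^s \Big)^2
\leq \Big( \sum_k |\hat u_k|^2 \la k \ra^{2s_0} \Big) \Big( \sum_k |\hat a_{n-k}(k)|^2 \la n-k \ra^{2s} \la k \ra^{-2s_0} \Big).
\]
Swapping the order of summation in $n$ and $k$ in the remaining double sum gives
\[
\sum_n \sum_k |\hat a_{n-k}(k)|^2 \la n-k \ra^{2s} \la k \ra^{-2s_0}
= \sum_k \la k \ra^{-2s_0} \| a(\cdot,k) \|_s^2
\leq \| a \|_s^2 \sum_k \la k\ra^{-2s_0},
\]
and the last series converges precisely because $s_0 > 1/2$. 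Combining the two factors yields $\|Au\|_s \leq C(s) \|a\|_s \|u\|_{s_0}$, which is the desired bound.

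The proof is entirely routine once the spectral-support observation $\la n\ra \lesssim \la n-k\ra$ is in hand, so there is no real obstacle; the only delicate point to state carefully is the dependence of the constant $c_C$ on $C$ (which is harmless since $C$ is fixed by the statement). Alternatively, one could invoke Lemma \ref{lemma:reg pseudo-diff} with $\sigma = 0$ and $\tau = s_0$, checking that the spectral support condition forces $\|a(\cdot,k)\|_{s+s_0} \la k\ra^{-s}$ to be controlled by $\|a(\cdot,k)\|_s$ up to a constant, but the direct Cauchy--Schwarz argument above is shorter and self-contained.
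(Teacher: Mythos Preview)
Your direct argument is correct and is essentially the computation underlying the paper's proof, just unpackaged: the paper invokes Lemma~\ref{lemma:reg pseudo-diff} with $(\tau,\sigma,s)$ replaced by $(s_0,\,s-s_0,\,s_0)$, checking its two hypotheses via the spectral-localization inequality $\|a(\cdot,k)\|_{s_1}\la k\ra^{s_2-s_1}\leq C\|a(\cdot,k)\|_{s_2}$ for $s_2\geq s_1$; the proof of that lemma is precisely a Cauchy--Schwarz estimate of the same type as yours. Your version is arguably cleaner here because you exploit the one-sided bound $\la n\ra\lesssim\la n-k\ra$ directly, rather than going through the split $\la n\ra^{2s}\lesssim\la n-k\ra^{2s}+\la k\ra^{2s}$.

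One small correction to your closing remark: the alternative you sketch, invoking Lemma~\ref{lemma:reg pseudo-diff} with $\sigma=0$ and $\tau=s_0$, would only give a map $H^s\to H^s$, not $H^{s_0}\to H^s$; to land on $\|u\|_{s_0}$ one must take $\sigma=s-s_0$ (with the ``$s$'' of that lemma set to $s_0$), exactly as the paper does.
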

\begin{proof}
Notice that \eqref{condiz pseudo reg 1} and \eqref{condiz pseudo reg 2} hold 
with $(\tau,\sigma,s)$ replaced by $(s_0,s-s_0,s_0)$. To see this, 
notice that the assumption that $a$ is Fourier supported on $\{ j \in \Z : |j| > C|k| \}$ 
implies that $\| a(\cdot\,, k) \|_{s_1} \la k \ra^{s_2-s_1} \leq K (s_1,s_2) \| a(\cdot\,, k) \|_{s_2} \,$ 
for $s_2\geq s_1$.
\end{proof}

We also recall the following estimates for the Hilbert transform 
(see \cite{IPT} or Lemma~B.5 in \cite{Baldi-Benj-Ono}).

\begin{lemma}\label{benj}
1) Let $s,m_1,m_2$ in $\N$ with $s\ge 2$, $m_1,m_2\ge 0$, $m=m_1+m_2$. 
Let $f\in H^{s+m}(\T)$. Then $[f,\mathcal{H}]u=f\mathcal{H}u-\mathcal{H}(fu)$ 
satisfies
$$
\big\| \partial_x^{m_1}[f,\mathcal{H}]\partial_x^{m_2} u\big\|_s
\le C(s)\big( \| u\|_s\| f\|_{m+2}+\| u\|_2\| f\|_{m+s}\big).
$$

2) There exists a universal constant $\delta$ in $(0,1)$ with the following property. 
Let $s,m_1,m_2$ in $\N$ and set $m=m_1+m_2$, 
$\beta\in W^{s+m+1,\infty}(\T,\R)$ with $|\beta|_1\le \delta$. 
Let $Bh(x)=h(x+\beta(x))$ for $h\in H^s(\T)$. Then
$$
\big\| \partial_x^{m_1}\big( B^{-1}\mathcal{H}B-\mathcal{H}\big)
\partial_x^{m_2} u\big\|_s
\le C(s)\big( |\beta |_{m+1} \| u\|_{s}+|\beta|_{s+m+1} \| u\|_0\big).
$$ 
\end{lemma}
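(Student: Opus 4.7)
\medskip

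\noindent\textbf{Proof plan.} The two parts of the lemma assert smoothing properties of operators built from the Hilbert transform $\mathcal{H}$. The plan is to handle each part directly at the level of Fourier series (resp.\ kernels) and then trade the gain of derivatives for tame constants by a standard Cauchy--Schwarz/Young argument.

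\emph{Part (1).} Write $f = \sum_j \hat f_j e^{ijx}$ and $u = \sum_k \hat u_k e^{ikx}$. A direct computation gives
\[
\pa_x^{m_1}[f,\mathcal{H}]\pa_x^{m_2} u
= -i \sum_{k,j} \hat f_{k-j}\, \hat u_j \,(ik)^{m_1}(ij)^{m_2}(\sgn j-\sgn k)\, e^{ikx}.
\]
The key algebraic observation is that $\sgn j-\sgn k=0$ when $jk>0$, so only the pairs with $jk\le 0$ contribute; for such pairs one has the cheap but crucial bound $\max(|j|,|k|)\le |k-j|$, whence
\[
|k|^{m_1}|j|^{m_2}\le |k-j|^{m}\quad\text{and}\quad |\sgn j-\sgn k|\le 2 .
\]
Thus the matrix of $\pa_x^{m_1}[f,\mathcal{H}]\pa_x^{m_2}$ is dominated by $|\widehat{\pa_x^{m} f}\,|_{k-j}$. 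A Cauchy--Schwarz argument analogous to the one used in the proof of Lemma~\ref{lemma:nuovo M} (splitting the indices according to whether $\la k\ra^s\le 2\la j\ra^s$ or not) yields
\[
\|\pa_x^{m_1}[f,\mathcal{H}]\pa_x^{m_2} u\|_s \le C(s)\bigl(\|f\|_{m+2}\|u\|_s+\|f\|_{s+m}\|u\|_2\bigr),
\]
where the index $m+2$ comes from summing $\sum_j\la j\ra^{-2}$ while the $\|u\|_2$ factor absorbs the extra $\la j\ra^{-2}$ needed for summability on the other side.

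\emph{Part (2).} Represent $\mathcal{H}$ by its principal value kernel $K(x-y)=\tfrac{1}{2\pi}\,\mathrm{p.v.}\cot(\tfrac{x-y}{2})$. A change of variables $x=z+\beta(z)$, $y=w+\beta(w)$ shows that $B^{-1}\mathcal{H}B$ has kernel
\[
K\bigl(z-w+\beta(z)-\beta(w)\bigr)\bigl(1+\beta'(w)\bigr).
\]
Writing $\beta(z)-\beta(w)=(z-w)\,\tilde\beta(z,w)$ with $\tilde\beta$ smooth when $\beta$ is smooth and $|\tilde\beta|\le |\beta|_1\le \delta$, one sees that the difference of kernels
\[
K(z-w+(z-w)\tilde\beta)(1+\beta'(w))-K(z-w)
\]
has no more than an integrable singularity on the diagonal (the simple pole of $K$ at $0$ cancels the $(z-w)$ factor), and in fact becomes a genuinely smooth function of $(z,w)$ of size controlled by derivatives of $\beta$. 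The operator $B^{-1}\mathcal{H}B-\mathcal{H}$ is therefore a classical smoothing operator with kernel $R(z,w)$, and differentiating the kernel in $z$ (resp.\ $w$) $m_1$ (resp.\ $m_2$) times brings out at most $m$ additional derivatives of $\beta$. One then controls $H^s$-norms of the convolution-type action using Schur's test / tame product rules: the norm of $R$ in $H^s_z L^2_w$ and $L^2_z H^s_w$ is bounded, respectively, by $C(s)|\beta|_{s+m+1}$ and $C(s)|\beta|_{m+1}$, which gives the announced tame estimate.

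\emph{Main obstacle.} The main technical difficulty is the bookkeeping of derivatives in part (2): one must simultaneously (a) exploit the cancellation of the singularity of $K$ at the diagonal, which requires expanding $K(z-w+(z-w)\tilde\beta)-K(z-w)$ to sufficiently high order in $\tilde\beta$, and (b) keep the loss of derivatives on $\beta$ at the sharp level $s+m+1$ by placing the $m$ extra $\pa_z,\pa_w$ derivatives judiciously (distributing them between $K$ and $\tilde\beta$) and interpolating using the algebra structure of $H^s(\T^2)$. Part (1) is comparatively straightforward once the sign cancellation is observed. In either case, one could alternatively cite \cite{IPT} or \cite[Lemma~B.5]{Baldi-Benj-Ono}, where the same estimates are carried out in detail.
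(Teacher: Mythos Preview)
The paper does not prove this lemma; it simply states it as a recalled result and refers to \cite{IPT} and \cite[Lemma~B.5]{Baldi-Benj-Ono} for the details. Your sketch is in the spirit of those references, and the last sentence of your proposal already acknowledges this.

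Your Part~(1) is correct: the Fourier computation is right, the sign cancellation $\sgn j-\sgn k=0$ for $jk>0$ is the crucial point, and the bound $|k|^{m_1}|j|^{m_2}\le|k-j|^m$ when $jk\le 0$ reduces the matrix to one dominated by $|\widehat{\partial_x^m f}_{k-j}|$. The Cauchy--Schwarz splitting you describe then gives exactly the stated indices.

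For Part~(2) your strategy (pass to the $\cot$ kernel, observe the singularity cancels) is the standard one, but there is a small slip in the kernel formula. With $Bh(x)=h(x+\beta(x))=h(\phi(x))$, the change of variables yields the kernel
\[
K\bigl(\phi^{-1}(z)-\phi^{-1}(w)\bigr)\,(\phi^{-1})'(w),
\]
so it is the \emph{inverse} diffeomorphism $\phi^{-1}(t)=t+\tilde\beta(t)$ that appears, not $\beta$ itself. This is harmless for the final estimate because $|\tilde\beta|_k\le C(k)|\beta|_k$ when $|\beta|_1$ is small, but you should either correct the formula or note the substitution. Also, ``genuinely smooth'' overstates things: the kernel is only as regular as $\beta$ allows (roughly, $R(z,w)$ inherits $s+m-1$ derivatives from $\beta\in W^{s+m+1,\infty}$), and the tame estimate has to be extracted by tracking which derivatives of $\beta$ enter at each stage---your last paragraph correctly identifies this as the main technical point, but the Schur/``tame product rules'' justification is thin as written.
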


\subsection{Interpolation estimates}
\label{subsec:estimates for exponentials}

Recall the interpolation inequality: if $s_1 \leq s \leq s_2$, then 
\begin{equation}
\label{basic interpolation inequality}
| f |_s \leq C(s_1, s_2) | f |_{s_1}^{\lm} | f |_{s_2}^{1-\lm}, 
\quad 
s = \lm s_1 + (1-\lm) s_2, \quad 
\lm \in [0,1],
\end{equation}
where $| \ |_s$ is either the $C^s$-norm the $H^s$-norm (or other norms of a scale with interpolation).
As a consequence, one has the following
\begin{lemma}[Interpolation]\label{lemma:funda}
Let $n \geq 1$ be an integer, let $\d \geq 0$, and let $\nu_1, \ldots, \nu_n$ be real numbers with 
\[
\nu_j \geq \d \quad \forall j = 1, \ldots, n, \qquad 
\sum_{j=1}^n \nu_j = \a.
\]
Then 
\begin{equation} \label{funda n}	
\prod_{j=1}^n |f|_{\nu_j} \leq C(\a) |f|_{\d}^{n-1} |f|_{\a - \d n + \d}.
\end{equation}
\end{lemma}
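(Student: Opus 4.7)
The plan is to apply the basic interpolation inequality \eqref{basic interpolation inequality} to each factor $|f|_{\nu_j}$ separately, using the same two endpoints $\d$ and $M := \a - (n-1)\d = \a - \d n + \d$, and then to multiply the resulting inequalities. The key arithmetic check is that the exponents of the high-norm factor add up to exactly $1$.

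First I would dispose of the degenerate case $\a = n\d$: then necessarily $\nu_j = \d$ for every $j$, and the inequality reduces to $|f|_\d^n \leq C |f|_\d^{n-1} |f|_\d$, which is trivial. Assume henceforth $\a > n\d$, so that $M > \d$, and for each $j = 1, \ldots, n$ write
\[
\nu_j = \lambda_j \d + (1-\lambda_j) M, \qquad
1 - \lambda_j = \frac{\nu_j - \d}{M - \d} \in [0,1],
\]
which is legitimate because $\d \leq \nu_j \leq M$ (the upper bound $\nu_j \leq M$ follows from $\sum_i \nu_i = \a$ and $\nu_i \geq \d$ for $i \neq j$). The inequality \eqref{basic interpolation inequality} then yields
\[
|f|_{\nu_j} \leq C(\a) \, |f|_\d^{\lambda_j} \, |f|_M^{1 - \lambda_j}
\qquad (j = 1, \ldots, n).
\]

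Multiplying these $n$ inequalities gives
\[
\prod_{j=1}^n |f|_{\nu_j} \leq C(\a) \, |f|_\d^{\sum_j \lambda_j} \, |f|_M^{\sum_j (1-\lambda_j)},
\]
and a direct computation using $\sum_j \nu_j = \a$ shows
\[
\sum_{j=1}^n (1 - \lambda_j) = \frac{\sum_j \nu_j - n \d}{M - \d} = \frac{\a - n \d}{\a - n \d} = 1,
\qquad \sum_{j=1}^n \lambda_j = n - 1.
\]
Substituting these two identities and recalling $M = \a - \d n + \d$ produces the claimed bound $\prod_j |f|_{\nu_j} \leq C(\a) |f|_\d^{n-1} |f|_{\a - \d n + \d}$.

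There is no real obstacle: the statement is a purely algebraic consequence of the two-endpoint interpolation inequality, and the only thing to verify carefully is the bookkeeping of the convex combination coefficients, which works out exactly because of the constraint $\sum_j \nu_j = \a$. The mildly delicate point is to make sure that each $\nu_j$ actually lies in $[\d, M]$ so that the convex combination is valid, which follows from $\nu_j \geq \d$ for all $j$ together with $\nu_j = \a - \sum_{i \neq j} \nu_i \leq \a - (n-1)\d = M$.
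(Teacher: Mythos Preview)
Your proof is correct and is essentially identical to the paper's own argument: both apply the two-endpoint interpolation inequality \eqref{basic interpolation inequality} with endpoints $\d$ and $M=\a-\d n+\d$, verify that each $\nu_j$ lies in $[\d,M]$, and then use $\sum_j\nu_j=\a$ to check that the high-norm exponents sum to $1$ (and handle the degenerate case $\a=n\d$ separately). The only cosmetic difference is that you treat the degenerate case first and write out the formula $1-\lambda_j=(\nu_j-\d)/(M-\d)$ explicitly.
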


\begin{proof}
Since $\sum_{j=1}^n \nu_j = \a$, and each $\nu_j$ is $\geq 1$,
\[
\nu_j = \a - \sum_{i \neq j} \nu_i 
\leq \a - \d(n-1).
\]
So $\nu_j \in [\d, \a - \d n + \d]$. 
Apply \eqref{basic interpolation inequality} with 
$s_1 = 1$, $s = \nu_j$, $s_2 = \a - \d n + \d$, and define $\th_j \in [0,1]$ by
\[
\nu_j = \d \th_j + (\a - \d n + \d) (1-\th_j),  \quad j=1,\ldots,n.
\]
Since $\nu_1 + \ldots + \nu_n = \a$, we find
\[
\Big( \sum_{j=1}^n \th_j \Big) (\a - \d n) 
= n(\a - \d n + \d) - \a 
= (\a - \d n)(n-1).
\]
If $\a - \d n \neq 0$, then 
$
\sum_{j=1}^n \th_j = n-1$ and $\sum_{j=1}^n (1 - \th_j) = 1$. 
Thus, using \eqref{basic interpolation inequality},
\[
\ba
\prod_{j=1}^n |f|_{\nu_j} 
&\leq C(\a) \prod_{j=1}^n |f|_\d^{\th_j} |f|_{\a - \d n + \d}^{1-\th_j} 
= C(\a) |f|_\d^{(\sum \th_j)} |f|_{\a - \d n + \d}^{(\sum(1-\th_j))} \\
&\leq C(\a) |f|_\d^{n-1} \, |f|_{\a- \d n + \d}.
\ea
\]
If, instead, $\a - \d n = 0$, then $\nu_j = \d$ for all $j$, and 
the conclusion still holds.
\end{proof}

The previous lemma, which has an interest per se, can be used to estimate the exponentials.
Let $v(x)$ be a function. The derivatives of $e^v$ are 
\[
\pa_x^\a (e^{v(x)}) = P_\a(x) \, e^{v(x)},
\]
where $P_\a(x)$ satisfies
$P_0(x) = 1$ and $P_{\a+1}(x) = \pa_x P_\a(x) + P_\a(x) \pa_x v(x)$. 
Thus, by induction,
\begin{equation} \label{sum Palpha}
P_\a(x) = \sum_{n=1}^\a \, \sum_{\nu \in S_{\a,n}} C(\nu) (\pa_x^{\nu_1}v)(x) \cdots (\pa_x^{\nu_n}v)(x), \qquad \a \geq 1,
\end{equation} 
where $\nu= (\nu_1, \ldots, \nu_n) \in S_{\a,n}$ means $\nu_j\ge 1$ and 
$\nu_1+\dots+\nu_n = \a$. The previous lemma implies that
\begin{equation*}
|  (\pa_x^{\nu_1}v)(x) \cdots (\pa_x^{\nu_n}v)(x)| 
\leq \prod_{j=1}^n | v|_{\nu_j}\leq C(\a)| v|_1^{n-1}|v|_{\a-n+1}.
\end{equation*} 
Then use \eqref{basic interpolation inequality} with 
$s_1 = 1$, $s = \a + 1 - n$, $s_2 = \a$, namely
$|v|_{\a+1-n} \leq C(\a) |v|_1^{1-\mu} |v|_\a^\mu$ 
with $\mu$ defined by
\[
\a+1-n = (1-\mu) + \a\mu,
\]
which is $\mu = (\a-n)/(\a-1)$.
Since $n-\mu = \a(1-\mu)$, we get
\begin{align*}
|v|_1^{n-1} \, |v|_{\a+1-n} 
&\leq C(\a) |v|_1^{n-1} \, |v|_1^{1-\mu} |v|_\a^\mu 
= C(\a) |v|_1^{n-\mu} |v|_\a^\mu \\
&\leq C(\a) (|v|_1^\a)^{1-\mu} |v|_\a^\mu \\
&\leq C(\a) \big( |v|_1^\a + |v|_\a \big).
\end{align*}
As a consequence, 
\begin{equation} \label{stima Palpha}
|P_\a(x)| \leq C(\a) ( |v|_1^\a + |v|_\a ).	
\end{equation}
In the case $v(x) = i |k|^{1/2} \b(x)$, this gives 
\begin{equation} \label{stima Palpha k beta}
|P_\a(x)| \leq C(\a) \big( |k|^{\a/2} |\b|_1^\a + |k|^{1/2} |\b|_\a \big).	
\end{equation}

\subsection{Non-stationary phase}

The following lemma is the classical fast oscillation estimate, based on 
repeated integrations by parts on the torus, in a tame version.

\begin{lemma}[Non-stationary phase] \label{lemma:non-stationary phase}
Let $p \in H^2(\T,\R)$,  
\begin{equation} \label{b'<1/2}
\| p \|_2 \leq K, \quad 
|p'(x)| \leq \frac12 \quad \forall x \in \R,
\end{equation}
for some constant $K>0$. 
Let $\om$ be an integer, $\om \neq 0$, and let $\a \geq 1$ be an integer. 
If $p \in H^{\a+1}(\T)$, $u \in H^\a(\T)$, then 
\[
\int_\T u(x)\, e^{i\om(x+p(x))}\,dx 
= \Big( \frac{i}{\om} \Big)^\a \int_\T Q_\a(x)\, e^{i\om(x+p(x))}\,dx\,,
\]
where $Q_\a \in L^2(\T)$, 
\[
\| Q_\a \|_0 \leq C(\a,K) \big( \| u \|_\a + \| p \|_{\a+1}\, \|u\|_1 \big),
\]
and $C(\a,K)$ is a positive constant that depends only on $\a$ and $K$. 
If $u=1$, then 
\[
\| Q_\a \|_0 \leq C(\a,K) \| p \|_{\a+1} \,.
\]
If $p=p(x,\om)$ and $u=u(x,\om)$ depend on $\om$, the estimate still holds if $K \geq \|p(\cdot,\om)\|_2$ and $|\pa_x p(x,\om)| \leq 1/2$. 
\end{lemma}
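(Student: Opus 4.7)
\medskip

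\noindent\textbf{Proof plan.}
The strategy is the classical integration-by-parts argument, with the standard transport operator defined by the phase, turned into a quantitative tame bound by Moser-type product estimates combined with Lemma~\ref{lemma:funda}.
First I would set $v(x) \defn 1/(1+p'(x))$. Since $|p'| \leq 1/2$ everywhere, $v$ is a well-defined smooth function of $p'$, with $\| v \|_{L^\infty} \leq 2$, and by the standard composition estimate for $F(y) = 1/(1+y)$ (smooth on $|y|\leq 1/2$),
\[
\| v \|_s \leq C(s,K) \bigl( 1 + \| p \|_{s+1} \bigr), \qquad s \geq 0,
\]
with $\| v \|_2 \leq C(K)$ in particular. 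The crucial identity
$\pa_x e^{i\om(x+p(x))} = i\om (1+p') e^{i\om(x+p(x))}$
then gives $e^{i\om(x+p)} = (i\om)^{-1}\, v \, \pa_x e^{i\om(x+p)}$.

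Next I would define $Q_\a$ recursively by $Q_0 \defn u$ and $Q_{\a+1} \defn \pa_x(v Q_\a)$. Integration by parts on $\T$ (no boundary terms) yields
\[
\int_\T Q_\a\, e^{i\om(x+p)} dx
\,=\, \frac{1}{i\om} \int_\T v Q_\a\, \pa_x e^{i\om(x+p)} dx
\,=\, \frac{i}{\om} \int_\T Q_{\a+1}\, e^{i\om(x+p)} dx,
\]
and iterating $\a$ times gives the desired identity. The regularity $p \in H^{\a+1}$, $u \in H^\a$ is exactly what is needed so that $Q_\a \in L^2(\T)$ (and each $Q_k$, $k \leq \a-1$, is sufficiently regular for the integrations by parts to be legitimate).

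The main work is the tame estimate on $\| Q_\a \|_0$. By induction one sees that
\[
Q_\a \;=\; \sum_{m,k,\nu} C_{m,k,\nu}\; (\pa_x^k u) \prod_{i=1}^{m} (\pa_x^{\nu_i} v),
\]
where the sum runs over $0 \leq k \leq \a$, $0 \leq m \leq \a$, $\nu = (\nu_1,\ldots,\nu_m)$ with $\nu_i \geq 1$ and $k + \nu_1+\cdots+\nu_m = \a$. Each such term I would estimate in $L^2$ by placing $L^\infty$ norms on all but one factor, using the Sobolev embedding $\| f \|_{L^\infty} \leq C \| f \|_1$ on $\T$, and then applying Lemma~\ref{lemma:funda} (with $\d = 1$) to the $m$ factors of $v$ to redistribute the derivatives onto at most one high-order factor. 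This yields for the generic term a bound of the form
\[
\bigl\| (\pa_x^k u) \textstyle\prod_i (\pa_x^{\nu_i} v) \bigr\|_0
\,\leq\, C(\a) \| v \|_2^{\,m-1} \bigl( \| u \|_\a \| v \|_2 + \| u \|_1 \| v \|_{\a-k+1} \bigr),
\]
and using $\| v \|_2 \leq C(K)$ and $\| v \|_{\a-k+1} \leq C(\a,K)(1 + \| p \|_{\a-k+2}) \leq C(\a,K)(1 + \| p \|_{\a+1})$ gives the desired tame bound $\| Q_\a \|_0 \leq C(\a,K)(\| u \|_\a + \| p \|_{\a+1} \| u \|_1)$ after summation over the finitely many multi-indices $(m,k,\nu)$.

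For the case $u = 1$, the recursive structure shows immediately that every term in the expansion of $Q_\a$ contains at least one factor $\pa_x^{\nu_i} v$ with $\nu_i \geq 1$: indeed, when $u$ is constant, only the terms with $k=0$ survive (so $m \geq 1$). The same estimate then produces $\| Q_\a \|_0 \leq C(\a,K) \| v \|_{\a} \leq C(\a,K) \| p \|_{\a+1}$. The case where $p = p(x,\om)$ and $u = u(x,\om)$ depends on $\om$ requires no change, since all estimates above are pointwise in $\om$ and depend on $p$ only through $\| p \|_2 \leq K$ and $\|p'\|_{L^\infty} \leq 1/2$.

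The main (minor) obstacle is the bookkeeping of the combinatorial expansion of $Q_\a$ and the correct use of interpolation so that the final bound is \emph{linear} in the highest-order norms $\| u \|_\a$ and $\| p \|_{\a+1}$: this is exactly what Lemma~\ref{lemma:funda}, applied with $\d=1$ to the $m$ derivative orders $\nu_1,\ldots,\nu_m$, is designed for. No other technical difficulty arises.
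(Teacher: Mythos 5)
Your strategy is the same as the paper's: the recursion $Q_{\a+1}=\pa_x(vQ_\a)$ with $v=1/(1+p')$ is precisely the recursion $Q_\a=\pa_x(Q_{\a-1}/h')$, $h=x+p$, used there, and both proofs then estimate a multinomial expansion of $Q_\a$ by interpolation. Two points, though, are not quite right as written and are exactly where the paper spends its effort.

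First, your displayed generic bound is off by one derivative in the $k=0$ case. The term $\| u \|_1 \| v \|_{\a-k+1}$ becomes $\| u \|_1 \| v \|_{\a+1}\lesssim \|u\|_1(1+\|p\|_{\a+2})$, which exceeds the assumed regularity $p\in H^{\a+1}$. The cure (put $u$, not a $v$-factor, in the $L^\infty$ slot when $k=0$; when a $v$-factor sits in $L^2$, the remaining $v$-factors accumulate only $\a-k+m-1$ derivatives, yielding $\|v\|_\a$ after Lemma~\ref{lemma:funda}) is simple, but the bound as stated does not hold.

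Second, and more substantively, Lemma~\ref{lemma:funda} alone cannot redistribute derivatives \emph{between} $u$ and $v$: it interpolates products of norms of a single function. In the intermediate case $1\le k<\a$ with $m\ge 1$, after placing all but one factor in $L^\infty$ you are left with a product like $\|u\|_{\mu_0}\|v\|_{\mu_1+1}\cdots\|v\|_{\mu_n+1}$ in which the derivative count must be traded jointly between $u$ and $v$ (equivalently, between $u$ and $p$). This is exactly the content of the paper's ``Case 3'': a direct convexity argument, with exponents $\th_0,\dots,\th_n$ chosen so that $\sum\th_i=n$, producing $(\|p\|_2\|u\|_\a)^{1-\th_0}(\|p\|_{\a+1}\|u\|_1)^{\th_0}$, which is then absorbed by Young's inequality. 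Lemma~\ref{lemma:funda} is used only in the $\mu_0=0$ case; the joint interpolation is a separate (though elementary) step that your write-up silently subsumes into ``bookkeeping''.

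Finally, a small stylistic difference: by keeping $v=1/(1+p')$ as the atomic object you need a Moser composition estimate $\|v\|_s\lesssim 1+\|p\|_{s+1}$. The paper instead records $Q_\a$ as $(h')^{-2\a}$ times a polynomial in $\pa^\nu h=\pa^\nu p$ ($\nu\ge1$), so the only nonlinearity in $p'$ is the single $L^\infty$ prefactor $\|(h')^{-1}\|_{L^\infty}\le 2$ and no composition estimate is invoked. Both approaches work, but the paper's form is slightly leaner on auxiliary results.
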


\begin{proof} Put $h(x) := x+p(x)$. 
By induction, integrating by parts $\a$ times gives
\[
\int_\T u(x)\, e^{i\om h(x)}\,dx 
= \Big( \frac{i}{\om} \Big)^{\a} \int_\T 
Q_\a(x) \, e^{i \om h(x)}\, dx,
\]
where, for $\a\geq 1$, $Q_\a$ is of the form
\begin{equation} \label{Q formula 1}
Q_\a = \frac{1}{(h')^{2\a}} \, \sum_{\nu \in S_\a} 
C(\nu) (\pa^{\nu_0}u) (\pa^{\nu_1} h) \ldots (\pa^{\nu_\a} h),
\end{equation}
where $\nu = (\nu_0, \nu_1, \ldots, \nu_\a) \in S_\a$ means 
\begin{equation} \label{nu in Salfa}
 0 \leq \nu_0 \leq \a, \quad 
1 \leq \nu_1 \leq \ldots \leq \nu_\a\,,  \quad
\nu_0 + \nu_1 + \ldots + \nu_\a = 2\a.
\end{equation}
Formula \eqref{Q formula 1} is proved by induction starting from 
$Q_{\a} = \pa_x(Q_{\a-1}/h')$. 
If we organize the sum in \eqref{Q formula 1} according to the number of indices among $\nu_1, \ldots, \nu_\a$ that are equal to 1,  we obtain 
\begin{equation} \label{Q alfa p}
Q_\a = \sum_{n=0}^\a \frac{1}{(h')^{\a+n}} \, \sum_{\mu \in T_{\a,n}}  C(\mu) (\pa^{\mu_0} u)(\pa^{\mu_1} p) \ldots (\pa^{\mu_n} p),
\end{equation}
where $\mu = (\mu_0,\mu_1, \ldots, \mu_n) \in T_{\a,n}$ means 
\[
 0 \leq \mu_0 \leq \a-n, \quad 
2 \leq \mu_1 \leq \ldots \leq \mu_n \,, \quad \mu_0 + \mu_1 + \ldots + \mu_n = \a + n.
\]
To estimate the products in \eqref{Q alfa p}, we distinguish three cases.

Case 1: $n=0$. Then $\mu_0 = \a$ and 
$\| (\pa^{\mu_0} u)(\pa^{\mu_1} p) \ldots (\pa^{\mu_n} p)\|_0 
= \| \pa^\a u \|_0 
\leq \| u \|_\a \,$.

Case 2: $n \geq 1$ and $\mu_0 = 0$. Then
\begin{align*} 
\| (\pa^{\mu_0} u)(\pa^{\mu_1} p) \ldots (\pa^{\mu_n} p) \|_0 
& \leq \| u \|_{L^\infty}  \| \pa^{\mu_1} p \|_{L^\infty} \ldots 
\| \pa^{\mu_{n-1}} p \|_{L^\infty}  \| \pa^{\mu_n} p \|_0 
\\ & 
\leq C^{n-1} \| u \|_1 \| p \|_{\mu_1 +1} \ldots 
\| p \|_{\mu_{n-1} +1}  \| p \|_{\mu_n}
\end{align*}
where $C$ is the universal constant of the embedding $\| u \|_{L^\infty} \leq C \| u \|_1$. 
Now it follows from 
Lemma~\ref{lemma:funda} 
applied with $\delta=2$ and $|\cdot|_s$ replaced with the Sobolev norms 
$\|\cdot\|_s$ (which satisfies the interpolation estimate \eqref{basic interpolation inequality}) 
that
\[
\| p \|_{\mu_1 +1} \ldots 
\| p \|_{\mu_{n-1} +1}  \| p \|_{\mu_n}
\leq C(\alpha) \| p \|_{2}^{n-1} \| p \|_{\a+1}\,.
\]

Case 3: $n, \mu_0 \geq 1$. For any $i\ge 1$, one has
$2 \leq \mu_i \leq \mu_n \leq \a$, because
$$
2(n-1) + \mu_n \leq \mu_1+\ldots + \mu_n = \a + n-\mu_0\le \a+ n-1.
$$ 
Therefore $\mu_i + 1 \leq \a+1$ for all $i\ge 1$, and one can write
\begin{align*} \label{aux}
&\| (\pa^{\mu_0} u)(\pa^{\mu_1} p) \ldots (\pa^{\mu_n} p) \|_0 \\
&\qquad \leq \| \pa^{\mu_0} u \|_0 \| \pa^{\mu_1} p \|_{L^\infty} \ldots 
\| \pa^{\mu_n} p \|_{L^\infty} 
\\ & \qquad
\leq C(n) \| u \|_{\mu_0} \| p \|_{\mu_1+1} \ldots \| p \|_{\mu_n + 1}  \notag
\\ & \qquad
\leq C(n) \| u \|_1^{\th_0} \| u \|_\a^{1-\th_0}\, 
\| p \|_{2}^{\th_1+\ldots+\th_n} 
\| p\|_{\a+1}^{n-(\th_1+\ldots+\th_n)} 
\end{align*}
where $\th_0, \th_1, \ldots, \th_n \in [0,1]$ are defined by 
\begin{align*}
\mu_i + 1 & = 2 \,\th_i + (\a+1)(1-\th_i) \quad \forall i=1,\ldots,n, \\
\mu_0 & = 1 \,\th_0 + \a (1-\th_0).
\end{align*}
Taking the sum gives $\th_0 + \th_1 + \ldots + \th_n = n$ 
because $\mu_0 + \mu_1 + \ldots + \mu_n = \a + n$. One deduces that
\begin{multline*}
\| u \|_1^{\th_0} \| u \|_\a^{1-\th_0}\, \| p \|_{2}^{\th_1+\ldots+\th_n} 
\| p\|_{\a+1}^{n-(\th_1+\ldots+\th_n)} \\
 \leq 
\| p \|_2^{n-1} (\|p\|_2 \|u\|_\a)^{1-\th_0} (\|p\|_{\a+1} \|u\|_1)^{\th_0} 
\end{multline*}
which in turn is smaller than 
\[
\| p \|_{2}^{n-1} (\|p\|_2 \|u\|_\a + \|p\|_{\a+1} \|u\|_1) 
\]
because $a^{1-\th} b^\th \leq (1-\th) a + \th b \leq a+b$ for all $a,b >0$, $\th \in [0,1]$. 

Since $\|1/h'\|_{L^\infty} \leq 2$, collecting all the above cases gives 
\[
\| Q_\a \|_0 \leq C(\a,K) (\|u\|_\a + \|p\|_{\a+1} \|u\|_1)
\]
for some constant $C(\a,K)>0$, because $\| p \|_2 \leq \| \b \|_2 \leq K$.  
Note that all the calculations above are not affected by a possible dependence of $p$ on $\om$. 

When $u=1$, only the case 2 gives a nonzero contribution to the sum.
\end{proof}

\subsection{$L^2(\T)$ as a subspace of $L^2(\R)$}

For the sake of completeness, we recall here how to define 
an isomorphism of $L^2(\T)$ onto a subspace of $L^2(\R)$, transforming Fourier series $\sum_{k \in \Z}$ into Fourier integrals $\int_\R d\xi$.

Consider a $C^\infty$ function $\rho : \R \to \R$ with compact support such that 
\begin{align*}
(i) & \ \ 0 \leq \rho(x) \leq 1 \quad \forall x \in \R, &&&
(iii) & \ \ \rho(x) = 0 \quad \forall |x| \geq 3\p/2, \\
(ii) & \ \ \rho(x) \geq 1/2 \quad \forall x \in [-\p,\p], &&&
(iv) & \ \ \rho(x) = 1 \quad \forall |x| \leq \p/2, \\
(v) & \ \ \sum_{k \in \Z} \rho(x+2\p k) = 1 \quad \forall x \in \R. &&& &
\end{align*}
Let $\mS$ be the multiplication operator $(\mS u)(x) := \rho(x) u(x)$, $u \in L^2(\T)$, and $X$ its range,
\[
X := \{ \mS u : u \in L^2(\T) \} \subset L^2(\R).
\]
The following properties of $\mS$ 
follow directly from the properties of $\rho$.
\begin{lemma}[The isomorphism $\mS$] \label{lemma:cutoff S}
The map $\mS : L^2(\T) \to X$ is bijective, and 
\[
\frac12\, \| u \|_{L^2(\T)} 
\leq \| \mS u \|_{L^2(\R)} 
\leq 2\, \| u \|_{L^2(\T)} 
\quad \forall u \in L^2(\T).
\]
If $u,v \in L^2(\T)$, then 
\begin{equation} \label{magic S}
(u,v)_{L^2(\T)} = (\mS u,v)_{L^2(\R)} 
\end{equation}
(the integral $(\mS u,v)_{L^2(\R)}$ is well-defined because $\mS u$ has compact support). 
In particular, for $v = e_k$, 
\begin{equation}\label{n1110}
\hat u_k = (u,e_k)_{L^2(\T)} = (\mS u,e_k)_{L^2(\R)} = \widehat{(\mS u)}(k) \quad \forall k \in \Z.
\end{equation}
Therefore 
\begin{equation} \label{from series to integral}
\mS \ : \ u(x) = \sum_{k\in\Z} \widehat{(\mS u)}(k) \, e_k(x) 
\ \  \mapsto \ \ 
\mS u(x) = \int_\R \widehat{(\mS u)}(\xi) \, e_\xi(x) \,d\xi.
\end{equation} 
\end{lemma}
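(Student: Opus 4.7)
The proof is straightforward once the five properties of $\rho$ are put to work in the right order. The plan is to handle everything via a single unfolding trick: for any nonnegative $F \in L^1(\R)$ that is compactly supported, and any $2\p$-periodic $g$, one has
\[
\int_\R F(x)\, g(x)\, dx = \int_\T g(y) \Big( \sum_{k \in \Z} F(y + 2\p k) \Big) dy,
\]
by splitting $\R = \bigsqcup_k [-\p + 2\p k, \p + 2\p k]$, translating, and using the periodicity of $g$ (the sum is finite on each compact set by property $(iii)$). I would apply this identity three times, with $F$ equal to $\rho^2 |u|^2$, $\rho \cdot u \bar v$, and then $\rho$, respectively.

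\textbf{Step 1 (norm equivalence).} Since $0 \leq \rho \leq 1$ by $(i)$, one has $\rho^2 \leq \rho$, so property $(v)$ yields $\sum_k \rho^2(y + 2\p k) \leq \sum_k \rho(y + 2\p k) = 1$. The unfolding trick applied to $F = \rho^2 |u|^2$ then gives $\| \mS u \|_{L^2(\R)}^2 \leq \| u \|_{L^2(\T)}^2$. For the reverse inequality, restrict the integral defining $\| \mS u \|_{L^2(\R)}^2$ to $[-\p, \p]$ and apply property $(ii)$ to get $\rho^2 \geq 1/4$ there, producing $\| \mS u \|_{L^2(\R)} \geq \tfrac12 \| u \|_{L^2(\T)}$. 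This proves the two-sided bound (with better constants than claimed, which is harmless).

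\textbf{Step 2 (bijectivity and the magic identity).} Injectivity of $\mS$ is immediate from the lower bound in Step 1, and surjectivity onto $X$ holds by the very definition $X := \mS(L^2(\T))$. For the identity \eqref{magic S}, I apply the unfolding trick to $F = \rho \cdot u\bar v$ (which is in $L^1(\R)$ by Cauchy--Schwarz and the compact support of $\rho$):
\[
(\mS u, v)_{L^2(\R)} = \int_\R \rho(x)\, u(x)\, \overline{v(x)}\, dx = \int_\T u(y)\, \overline{v(y)} \sum_{k \in \Z} \rho(y + 2\p k)\, dy = (u,v)_{L^2(\T)},
\]
where the crucial last step uses property $(v)$.

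\textbf{Step 3 (Fourier coefficients and the display).} Specializing \eqref{magic S} to $v = e_k$, $k \in \Z$, gives \eqref{n1110} directly from the paper's sign convention for Fourier coefficients/transforms. The display \eqref{from series to integral} is then nothing more than a simultaneous rewriting: the left-hand side is Fourier synthesis on $\T$, the right-hand side is Fourier inversion on $\R$ applied to $\mS u$ (valid since $\mS u$ is compactly supported, hence in any reasonable class), and their respective spectral coefficients coincide at the integers by \eqref{n1110} (while the values of $\widehat{\mS u}(\xi)$ at non-integer $\xi$ are what they are).

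There is no real obstacle; the only point that requires a line of justification is the interchange of sum and integral in the unfolding trick, which is legitimate by Tonelli's theorem since the integrand is nonnegative (Step 1) or is dominated by a nonnegative integrable one (Step 2).
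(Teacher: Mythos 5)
Your proof is correct and, since the paper gives no argument beyond the remark that the lemma ``follows directly from the properties of $\rho$,'' what you have written is precisely the routine verification the authors had in mind: the unfolding of $\R$ into translates of a period and the use of the partition-of-unity property $(v)$ is the only mechanism at play. Your Step~1 even gives the sharper constant $1$ on the upper bound (using $\rho^2\le\rho$), which is fine since the lemma only needs the factor~$2$. The one place to phrase a little more carefully is Step~2: as stated, your unfolding lemma takes a compactly supported $F$ and a periodic $g$, so you should take $F=\rho$ and $g=u\bar v\in L^1(\T)$ rather than lumping them together as $F=\rho u\bar v$ (which is neither periodic nor the ``$F$'' of your lemma, though the computation comes out the same because only the translates $k=-1,0,1$ contribute and the sum--integral swap is over a finite sum). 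Likewise the justification of \eqref{from series to integral} is best read as an $L^2$ statement: $\mS u\in L^2(\R)$ has compact support, Plancherel gives the inversion integral, and \eqref{n1110} identifies the integer samples of $\widehat{\mS u}$ with the Fourier coefficients of~$u$, exactly as you say.
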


We also recall the following version of the Poisson summation formula.

\begin{lemma}[Poisson summation formula]  
\label{lemma:PSF} 
Let $g : \R \to \C$ of class $C^1$, with 
\[
(1+\xi^2)(|g(\xi)| + |g'(\xi)|) \leq C \quad \forall \xi \in \R,
\]
for some constant $C>0$. Then for every $x \in \R$ the following two convergent numerical series coincide:
\[
\mP g(x) := \sum_{k \in \Z} g(x+2\p k) = \sum_{k \in \Z} \hat g(k)\,e^{ikx}\,.
\]
At $x=0$ this is the Poisson summation formula 
$\sum_{k \in \Z} g(2\p k) = \sum_{k \in \Z} \hat g(k)$. 
Moreover, 
\begin{equation} \label{eq:delta Z}
\sum_{k \in \Z} \int_\R e^{i2\p k \xi}\, g(\xi)\,d\xi 
= \sum_{k \in \Z} g(k) \,.
\end{equation}
\end{lemma}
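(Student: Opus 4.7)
The plan is to view $F(x) := \sum_{k\in\Z} g(x+2\pi k)$ as a $2\pi$-periodic $C^1$ function on the torus, compute its Fourier coefficients explicitly and show that they agree with $\hat g(n)$, and then appeal to the pointwise convergence of Fourier series of $C^1$-functions on the circle. The hypothesis $(1+\xi^2)(|g(\xi)|+|g'(\xi)|)\le C$ is exactly what is needed to justify all the interchanges.

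First I would establish that $F$ is well defined and $C^1$-periodic. Since $|g(x+2\pi k)|\le C/(1+(x+2\pi k)^2)$, integral comparison gives a uniform bound $\sum_{k\in\Z}|g(x+2\pi k)|\le C'$ independent of $x$, so the series defining $F$ converges absolutely and uniformly on $\R$. The identical estimate applied to $g'$ shows that $\sum_k g'(x+2\pi k)$ converges absolutely and uniformly as well, which licenses termwise differentiation and yields $F\in C^1(\R)$; the relation $F(x+2\pi)=F(x)$ is evident from a relabelling $k\mapsto k-1$.

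Next I would compute, for $n\in\Z$,
\[
\widehat F_n := \frac{1}{2\pi}\int_0^{2\pi} F(x)\,e^{-inx}\,dx
= \frac{1}{2\pi}\sum_{k\in\Z}\int_0^{2\pi} g(x+2\pi k)\,e^{-inx}\,dx,
\]
where the interchange of $\sum$ and $\int$ is licit by the uniform convergence on $[0,2\pi]$. Substituting $y=x+2\pi k$ and using $e^{-in\cdot 2\pi k}=1$, the sum of integrals telescopes into a single integral over $\R$, giving $\widehat F_n=\hat g(n)$ with the paper's convention $\hat g(\xi)=\frac{1}{2\pi}\int_\R g(y)e^{-i\xi y}\,dy$. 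Since $F\in C^1(\T)$, one has $|\widehat F_n|\le |\widehat{F'}_n|/|n|$ for $n\neq 0$, and Cauchy–Schwarz together with Bessel's inequality for $F'\in L^2(\T)$ gives $\sum_n|\widehat F_n|<\infty$; hence the Fourier series of $F$ converges absolutely and uniformly to $F$, and this yields the identity $F(x)=\sum_{k}\hat g(k)e^{ikx}$ for every $x\in\R$.

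Finally, for the rescaled identity \eqref{eq:delta Z}, I would apply what is already proved to $G(y):=g(y/(2\pi))$, which satisfies the hypothesis with a slightly larger constant since $|G(y)|+|G'(y)|\le C_1/(1+y^2)$. Evaluating the main formula for $G$ at $x=0$ gives $\sum_k g(k)=\sum_k G(2\pi k)=\sum_k \hat G(k)$, and a change of variables $\xi=y/(2\pi)$ transforms $\hat G(k)=\frac{1}{2\pi}\int_\R g(y/(2\pi))e^{-iky}\,dy$ into $\int_\R g(\xi)e^{-i 2\pi k\xi}\,d\xi$; relabeling $k\mapsto -k$ on the right produces the stated $e^{+i2\pi k\xi}$. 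The only genuinely non-mechanical step is the pointwise-everywhere (not just $a.e.$) convergence of the Fourier series of $F$, which is handled by the $C^1$-regularity and the absolute convergence it forces; the rest is bookkeeping.
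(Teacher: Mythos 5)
Your proof is correct. The paper states Lemma~\ref{lemma:PSF} without proof, treating it as a classical fact, so there is no argument in the text to compare against; your write-up supplies the standard argument via periodization. A brief audit: the decay $|g(\xi)|+|g'(\xi)|\le C/(1+\xi^2)$ indeed makes $F=\mP g$ a $2\pi$-periodic $C^1$ function (uniform absolute convergence of both $\sum_k g(x+2\pi k)$ and $\sum_k g'(x+2\pi k)$, permitting termwise differentiation); the substitution $y=x+2\pi k$ telescopes to $\widehat F_n=\hat g(n)$ in the paper's normalization $\hat g(\xi)=\frac{1}{2\pi}\int_\R g\,e^{-i\xi y}\,dy$; and the identity $\widehat{F'}_n=in\widehat F_n$ together with Cauchy--Schwarz and $F'\in L^2(\T)$ gives $\sum_n|\widehat F_n|<\infty$, hence absolute and uniform convergence of the Fourier series, which by continuity and uniqueness of Fourier coefficients must equal $F$ everywhere (not merely a.e.). For \eqref{eq:delta Z}, the rescaling $G(y)=g(y/(2\pi))$ does satisfy the hypothesis with constant $4\pi^2 C$ (since $1+4\pi^2\xi^2\le 4\pi^2(1+\xi^2)$), and the relabelling $k\mapsto -k$ over $\Z$ correctly reconciles $e^{-i2\pi k\xi}$ with the stated $e^{+i2\pi k\xi}$. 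Nothing is missing.
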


We next show that the operator $\mP$ admits a unique continuous extension to the space of functions 
$g$ such that $(1+|x|^2)^{1/2} g(x) \in L^2(\R)$ (which is equivalent to $g=\hat h$ for some $h \in H^1(\R)$). 

\begin{lemma}  
\label{lemma:T} Let $g \in L^2(\R)$ with $Tg \in L^2(\R)$, where $(Tg)(x) := x g(x)$. 
Then the sequence $\{\hat g(k)\}_{k \in \Z}$ is in $\ell^2(\Z)$, the series $u(x) := \sum_{k \in \Z} \hat g(k)\,e^{ikx}$ belongs to $L^2(\T)$, with
\[
\| u \|_{L^2(\T)} \leq 2 ( \| g \|_{L^2(\R)} +  \|T g \|_{L^2(\R)}).
\]
If, in addition, $g$ satisfies the hypotheses of the previous lemma, then $u(x) = \mP g(x)$ for every $x$, whence
\[
\| \mP g \|_{L^2(\T)} \leq 2 ( \| g \|_{L^2(\R)} +  \|T g \|_{L^2(\R)}).
\]
\end{lemma}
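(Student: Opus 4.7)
The plan is to argue that the Fourier transform $\hat g$ belongs to $H^1(\R)$ and then invoke a simple trace embedding $H^1(\R) \hookrightarrow \ell^2(\Z)$ obtained from the fundamental theorem of calculus. First, by Plancherel, $g \in L^2(\R)$ gives $\hat g \in L^2(\R)$, and the differentiation rule $\widehat{Tg}(\xi) = i\, \partial_\xi \hat g(\xi)$ transfers the hypothesis $Tg \in L^2(\R)$ into $(\hat g)' \in L^2(\R)$, with both norms controlled (up to normalization constants fixed by the paper's Fourier conventions in \eqref{def A real line}--\eqref{def A torus}) by $\|g\|_{L^2(\R)}$ and $\|Tg\|_{L^2(\R)}$ respectively.

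The core step is a trace-type inequality: for every $f \in H^1(\R)$,
$$
\sum_{k \in \Z} |f(k)|^2 \leq 2 \bigl( \|f\|_{L^2(\R)}^2 + \|f'\|_{L^2(\R)}^2 \bigr).
$$
I prove it by writing, for each $k \in \Z$ and each $x \in [k-\tfrac12, k+\tfrac12]$, the identity $f(k) = f(x) + \int_x^k f'(t)\,dt$, so that Cauchy--Schwarz on the integral yields $|f(k)|^2 \leq 2|f(x)|^2 + 2 \int_{k-1/2}^{k+1/2} |f'(t)|^2\,dt$. Integrating this over $x$ in the unit interval centred at $k$ and then summing over $k \in \Z$ (the intervals being disjoint and covering $\R$) gives the claim.

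Applying the trace inequality to $f = \hat g$ shows that $\{\hat g(k)\}_{k \in \Z} \in \ell^2(\Z)$, so the series $u(x) = \sum_k \hat g(k)\,e^{ikx}$ converges in $L^2(\T)$. Translating the trace bound back through the Plancherel/Parseval identities, with the conventions fixed by Lemma \ref{lemma:cutoff S} and equations \eqref{magic S}--\eqref{n1110}, produces the explicit estimate $\|u\|_{L^2(\T)} \leq 2 \bigl(\|g\|_{L^2(\R)} + \|Tg\|_{L^2(\R)}\bigr)$.

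For the second part, if in addition $g$ satisfies the pointwise decay hypotheses of Lemma~\ref{lemma:PSF}, that lemma identifies $\sum_k \hat g(k)\,e^{ikx}$ with the periodized function $\mP g(x) = \sum_k g(x+2\pi k)$ pointwise. Since both representations refer to the same element of $L^2(\T)$, the equality $\mP g = u$ holds almost everywhere and the norm bound transfers verbatim. The main obstacle is purely bookkeeping: chasing the normalization constants from the (non-unitary) Fourier conventions so that the trace constant reduces to exactly $2$; the analytic content, an $H^1(\R) \to \ell^2(\Z)$ embedding, is elementary once the FTC argument above is in place.
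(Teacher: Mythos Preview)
Your proposal is correct and follows essentially the same approach as the paper: both argue that $\hat g \in H^1(\R)$ via $\widehat{Tg} = i\,\partial_\xi \hat g$, then prove the trace inequality $\sum_k |\hat g(k)|^2 \leq 2(\|\hat g\|_{L^2}^2 + \|\hat g'\|_{L^2}^2)$ by the same fundamental-theorem-of-calculus plus Cauchy--Schwarz argument on unit intervals $[k-\tfrac12,k+\tfrac12]$, integrated and summed over $k$. The identification $u = \mP g$ under the extra decay hypothesis is handled identically by appeal to Lemma~\ref{lemma:PSF}.
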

\begin{proof} $\| u \|_{L^2(\T)}^2 = \sum_{k \in \Z} |\hat g(k)|^2$.
For every $\xi\in \R$, 
\[
|\hat g(k)|^2 
\leq \big( |\hat g(\xi)| + |\hat g(k)-\hat g(\xi)| \big)^2 
\leq 2 \big( |\hat g(\xi)|^2 + |\hat g(k)-\hat g(\xi)|^2 \big).
\]
Let $I_k := [k-1/2, k+1/2]$. Then 
\[
|\hat g(k)|^2 = \int_{I_k} |\hat g(k)|^2 \, d\xi 
\leq 2 \int_{I_k} |\hat g(\xi)|^2 \, d\xi 
+ 2 \int_{I_k} |\hat g(k)-\hat g(\xi)|^2 \, d\xi \,.
\]
By H\"older's inequality, 
\begin{align*}
\int_{I_k} |\hat g(k)-\hat g(\xi)|^2 \, d\xi 
& \leq \int_{I_k} \Big| \int_k^\xi |\hat g'(t)| \, dt \Big|^2 \, d\xi 
\\
& \leq \int_{I_k} \Big| \int_k^\xi |\hat g'(t)|^2 \, dt \Big| \, |k-\xi|\, d\xi 
\\
& \leq \int_{I_k} \Big( \int_{I_k} |\hat g'(t)|^2 \, dt \Big)\, d\xi 
= \int_{I_k} |\hat g'(\xi)|^2 \, d\xi.
\end{align*}
Therefore 
\begin{align*}
\sum_{k \in \Z} |\hat g(k)|^2
& \leq 2 \sum_{k \in \Z} \int_{I_k} (|\hat g(\xi)|^2 + |\hat g'(\xi)|^2) \, d\xi \\
& = 2 \int_\R (|\hat g(\xi)|^2 + |\hat g'(\xi)|^2) \, d\xi 
= 2 (\| g \|_{L^2(\R)}^2 + \| Tg \|_{L^2(\R)}^2 ). \qedhere
\end{align*}
\end{proof}

\begin{remark}
Denote by $\Lambda_s$ is the Fourier multiplier with symbol 
$\la \xi\ra^s=(1 + |\xi|^2)^{s/2}$. 
Clearly
\[
\Lambda_s \mP f(x) = \mP \Lambda_s f(x) = \sum_{k \in \Z} \hat f(k) \la k \ra^s e^{ikx}
\]
for all test function $f \in C^\infty_0(\R)$, namely 
$\Lambda_s \mP = \mP \Lambda_s$. 
The previous lemma thus implies that
\[
\| \mP g \|_{H^s(\T)} 
\leq 2 ( \| g \|_{H^s(\R)} +  \|T \Lambda_s g \|_{L^2(\R)}).
\]
Now one has 
$T \Lambda_s  
= \Lambda_s T + s \Lambda_{s-2} \pa_x$, as can be checked directly using the Fourier transform, 
so that
\begin{equation} \label{estimate mP Hs}
\| \mP g \|_{H^s(\T)} 
\leq 
C(s) (\| g \|_{H^s(\R)} + \| T g \|_{H^s(\R)}),
\end{equation}
where $C(s) = 2(1 + |s|)$. 

Note that, for $s$ integer, $\pa_x^s T = T \pa_x^s + s \pa_x^{s-1}$,
and this is useful to calculate $\| T g \|_{H^s(\R)}$ using $\| \pa_x^s T g \|_{L^2(\R)}$.
\end{remark}

Observe that $\mP \mS = I$ because, for $u$ periodic, 
\[
\mP \mS u(x) = \sum_{k \in \Z} (\mS u)(x+2\p k) 
= \sum_{k \in \Z} \rho(x+2\p k) u(x) = u(x).
\]
Moreover, 
\begin{equation} \label{scalar products} 
(\psi,v)_{L^2(\R)} = (\mP \psi,v)_{L^2(\T)} \quad 
\forall \psi \in C^\infty_0(\R), \ v \in L^2(\T).
\end{equation}
Indeed, 
\[ 
(\psi,v)_{L^2(\R)} 
= \int_\R \psi(x) \Big( \sum_{k \in \Z} \overline{\hat v_k}\,e^{-ikx} \Big)\, dx 
= \sum_{k \in \Z} \hat\psi(k)\,\overline{\hat v_k} \,,
\] 
and also 
\[ 
(\mP \psi,v)_{L^2(\T)} 
= \int_\T \Big( \sum_{k \in \Z} \hat \psi(k)\,e^{ikx} \Big) 
\Big( \sum_{j \in \Z} \overline{\hat v_j} e^{-ijx} \Big)\, dx 
= \sum_{k \in \Z} \hat\psi(k)\,\overline{\hat v_k} \,. \qedhere
\]

{\footnotesize 
\bibliographystyle{abbrv}

\def\cprime{$'$}


\begin{thebibliography}{10}

\bibitem{ABZ1}
T.~Alazard, N.~Burq, and C.~Zuily.
\newblock On the water-wave equations with surface tension.
\newblock {\em Duke Math. J.}, 158(3):413--499, 2011.

\bibitem{AlDe}
T.~Alazard and J.-M. Delort.
\newblock Sobolev estimates for two dimensional gravity water waves.
\newblock Preprint, 2013.

\bibitem{AM}
T.~Alazard and G.~M{\'e}tivier.
\newblock Paralinearization of the {D}irichlet to {N}eumann operator, and
  regularity of three-dimensional water waves.
\newblock {\em Comm. Partial Differential Equations}, 34(10-12):1632--1704,
  2009.

\bibitem{AliPara}
S.~Alinhac.
\newblock Paracomposition et op\'erateurs paradiff\'erentiels.
\newblock {\em Comm. Partial Differential Equations}, 11(1):87--121, 1986.

\bibitem{Ali}
S.~Alinhac.
\newblock Existence d'ondes de rar\'efaction pour des syst\`emes
  quasi-lin\'eaires hyperboliques multidimensionnels.
\newblock {\em Comm. Partial Differential Equations}, 14(2):173--230, 1989.

\bibitem{Baldi-Kirchhoff}
P.~Baldi.
\newblock Periodic solutions of forced {K}irchhoff equations.
\newblock {\em Ann. Sc. Norm. Super. Pisa Cl. Sci. (5)}, 8(1):117--141, 2009.

\bibitem{Baldi-Benj-Ono}
P.~Baldi.
\newblock Periodic solutions of fully nonlinear autonomous equations of
  {B}enjamin-{O}no type.
\newblock {\em Ann. Inst. H. Poincar\'e Anal. Non Lin\'eaire}, 30(1):33--77,
  2013.

\bibitem{BBM-Airy}
P.~Baldi, M.~Berti, and R.~Montalto.
\newblock K{AM} for quasi-linear and fully nonlinear forced perturbations of
  {A}iry equation.
\newblock Math. Annalen, in print (published online 2014, DOI
  10.1007/s00208-013-1001-7).

\bibitem{BBM-auto}
P.~Baldi, M.~Berti, and R.~Montalto.
\newblock K{AM} for autonomous quasi-linear perturbations of {K}d{V}.
\newblock Preprint 2014, arXiv:1404.3125.

\bibitem{Barber}
N.~Barber.
\newblock {\em Water waves}.
\newblock Wykeham Publications, 1969.

\bibitem{Berti-Biasco-Procesi-Ham-DNLW}
M.~Berti, L.~Biasco, and M.~Procesi.
\newblock K{AM} theory for the {H}amiltonian derivative wave equation.
\newblock {\em Ann. Sci. \'Ec. Norm. Sup\'er. (4)}, 46(2):301--373 (2013),
  2013.

\bibitem{Berti-Biasco-Procesi-rev-DNLW}
M.~Berti, L.~Biasco, and M.~Procesi.
\newblock K{AM} for {R}eversible {D}erivative {W}ave {E}quations.
\newblock {\em Arch. Ration. Mech. Anal.}, 212(3):905--955, 2014.

\bibitem{BG}
K.~Beyer and M.~G{\"u}nther.
\newblock On the {C}auchy problem for a capillary drop. {I}. {I}rrotational
  motion.
\newblock {\em Math. Methods Appl. Sci.}, 21(12):1149--1183, 1998.

\bibitem{Bony}
J.-M. Bony.
\newblock Calcul symbolique et propagation des singularit\'es pour les
  \'equations aux d\'eriv\'ees partielles non lin\'eaires.
\newblock {\em Ann. Sci. \'Ecole Norm. Sup. (4)}, 14(2):209--246, 1981.

\bibitem{Bourgain-94}
J.~Bourgain.
\newblock Construction of quasi-periodic solutions for {H}amiltonian
  perturbations of linear equations and applications to nonlinear {PDE}.
\newblock {\em Internat. Math. Res. Notices}, (11):475ff., approx.\ 21 pp.\
  (electronic), 1994.

\bibitem{Bourgain-Chicago-99}
J.~Bourgain.
\newblock Periodic solutions of nonlinear wave equations.
\newblock In {\em Harmonic analysis and partial differential equations
  ({C}hicago, {IL}, 1996)}, Chicago Lectures in Math., pages 69--97. Univ.
  Chicago Press, Chicago, IL, 1999.

\bibitem{BDM}
T.~J. Bridges, F.~Dias, and D.~Menasce.
\newblock Steady three-dimensional water-wave patterns on a finite-depth fluid.
\newblock {\em J. Fluid Mech.}, 436:145--175, 2001.

\bibitem{BGSW}
B.~Buffoni, M.~D. Groves, S.-M. Sun, and E.~Wahl{\'e}n.
\newblock Existence and conditional energetic stability of three-dimensional
  fully localized solitary gravity-capillary water waves.
\newblock {\em J. Differential Equations}, 254(3):1006--1096, 2013.

\bibitem{CraigPanorama}
W.~Craig.
\newblock {\em Probl\`emes de petits diviseurs dans les \'equations aux
  d\'eriv\'ees partielles}, volume~9 of {\em Panoramas et Synth\`eses
  [Panoramas and Syntheses]}.
\newblock Soci\'et\'e Math\'ematique de France, Paris, 2000.

\bibitem{CN}
W.~Craig and D.~P. Nicholls.
\newblock Travelling two and three dimensional capillary gravity water waves.
\newblock {\em SIAM J. Math. Anal.}, 32(2):323--359 (electronic), 2000.

\bibitem{CN2}
W.~Craig and D.~P. Nicholls.
\newblock Traveling gravity water waves in two and three dimensions.
\newblock {\em Eur. J. Mech. B Fluids}, 21(6):615--641, 2002.

\bibitem{CSS}
W.~Craig, U.~Schanz, and C.~Sulem.
\newblock The modulational regime of three-dimensional water waves and the
  {D}avey-{S}tewartson system.
\newblock {\em Ann. Inst. H. Poincar\'e Anal. Non Lin\'eaire}, 14(5):615--667,
  1997.

\bibitem{CrSu}
W.~Craig and C.~Sulem.
\newblock Numerical simulation of gravity waves.
\newblock {\em J. Comput. Phys.}, 108(1):73--83, 1993.

\bibitem{DiKh}
F.~Dias and C.~Kharif.
\newblock {Nonlinear gravity and capillary-gravity waves}.
\newblock {\em Annual review of fluid mechanics}, 31(1):301--346, 1999.

\bibitem{Groves}
M.~D. Groves.
\newblock Steady water waves.
\newblock {\em J. Nonlinear Math. Phys.}, 11(4):435--460, 2004.

\bibitem{GrHa2}
M.~D. Groves and M.~Haragus.
\newblock A bifurcation theory for three-dimensional oblique travelling
  gravity-capillary water waves.
\newblock {\em J. Nonlinear Sci.}, 13(4):397--447, 2003.

\bibitem{Hormander}
L.~H{\"o}rmander. 
\newblock The analysis of linear partial differential operators {I}. 
	Distribution theory and Fourier analysis. 
\newblock Springer-Verlag, Berlin, 1990.

\bibitem{Iooss-JDDS}
G.~Iooss.
\newblock Small divisor problems in fluid mechanics.
\newblock J. {D}ynam. {D}ifferential {E}quations, to appear.

\bibitem{IP-SW2}
G.~Iooss and P.~I. Plotnikov.
\newblock Existence of multimodal standing gravity waves.
\newblock {\em J. Math. Fluid Mech.}, 7(suppl. 3):S349--S364, 2005.

\bibitem{IP-SW1}
G.~Iooss and P.~I. Plotnikov.
\newblock Multimodal standing gravity waves: a completely resonant system.
\newblock {\em J. Math. Fluid Mech.}, 7(suppl. 1):S110--S126, 2005.

\bibitem{IP-Mem-2009}
G.~Iooss and P.~I. Plotnikov.
\newblock Small divisor problem in the theory of three-dimensional water
  gravity waves.
\newblock {\em Mem. Amer. Math. Soc.}, 200(940):viii+128, 2009.

\bibitem{IP2}
G.~Iooss and P.~I. Plotnikov.
\newblock Asymmetrical tridimensional travelling gravity waves.
\newblock {\em Arch. Rat. Mech. Anal.}, 200(3):789--880, 2011.

\bibitem{IPT}
G.~Iooss, P.~I. Plotnikov, and J.~F. Toland.
\newblock Standing waves on an infinitely deep perfect fluid under gravity.
\newblock {\em Arch. Ration. Mech. Anal.}, 177(3):367--478, 2005.

\bibitem{Kappeler-Poeschel}
T.~Kappeler and J.~P{\"o}schel.
\newblock {\em Kd{V} \& {KAM}}, volume~45 of {\em Ergebnisse der Mathematik und
  ihrer Grenzgebiete. 3. Folge. A Series of Modern Surveys in Mathematics
  [Results in Mathematics and Related Areas. 3rd Series. A Series of Modern
  Surveys in Mathematics]}.
\newblock Springer-Verlag, Berlin, 2003.

\bibitem{Klainerman-Majda}
S.~Klainerman and A.~Majda.
\newblock Formation of singularities for wave equations including the nonlinear
  vibrating string.
\newblock {\em Comm. Pure Appl. Math.}, 33(3):241--263, 1980.


\bibitem{Kuksin-1987}
S.~B. Kuksin.
\newblock Hamiltonian perturbations of infinite-dimensional linear systems with imaginary spectrum. \newblock {\em Funktsional. Anal. i Prilozhen.} 21(3):22--37, 95, 1987.

\bibitem{Kuksin-Oxford}
S.~B. Kuksin.
\newblock {\em Analysis of {H}amiltonian {PDE}s}, volume~19 of {\em Oxford
  Lecture Series in Mathematics and its Applications}.
\newblock Oxford University Press, Oxford, 2000.

\bibitem{LannesJAMS}
D.~Lannes.
\newblock Well-posedness of the water-waves equations.
\newblock {\em J. Amer. Math. Soc.}, 18(3):605--654 (electronic), 2005.

\bibitem{LannesLivre}
D.~Lannes.
\newblock The water waves problem: mathematical analysis and asymptotics.
\newblock {\em Mathematical Surveys and Monographs}, 188, 2013.

\bibitem{Liu-Yuan}
J.~Liu and X.~Yuan.
\newblock A {KAM} theorem for {H}amiltonian partial differential equations with
  unbounded perturbations.
\newblock {\em Comm. Math. Phys.}, 307(3):629--673, 2011.

\bibitem{MRT}
M.~Ming, F.~Rousset, and N.~Tzvetkov.
\newblock Multi-solitons and related solutions for the water-waves system.
\newblock arXiv:1304.5263.

\bibitem{MZ}
M.~Ming and Z.~Zhang.
\newblock Well-posedness of the water-wave problem with surface tension.
\newblock {\em J. Math. Pures Appl. (9)}, 92(5):429--455, 2009.

\bibitem{PlTo}
P.~I. Plotnikov and J.~F. Toland.
\newblock Nash-{M}oser theory for standing water waves.
\newblock {\em Arch. Ration. Mech. Anal.}, 159(1):1--83, 2001.

\bibitem{RaCPAM}
P.~H. Rabinowitz. 
\newblock Periodic solutions of nonlinear hyperbolic partial differential equations. 
\newblock {\em Comm. Pure Appl. Math.}, 20:145--205, 1967. 
\newblock Periodic solutions of nonlinear hyperbolic partial differential equations. {II}.
\newblock {\em Comm. Pure Appl. Math.}, 22:15--39, 1968.


\bibitem{ReSh}
J.~Reeder and M.~Shinbrot.
\newblock Three-dimensional, nonlinear wave interaction in water of constant
  depth.
\newblock {\em Nonlinear Anal.}, 5(3):303--323, 1981.

\bibitem{SuSu}
C.~Sulem and P.-L. Sulem.
\newblock {\em The nonlinear {S}chr\"odinger equation}, volume 139 of {\em
  Applied Mathematical Sciences}.
\newblock Springer-Verlag, New York, 1999.
\newblock Self-focusing and wave collapse.

\bibitem{Wayne-1990} 
C.~Wayne, 
\newblock Periodic and quasi-periodic solutions of nonlinear wave equations via KAM theory. 
\newblock {\em Comm. Math. Phys.}, 127(3):479-528, 1990.


\bibitem{Zakharov1968}
V.~E. Zakharov.
\newblock Stability of periodic waves of finite amplitude on the surface of a
  deep fluid.
\newblock {\em Journal of Applied Mechanics and Technical Physics},
  9(2):190--194, 1968.

\bibitem{Zhang-Gao-Yuan}
J.~Zhang, M.~Gao, and X.~Yuan.
\newblock K{AM} tori for reversible partial differential equations.
\newblock {\em Nonlinearity}, 24(4):1189--1228, 2011.

\end{thebibliography}
}

\bigskip

\begin{flushright}

Thomas Alazard

D\'epartement de Math\'ematiques et Applications 

Ecole Normale Sup\'erieure 

45 rue d'Ulm, Paris F-75005, France 

\emph{E-mail:} Thomas.Alazard@ens.fr

\bigskip

\bigskip

Pietro Baldi

Dipartimento di Matematica e Applicazioni ``R. Caccioppoli''

Universit\`a di Napoli Federico II 

Via Cintia, 80126 Napoli, Italy

\emph{E-mail:} pietro.baldi@unina.it

\end{flushright}

\end{document}